\documentclass{amsart}
\usepackage{amsthm}
\usepackage{amsmath}
\usepackage{amstext}
\usepackage{amsfonts}
\usepackage{latexsym}
\usepackage{amscd}
\usepackage{xypic}
\addtolength{\textheight}{0in}
\addtolength{\topmargin}{0in}
\addtolength{\textwidth}{0in}
\addtolength{\oddsidemargin}{0in}
\addtolength{\evensidemargin}{0in}
\newcommand{\la}{\ensuremath{\rightarrow}}
\newcommand{\sheaf}{\ensuremath{\mathcal{O}}}
\theoremstyle{plain}
\newtheorem{theorem}{Theorem}[section]
\newtheorem{lemma}[theorem]{Lemma}
\newtheorem{proposition}[theorem]{Proposition}
\newtheorem{corollary}[theorem]{Corollary}

\theoremstyle{definition}
\newtheorem{definition}[theorem]{Definition}
\newtheorem{notation}[theorem]{Notation}

\newtheorem{remark}[theorem]{Remark}

\numberwithin{equation}{section}

\begin{document}

\title[Smoothings of schemes with non-isolated singularities]{Smoothings of schemes with non-isolated singularities}
\author{Nikolaos Tziolas}
\address{Department of Mathematics, University of Cyprus, P.O. Box 20537, Nicosia, 1678, Cyprus}
\email{tziolas@ucy.ac.cy}

\subjclass[2000]{Primary 14D15}




\begin{abstract}
In this paper we study the deformation and $\mathbb{Q}$-Gorenstein deformation theory of schemes with non-isolated singularities. We obtain obstruction 
spaces for the existence of deformations and also for local deformations to exist globally. Finally we obtain explicit criteria in order for a pure and 
reduced scheme of finite type over a field $k$ to have smoothings and $\mathbb{Q}$-Gorenstein smoothings.  
\end{abstract}

\maketitle

\section{Introduction}

The purpose of this paper is to describe the deformation and $\mathbb{Q}$-Gorenstein deformation theory of schemes defined over a field $k$ with non-isolated singularities and to obtain criteria for the existence of smoothings and $\mathbb{Q}$-Gorenstein smoothings. The motivation for doing so comes from many different problems. Two of the most important ones are the compactification of the moduli space of surfaces of general type (and its higher dimensional analogues) and the minimal model program.

Let $0 \in C$ be the germ of a smooth curve and let $U=C-0$. It is well known~\cite{KoBa88},~\cite{Ale06} that any family 
$f_U \colon \mathcal{X}_U \rightarrow U$ of smooth surfaces of general type over $U$ can be completed in a unique way to a family 
$f \colon \mathcal{X} \rightarrow C$ such that 
$\omega_{\mathcal{X}/C}^{[k]}$ is invertible and ample for some $ k>0$ and the central fiber $X=f^{-1}(0)$ is a stable surface. A stable surface 
is a proper two-dimensional reduced scheme $X$ such that $X$ 
has only semi-log-canonical singularities and $\omega_X^{[k]}$ is locally free and ample for some $k>0$. 
Hence the moduli space of surfaces of general type can be compactified by adding the stable surfaces. 
Therefore it is interesting to know which stable surfaces are smoothable and which are not. For an overview of recent advances in this area, and the 
higher dimensional analogues, we refer the reader to~\cite{Ale06}.

There are two applications from the minimal model program related to the smoothability problem that we would like to mention.

\textit{1.} The outcome of the minimal model program starting with a smooth $n$-dimensional projective variety $X$ is a terminal projective variety $Y$ 
such that either $K_Y$ is nef, or $Y$ has a Mori fiber space structure, which means that there is a projective morphism $f \colon Y \la Z$ with 
$-K_Y$ $f$-ample. Suppose that the second case happens and $\dim Z=1$. Let $z \in Z$ and $Y_z= f^{-1}(z)$. Then $Y_z$ is a Fano variety of dimension 
$n-1$ and $Y$ is 
a $\mathbb{Q}$-Gorenstein smoothing $Y_z$. In general $Y_z$ has non-isolated singularities and may not even be normal. 
Hence the classification of Mori fiber spaces in dimension $n$ is directly related with the classification of 
smoothable Fano varieties of dimension $n-1$. 

\textit{2.} One of the two fundamental maps that appear in the context of the three dimensional minimal model program is an extremal neighborhood. A 3-fold terminal extremal neighborhood~\cite{Ko-Mo92} is a proper birational map $\Delta \subset Y \stackrel{f}{\la} X \ni P$ such that $Y$ is the germ of a 3-fold along a proper curve $\Delta$, $\Delta_{red}=f^{-1}(P)$, $X$ and $Y$ are terminal, and $-K_Y$ is $f$-ample. An extremal neighborhood is the local analogue of a flipping contraction or a divisorial contraction that contracts a divisor onto a curve. In this setting then, $Y$ is a one-parameter $\mathbb{Q}$-Gorenstein smoothing of the general member $H \in |\sheaf_Y|$. The singularities of $H$ are in general difficult to understand and it may even be non-normal. Of course there are natural higher dimensional analogues of the previous construction.

It is therefore of interest to study the deformation theory of schemes with non-isolated singularities and to obtain criteria for a scheme $X$ to be smoothable. The case when $X$ is a reduced scheme with normal crossing singularities has been extensively studied by R. Friedman~\cite{Fr83}. In particular he obtained a condition called $d$-semistability in order for $X$ to be smoothable with a smooth total space and he studied the obstruction theory for a $d$-semistable scheme to be smoothable. As an application of his methods he showed that any $d$-semistable $K3$ surface is smoothable. H. Pinkham and U. Persson have studied the problem of whether a $d$-semistable scheme is smoothable and they have obtained examples that this is not always so~\cite{PiPe83}. Later, Y. Kawamata and Y. Namikawa~\cite{KawNam94} have defined and studied the notion of logarithmic deformations of a normal crossing reduced scheme and they extended Friedman's result on the smoothability of normal crossing $K3$ surfaces, to higher dimensional normal crossing Calabi-Yau varieties.

Typically, one first studies this problem locally and then globally. The local problem is to study which singularities are smoothable and the global
is to find obstructions for the local smoothings to exist globally. If $X$ has isolated singularities only, then it is well known that $H^2(T_X)$
is an obstruction space for the globalization of the local deformations. Hence if $X$ is locally smoothable and $H^2(T_X)=0$ then $X$ itself is
smoothable. However, if the singular locus of $X$ has dimension bigger than one, then there are examples of locally smoothable varieties
whose obstruction in $H^2(T_X)$ is zero which are not globally smoothable~\cite{PiPe83}. The reason behind this is that if the singularities are not isolated, then there are many local automorphisms of deformations that do not lift to higher order. Another major difference between the isolated and non-isolated singularities case is that Schlessinger's cotangent cohomology sheaves $T^i(X)$ do not have finite support anymore but they are sheaves supported on the singular locus of $X$, and are in general very difficult to describe~\cite{Tzi08}.

In this paper we make an effort to present a systematic study of the deformation theory of schemes with positive dimensional singular locus
and write a few smoothability and nonsmoothability criteria. Some of the results that we prove are already known and many others are to our
knowledge new. We have tried to get the most general results with the fewest possible restrictions on the singularities. We hope this paper will be a useful reference to anyone using deformation theory.

This paper is organized as follows.

In section 3 we define the deformation functors $Def(Y,X)$ and $Def^{qG}(Y,X)$ where $Y \subset X$ is a closed subscheme of a scheme $X$ defined over a field $k$. If $Y=X$ then
these are the usual deformation and $\mathbb{Q}$-Gorenstein deformation functors of $X$. 
If $P \in X$ is an affine isolated singularity then $Def(P,X)=Def(P\in X)$ is the functor of 
algebraic deformations of isolated singularities defined by Artin~\cite[Definition 5.1]{Art76}. More generally, if $Y \not = X$ then these are deformation functors of $\widehat{X}$, the formal completion
of $X$ along $Y$ with certain algebraizability conditions that are explained in Definition~\ref{def1}. They are algebraic analogues of deformations of germs of analytic spaces. We also define the local deformation functors $Def_{loc}(Y,X)$ and $Def^{qG}_{loc}(Y,X)$ which parametrize local deformations of $Y \subset X$. In almost all applications, and for the deformation functors to have good properties, we will assume that $Y$ contains the singular locus of $X$.

In section 4 we describe the tangent spaces $\mathbb{T}^1(Y,X)$, $\mathbb{T}^1_{qG}(Y,X)$ of $Def(Y,X)$ and $Def^{qG}(Y,X)$. Moreover, in Proposition~\ref{local-to-global} we obtain the local to global sequence for the functors $Def(Y,X)$ and $Def^{qG}(Y,X)$ which is a generalization of the usual local to global sequence for $Def(X)$~\cite[Theorem 2.4.1]{Ser06}.

In section 5 we study the existence of a pro-representable hull for the deformation functors defined in section 3. It is known that $Def(Y,X)$ has a pro-representable hull if its tangent space $\mathbb{T}^1(Y,X)$ is finite dimensional~\cite{Sch68}. In Theorem~\ref{hull-of-D} we show that this also holds for $Def^{qG}(Y,X)$ and in Theorem~\ref{local-hull} we show that under some strong restrictions on the singularities of $X$, $Def_{loc}^{qG}(Y,X)$ and $Def_{loc}(Y,X)$ have a hull too. Finally in Proposition~\ref{finiteness-of-tangent-spaces} we exhibit some cases when  $\mathbb{T}^1(Y,X)$ and  $\mathbb{T}^1_{qG}(Y,X)$ are finite dimensional over the base field $k$.

In sections 6 and 7 we explain the main technical tool that we use to study the deformation theory of $X$, Kawamata's $T^1$-lifting
property~\cite{Kaw92},~\cite{Kaw97}.

In section 8 we use the $T^1$-lifting property to study the global deformation theory of $Y \subset X$. In particular, in Theorem~\ref{global-ob} we show that if $X$ is a pure and reduced scheme defined over a field of characteristic zero and $X-Y$ is smooth, then $\mathrm{Ext}_{\hat{X}}^2(\widehat{\Omega}_X,\sheaf_{\hat{X}})$ is an obstruction space to lift a deformation $X_n \in Def(Y,X)(A_n)$ to $A_{n+1}$, where $\hat{X}$ is the formal completion of $X$ along $Y$ and $A_{n}=k[t]/(t^{n+1})$. Moreover, we exhibit an explicit obstruction element.

In section 9 we study the problem of when local deformations of $Y \subset X$ exist globally. The main results are,
\begin{enumerate}
\item In Proposition~\ref{smoothness-of-pi} we show that under very strong restrictions on the singularities of $X$, the global to local map
\[
\pi \colon  Def(Y,X) \la Def_{loc}(Y,X)
\]
is smooth if $H^2(\widehat{T}_X)=0$, where $\widehat{T}_X$ is the completion of $T_X$ along $Y$. However, in general $\pi$ may fail to be smooth. This is in contrast to the case of isolated singularities when it is well known that the global to local map is smooth if $H^2(T_X)=0$.
\item To get around the failure of $\pi$ to be smooth, for any small extension
\[
0 \la J \la B \la A \la 0
\]
and any $X_A \in Def(Y,X)(A)$ we define the spaces $Def(X_A/A,B)$ and $Def_{loc}(X_A/A,B)$, parametrizing global and local liftings of $X_A$ to $B$ with certain local compatibility conditions that are explained in Definition~\ref{def-of-local-functors}. In Theorem~\ref{local-to-global-1} we describe them and we show that there is an exact sequence
\[
0 \la H^1(\widehat{T}_X \otimes J) \stackrel{\alpha}{\la} Def(X_A/A,B) \stackrel{\pi}{\la}   Def_{loc}(X_A/A,B)    \stackrel{\partial}{\la} H^2(\widehat{T}_X \otimes J)
\]
generalizing the first order global to local exact sequence. Moreover we show that there are two successive obstructions in $H^0(T^2(X)\otimes J)$ and 
$H^1(T^1(X) \otimes J)$ in order that $\mathrm{Def}_{loc}(X_A/A,B)\not= \emptyset$. If these obstructions vanish, then there is another obstruction in 
$H^2(\widehat{T}_X \otimes J)$ in order that $Def(X_A/A,B)\not = \emptyset$, i.e., for the local deformations to exist globally. 
These obstruction spaces were well known if $X=Y$~\cite{Har04}.
\end{enumerate}

In section 10 we extend all results obtained for the functor $Def(Y,X)$ to $Def^{qG}(Y,X)$. We do this by using the fact that locally 
any $\mathbb{Q}$-Gorenstein deformation of $X$ is induced by a deformation of its index 1 cover~\cite{KoBa88}.

Let $X$ be a scheme of finite type over a field $k$ and $f \colon \mathcal{X} \la \mathcal{S}$ be a deformation of $X$ over the spectrum of a discrete valuation ring $(R,m)$. In section 11 we compare properties of the global deformation $f$ with properties of the associated formal deformation $f_n \colon X_n \la S_n$, where $S_n=\mathrm{Spec}R/m^{n+1}$ and $X_n=\mathcal{X}\times_S S_n$. In particular we obtain criteria on the associated formal deformation in order for the global one to be a smoothing. This is important because the deformations obtained with our methods are only formal and not necessarily algebraic. Then if they are algebraic it is of interest to know which properties of the global deformation can be read from properties of the associated formal deformation.

In section 12  we apply the theory developed in the previous sections to give some smoothing and non-smoothing criteria for a pure and reduced scheme of finite type over a field $k$. The main results are,
\begin{enumerate}
\item Let $D$ be either $Def(X)$ or $Def^{qG}(X)$. In Theorem~\ref{non-smoothing-2} we show that if at any generic point of its
singular locus, $X$ is normal crossings and $H^0(p(T^1_D(X)))=H^1_Z(p(T^1_D(X)))=0$, then $X$ is not smoothable, where $p(T^1_D(X))$
is the quotient of $T^1_D(X)$ by its torsion and $Z$ is the support of the torsion part. As a special case we
get that if $X$ is normal crossings and $H^0(T^1(X))=0$, then $X$ is not smoothable.
\item In Theorem~\ref{smoothing1} we show that if $X$ is a locally smoothable $\mathbb{Q}$-Gorenstein scheme such that the index 1 covers of all its singular points have complete intersection singularities, $T_{qG}^1(X)$ is finitely generated by its global sections and $H^1(T_{qG}^1(X))=H^2(T_X)=0$, then $X$ has a formal $\mathbb{Q}$-Gorenstein smoothing. Various other more specialized smoothing criteria are given as well.
\end{enumerate}

In section 13 we apply the theory developed earlier in order to give examples in the context of the moduli of stable surfaces and the three dimensional 
minimal model program. First we give two examples of non-smoothable stable surfaces. The components of the moduli space of stable surfaces that these 
surfaces belong to, do not contain any smooth surfaces of general type and hence these are extra components that appear by compactifying the moduli space 
of surfaces of general type. Finally, by deforming a particular non-normal surface $H$, we construct a three dimensional divisorial extremal neighborhood 
$f \colon Y \la X$ such that $H$ is the general member of $|\sheaf_Y|$.

\section{Preliminaries}
\begin{enumerate}
\item All schemes in this paper are separated and Noetherian defined over a field $k$. Additional properties will be stated as needed.
\item  We denote by $Art(k)$ the category of Artin local $k$-algebras.
\item  For any coherent sheaf $\mathcal{F}$ on a scheme $X$, we denote $\mathcal{F}^{[n]}=(\mathcal{F}^{\otimes n})^{\ast \ast}$.
\item  Let $F \colon Art(k) \la Sets$  be a deformation functor. Then, following the notation introduced by Schlessinger~\cite{Sch68}, its tangent space is the set $F(k[t]/(t^2))$ and is denoted by $T^1_F$.
\item A small extension of local Artin $k$-algebras is a square zero extension
\[
0 \la J \la B \la A \la 0
\]
of local Artin $k$-algebras $(A,m_A)$, $(B,m_B)$ such that $J$ is a principal ideal of $B$ and $m_B J=0$ (and therefore $J \cong k$ as a $B$-module).
\item Let $X \la Y$ be a morphism of Noetherian separated schemes and $\mathcal{F}$ a coherent sheaf on $X$. Then by $T^i(X/Y,\mathcal{F})$ we denote Schlessingers
cotangent cohomology sheaves~\cite{Li-Sch67}.
\item Let $X$ be a scheme. A formal deformation of $X$, is a flat morphism of formal schemes $\mathfrak{f} \colon \mathfrak{X} \la \mathfrak{S}$, where $\mathfrak{S}=\mathrm{Specf R}$, $(R,m_R)$ is a complete local ring and such that $X \cong \mathfrak{X}\times_{\mathfrak{S}} \mathrm{Specf} (R/m_R)$. Equivalently, a formal deformation of $X$ over $(R,m_R)$, is a collection of compatible deformations $f_n \colon X_n \la \mathrm{Spec}R_n$, for all $n \in \mathbb{Z}_{>0}$, where $R_n=R/m_R^{n+1}$.
    Suppose that $X$ is of finite type over a field $k$. Then the formal deformation is called effective if and only if there is a flat morphism of finite type $f \colon \mathcal{X} \la \mathcal{S}=\mathrm{Spec}R$ of schemes with $X=\mathcal{X}\times_{\mathcal{S}}\mathrm{Spec}(R/m_R) =X$ and such that $\mathfrak{X}=\widehat{\mathcal{X}}$, the formal completion of $\mathcal{X}$ along $X$. In this case, $\mathfrak{f}$ is called the associated formal deformation of $f$. If in addition, $f$ is induced from a deformation $f^{\prime} \colon \mathcal{X}^{\prime} \la \mathrm{Spec} A$, where $(A,m_A)$ is a localization of a finitely generated $k$-algebra such that $\hat{A}\cong R$, then the deformation is called algebraic.
\item A reduced scheme $X$ is called $\mathbb{Q}$-Gorenstein if and only if it is Cohen-Macauley, Gorenstein in codimension 1 and there
is $n \in \mathbb{Z}_{>0}$ such that $\omega_X^{[n]}$ is invertible.
\item A smoothing of a scheme $X$ is a flat morphism $ f \colon \mathcal{X} \la T=\mathrm{Spec} R$, where $(R,m)$ is a discrete valuation ring, such that $\mathcal{X} \times_T \mathrm{Spec}(R/m) \cong X$ and the generic fiber $\mathcal{X} \times_T \mathrm{Spec} K(R) $ is smooth over $K(R)$. If in addition $X$ is $\mathbb{Q}$-Gorenstein and there is $n \in \mathbb{Z}_{>0}$ such that $\omega^{[n]}_{\mathcal{X}/T}$ is invertible, then the smoothing is called $\mathbb{Q}$-Gorenstein. To avoid degenerate situations we will assume that $X$ is either a local scheme and $f$ a morphism of local schemes, or $X$ and $f$ are proper and of finite type.
\end{enumerate}

\section{The Deformation Functors.}\label{definition-of-functors-section}

First we recall the definition of an \'etale neighborhood of a closed subscheme $Y$ of a scheme $X$~\cite{Cox78}.

\begin{definition}
Let $X$ be a Noetherian scheme defined over a field $k$ and $Y \subset X$ a closed subscheme of it. An \'{e}tale neighborhood of $Y$ in $X$
is an \'etale morphism $Z \la X$ such that $ Z\times_X Y \cong Y$.
\end{definition}
Next we define the deformation functors that we are going to study in this paper.
\begin{definition}\label{def1}
Let $X$ be a Noetherian scheme defined over a field $k$ and $Y \subset X$ a closed subscheme of it. Let $\hat{X}$ be the formal completion
of $X$ along $Y$. Then $Def(Y,X) \colon Art(k) \la Sets $ is the functor such that for any finite local Artin $k$-algebra $A$,
$Def(Y,X)(A)$ is the set of isomorphism classes of flat morphisms of formal schemes $ f \colon \mathcal{X} \la \mathrm{Specf} A$
such that
\begin{enumerate}
\item $\mathcal{X} \times_{\mathrm{Specf} A} \mathrm{Specf} k \cong \hat{X}$.
\item There is an open cover $\mathcal{U}_i$ of $\mathcal{X}$ and flat morphisms of schemes $f_i \colon U_i \la \mathrm{Spec} A$ such that
\begin{enumerate}
\item $U_i \times _{\mathrm{Spec} A} \mathrm{Spec} k$ is a local \'etale neighborhood of $Y$ in $X$.
\item $\mathcal{U}_i \la \mathrm{Specf} A$ is the formal completion of $U_i \la \mathrm{Spec} A$ along $Y$.
\end{enumerate}
\end{enumerate}
\end{definition}
Next we define the notion of $\mathbb{Q}$-Gorenstein deformations and the corresponding deformation functor $Def^{qG}(Y,X)$. In order for this to make sense it is necessary to define the notion of relative dualizing sheaves for a formal family as in~\ref{def1}.
\begin{definition}
Let $X$ be a Cohen-Macauley scheme Gorenstein in codimension one defined over a field $k$ and $Y \subset X$ a closed subscheme of it. Let
$ f \colon \mathcal{X} \la \mathcal{S}=\mathrm{Specf} A$ be an element of $Def(Y,X)(A)$, where $A \in Art(k)$. Let $\mathcal{U}_i$ be an open cover
of $\mathcal{X}$ as in Definition~\ref{def1}. Then the sheaves $(\omega_{U_i/A}^{[n]})^{\wedge}$ glue together to a coherent sheaf on $\mathcal{X}$ which
we denote by $\omega_{\mathcal{X}/\mathcal{S}}^{[n]}$. Note that the construction is independent of the cover chosen.
\end{definition}

\begin{definition}\label{def2}
Let $X$ be a $\mathbb{Q}$-Gorenstein scheme defined over a field $k$ and $Y \subset X$ a closed subscheme of it.
The functor of $\mathbb{Q}$-Gorenstein deformations
is the functor $Def^{qG}(Y,X) \colon Art(k) \la Sets$ such that for any finite local Artin $k$-algebra $A$, $Def^{qG}(Y,X)(A)$ is the set of
isomorphism classes of flat morphisms $\mathcal{X} \la \mathcal{S}=\mathrm{Specf} A$ in $Def(Y,X)$, such that the sheaf
$\omega^{[n]}_{\mathcal{X}/\mathcal{S}}$ is invertible for some $n \in \mathbb{Z}_{>0}$.
\end{definition}

It is not immediately clear that $Def^{qG}(Y,X)$ as defined above is a functor. This would be true if the property that
$\omega_{\mathcal{X}/\mathcal{S}}$ is $\mathbb{Q}$-Gorenstein is stable under base extension, which is known to be true~\cite[Lemma 2.6]{Has-Kov04}.

\begin{remark}
\begin{enumerate}
\item If $Y=X$, then the functors $Def(X,X)$ and $Def^{qG}(X,X)$ are just the familiar deformation functors $Def(X)$ and $Def^{qG}(X)$. 
\item Let $P \in X$ be an affine isolated singularity. Then it follows from the definitions and from 
Theorem~\ref{Artin},~\cite[Corollary 2.6]{Art69} that $Def(P,X)$ is the functor of algebraic deformations of an isolated 
singularity~\cite[Definition 5.1]{Art76}. Traditionally this functor is denoted by $Def(P \in X)$ and we will frequently use this notation too. 
More generally, if $X$ has isolated singularities and $Y=X^{sing}=\{P_1,\ldots , P_k\}$, then $Def(Y,X)=\prod_{i=1}^kDef (P_i \in X)$. 
\end{enumerate}
Moreover, as we will see later, in order to obtain 
reasonable results about $Def(Y,X)$ or $Def^{qG}(Y,X)$ (in particular, existence of pro-representable hulls), we will assume that $Y$ is proper and 
$X-Y$ is smooth.
\end{remark}

\begin{remark}
The functors $Def(Y,X)$ and $Def^{qG}(Y,X)$ are an attempt to get an algebraic analog of deformations of germs of analytic spaces. A candidate for an algebraic germ is the formal neighborhood. However, completion along a subscheme is not an algebraic construction. The algebraic analogs of local analytic neighborhoods are \'etale neighborhoods. Ideally we would like to define the notion of an algebraic germ in such a way so that if two are isomorphic then they are at least locally \'etale equivalent and any morphism between two algebraic germs should come, at least locally, from a morphism between \'etale neighborhoods. It is known~\cite[Theorem 4]{Cox78}, that if $Y \subset X_1$, $Y \subset X_2$ is an embedding of a scheme $Y$ into two schemes $X_1$, $X_2$ and $X_1^h \cong X_2^h$, then, under relatively mild hypotheses, the isomorphism is induced by a common \'etale neighborhood of $Y$ in $X_1$, $X_2$. However, it is possible that $\hat{X}_1 \cong \hat{X}_2$ but $X_1^h \not\cong X_2^h$, and hence $X_1$ and $X_2$ are not \'etale equivalent around $Y$~\cite[Example 1]{Cox78}. For these reasons the correct definition of the algebraic germ of $Y \subset X$ would be that of the henselization $X^h$ of $X$ along $Y$ instead of the completion $\hat{X}$. However, due to technical difficulties working with henselization, we work with the formal neighborhood and impose a local algebraizability condition in order not to get too far away from the geometry of $Y \subset X$. Moreover, in many cases the results of Artin~\cite{Art69} allow us to move between the formal and algebraic case.
\end{remark}

\begin{notation}
For the remaining part of this paper, whenever we speak of $Def(Y,X)$ or $Def^{qG}(Y,X)$, $X$ is assumed to satisfy all the relevant properties
stated in Definitions~\ref{def1},~\ref{def2}.
\end{notation}

One of the fundamental problems in deformation theory is to determine when a given scheme $X$ admits a smoothing. The natural approach
to this is first to study the problem locally, i.e, to study which singularities are smoothable and then to globalize the local
smoothings. If $X$ has
isolated singularities only, say $P_1, \ldots , P_k$, then the globalization of the local deformations is achieved by studying the natural
transformation of functors
\begin{equation}\label{isolated-sing}
D(X) \la \prod_{i=1}^k D(P_i, X)
\end{equation}
where $D(X)$ is either $Def(X)$ or $Def^{qG}(X)$ and $D(P_i,X)$ is either $Def(P_i,X)$ or $Def^{qG}(P_i,X)$.
If the singularities of $X$ are not isolated, then the above map does not exist. A kind of sheafification of the local
deformation functors is more appropriate in this case.
\begin{definition}
Let $D(Y,X)$ be either $Def(Y,X)$ or $Def^{qG}(Y,X)$. The functor $\underline{D}(Y,X)$ is the functor \[
\underline{D}(Y,X) \colon Art(k) \la Sh(X)\]
 defined as follows. For any finite local
$k$-algebra $A$,  $\underline{D}(Y,X)(A)$ is the sheaf associated to the presheaf $F$ defined by $F(V) = D(Y\cap V,V)(A)$ for any open set $V$.
\end{definition}
\begin{definition}
Let $D(Y,X)$ be either $Def(Y,X)$ or $Def^{qG}(Y,X)$.
The functor of local deformations of $D(Y,X)$ is the functor $D_{loc}(Y,X) \colon Art(k) \la Sets$ defined by \[
D_{loc}(Y,X)(A)=H^0(\underline{D}(Y,X)(A))\]
\end{definition}

For $D(Y,X)$ as above, there is a natural transformation of functors
\begin{equation}\label{pai}
\pi \colon D(Y,X) \la D_{loc}(Y,X)
\end{equation}
We call this map the local to global map. If $X$ has isolated singularities and $Y=X$, then $\pi$ extends~(\ref{isolated-sing}). 

\begin{remark}
If $X$ has isolated singularities and $H^2(T_X)=0$, then it is well known that $\pi$ is smooth. This is not so in general. This is due to the inability to lift local automorphisms of deformations to higher order. However under some strong conditions on the singularities of $X$, $\pi$ is still smooth (Proposition~\ref{smoothness-of-pi}).
\end{remark}

\section{The tangent space of $Def(Y,X)$ and $Def^{qG}(Y,X)$.}

Let $Y \subset X$ be a closed subscheme of a scheme $X$. In this section we describe the tangent spaces of the functors $Def(Y,X)$, $Def^{qG}(Y,X)$ and the local to global map $\pi$~(\ref{pai}) at the level of tangent spaces.
\begin{definition}
We denote by $ \mathbb{T}^1(Y,X)$, $T^1(Y,X)$, $\mathbb{T}^1_{qG}(Y,X)$ and $T^1_{qG}(Y,X)$ the tangent spaces of the functors $Def(Y,X)$, $\underline{Def}(Y,X)$,  $Def^{qG}(Y,X)$ and  $\underline{Def}^{qG}(Y,X)$, respectively. 
\end{definition}

It easily follows from the definitions of the deformation functors involved that $H^0(T^1(Y,X))$ and $H^0(T_{qG}^1(Y,X))$ are the tangent spaces of 
$Def_{loc}(Y,X)$ and $Def^{qG}_{loc}(Y,X)$, respectively. If $X-Y$ is smooth, then $T^1(Y,X)$ is just Schlessinger's $T^1(X)$ sheaf and  $T^1_{qG}(Y,X)$ is 
the subsheaf $T^1_{qG}(X)$ of $T^1(X)$ defined as follows. For any affine open subset $U \subset X$, $T^1_{qG}(X)(U)$ is the $\sheaf_X(U)$-module of 
isomorphism classes of first order $\mathbb{Q}$-Gorenstein deformations of $U$.

The next proposition describes the global to local map at the level of tangent spaces. If $X=Y$ and $D=Def(X)$ then this is just the familiar global to 
local sequence of the functor $Def(X)$~\cite[Theorem 2.4.1]{Ser06}.
\begin{proposition}\label{local-to-global}
Suppose that $X$ is a reduced scheme and $Y \subset X$ a closed subscheme. Then
\begin{enumerate}
\item Then there is a canonical injection \[
\phi \colon \mathbb{T}^1(Y,X) \la \mathrm{Ext}_{\hat{X}}(\widehat{\Omega}_X,\sheaf_{\hat{X}})
\]
which is an isomorphism if $X-Y$ is smooth.
\item Let $D$ be either $Def(Y,X)$ or $Def^{qG}(Y,X)$. Then there is an exact sequence
\[
0 \la H^1(\hat{T}_X ) \la \mathbb{T}_D^1(Y,X) \la H^0(\widehat{T^1}_D(X))
\]
If in addition $X-Y$ is smooth, then $\mathbb{T}^1_D(Y,X)=\mathbb{T}^1_D(X)$, $T^1_D(Y,X)=T^1_D(X)$ and there is an extended exact sequence
\[
0 \la H^1(\hat{T}_X) \la \mathbb{T}_D^1(X) \la H^0(T_D^1(X)) \la H^2(\hat{T}_X)
\]
\end{enumerate}
where $\hat{X}$ is the formal completion of $X$ along $Y$ and $\widehat{\Omega}_X$, $\hat{T}_X$, $\widehat{T^1}_D(X)$ are the coresponding completions of 
$\Omega_X$, $T_X$ and $T^1_D(X)$ along $Y$.
\end{proposition}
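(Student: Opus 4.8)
The plan is to identify $\mathbb{T}^1(Y,X) = Def(Y,X)(k[t]/(t^2))$ with an $\mathrm{Ext}$ group over the formal scheme $\hat{X}$ by the usual deformation-theoretic dictionary, and then to feed this into a local-to-global spectral sequence for $\mathrm{Ext}$ on $\hat{X}$.

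First I would prove (1). A first-order deformation in $Def(Y,X)(k[t]/(t^2))$ is, by Definition~\ref{def1}, a flat formal scheme $\mathcal{X}\to\mathrm{Specf}\,k[t]/(t^2)$ restricting to $\hat{X}$, which is moreover locally (in a formal-étale-neighborhood sense) the completion of an honest affine deformation. Standard deformation theory (as in~\cite{Ser06}) identifies isomorphism classes of first-order deformations of a scheme with $\mathrm{Ext}^1$ of the cotangent complex into the structure sheaf; since $X$ is reduced, $\mathrm{Ext}^1_{\hat X}(\widehat{\Omega}_X,\sheaf_{\hat X})$ receives all such classes, giving the canonical map $\phi$. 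Injectivity of $\phi$ comes from the fact that two formal deformations inducing the same $\mathrm{Ext}^1$ class are isomorphic as formal deformations (the gluing data for the local pieces is exactly what the $\mathrm{Ext}^1$ class, together with the local $T^1$'s, records). When $X-Y$ is smooth, $\widehat{\Omega}_X$ has finite homological dimension away from $Y$ and the local algebraizability in Definition~\ref{def1} is automatic on the smooth locus, so every class in $\mathrm{Ext}^1_{\hat X}(\widehat{\Omega}_X,\sheaf_{\hat X})$ comes from a genuine element of $Def(Y,X)(k[t]/(t^2))$; hence $\phi$ is an isomorphism. For the $\mathbb{Q}$-Gorenstein variant one restricts to the subsheaf $T^1_{qG}(X)\subset T^1(X)$ and checks, using~\cite[Lemma 2.6]{Has-Kov04}, that the $\mathbb{Q}$-Gorenstein condition is the local condition cutting out $\mathbb{T}^1_{qG}$ inside $\mathbb{T}^1$.

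Next I would prove (2) by applying the local-to-global spectral sequence
\[
E_2^{p,q}=H^p(\hat X,\,\mathcal{E}\!xt^q_{\hat X}(\widehat{\Omega}_X,\sheaf_{\hat X}))\;\Longrightarrow\;\mathrm{Ext}^{p+q}_{\hat X}(\widehat{\Omega}_X,\sheaf_{\hat X}).
\]
One has $\mathcal{E}\!xt^0_{\hat X}(\widehat{\Omega}_X,\sheaf_{\hat X})=\hat T_X$ and $\mathcal{E}\!xt^1_{\hat X}(\widehat{\Omega}_X,\sheaf_{\hat X})=T^1(X)^{\wedge}$ (more precisely $\widehat{T^1}_D(X)$ in the $D=Def$ case, its $qG$-analogue otherwise, by the identification of the local $T^1$ with the relevant $\mathcal{E}\!xt^1$). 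The low-degree exact sequence of the spectral sequence reads
\[
0 \la H^1(\hat T_X) \la \mathrm{Ext}^1_{\hat X}(\widehat{\Omega}_X,\sheaf_{\hat X}) \la H^0(\widehat{T^1}_D(X)) \la H^2(\hat T_X) \la \cdots,
\]
and combining this with the isomorphism $\phi$ from (1) gives the first displayed sequence in general, and the four-term extended sequence once $X-Y$ is smooth (so that $\mathcal{E}\!xt^q$ for $q\geq 2$ are supported on $Y$ and, more to the point, the edge map lands in $H^2(\hat T_X)$ exactly as written, and $\mathbb{T}^1_D(Y,X)=\mathbb{T}^1_D(X)$, $T^1_D(Y,X)=T^1_D(X)$ because the completion functor along $Y$ does not lose information when everything is supported on $Y$). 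The statement $\mathbb{T}^1_D(Y,X)=\mathbb{T}^1_D(X)$ deserves a separate short argument: when $X-Y$ is smooth, any first-order deformation of $\hat X$ in the sense of Definition~\ref{def1} extends canonically over the smooth locus and hence is the same data as a deformation of $X$ itself.

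The main obstacle I expect is not the spectral sequence bookkeeping but the comparison between the formal/étale-neighborhood bookkeeping in Definition~\ref{def1} and honest $\mathrm{Ext}$ groups over $\hat X$ — i.e., showing that the local algebraizability condition built into $Def(Y,X)$ does not shrink the tangent space relative to the naive $\mathrm{Ext}^1_{\hat X}(\widehat{\Omega}_X,\sheaf_{\hat X})$. This is where the hypothesis that $X-Y$ is smooth does the real work: on the smooth part there is nothing to algebraize, and near $Y$ one invokes Artin approximation (Theorem~\ref{Artin}) to promote formal local deformations to algebraic ones, so that $\phi$ is genuinely surjective. For the $\mathbb{Q}$-Gorenstein case the additional subtlety is that $\omega^{[n]}_{\mathcal{X}/\mathcal{S}}$ being invertible is a condition that must be checked to be both open and closed under small extensions at first order; this again reduces to the local statement on index-$1$ covers and~\cite[Lemma 2.6]{Has-Kov04}.
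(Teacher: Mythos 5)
Your overall strategy — identify $\mathbb{T}^1(Y,X)$ with $\mathrm{Ext}^1_{\hat X}(\widehat{\Omega}_X,\sheaf_{\hat X})$ by building a square-zero extension of $\widehat{\Omega}_X$ from a first-order deformation, then feed this into the local-to-global $\mathrm{Ext}$ spectral sequence over $\hat X$ — is the same as the paper's, and the spectral-sequence bookkeeping in your part (2) matches the paper's exactly. The $\mathbb{Q}$-Gorenstein case also follows the paper's route (pull back the subsheaf $T^1_{qG}\subset T^1$ along the global-to-local map).

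The gap is precisely at the point you yourself flag as "the main obstacle": showing that when $X-Y$ is smooth, every class of $\mathrm{Ext}^1_{\hat X}(\widehat{\Omega}_X,\sheaf_{\hat X})$ is represented by a deformation of $\hat X$ that is locally the completion of a deformation of a local \'etale neighborhood. Your justification — "finite homological dimension away from $Y$," "nothing to algebraize on the smooth locus," and an appeal to Artin approximation near $Y$ — does not actually deliver this. Theorem~\ref{Artin} (Artin's Corollary~2.6) compares two finite-type $S$-schemes with isomorphic complete local rings; it does not assert that a formal first-order deformation of $\hat X$ is locally the completion of an honest scheme deformation of $X$. The paper's actual mechanism is a concrete claim, proved by taking a local free presentation of $\Omega_X$ and completing, that
\[
\mathcal{E}xt^1_{\hat X}(\widehat{\Omega}_X,\sheaf_{\hat X})\;\cong\;\bigl(\mathcal{E}xt^1_X(\Omega_X,\sheaf_X)\bigr)^{\wedge}.
\]
Combined with the fact that $T^1(X)=\mathcal{E}xt^1_X(\Omega_X,\sheaf_X)$ (because $X$ is reduced) is supported on $Y$ when $X-Y$ is smooth — so that $\widehat{T^1}(X)=T^1(X)$ — this shows the local $\mathcal{E}xt^1$ class of any formal deformation of $\hat X$ comes from a bona fide class in $T^1(X)$, hence from a deformation of $X$ itself; that is what makes $\phi$ surjective and also what justifies $\widehat{T^1}_D(X)=T^1_D(X)$ in the four-term sequence. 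You should replace the Artin-approximation handwave with this completion-commutes-with-$\mathcal{E}xt^1$ argument (or supply a genuine algebraization statement in the right generality), otherwise the surjectivity step, and with it the identity $\mathbb{T}^1_D(Y,X)=\mathrm{Ext}^1_{\hat X}(\widehat{\Omega}_X,\sheaf_{\hat X})$, is not established.
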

\begin{proof}
We first do the case $D=Def(Y,X)$.
The proof is based on the one for ordinary schemes~\cite[Theorem 2.4.1]{Ser06}. Let $\mathcal{X}_1 \la \mathrm{Specf}A_1 $ be a first order deformation of $\hat{X}$.
Then by definition there is an open cover
$\mathcal{U}_i$ of $\mathcal{X}_1$ such that $\mathcal{U}_i \cong \widehat {U_i}$, where $U_i$ is a first order deformation of a local \'etale
neighborhood $V_i$ of $Y$ in $X$. Then
the extension \[
0 \la k \la A_1 \la k \la 0\]
gives the extension
\[
0 \la \sheaf_{V_i} \la \sheaf_{U_i} \la \sheaf_{V_i} \la 0\]
and since $X$ is assumed to be reduced, there is an exact sequence
\[
0 \la \sheaf_{V_i} \la \Omega_{U_i}\otimes \sheaf_{V_i} \la \Omega_{V_i} \la 0
\]
and consequently
\[
 0 \la \sheaf_{\widehat{V_i}} \la \widehat{\Omega}_{U_i}\otimes \sheaf_{\widehat{V_i}} \la \widehat{\Omega}_{V_i} \la 0
\]
Patching them all together we get the exact sequence
\[
0 \la \sheaf_{\hat{X}} \la \widehat{\Omega}_{\mathcal{X}}\otimes \sheaf_{\hat{X}} \la \widehat{\Omega}_X \la 0
\]
Hence we get a map
\[
\mathbb{T}^1(Y,X) \la \mathrm{Ext}_{\hat{X}}(\widehat{\Omega}_X,\sheaf_{\hat{X}})
\]
which is injective, as in the usual scheme case. Conversely, let
\[
0 \la \sheaf_{\hat{X}} \la \mathcal{E} \la \widehat{\Omega}_X \la 0
\]
be any extension in $\mathrm{Ext}_{\hat{X}}(\widehat{\Omega}_X,\sheaf_{\hat{X}})$. Let $\hat{d} \colon \sheaf_{\hat{X}} \la \widehat{\Omega}_X $ be the
completion of the universal derivation of $X$
(For detailed definitions and properties of $\hat{d}$ and $\widehat{\Omega}_{\mathcal{X}}$, for any formal scheme $\mathcal{X}$ see~\cite{TaLoRo07}).
Then exactly as in the scheme case this gives a first order deformation $\mathcal{X}$ of $\hat{X}$. However in general it may not be locally the
completion of a deformation of a local \'etale
neighborhood of $Y$ in $X$.

The standard local to global spectral sequence gives
\begin{equation}\label{spectral}
0 \la H^1(\widehat{T_X}) \la \mathrm{Ext}^1_{\hat{X}}(\widehat{\Omega}_X,\sheaf_{\hat{X}})
\la H^0(\mathcal{E}xt^1_{\hat{X}}(\widehat{\Omega}_X,\sheaf_{\hat{X}})) \la H^2(\widehat{T_X})
\end{equation}
\textbf{Claim:}
\[
\mathcal{E}xt^1_{\hat{X}}(\widehat{\Omega}_X,\sheaf_{\hat{X}}) \cong \mathcal{E}xt^1_{X}(\Omega_X,\sheaf_{X})^{\wedge}
\]
In fact we will show that
\[
\mathcal{E}xt^1_{\hat{X}}(\widehat{\mathcal{F}},\widehat{\mathcal{P}}) \cong \mathcal{E}xt^1_{X}(\mathcal{F},\mathcal{P})^{\wedge}
\]
where $\mathcal{F}$ and $\mathcal{P}$ are coherent $\sheaf_X$-modules. This is a local result, so we may assume that $X=\mathrm{Spec}A$ and $Y=V(I)$,
where $I \subset A$ is an ideal. Then, since $\mathcal{F}$ is coherent, there is an exact sequence
\[
0 \la \sheaf_X^k \la \sheaf_X^m \la \mathcal{F} \la 0
\]
Applying $\mathcal{H}om_X(\; , \mathcal{P})$ and taking completions, we get the exact sequence
\[
\hat{\mathcal{P}}^m \la \hat{\mathcal{P}}^k \la \mathcal{E}xt^1_{X}(\mathcal{F},\mathcal{P})^{\wedge} \la 0
\]
Taking completions first and then applying $\mathcal{H}om_{\hat{X}}(\; , \hat{P})$, we get the exact sequence
\[
\hat{\mathcal{P}}^m \la \hat{\mathcal{P}}^k \la \mathcal{E}xt^1_{\hat{X}}(\widehat{\Omega}_X,\sheaf_{\hat{X}}) \la 0
\]
The claim now follows immediately from the last two exact sequences.

Since $X$ is reduced it follows that $T^1(X)=\mathcal{E}xt^1_X(\Omega_X,\sheaf_X)$. Hence from~(\ref{spectral}) we get the
exact sequence
\begin{equation}\label{eq-local-to-global}
0 \la H^1(\widehat{T}_X) \la \mathrm{Ext}^1_{\hat{X}}(\widehat{\Omega}_X,\sheaf_{\hat{X}})  \la H^0(\widehat{T^1}(X)) \la H^2(\widehat{T}_X)
\end{equation}
The space $H^1(\widehat{T}_X)$ classifies the first order locally trivial deformations of $\widehat{X}$~\cite{Halp76}, and
$\mathbb{T}^1(Y,X) \subset \mathrm{Ext}^1_{\hat{X}}(\widehat{\Omega}_X,\sheaf_{\hat{X}})$. Therefore there is an exact sequence
\[
0 \la H^1(\widehat{T}_X) \la \mathbb{T}^1(Y,X) \la H^0(\widehat{T^1}(X))
\]
as claimed. If in addition we have that $X-Y$ is smooth, then $T^1(X)$ is supported on $Y$ and hence $\widehat{T^1}(X)=T^1(X)$. Hence every first order
deformation $\mathcal{X}$
of $\widehat{X}$ arising from an element of $\mathrm{Ext}^1_{\widehat{X}}(\widehat{\Omega}_X, \sheaf_{\widehat{X}})$ is locally the completion of a local
deformation of $X$ and hence in this case
$\mathbb{T}^1(Y,X)=\mathrm{Ext}^1_{\widehat{X}}(\widehat{\Omega}_X, \sheaf_{\widehat{X}})$.
This together with the exact sequence~(\ref{eq-local-to-global}) give the exact sequence
claimed in the second part of the proposition.

It remains to consider the $\mathbb{Q}$-Gorenstein functor. Let $\psi \colon \mathbb{T}^1(Y,X) \la H^0(T^1(Y,X))$ be the global to local map
that was defined earlier. Then $H^0(T^1_{qG}(Y,X)) \subset H^0(T^1(Y,X))$ and $\mathbb{T}^1_{qG}(Y,X) = \psi^{-1}(H^0(T^1_{qG}(Y,X)))$. This together
with the results just proven for the usual deformations case give the corresponding ones for the $\mathbb{Q}$-Gorenstein case.
\end{proof}
\begin{remark}
From Proposition~\ref{local-to-global}, it follows that in order to have reasonable results concerning the tangent space of $Def(Y,X)$ or $Def^{qG}(Y,X)$, $X-Y$ must be smooth. From now on we will always assume this.
\end{remark}

\section{Existence of pro-representable hulls.}

In this section we investigate the existence of pro-representable hulls~\cite{Sch68} for all the deformation functors defined in 
section~\ref{definition-of-functors-section}. To do so we use the following result of Schlessinger.

\begin{theorem}[\cite{Sch68}]
Let $F \colon Art(k) \rightarrow Sets$ be a functor such that $F(k)$ is a single point . Let $A^{\prime} \rightarrow A$ and  
$A^{\prime\prime} \rightarrow A$ be morphisms in $Art(k)$ and consider the map
\begin{equation}\label{cond-H}
F(A^{\prime}\times_A A^{\prime\prime}) \rightarrow F( A^{\prime}) \times_{F(A)} F( A^{\prime\prime})
\end{equation}
Then 
\begin{enumerate}
\item $F$ has a pro-representable hull if and only if $F$ has the properties $(H_1)$, $(H_2)$, $(H_3)$ below.
\begin{enumerate}
\item[$(H_1)$] (\ref{cond-H}) is a surjection whenever $A^{\prime\prime} \rightarrow A$ is a small extension.
\item[$(H_2)$] (\ref{cond-H}) is a bijection when $A=k$ and $A^{\prime\prime}=k[t]/(t^2)$.
\item[$(H_3)$] $\dim_k T^1_F < \infty$.
\end{enumerate}
\item $F$ is pro-representable if and only if $F$ has the additional property $(H_4)$:
\[
F(A^{\prime} \times_A A^{\prime}) \rightarrow F(A^{\prime})\times_{F(A)} F(A^{\prime})
\]
is an isomorphism for any small extension $A^{\prime} \rightarrow A$.
\end{enumerate}
\end{theorem}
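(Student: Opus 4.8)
This is Schlessinger's criterion, and the plan is to prove both biconditionals, the real content lying in the ``if'' directions.

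\emph{Necessity.} Suppose $F$ has a hull: a complete Noetherian local $k$-algebra $R$ with residue field $k$ and a formal element $\hat\xi\in\varprojlim_n F(R/\mathfrak{m}_R^{n})$ such that the induced morphism $h_R:=\mathrm{Hom}_{k\text{-alg}}(R,-)\to F$ is smooth and an isomorphism on tangent spaces. Since $\mathrm{Hom}(R,-)$ sends fibre products of rings to fibre products of sets, $h_R$ satisfies $(H_1)$ and $(H_2)$ with bijective comparison maps; smoothness of $h_R\to F$, which applied to $A\to k$ already forces $h_R(A)\to F(A)$ to be onto for every $A$, then transports surjectivity in $(H_1)$ — and, with the tangent isomorphism, also bijectivity in $(H_2)$ — to $F$ by a short diagram chase. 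Property $(H_3)$ holds because $T^1_F\cong(\mathfrak{m}_R/\mathfrak{m}_R^{2})^{\vee}$ is finite dimensional. If moreover $F$ is pro-representable we may take $h_R\cong F$, and then $(H_4)$ is immediate from $h_R(A'\times_A A')=h_R(A')\times_{h_R(A)}h_R(A')$.

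\emph{Sufficiency of $(H_1)$, $(H_2)$, $(H_3)$.} First I would extract from $(H_1)$ and $(H_2)$ the standard linear-algebra package: $T^1_F=F(k[t]/(t^2))$ is a $k$-vector space, $F(k\oplus W)\cong W\otimes_k T^1_F$ naturally in finite-dimensional $W$, and for every small extension the set of liftings of a given element, if non-empty, carries a transitive action of $T^1_F$ tensored with the kernel. By $(H_3)$ put $d=\dim_k T^1_F<\infty$, fix $Q=k[[x_1,\dots,x_d]]$ and an identification $\mathfrak{m}_Q/\mathfrak{m}_Q^{2}\cong(T^1_F)^{\vee}$. Then I would construct $R=Q/\bigcap_q I_q$ together with compatible $u_q\in F(Q/I_q)$ by induction: $I_1=\mathfrak{m}_Q^{2}$ and $u_1\in F(Q/I_1)\cong\mathrm{End}_k(T^1_F)$ the identity; given $(I_q,u_q)$, let $I_{q+1}$ be a \emph{minimal} ideal with $\mathfrak{m}_Q I_q\subseteq I_{q+1}\subseteq I_q$ over which $u_q$ lifts, and choose a lift $u_{q+1}$. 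A minimal such ideal exists because the candidates all lie between $\mathfrak{m}_Q I_q$ and $I_q$ (a finite-length $Q$-module, so there are only finitely many) and are closed under intersection: given candidates $J_1,J_2$, one uses the transitive action to adjust one lift so that the two agree over $Q/(J_1+J_2)$, then applies $(H_1)$ (along a factorization into small extensions) to $Q/(J_1\cap J_2)\cong Q/J_1\times_{Q/(J_1+J_2)}Q/J_2$. Automatically $\mathfrak{m}_Q^{q+1}\subseteq I_q$, so $R$ is Noetherian complete local, the $u_q$ assemble into $\hat\xi$, and $\mathfrak{m}_R/\mathfrak{m}_R^{2}\cong(T^1_F)^{\vee}$, so $h_R\to F$ is an isomorphism on tangent spaces.

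\emph{Versality is the crux.} The hard part is to show $h_R\to F$ is smooth, and I expect essentially all the difficulty to be here. It suffices to treat a small extension $0\to(t)\to B\to A\to 0$ with $p\in h_R(A)$ and $\eta\in F(B)$ inducing the same element of $F(A)$. Factor $p$ through some $R_q=Q/I_q$ and form $C=R_q\times_A B$; by $(H_1)$ the pair $(u_q,\eta)$ comes from some $v\in F(C)$. Lifting the smooth map $Q\to R_q$ to a surjection $Q\to C$ presents $C$ as $Q/\mathfrak{c}$ with $\mathfrak{m}_Q I_q\subseteq\mathfrak{c}\subseteq I_q$, and $u_q$ lifts to $F(Q/\mathfrak{c})$ through $v$; minimality of $I_{q+1}$ then forces $I_{q+1}\subseteq\mathfrak{c}$, hence a surjection $R_{q+1}\to C$, and composing $R\to R_{q+1}\to C\to B$ gives a ring map over $A$. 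To make it send $\hat\xi$ to $\eta$ exactly, correct it by the element of $\mathrm{Der}_k(R,(t))=T^1_{h_R}$ measuring the discrepancy; the discrepancy lies in $T^1_F$ and lifts uniquely because the tangent map is an isomorphism. This proves smoothness, so $(R,\hat\xi)$ is a hull. The minimality built into each $I_{q+1}$ is exactly what prevents any lifting obstruction from surviving, and $(H_1)$ is used both to make the construction well defined and, via the presentation $C=Q/\mathfrak{c}$, to feed the minimality into the lifting argument.

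\emph{Sufficiency of $(H_4)$.} Finally, given a hull $(R,\hat\xi)$, I would show $h_R\to F$ is injective on each $F(A)$ by induction on $\dim_k A$, which with smoothness yields $h_R\cong F$. If $\varphi_1,\varphi_2\in h_R(A)$ have the same image in $F(A)$, choose a small extension $A\to A_0$; the inductive hypothesis gives $\varphi_1=\varphi_2$ over $A_0$, so $\varphi_1$ and $\varphi_2$ differ by an element of $\mathrm{Der}_k(R,k)=T^1_{h_R}$. Its image in $T^1_F$ acts on $\varphi_2(\hat\xi)=\varphi_1(\hat\xi)$, and here $(H_4)$ enters — it guarantees that the transitive $T^1_F$-action on liftings is in fact free — so that image is $0$, whence by the tangent isomorphism the element of $T^1_{h_R}$ is $0$ and $\varphi_1=\varphi_2$.
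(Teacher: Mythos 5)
The paper states this theorem as a citation to Schlessinger and does not reproduce a proof, so there is no in-paper argument to compare against; what you have written is a faithful reconstruction of Schlessinger's original argument. The four pieces — transporting $(H_1)$--$(H_3)$ from $h_R$ using smoothness and the tangent isomorphism; the inductive construction of $R=Q/\bigcap_q I_q$ with each $I_{q+1}$ minimal among liftable ideals pinched between $\mathfrak m_Q I_q$ and $I_q$; versality via $C=R_q\times_A B$, a presentation $C\cong Q/\mathfrak c$, minimality, and a final tangent-space correction; and injectivity from freeness of the action under $(H_4)$ by induction on length — are exactly Schlessinger's steps and are organized correctly.

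One small slip: the set of ideals $J$ with $\mathfrak m_Q I_q\subseteq J\subseteq I_q$ over which $u_q$ lifts need not be finite when $k$ is infinite (the lattice of $k$-subspaces of $I_q/\mathfrak m_Q I_q$ is typically infinite). What you actually need and have is that $I_q/\mathfrak m_Q I_q$ has finite length, hence satisfies DCC; a minimal candidate therefore exists, and your closure-under-intersection argument (using the surjectivity of the torsor map along $I_q/J_1\twoheadrightarrow I_q/(J_1+J_2)$) shows it is unique. With that phrasing corrected, the proof is complete and matches the source the paper cites.
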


By using the criterion of the previous theorem, Schlessinger showed the following.

\begin{proposition}[\cite{Sch68}]
Let $X$ be a scheme defined over a field $k$. Then $Def(X)$ has a pro-representable hull if and only if $\dim \mathbb{T}^1(X) < \infty$.
\end{proposition}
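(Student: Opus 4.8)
The plan is to verify Schlessinger's conditions $(H_1)$, $(H_2)$, $(H_3)$ for the functor $F = Def(X)$, since by the theorem of Schlessinger quoted above these are exactly what is needed for a pro-representable hull. The condition $(H_3)$, finite-dimensionality of $T^1_F = \mathbb{T}^1(X)$, is precisely the hypothesis of the proposition, so that one is free. The content is therefore to check $(H_1)$ and $(H_2)$, and the key point is that these are automatic for any reasonable deformation functor of a scheme; I would prove them by a direct gluing argument on deformations.

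First I would set up the situation: given morphisms $A' \to A$ and $A'' \to A$ in $Art(k)$ with $A'' \to A$ a small extension, and given deformations $\mathcal{X}' \in Def(X)(A')$ and $\mathcal{X}'' \in Def(X)(A'')$ that agree over $A$ (i.e. induce isomorphic deformations $\mathcal{X}_A \in Def(X)(A)$), I would produce a deformation over the fiber product $A' \times_A A''$ restricting to each. The construction is to glue $\mathcal{X}'$ and $\mathcal{X}''$ along the chosen isomorphism over $\mathcal{X}_A$: since $X$ is a single scheme (no local \'etale-neighborhood bookkeeping as in the general $Def(Y,X)$ case, because here $Y=X$), the underlying topological space is that of $X$ and the structure sheaf over the fiber product is the fiber product of structure sheaves $\sheaf_{\mathcal{X}'} \times_{\sheaf_{\mathcal{X}_A}} \sheaf_{\mathcal{X}''}$. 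Flatness of the glued family over $A' \times_A A''$ follows from the local criterion together with flatness of $\mathcal{X}'$, $\mathcal{X}''$ over $A'$, $A''$ and the fact that $A' \times_A A'' \to A'$ and $A' \times_A A'' \to A''$ fit into the standard exact sequence $0 \to \ker(A''\to A) \to A'\times_A A'' \to A' \to 0$; here one uses that $A'' \to A$ is a small extension so the kernel is $\cong k$. This gives $(H_1)$: the map (\ref{cond-H}) is surjective. For $(H_2)$ one takes $A = k$ and $A'' = k[t]/(t^2)$; then $A' \times_k k[t]/(t^2)$ and the gluing is canonical with no choices (the only isomorphism over $k$ is the identity on $X$), so (\ref{cond-H}) is moreover injective, hence bijective.

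The main obstacle, and the only place that needs genuine care rather than formal nonsense, is checking that the glued sheaf really is a flat deformation and that the gluing is well-defined independent of the identification chosen over $A$ up to the appropriate equivalence — in other words that the constructions descend to isomorphism classes. For $(H_1)$ surjectivity this is not an issue since we only need existence of a preimage, but one should be careful that flatness over the fiber product ring $A'\times_A A''$ genuinely holds; the clean way is the criterion that for an exact sequence of Artinian rings as above, a module that is flat over $A'$ and whose reduction is flat over $A''$ compatibly is flat over the fiber product. I would invoke this (it is standard, e.g. as used in Schlessinger's original treatment of $Def(X)$) rather than reprove it. Once $(H_1)$, $(H_2)$ are in hand, the proposition follows immediately from the quoted theorem of Schlessinger, the ``only if'' direction being trivial since $(H_3)$ is one of the necessary conditions. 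I would remark that this is really just the classical fact that $Def(X)$ always satisfies $(H_1)$ and $(H_2)$, recorded here as the base case of the more elaborate hull statements for $Def(Y,X)$ and $Def^{qG}(Y,X)$ proved later in the section.
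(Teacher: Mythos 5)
Your proposal is correct and is essentially Schlessinger's original argument (the paper does not reprove this proposition — it cites \cite{Sch68} — but it explicitly follows the same gluing-of-structure-sheaves strategy when it proves the analogous hull statement for $Def^{qG}(Y,X)$ in Theorem~\ref{hull-of-D}). The one place you are slightly terse is $(H_2)$: injectivity when $A=k$, $A''=k[t]/(t^2)$ is not quite ``no choices'' but rather that the fiber-product construction together with the fact that any two lifts over $A'\times_k k[t]/(t^2)$ inducing the same pair of deformations differ by an automorphism that is visibly trivial; this is standard and your sketch points in the right direction.
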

The proof given by Schlessinger applies directly to $Def(Y,X)$ and therefore $Def(Y,X)$ has a pro-representable hull if and only 
if $\dim_k \mathbb{T}^1(Y,X) < \infty$.

In what follows first we present some cases when $\mathbb{T}^1(Y,X)$ and $\mathbb{T}^1_{qG}(Y,X)$ have finite dimension over $k$. Then we show that $Def^{qG}(Y,X)$ has a pro-representable hull if and only if $\dim_k \mathbb{T}^1_{qG}(Y,X) < \infty$ and finally we show that under some strong restrictions on the singularities of $X$, $Def_{loc}(Y,X)$ and $Def^{qG}_{loc}(Y,X)$ have a pro-representable hull too.

\begin{proposition}\label{finiteness-of-tangent-spaces}
Let $X$ be a reduced scheme and $Y \subset X$ a proper subscheme of it. Then $\mathbb{T}^1(Y,X)$ and $\mathbb{T}^1_{qG}(Y,X)$ have finite dimension over the base field $k$ in any of the following cases.
\begin{enumerate}
\item $X=Y$.
\item Both $X$ and $Y$ are proper and smooth and the normal bundle $N_{Y/X}$ of $Y$ in $X$ is ample.
\item $Y$ is contractible to an isolated singularity, i.e., there is a proper morphism $f \colon X \la Z$ such that $f(Y)$ is a point,
$X-Y \cong Z-f(Y)$, $Z-f(Y)$ is smooth
and $R^if_{\ast}\sheaf_X =0$, for all $i \geq 1$.
\item $\dim Y=1$, $X-Y$ is smooth and $I_Y/I_Y^{(2)}$ is ample, where $I_Y^{(2)}$ is the second symbolic power of the ideal sheaf $I_Y$ of $Y$ in $X$.
\end{enumerate}
\end{proposition}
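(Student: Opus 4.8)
The plan is to bound $\dim_k\mathbb{T}^1(Y,X)$ via the local-to-global sequence of Proposition~\ref{local-to-global}(2). Since $X-Y$ is smooth (our standing assumption), that sequence reads
\[
0 \la H^1(\hat{T}_X) \la \mathbb{T}^1(Y,X) \la H^0(T^1(X)) \la H^2(\hat{T}_X),
\]
so it suffices to prove that $H^1(\hat{T}_X)$ and $H^0(T^1(X))$ are finite dimensional over $k$; the statement for $\mathbb{T}^1_{qG}(Y,X)$ then follows, because $Def^{qG}(Y,X)$ is a subfunctor of $Def(Y,X)$ and hence $\mathbb{T}^1_{qG}(Y,X)$ is a $k$-subspace of $\mathbb{T}^1(Y,X)$. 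The finiteness of $H^0(T^1(X))$ is immediate in all four cases: as $X-Y$ is smooth, $T^1(X)$ is a coherent sheaf supported on a closed subscheme of the proper scheme $Y$ (in case (1), on $X=Y$ itself), hence on a proper scheme, so $H^0(X,T^1(X))$ is finite dimensional and $\widehat{T^1}(X)=T^1(X)$. Thus everything reduces to proving $\dim_k H^1(\hat{T}_X)<\infty$, case by case.

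Case (1) is immediate: $\hat{X}=X$, $\hat{T}_X=T_X$, and $X$ is proper over $k$, so $H^1(X,T_X)$ is finite dimensional. For case (3) I would apply the theorem on formal functions to the proper morphism $f\colon X\la Z$: with $z=f(Y)$ it gives $H^1(\hat{X},\hat{T}_X)\cong (R^1f_\ast T_X)^{\wedge}_z$, and since $f$ restricts to an isomorphism $X-Y\cong Z-z$, the coherent sheaf $R^1f_\ast T_X$ is supported on $\{z\}$, so its stalk there has finite length and $H^1(\hat{X},\hat{T}_X)$ is finite dimensional over $k$. (The hypothesis $R^if_\ast\sheaf_X=0$, $i\geq 1$, serves to make $Z$ well behaved, but is not otherwise needed for this finiteness.)

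For cases (2) and (4) I would filter $\hat{T}_X$ by powers of $I=I_Y$. From the sequences $0\la I^nT_X/I^{n+1}T_X\la T_X/I^{n+1}T_X\la T_X/I^nT_X\la 0$ and the theorem on formal functions (applicable since the thickenings of $Y$ are proper), $H^1(\hat{T}_X)=\varprojlim_n H^1(X,T_X/I^{n+1}T_X)$; if the graded pieces $\mathrm{gr}^n:=I^nT_X/I^{n+1}T_X$ satisfy $H^1(\mathrm{gr}^n)=0$ for $n\gg 0$, the transition maps $H^1(T_X/I^{n+1}T_X)\la H^1(T_X/I^nT_X)$ are eventually injective between finite dimensional spaces, hence eventually isomorphisms, and the limit is finite dimensional. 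So it is enough to prove $H^1(\mathrm{gr}^n)=0$ for $n\gg 0$. In case (2), $X$ and $Y$ smooth makes $Y\subset X$ a regular embedding, $I^n/I^{n+1}=\mathrm{Sym}^n N^\vee_{Y/X}$, and $\mathrm{gr}^n=T_X|_Y\otimes \mathrm{Sym}^n N^\vee_{Y/X}$; Serre duality on the proper smooth $Y$ and $(\mathrm{Sym}^n N^\vee_{Y/X})^\vee\cong\mathrm{Sym}^n N_{Y/X}$ turn the vanishing of $H^1(\mathrm{gr}^n)$ into that of $H^{\dim Y-1}(Y,(T_X|_Y)^\vee\otimes\omega_Y\otimes\mathrm{Sym}^n N_{Y/X})$, which holds for $n\gg 0$ by Serre vanishing for the ample vector bundle $\mathrm{Sym}^n N_{Y/X}$ (pull back to $\mathbb{P}(N_{Y/X})$, where $\sheaf(1)$ is ample, and use $R^j\pi_\ast\sheaf(n)=0$ for $j>0$), provided $\dim Y\geq 2$. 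In case (4) one works with the symbolic powers $I^{(n)}$ (hence the hypothesis on $I_Y/I_Y^{(2)}$ rather than $I_Y/I_Y^2$): away from the finitely many bad points of $Y$ the graded pieces are $T_X|_Y\otimes\mathrm{Sym}^n(I_Y/I_Y^{(2)})$, and ampleness of $I_Y/I_Y^{(2)}$ together with the same $\mathbb{P}$-bundle-plus-Serre-vanishing argument kills all $H^i$, $i\geq 1$, for $n\gg 0$, which suffices.

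The step I expect to be the main obstacle is case (2). The filtration argument as just sketched works once $\dim Y\geq 2$; for $\dim Y=1$ the top cohomology of $\mathrm{gr}^n$ does not vanish (it grows with $n$), so there one genuinely needs instead the opposite positivity of case (4) or the contractibility of case (3). A more uniform route for case (2) is formal duality: $H^1(\hat{X},\hat{T}_X)$ is topologically dual to $H^{\dim X-1}_Y(X,\Omega_X\otimes\omega_X)$, which is governed by $\mathrm{cd}(X-Y)$, and ampleness of $N_{Y/X}$ bounds this cohomological dimension (Hartshorne). Either way, the delicate part is the interplay of the formal-functions comparison, the Mittag--Leffler vanishing of $\varprojlim^1$, and the ampleness-based vanishing of the associated graded; cases (1), (3), (4) are routine once the reduction to $H^1(\hat{T}_X)$ is in hand.
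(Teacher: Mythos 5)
Your overall strategy matches the paper's: both reduce, via the local-to-global sequence of Proposition~\ref{local-to-global}, to showing that $H^1(\widehat{T}_X)$ and $H^0(T^1(X))$ are finite dimensional, and both observe that the $\mathbb{Q}$-Gorenstein case follows since $\mathbb{T}^1_{qG}(Y,X)\subset\mathbb{T}^1(Y,X)$. Case~(1) and case~(4) are handled as in the paper (filtration by symbolic powers, generic surjectivity of $S^n(I_Y/I_Y^{(2)})\la I_Y^{(n)}/I_Y^{(n+1)}$, Serre vanishing from ampleness, Mittag--Leffler to pass to the inverse limit).

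Where you genuinely diverge is case~(3): you apply the Grothendieck comparison/formal-functions theorem directly to get $H^1(\widehat{T}_X)\cong(R^1f_\ast T_X)^{\wedge}_z$, then note $R^1f_\ast T_X$ is coherent (f proper) and supported at the single closed point $z=f(Y)$ (f is an isomorphism off $Y$), so its stalk is a finite-length $\sheaf_{Z,z}$-module and hence finite dimensional over $k$. This is substantially shorter than the paper's route, which after the same comparison step dualizes the cotangent sequence of $f$, resolves $T_Z$ over affine $Z$, and runs an induction using $R^1f_\ast\sheaf_X=0$. Your observation that $R^if_\ast\sheaf_X=0$ plays no role in the finiteness is defensible --- once one notices that $R^1f_\ast T_X$ is a coherent sheaf with support $\{z\}$, the paper's extra machinery is dispensable. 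For case~(2) the paper simply cites Hartshorne~\cite{Har68} and gives no proof; your filtration-plus-Serre-vanishing sketch reconstructs a proof but, as you yourself note, only when $\dim Y\geq 2$ (for $\dim Y=1$ the ampleness of $N_{Y/X}$ drives $\deg\mathrm{Sym}^nN^{\vee}_{Y/X}\to-\infty$, so $H^1$ of the graded pieces does not die). That worry is legitimate and should not be waved away: Hartshorne's finiteness theorem for formal cohomology along a subvariety with ample normal bundle gives $\dim_k H^i(\widehat{X},\widehat{F})<\infty$ precisely for $i<\dim Y$, so the case $\dim Y=1$ is not covered by the result you (or the paper) are invoking. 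Since the paper offers only a citation for case~(2), there is no argument to compare against; what you have for (2) is an honest sketch with a flagged gap rather than a complete proof. One further minor point: for cases~(2) and~(4) the identification $H^1(\widehat{T}_X)=\varprojlim_n H^1(T_X/I^{n+1}T_X)$ is not literally the theorem on formal functions (there is no proper morphism to a point); what is needed is the comparison of cohomology of the formal scheme with the inverse limit over thickenings, which requires a Mittag--Leffler argument --- the paper uses the same identification without comment, so this is a shared imprecision rather than a defect of your write-up alone.
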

\begin{proof}
We use Proposition~\ref{local-to-global}. Then the first part is immediate and the second part was proved by Hartshorne~\cite{Har68}. The third
part is well known in the analytic category
but due to the lack of reference we present a proof here. The result is local around $Y$, so we
may assume that $Z=\mathrm{Spec}A$, where
$(A,m)$ is the localization of a finitely generated $k$-algebra. Let $f\colon X \la Z$ the birational map in the assumption.
Now since $f$ is proper and birational, $H^1(T_X)$ is a finitely generated torsion $A$-module and hence
$H^1(T_X)^{\wedge}=H^1(T_X)$, where $H^1(T_X)^{\wedge}$ is the $m$-adic completion of $H^1(T_X)$. Then according to the formal functions theorem, \[
H^1(T_X) \cong H^1(\widehat{T_X})
\]
Dualizing the standard exact sequence
\[
f^{\ast}\Omega_Z \la \Omega_X \la \Omega_{X/Z} \la 0
\]
and taking into consideration that $f$ is birational, we get the exact sequence
\[
0 \la T_X \la (f^{\ast}\Omega_Z)^{\ast} \la M \la 0
\]
where $M$ is a coherent $\sheaf_X$-module supported on $Y$. Hence $\dim_k H^1(T_X) < \infty $ iff $\dim_k H^1((f^{\ast}\Omega_Z)^{\ast})< \infty$.
Moreover, there is a natural map
$\psi \colon f^{\ast}T_Z \la (f^{\ast}\Omega_Z)^{\ast}$ and the supports of both $\mathrm{Ker}(\psi)$, $\mathrm{CoKer}(\psi)$ are contained in $Y$.
Hence it suffices to show that
$\dim_k H^1(f^{\ast}T_Z)<\infty$. Since $Z$ is affine, there is an exact sequence
\[
0 \la N \la \sheaf_Z^m \la T_Z \la 0
\]
and hence an exact sequence
\[
0 \la Q \la f^{\ast}N \la \sheaf_X^m \la f^{\ast}T_Z \la 0
\]
where $Q$ is supported on $Y$. This breaks into two short exact sequences
\begin{gather*}
0\la Q \la f^{\ast}N \la M \la 0\\
0 \la M \la \sheaf_X^m \la f^{\ast}T_Z \la 0
\end{gather*}
Therefore since $R^1f_{\ast}\sheaf_X =0$ it now follows that $\dim_k H^1(f^{\ast}T_Z) < \infty$ iff $\dim_k H^2(f^{\ast}N)< \infty$. Repeating the above
argument and by induction the result follows.

It remains to show the last part. So, assume that $\dim Y=1$, $I_Y/I_Y^{(2)}$ is ample and that $X-Y$ is smooth.
Then by Proposition~\ref{local-to-global} it suffices to show that
$\dim_k H^1(\widehat{T_X}) < \infty$. The completion $\hat{X}$ of $X$ along $Y$ can be calculated via the
ideal sheaves $I_Y^{(n)}$ and hence
\[
H^1(\widehat{T_X})=\lim_{\longleftarrow}H^1(T_X \otimes \sheaf_X /I_Y^{(n)})
\]
The short exact sequence
\[
0 \la I_Y^{(n)}/I_Y^{(n+1)} \la \sheaf_X /I_Y^{(n+1)} \la \sheaf_X /I_Y^{(n)} \la 0
\]
gives the exact sequence
\[
0 \la K_n \la I_Y^{(n)}/I_Y^{(n+1)} \otimes T_X \la \sheaf_X /I_Y^{(n+1)} \otimes T_X \stackrel{\alpha_n}{\la} \sheaf_X /I_Y^{(n)}\otimes T_X \la 0
\]
We will show that $H^1(\mathrm{Ker}(\alpha_n))=0$, for $n$ sufficiently large and hence, since $Y$ is proper, $\dim_k H^1(\widehat{T_X}) < \infty $.
Since $\dim Y=1$, $H^2(K_n)$=0
and hence it suffices to show
that $H^1( I_Y^{(n)}/I_Y^{(n+1)} \otimes T_X)=0$, for $n$ sufficiently large. The natural map
\[
S^n(I_Y/I_Y^{(2)}) \la I_Y^{(n)}/I_Y^{(n+1)}
\]
is generically surjective along $Y$ and hence there exists an exact sequence
\[
S^n(I_Y/I_Y^{(2)})\otimes T_X \la I_Y^{(n)}/I_Y^{(n+1)}\otimes T_X \la T_n \la 0
\]
where $T_n$ has zero dimensional support. Since $I_Y/I_Y^{(2)}$ is ample, it follows that there is a $n_0 \in \mathbb{Z}$
such that $H^1(S^n(I_Y/I_Y^{(2)})\otimes T_X )=0$, for
all $n \geq n_0$. Therefore $H^1(I_Y^{(n)}/I_Y^{(n+1)}\otimes T_X)=0$ for all $n \geq n_0$ and hence $\dim_k H^1(\widehat{T_X})< \infty $ as claimed.
\end{proof}

\begin{theorem}\label{hull-of-D}
Let $X$ be a $\mathbb{Q}$-Gorenstein scheme and $Y \subset X$ a closed subscheme of it.
Assume furthermore that $\dim_k \mathbb{T}^1_{qG}(Y,X) < \infty$ ( this for example happens if $Y\subset X$ satisfy the conditions of
Proposition~\ref{finiteness-of-tangent-spaces}). Then the functor $Def^{qG}(Y,X)$ has a pro-representable hull.
\end{theorem}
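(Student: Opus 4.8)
The plan is to verify Schlessinger's conditions $(H_1)$, $(H_2)$, $(H_3)$ for the functor $F = Def^{qG}(Y,X)$ and invoke the theorem of Schlessinger quoted above. Condition $(H_3)$ is the hypothesis $\dim_k \mathbb{T}^1_{qG}(Y,X) < \infty$, so nothing need be done there. The strategy for $(H_1)$ and $(H_2)$ is to leverage the fact — already recorded in the excerpt, via Schlessinger's original argument applied verbatim to $Def(Y,X)$ — that the ambient functor $Def(Y,X)$ satisfies $(H_1)$ and $(H_2)$, together with the observation that $Def^{qG}(Y,X)$ is the subfunctor of $Def(Y,X)$ cut out by the closed condition that $\omega^{[n]}_{\mathcal{X}/\mathcal{S}}$ be invertible for some $n > 0$. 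The key input is the stability of the $\mathbb{Q}$-Gorenstein property under base change, cited as \cite[Lemma 2.6]{Has-Kov04}, which is precisely what makes $Def^{qG}(Y,X)$ a subfunctor in the first place; what we additionally need is that this property \emph{descends} along the relevant morphisms of Artin rings.

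First I would treat $(H_1)$. Let $0 \la J \la A'' \la A \la 0$ be a small extension and $A' \la A$ any morphism in $Art(k)$. Given $\xi' \in F(A')$ and $\xi'' \in F(A'')$ with the same image in $F(A)$, we must produce a lift in $F(A' \times_A A'')$. By $(H_1)$ for $Def(Y,X)$ there is a lift $\eta \in Def(Y,X)(A' \times_A A'')$, represented by some formal family $\mathcal{X} \la \mathrm{Specf}(A' \times_A A'')$; it remains to check that $\omega^{[n]}_{\mathcal{X}}$ is invertible for suitable $n$. The point is that invertibility of a coherent sheaf on a Noetherian formal scheme can be checked after faithfully flat base change, and $A' \times_A A''$ maps to $A'$ and to $A''$ with the property that the induced maps on the associated reduced schemes (all equal to $\mathrm{Spec}\,k$) are isomorphisms, so a coherent sheaf on $\mathcal{X}$ is invertible iff its pullbacks to the families over $A'$ and over $A''$ are — because $A' \times_A A'' \to A' \times A''$ is injective and the target is faithfully flat enough over the fiber product after the small extension, using $m_{A''}J = 0$. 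Since $\xi'$ and $\xi''$ are $\mathbb{Q}$-Gorenstein, choosing a common $n$ (the index is uniformly bounded on an Artin base by the index of the central fiber, using that $\omega^{[n]}$ being invertible is an open and closed condition that propagates from the closed point) gives invertibility of $\omega^{[n]}_{\mathcal{X}}$, hence $\eta \in F(A' \times_A A'')$.

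For $(H_2)$, where $A = k$ and $A'' = k[t]/(t^2)$, the same descent argument applies and is in fact easier: the relevant statement is that the natural map $F(A' \times_k k[t]/(t^2)) \to F(A') \times_{F(k)} F(k[t]/(t^2))$ is bijective. Bijectivity for $Def(Y,X)$ is already known, and the subfunctor condition is detected fiberwise by the argument above, so the bijection restricts to one on the $\mathbb{Q}$-Gorenstein loci. With $(H_1)$, $(H_2)$, $(H_3)$ in hand, Schlessinger's theorem yields a pro-representable hull for $Def^{qG}(Y,X)$.

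The main obstacle is the bookkeeping around \emph{uniformity of the index} and the precise descent statement for invertibility of $\omega^{[n]}_{\mathcal{X}/\mathcal{S}}$ over the glued family on a fiber-product base: one must be careful that the reflexive power $\omega^{[n]}$ commutes with the base changes $A' \times_A A'' \to A'$, $\to A''$, which is again exactly \cite[Lemma 2.6]{Has-Kov04} (compatibility of formation of $\omega^{[n]}$ with base change in the $\mathbb{Q}$-Gorenstein setting), and that an invertible sheaf on the central fiber lifts the invertibility to every infinitesimal thickening since the obstruction to a line bundle deforming lives in a cohomology group that vanishes on an Artinian base with a point as underlying topological space — equivalently, a coherent sheaf on $\mathcal{X}$ flat over Artinian $B$ and invertible modulo $m_B$ is invertible. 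Once this local-algebra lemma is isolated and proved, the rest of the argument is a formal transcription of Schlessinger's verification for $Def(Y,X)$ restricted to the $\mathbb{Q}$-Gorenstein subfunctor.
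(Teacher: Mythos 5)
Your overall strategy coincides with the paper's: verify Schlessinger's $(H_1)$, $(H_2)$, $(H_3)$, with $(H_3)$ being the hypothesis and the work concentrated in $(H_1)$. However, there is a genuine gap at the heart of the $(H_1)$ argument. You reduce to showing that $\omega^{[n]}_{\mathcal{X}/R}$ is invertible over the fiber-product base $R = A' \times_A A''$, and you assert this follows because ``$\omega^{[n]}$ commutes with base change,'' citing \cite[Lemma 2.6]{Has-Kov04}. That lemma establishes only that the $\mathbb{Q}$-Gorenstein property is \emph{preserved} under base change (which the paper uses to show $Def^{qG}(Y,X)$ is even a functor, i.e., maps $R \to A'$ take $\mathbb{Q}$-Gorenstein families to $\mathbb{Q}$-Gorenstein families). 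For $(H_1)$ you need the opposite direction: to promote $\mathbb{Q}$-Gorenstein-ness from the families over $A'$ and $A''$ up to $R$. Reflexive powers such as $\omega^{[n]}$ are defined as double duals, and double duals do not commute with arbitrary base change; neither is $\omega^{[n]}_{\mathcal{X}/R}$ automatically flat over $R$, so the line-bundle Nakayama argument at the end of your proposal cannot be applied directly.

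The paper supplies precisely the missing idea. It constructs a natural comparison map
\[
\phi \colon \omega^{[n]}_{X_R/R} \longrightarrow \omega^{[n]}_{X_{A''}/A''} \times_{\omega^{[n]}_{X_A/A}} \omega^{[n]}_{X_{A'}/A'},
\]
whose target is invertible (the three factors all have the same index $n$ by \cite{KoBa88}, and a compatible fiber product of invertible modules over the fiber-product ring is invertible by \cite[Corollary 3.6]{Sch68}). Then it observes that $\omega^{[n]}_{X_R/R}$ is reflexive by construction and $X_R$ is Cohen–Macaulay, so $\phi$ is an isomorphism iff its restriction to the relative Gorenstein locus $X^0 \subset X$ is. Over $X^0$ one has $\omega^{[n]} = \omega^{\otimes n}$, which is invertible and genuinely compatible with base change, so the identification over $X^0$ follows immediately from Schlessinger's fiber-product formalism. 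Without this reduction to the Gorenstein locus, the appeal to ``compatibility of $\omega^{[n]}$ with base change'' is unsupported, and the rest of the argument does not close. Your second paragraph on the ``main obstacle'' correctly identifies the problem but resolves it by citing a reference that does not prove what is needed.
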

\begin{proof}
We only do the case when $X=Y$. The general case is similar. For convenience set $D=Def^{qG}(Y,X)$.
We follow the general lines of the proof given by Schlessinger for the usual deformation functor
$Def(X)$~\cite[Proposition 3.10]{Sch68}.
It suffices to show that $D$ satisfies Schlessingers conditions $(H_1), (H_2), (H_3)$~\cite{Sch68}. $(H_3)$ is satisfied
by assumption and $(H_2)$ will follow from $(H_1)$ since it is satisfied for the usual deformation
functor $Def(Y,X)$.
Let $A^{\prime\prime} \la A$ and $A^{\prime} \la A$ be homomorphisms between Artin local $k$-algebras such that $A^{\prime\prime} \la A$ is a small
extension, i.e., there
is a square zero extension
\[
0 \la k \la A^{\prime\prime} \la A \la 0
\]
We will show that the natural map
\[
D(A^{\prime\prime} \times_A A^{\prime}) \la D(A^{\prime\prime})\times_{D(A)} D(A^{\prime})
\]
is surjective (this is condition $(H_1)$. Let $X_{A^{\prime\prime}} \in D(A^{\prime\prime})$, $X_{A^{\prime}}\in D(A^{\prime})$ and $X_A\in D(A)$ such that
 $X_{A^{\prime\prime}}\otimes_{A^{\prime\prime}} A = X_{A^{\prime}}\otimes_{A^{\prime}} A = X_A$. Then there are natural maps $\sheaf_{X_{A^{\prime\prime}}}\la \sheaf_{X_A}$ and
 $\sheaf_{X_{A^{\prime}}}\la \sheaf_{X_A}$. Let $R=A^{\prime\prime}\times_A A^{\prime}$ and let $X_R$ be the scheme $(|X|, \sheaf_{X_R})$,
where $|X|$ is the underlying topological space of $X$ and
$\sheaf_{X_R}=\sheaf_{X_{A^{\prime\prime}}}\times_{\sheaf_{X_A}} \sheaf_{X_{A^{\prime}}}$. Then $\sheaf_{X_R}$ is a flat $R$-algebra,
$\sheaf_{X_R} \otimes_R A^{\prime\prime}=
 \sheaf_{X_{A^{\prime\prime}}}$ and $\sheaf_{X_R} \otimes_R A^{\prime}=
 \sheaf_{X_{A^{\prime}}}$~\cite{Sch68}. To conclude the proof we must show that $X_R$ is $\mathbb{Q}$-Gorenstein, i.e., that
it is Cohen-Macauley, Gorenstein in codimension 1 and there is $n \in \mathbb{Z}$ such that $\omega^{[n]}_{{X_R}/R}$ is invertible. Since $X_R$ is a deformation
of $X$ over an Artin local ring $R$, it is Cohen-Macauley and Gorenstein in codimension 1. Let $n$ be the index of $X$. Then there is a natural map
 \[
 \phi \colon \omega^{[n]}_{{X_R}/R} \la \omega^{[n]}_{{X_{A^{\prime\prime}}/{A^{\prime\prime}}}} \times_{\omega^{[n]}_{{X_{A}/A}}} \omega^{[n]}_{{X_{A^{\prime}}/{A^{\prime}}}}
 \]
We will show that it is an isomorphism. First observe that since $X$ is $\mathbb{Q}$-Gorenstein of index $n$ and $X_A$, $X_{A^{\prime}}$, $X_{A^{\prime\prime}}$
are also $\mathbb{Q}$-Gorenstein, they have also index $n$~\cite{KoBa88} and hence the right hand side is invertible.
Since $\omega^{[n]}_{{X_R}/R}$ is reflexive and $X_R$ Cohen-Macauley it suffices to show that $\phi$ is an isomorphism over the Gorenstein locus. Let $X^0\subset X$
be the Gorenstein locus of $X$. Then there is a commutative diagram
\[
\xymatrix{
\omega^{[n]}_{{X^0_R}/R} \ar[r]\ar[d] &   \omega^{[n]}_{{X^0_{A^{\prime}}/{A^{\prime}}}}\ar[d]                             \\
  \omega^{[n]}_{{X^0_{A^{\prime\prime}}/{A^{\prime\prime}}}}\ar[r]   &     \omega^{[n]}_{{X^0_A}/A}
  }
\]
and moreover, since $\omega^{[n]}_{{X^0_R}/R}=\omega^{\otimes n}_{{X^0_R}/R}$,
$\omega^{[n]}_{{X^0_{A^{\prime\prime}}/{A^{\prime\prime}}}}=\omega^{\otimes n}_{{X^0_{A^{\prime\prime}}/{A^{\prime\prime}}}}$ are invertible,
\[
\omega^{[n]}_{{X^0_R}/R} \otimes_R A^{\prime\prime} = \omega^{[n]}_{{X^0_{A^{\prime\prime}}/{A^{\prime\prime}}}}
\]
Hence~\cite[Corollary 3.6]{Sch68}
\[
\omega^{[n]}_{{X^0_R}/R} \cong \omega^{[n]}_{{X^0_{A^{\prime\prime}}/{A^{\prime\prime}}}} \times_{{\omega^{[n]}_{{X^0_{A}/A}}}} \omega^{[n]}_{{X^0_{A^{\prime}}/{A^{\prime}}}}
\]
as claimed and therefore $X_R$ is $\mathbb{Q}$-Gorenstein.
\end{proof}
The next proposition shows that under some strong restrictions on the singularities of $X$, the local deformation functors $Def_{loc}(Y,X)$ and $Def^{qG}_{loc}(Y,X)$
have a hull too. This is useful in the cases when $Def(Y,X)$ and $Def^{qG}(Y,X)$ do not have a hull. The reason of this failure is that they may not have finite
dimensional tangent spaces. However, the tangent spaces of the local functors are $H^0(T^1(Y,X))$ and $H^0(T^1_{qG}(Y,X))$ and since $T^(Y,X)$, $T^1_{qG}(Y,X)$ are coherent sheaves supported on the
singular locus of $X$, $H^0(T^1(Y,X))$ and $H^0(T^1_{qG}(Y,X))$ will be finite dimensional if the singular locus of $X$ is proper and is contained in $Y$.
\begin{theorem}\label{local-hull}
Let $X$ be a scheme and $Y \subset X$ a subscheme of it. Assume that the singular locus $Z$ of $X$ is proper and that $Z \subset Y$. Let $D$ be either $Def(Y,X)$ or $Def^{qG}(Y,X)$. Suppose that one of
the following conditions are satisfied
\begin{enumerate}
\item With the exception of finitely many singular points, $D$ locally satisfies Schlessingers condition $(H_4)$;
\item The codimension of $Z$ in $X$ is at least $3$ and $\mathrm{depth}_P(\sheaf_{X,P}) \geq 3$, for any point $P\in Z$ (closed or not)
\end{enumerate}
Then the local deformation functor $D_{loc}$ has a pro-representable hull.
\end{theorem}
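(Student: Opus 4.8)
The strategy is to verify Schlessinger's conditions $(H_1)$, $(H_2)$, $(H_3)$ for $D_{loc}$ and apply the criterion of Schlessinger quoted above. Condition $(H_3)$ is immediate: as remarked before the statement, the tangent space of $D_{loc}$ is $H^0(T^1_D(Y,X))$, and since we are assuming $X-Y$ smooth this is a coherent sheaf supported on the singular locus $Z$; as $Z$ is proper, $H^0(T^1_D(Y,X))$ is finite dimensional over $k$. For $(H_2)$ I would use that every local functor $D(Y\cap V,V)$ satisfies Schlessinger's $(H_2)$ — classical for $Def$, and for $Def^{qG}$ by the reduction to index-$1$ covers used in Theorem~\ref{hull-of-D} — so that the sheaf $\underline{D}(Y,X)(A'\times_k k[t]/(t^2))$ is canonically isomorphic to $\underline{D}(Y,X)(A')\times \underline{D}(Y,X)(k[t]/(t^2))$; since $\underline{D}(Y,X)(k[t]/(t^2))=T^1_D(Y,X)$ by definition and $H^0$ of a product of sheaves of sets is the product of the $H^0$'s, taking global sections yields $D_{loc}(A'\times_k k[t]/(t^2))=D_{loc}(A')\times T^1_{D_{loc}}$, which is $(H_2)$.

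The real content is $(H_1)$. Fix a small extension $0\to k\to A''\to A\to 0$, a morphism $A'\to A$, put $\tilde A=A''\times_A A'$, and let $\xi''\in D_{loc}(A'')$, $\xi'\in D_{loc}(A')$ have the same image in $D_{loc}(A)$; one must lift them to $D_{loc}(\tilde A)$. Because at each point of $Z$ the stalk of $\underline{D}(Y,X)$ is a deformation functor satisfying $(H_1)$ (classical for $Def$, and Theorem~\ref{hull-of-D} for $Def^{qG}$), local lifts exist: there is an affine cover $\{V_i\}$ of $X$ and elements $\eta_i\in D(Y\cap V_i,V_i)(\tilde A)$ mapping to $(\xi''|_{V_i},\xi'|_{V_i})$. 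On $V_i\cap V_j$ the restrictions of $\eta_i$ and $\eta_j$ are two lifts of one and the same datum, so they differ by a section of the sheaf $\mathcal H$ of such lifts; off $Z$ the scheme $X$ is smooth and the lift is unique, so $\mathcal H$ is supported on $Z$. The resulting \v{C}ech $1$-cocycle defines a class in $H^1(X,\mathcal H)$, and one can modify the $\eta_i$ so that they glue to a global lift in $D_{loc}(\tilde A)$ precisely when this class vanishes. So $(H_1)$ holds as soon as $H^1(X,\mathcal H)=0$.

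Under hypothesis (1), at every point of $Z$ outside a finite set the local functor satisfies $(H_4)$, and combined with $(H_1)$ and $(H_2)$ this forces (in the usual way) the lift to $\tilde A$ to be unique there; hence $\mathcal H$ is supported on finitely many points, is therefore flasque, and $H^1(X,\mathcal H)=0$, giving $(H_1)$. One still checks the routine point that the exceptional singularities may be enclosed in disjoint affines over which local lifts are chosen by $(H_1)$, and that these patch with the unique lifts elsewhere, the overlaps being free of exceptional points. Under hypothesis (2) I would reduce to (1) by showing that $\mathrm{codim}_X Z\ge 3$ together with $\mathrm{depth}_P\sheaf_{X,P}\ge 3$ for every $P\in Z$ forces $D(Y\cap V,V)$ to satisfy $(H_4)$ at \emph{all} points of $Z$: the failure of $(H_4)$ amounts to the failure of an infinitesimal automorphism of the central fibre to lift across a small extension, the obstruction to this lies in a local cohomology group $\mathcal H^i_Z(-)$ with $i\le 2$ of the appropriate (completed) tangent or cotangent sheaf, and the depth-and-codimension hypotheses make every such group vanish — equivalently, they annihilate the ambiguity sheaf $\mathcal H$ outright. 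For $Def^{qG}$ one runs the same analysis on the index-$1$ covers via Section~10. In all cases $D_{loc}$ satisfies $(H_1)$, $(H_2)$, $(H_3)$ and therefore has a pro-representable hull.

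I expect the main obstacle to be the precise identification, under hypothesis (2), of which cotangent-cohomology sheaf governs the lifting of infinitesimal automorphisms, together with the verification that $\mathrm{depth}\ge 3$ and $\mathrm{codim}\ge 3$ indeed force the pertinent local cohomology to vanish — this is in the spirit of Schlessinger's rigidity results for quotient singularities and is the delicate step. The remaining points — $(H_2)$, $(H_3)$, and the sheaf-theoretic patching once $H^1(X,\mathcal H)=0$ — are routine by comparison.
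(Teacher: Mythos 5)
Your handling of $(H_1)$ under hypothesis (1) takes a genuinely different route from the paper's. The paper constructs, quite explicitly, the candidate gluing isomorphisms $\phi'_{ij}\times\phi''_{ij}$ for the fiber-product sheaves $\sheaf_{\mathcal X'_i}\times_{\sheaf_{\mathcal X_i}}\sheaf_{\mathcal X''_i}$, exhibits the commutativity diagram that must hold for these to descend, and then invokes $(H_4)$ on the overlaps to lift the offending automorphisms across the small extension so that the diagram can be made to commute. You instead exploit the other face of $(H_4)$: together with $(H_1)$ and $(H_2)$ (which always hold for the local functors), $(H_4)$ gives uniqueness of the local lift to $\tilde A$, so that once the finitely many exceptional points are isolated in their own opens, restrictions of the local lifts are forced to agree on overlaps and glue automatically. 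That patching argument (the ``routine point'' at the end of your paragraph) is what carries the day; the intermediate framing in terms of a \v{C}ech class in $H^1(X,\mathcal H)$ vanishing because $\mathcal H$ is flasque is somewhat loose — $\mathcal H$ is a priori a sheaf of sets, not of groups, so invoking \v{C}ech $H^1$ and flasqueness needs more justification than you give — but the conclusion you draw is right, and the approach via uniqueness is a reasonable alternative to the paper's automorphism-lifting computation. Your separate verification of $(H_2)$ is also fine, and is actually more careful than the paper, which only remarks in passing that $(H_2)$ holds without restrictions.

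Where there is a genuine gap is in hypothesis (2). The paper disposes of it in one line: by Artin's theorem on deformations of singularities, $\mathrm{codim}_X Z\ge 3$ together with $\mathrm{depth}_P\sheaf_{X,P}\ge 3$ implies $Def(V)=Def(V-Z)$ for every affine $V$, and since $V-Z$ is smooth and affine the latter functor is trivial (hence pro-representable, hence $(H_4)$ holds), reducing (2) to (1). You gesture at the same reduction but replace the citation with a speculative argument that the obstruction to lifting an infinitesimal automorphism lives in a local cohomology group $\mathcal H^i_Z(-)$ with $i\le 2$ which the depth hypotheses should kill; you yourself flag this as the delicate step you haven't pinned down. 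As it stands, you have not identified which cotangent-complex term or local cohomology group actually governs $(H_4)$, nor verified the vanishing. This is precisely the content of the Artin/Schlessinger rigidity result the paper cites, so the argument for (2) is incomplete without it (or an equivalent computation).
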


\begin{proof}
We only do the case when $Y=X$. The proof of the general case is similar.

It suffices to verify Schlessinger's conditions $(H_1)$, $(H_2)$ and $(H_3)$. The tangent space of $D_{loc}$ is $H^0(T^1_D(X))$. Since $T^1_D(X)$
is a coherent sheaf supported on the singular locus of $X$, $H^0(T^1_D(X))$ is finite dimensional over the base field $k$.
So $(H_3)$ is satisfied.

Assume now that either one of the conditions in the statement is satisfied. If the second one holds, then $Def(X)=Def(X-Z)$~\cite{Art76} and since $X-Z$
is smooth, it locally satisfies $(H_4)$.
hence we only need to assume that the first condition is satisfied.

Let $A^{\prime\prime} \la A$ and $A^{\prime} \la A$ be homomorphisms between Artin local $k$-algebras such that $A^{\prime\prime} \la A$ is a small
extension. We will show $(H_1)$, i.e.,  that
the natural map
\[
D_{loc}(A^{\prime\prime}\times_A A^{\prime}) \la D_{loc}(A^{\prime\prime})\times_{{D_{loc}(A)}}D_{loc}(A^{\prime})
\]
is surjective. By definition, $D_{loc}(B)=H^0(\underline{D}(B))$, for any local finite $k$-algebra $B$. Let $s^{\prime}\in H^0( \underline{D}(A^{\prime}))$
and $s^{\prime\prime}\in H^0( \underline{D}(A^{\prime\prime}))$ such that they map to $s\in H^0( \underline{D}(A))$ under the natural
maps $\lambda^{\prime}\colon H^0( \underline{D}(A^{\prime}))\la H^0( \underline{D}(A))$ and
$\lambda^{\prime\prime} \colon H^0( \underline{D}(A^{\prime\prime}))\la H^0( \underline{D}(A))$. Let $\{U_i\}$ be an affine open cover of $X$ and
let $U_{ij}=U_i\cap U_j$. Let $\mathcal{X}_i$ be any deformation of $U_i$ over a ring $B$. In what follows we will denote by
$\mathcal{X}_{ij}$ its restriction on $U_{ij}$.

The section $s$ is equivalent to a collection of deformations $\mathcal{X}_i$ of $U_i$ over $A$ and $A$-isomorphisms
$\phi_{ij} \colon \mathcal{X}_{ij} \la \mathcal{X}_{ji}$.
Similarly $s^{\prime}$ is equivalent to a collection
of deformations $\mathcal{X}^{\prime}_i$ of $U_i$ over $A^{\prime}$ and $A^{\prime}$-isomorphisms
$\phi^{\prime}_{ij} \colon \mathcal{X}^{\prime}_{ij} \la \mathcal{X}^{\prime}_{ji}$
and $s^{\prime\prime}$ is equivalent to a collection
of deformations $\mathcal{X}^{\prime\prime}_i$ of $U_i$ over $A^{\prime\prime}$ and $A^{\prime\prime}$- isomorphisms $\phi^{\prime\prime}_{ij}
\colon \mathcal{X}^{\prime\prime}_{ij} \la \mathcal{X}^{\prime\prime}_{ji}$. Since $\lambda^{\prime}(s^{\prime})=\lambda^{\prime\prime}(s^{\prime\prime})=s$,
it follows that there are $A$-isomorphisms $\psi_i^{\prime}\colon \mathcal{X}^{\prime}_i\otimes_{A^{\prime}}A \la \mathcal{X}_i$ and
$\psi_i^{\prime\prime}\colon \mathcal{X}^{\prime\prime}_i\otimes_{A^{\prime\prime}}A \la \mathcal{X}_i$. Then
$ \sheaf_{\mathcal{X}^{\prime}_i} \times_{\sheaf_{\mathcal{X}_i}} \sheaf_{\mathcal{X}^{\prime\prime}_i}$
is a deformation of $U_i$ over $R=A^{\prime\prime}\times_A A^{\prime}$. The collection
$ \{\sheaf_{\mathcal{X}^{\prime}_i} \times_{\sheaf_{\mathcal{X}_i}} \sheaf_{\mathcal{X}^{\prime\prime}_i}\}$
form a section in $H^0(\underline{D}(R))$ iff there are $R$-isomorphisms
\[
\lambda_{ij} \colon \sheaf_{\mathcal{X}^{\prime}_{ij}} \times_{\sheaf_{\mathcal{X}_{ij}}} \sheaf_{\mathcal{X}^{\prime\prime}_{ij}} \la
\sheaf_{\mathcal{X}^{\prime}_{ji}} \times_{\sheaf_{\mathcal{X}_{ji}}} \sheaf_{\mathcal{X}^{\prime\prime}_{ji}}
\]
The natural candidate for such an isomorphism is
\[
\phi_{ij}^{\prime} \times \phi_{ij}^{\prime\prime}\colon \sheaf_{\mathcal{X}^{\prime}_{ij}}\times\sheaf_{\mathcal{X}^{\prime\prime}_{ij}} \la
\sheaf_{\mathcal{X}^{\prime}_{ji}}\times\sheaf_{\mathcal{X}^{\prime\prime}_{ji}}
 \]
This isomorphism induces an isomorphism of
$ \sheaf_{\mathcal{X}^{\prime}_{ij}} \times_{\sheaf_{\mathcal{X}_{ij}}} \sheaf_{\mathcal{X}^{\prime\prime}_{ij}}$ if and only if there is a commutative diagram
\[
\xymatrix{
\sheaf_{\mathcal{X}^{\prime}_{ij}}\otimes_{A^{\prime}}A \ar[dd]_{{\phi_{ij}^{\prime}}}\ar[dr]^{{\psi^{\prime}_{ij}}} &
& \sheaf_{\mathcal{X}^{\prime\prime}_{ij}}\otimes_{A^{\prime\prime}}A \ar[dd]^{{\phi_{ij}^{\prime\prime}}}\ar[dl]_{{\psi^{\prime\prime}_{ij}}} \\
                                        &  \sheaf_{\mathcal{X}_{ij}} \ar[dd]^{\phi_{ij}}                           & \\
\sheaf_{\mathcal{X}^{\prime}_{ji}}\otimes_{A^{\prime}}A \ar[dr]^{{\psi^{\prime}_{ji}}}        &
& \sheaf_{\mathcal{X}^{\prime\prime}_{ji}}\otimes_{A^{\prime\prime}}A \ar[dl]_{{\psi^{\prime}_{ji}}} \\
                                        &  \sheaf_{\mathcal{X}_{ji}}                          &
}
\]
By our assumption, we can refine the open cover in such a way that $U_{ij}$ satisfies $(H_4)$. Then we can modify the $\phi_{ij}$ so
that the left hand side of the diagram commutes and then, since $U_{ij}$ satisfies $(H_4)$, we lift them to $X^{\prime\prime}_{ij}$.
Hence we get a section and therefore $D_{loc}$ satisfies $(H_1)$. Similarly it also satisfies $(H_2)$ (note that $(H_2)$ is satisfied without
any restrictions on the singularities of $X$), and hence $D_{loc}$ has a hull.
\end{proof}
Next we present a simple case when cases when $D_{loc}$ has a hull.
\begin{corollary}
Assume that, with the exception of finitely many singular points, the index 1 cover of any singular point of $X$ is smooth and that the singular locus of $X$ is proper. Then $Def^{qG}_{loc}(Y,X)$ has a hull.
 \end{corollary}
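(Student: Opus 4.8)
The plan is to deduce this from part (1) of Theorem~\ref{local-hull}: I would verify that, with the exception of finitely many singular points, $D=Def^{qG}(Y,X)$ locally satisfies Schlessinger's condition $(H_4)$. First I note that the remaining hypotheses of Theorem~\ref{local-hull} hold automatically here: under the standing assumption that $X-Y$ is smooth, the singular locus $Z$ of $X$ is contained in $Y$, and $Z$ is proper by hypothesis.

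Next I would fix a singular point $P\in Z$ outside the finite exceptional set, so that the index $1$ cover of $X$ at $P$ is smooth, and choose a small affine neighborhood $V$ of $P$ with $\omega_V^{[n]}\cong\sheaf_V$ for some $n>0$. Then the index $1$ cover $\pi\colon\widetilde V\la V$, $\widetilde V=\mathrm{Spec}_V\bigoplus_{i=0}^{n-1}\omega_V^{[-i]}$, is defined on all of $V$ with its natural $\mu_n$-action, and after shrinking $V$ into the (open) locus where $\widetilde V$ is smooth I may assume $\widetilde V$ is smooth. The key step is then to invoke the result of Koll\'ar and Shepherd-Barron~\cite{KoBa88} --- the same reduction of $\mathbb{Q}$-Gorenstein deformations to deformations of the index $1$ cover that is used throughout section~10 --- which, via passing to index $1$ covers and $\mu_n$-quotients, identifies $Def^{qG}(Y\cap V,V)$ with the functor of $\mu_n$-equivariant deformations of $\widetilde V$ relative to $\pi^{-1}(Y)\cap\widetilde V$.

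Finally I would observe that this equivariant functor is the trivial functor. Since $\widetilde V$ is smooth and affine, $T^1(\widetilde V)=0$ and $H^i(\widehat T_{\widetilde V})=0$ for $i\geq 1$, so by the argument of Proposition~\ref{local-to-global} every deformation of $\widehat{\widetilde V}$ over an $A\in Art(k)$ is isomorphic to the trivial one; and lifting the $\mu_n$-action is unobstructed and unique because in characteristic zero $\mu_n$ is linearly reductive, so equivariant cohomology is the invariant part of ordinary cohomology. Hence $Def^{qG}(Y\cap V,V)$ is trivial, in particular pro-representable, hence it satisfies $(H_4)$. Thus the hypotheses of Theorem~\ref{local-hull}(1) are met and $Def^{qG}_{loc}(Y,X)$ has a pro-representable hull.

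The step I expect to require the most care is the functorial identification of $\mathbb{Q}$-Gorenstein deformations of $V$ with $\mu_n$-equivariant deformations of $\widetilde V$ in the formal/\'etale-neighborhood framework of Definitions~\ref{def1} and~\ref{def2}; once that is granted (it is precisely the mechanism of section~10), everything else reduces to the rigidity of a smooth affine scheme together with linear reductivity of $\mu_n$, the latter being the only place where the hypothesis on the characteristic enters.
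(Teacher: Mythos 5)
Your proof is correct and takes essentially the same route as the paper: reduce to verifying $(H_4)$ away from a finite set via Theorem~\ref{local-hull}, then pass to the index $1$ cover and exploit its smoothness. Where the paper checks $(H_4)$ by directly lifting a $B$-automorphism of $X_B$ through $\widetilde X_B \to \widetilde X_A \to X_A$, you instead show the local functor is outright trivial, and you make explicit the linear-reductivity step needed for the $\mu_n$-equivariant descent, which the paper's argument leaves implicit.
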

 \begin{proof}
By Theorem~\ref{local-hull} all we need is to show that with the exception of finitely many singular points, property $(H_4)$ is satisfied. This is equivalent to showing that local automorphisms of deformations lift to higher order. Since the result is local, we may assume that $X$ is affine.
Then let $\pi \colon \tilde{X} \la X$ be the index 1 cover of $X$. Let $X_A$ be a $\mathbb{Q}$-Gorenstein deformation of $X$ over $A$. Let $A \la B$
a finite local $A$-algebra and $X_B=X_A \otimes_A B$. Let $\theta$ be a $B$-automorphism of $X_B$.
 Let $\tilde{X}_A \la X_A$ be the index 1 cover of $X_A$. Then $\tilde{X}_A$ is a deformation of $\tilde{X}$~\cite{KoBa88} over $A$
and $\tilde{X}_A\otimes_A B$ is the index 1 cover of $X_B$. From the construction of the index 1 cover, $\theta$ lifts to an automorphism
of $\tilde{X}_B$ which is smooth by assumption. This now lifts to an automorphism of $\tilde{X}_A$ and hence to an automorphism of $X_A$.
 \end{proof}
\begin{remark}
From the proof of Theorem~\ref{local-hull}, it is clear that the obstruction for the local deformation functors to have a hull is the presence
of automorphisms. The only cases that we were able to show existence of a hull is in fact when there are no automorphisms. In view of this,
perhaps it would be better to consider the stack of deformations instead.
\end{remark}

\section{The $T^1$-lifting property.}\label{T1-section}

The main technical tool that we will use in order to study the deformation theory of a scheme $X$ is Kawamata's $T^1$-lifting property~\cite{Kaw92},~\cite{Kaw97}. We recall
the basic definitions and properties.

Let $D \colon Art(k) \la Sets$ be a deformation functor of some scheme $X$ defined over a field of characteristic zero, i.e., a covariant functor
that satisfies
Schlessinger's conditions $(H_1)$ and $(H_2)$. Assume moreover that $D$ has an obstruction space $T_D^2$.
For $A\in Art(k)$, $D(A)$ is the set of isomorphism classes of pairs $(X_A,\phi_0)$ consisting of deformations $X_A$ of $X$ and marking
isomorphisms $\phi_0 \colon X_A\otimes_A k \la X$.
The class of $(X_A,\phi_0)$ will be denoted by $[X_A,\phi_0]$.

Let $B_n=k[x,y]/(x^{n+1},y^{2})$ and $C_n=k[x,y]/(x^{n+1},y^{2},x^ny)$. There are natural maps
$\alpha_n \colon A_{n+1}\la A_n$, $\beta_n \colon B_n \la A_n$, $\gamma_n \colon B_n \la C_n$, $\delta_n \colon C_n \la B_{n-1}$, $\zeta_n \colon A_n \la C_n$ and $\varepsilon_n \colon A_{n+1} \la B_n$ with $\beta_n(x)=t$, $\beta_n(y)=0$, $\varepsilon_n(t)=x+y$, $\zeta_n(t)=x+y$.

\begin{definition}
Let $[X_n, \phi_0] \in D(A_n)$. Then we define
\begin{enumerate}
\item $\mathbb{T}_D^1(X_n/A_n)$ to be the set of isomorphism classes of pairs $(Y_n, \psi_n)$ consisting of deformations $Y_n$ of $X$ over $B_n$ and marking isomorphisms
$\psi_n \colon Y_n\otimes_{B_n}A_n \la X_n$.
\item $T_D^1(X_n/A_n)$ to be the sheaf of sets on $X$ associated to the presheaf $\mathcal{F}$ such that for any open $U \subset X$, $\mathcal{F}(U)=\mathbb{T}^1_D(U_n/A_n)$, where $U_n=\mathcal{X}_n|_U$.
\end{enumerate}
If $D$ is $Def(Y,X)$ or $Def^{qG}(Y,X)$ then we use the notation $\mathbb{T}^1(X_n/A_n)$, $T^1(X_n/A_n)$, $\mathbb{T}_{qG}^1(X_n/A_n)$ and $T_{qG}^1(X_n/A_n)$, respectively.
\end{definition}

\begin{definition}[~\cite{Kaw92},~\cite{Kaw97}]
We say that the deformation functor $D$ satisfies the $T^1$-lifting property if and only if for any $X_n \in D(A_n)$ the natural map
\[
\phi_n \colon \mathbb{T}^1_D(X_n/A_n)\la \mathbb{T}^1_D(X_{n-1}/A_{n-1})
\]
is surjective, where $X_{n-1}=D(\alpha_{n-1})(X_n)$.
\end{definition}

\begin{theorem}[{~\cite[Theorem 1]{Kaw92}}]\label{T1-lifting-property}
Let $D$ be a deformation functor that satisfies the $T^1$-lifting property. Then $D$ is smooth. In particular,
if $D$ has a hull, then its hull is smooth.
\end{theorem}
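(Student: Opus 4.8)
The plan is to recall Kawamata's argument~\cite{Kaw92},~\cite{Kaw97}, which reduces the assertion to a curvilinear lifting criterion for the hull and then verifies that criterion from the $T^1$-lifting hypothesis. First I would reduce to the case in which $D$ has a hull $R$; this is automatic when $\dim_k T^1_D<\infty$, which covers all the applications in this paper. Write $R\cong P/I$ with $P=k[[t_1,\dots,t_r]]$, $r=\dim_k T^1_D$ and $I\subseteq\mathfrak{m}_P^2$. Since the canonical map $h_R\la D$ is smooth, $D$ is smooth $\iff$ $h_R$ is smooth $\iff$ $R$ is regular $\iff$ $I=0$, and the last condition is equivalent to the curvilinear lifting property that every homomorphism $\phi_n\colon R\la A_n$ extends to $R\la A_{n+1}$. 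One implication is standard; for the other, if $I\ne 0$ choose $n\ge 2$ with $I\subseteq\mathfrak{m}_P^n\setminus\mathfrak{m}_P^{n+1}$ and $f\in I$ with nonzero degree-$n$ leading form $f_n$: a generic line $t_i\mapsto c_is$ has $f_n(c)\ne 0$, so it induces a surjection $R\la k[[s]]/(s^n)=A_{n-1}$ which does not extend to $A_n$, since the resulting obstruction $f_n(c)s^n$ cannot be absorbed by an order-$s^n$ correction once $n\ge 2$. Here only the infiniteness of $k$ is used, hence $\mathrm{char}\,k=0$. So the theorem reduces to the implication ``$T^1$-lifting $\Rightarrow$ curvilinear lifting''.

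For that implication the computation rests on the two fibre-product identities of Artin rings
\[
C_n\;\cong\;A_n\times_{A_{n-1}}B_{n-1},\qquad A_{n+1}\;\cong\;A_n\times_{C_n}B_n,
\]
the first realized by the natural maps over $\alpha_{n-1}$ and $\beta_{n-1}$, the second by $\alpha_n$ and $\varepsilon_n$ over $\zeta_n$ and $\gamma_n$; each follows from a dimension count together with injectivity, and the second is the place where $\mathrm{char}\,k=0$ is essential, since it uses that $\varepsilon_n(t^{n+1})=(x+y)^{n+1}=(n+1)x^ny$ is a generator of $\ker\gamma_n$. Now fix $n\ge 1$ and $X_n\in D(A_n)$ (the case $n=0$ being trivial), with restriction $X_{n-1}\in D(A_{n-1})$. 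The obstruction to lifting $X_n$ along $\alpha_n\colon A_{n+1}\la A_n$ lies in $T^2_D$, and via the map of square-zero extensions $(\varepsilon_n,\zeta_n)$, which is an isomorphism on kernels, it equals (up to an invertible scalar) the obstruction to lifting $D(\zeta_n)(X_n)\in D(C_n)$ along $\gamma_n\colon B_n\la C_n$. So it suffices to lift $D(\zeta_n)(X_n)$ along $\gamma_n$. Under $C_n\cong A_n\times_{A_{n-1}}B_{n-1}$ the element $D(\zeta_n)(X_n)$ restricts to $X_n$ over $A_n$ and to $D(\varepsilon_{n-1})(X_n)$ over $B_{n-1}$, and $D(\varepsilon_{n-1})(X_n)$ defines a class in $\mathbb{T}^1(X_{n-1}/A_{n-1})$ because $\beta_{n-1}\varepsilon_{n-1}=\alpha_{n-1}$. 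By the $T^1$-lifting property this class lifts to some $W\in\mathbb{T}^1(X_n/A_n)$, i.e. $W\in D(B_n)$ with $D(\beta_n)(W)\cong X_n$ whose restriction along $\delta_n\gamma_n$ is $D(\varepsilon_{n-1})(X_n)$. Then $D(\gamma_n)(W)$ and $D(\zeta_n)(X_n)$ have the same restrictions to $A_n$ and to $B_{n-1}$, hence coincide in $D(C_n)$, so $D(\zeta_n)(X_n)$ lifts along $\gamma_n$ and therefore $X_n$ lifts to $A_{n+1}$. This yields curvilinear lifting, hence $R$ is regular and $D$, together with its hull, is smooth.

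The step I expect to cost the most care is the claim that $D(\gamma_n)(W)$ and $D(\zeta_n)(X_n)$, having the same restrictions, coincide in $D(C_n)$: this is the injectivity of the comparison map $D(C_n)\la D(A_n)\times_{D(A_{n-1})}D(B_{n-1})$, which may fail because automorphisms of the restricted deformations need not lift, Schlessinger's axioms $(H_1)$, $(H_2)$ giving only surjectivity of such maps. One deals with this either by carrying the marking isomorphisms $\psi_n$ from the definition of $\mathbb{T}^1(X_n/A_n)$ through the whole argument, which rigidifies the deformations enough, or by transferring the computation along the smooth morphism $h_R\la D$ to the representable functor $h_R=\mathrm{Hom}_k(R,-)$, where the fibre products of rings carry genuine universal properties that make the whole chain of equivalences transparent. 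Apart from this and the two invocations of characteristic zero noted above, what remains is a diagram chase among the rings $A_\bullet$, $B_\bullet$, $C_\bullet$ and the structure maps $\alpha,\beta,\gamma,\delta,\zeta,\varepsilon$.
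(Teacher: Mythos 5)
The paper gives no proof of this theorem: it is cited verbatim from Kawamata~\cite{Kaw92},~\cite{Kaw97}, and the surrounding refinements (Theorem~\ref{T1}, Proposition~\ref{T1-1}) are again extracted from that source by citation rather than proved here. Your reconstruction is the cited argument, and it is sound in outline: reduce smoothness of $D$ to curvilinear lifting for the hull $R$, establish the fibre-product identities $C_n\cong A_n\times_{A_{n-1}}B_{n-1}$ and $A_{n+1}\cong A_n\times_{C_n}B_n$, invoke $(H_1)$ for the second of these, and feed the $T^1$-lifting hypothesis in at the small extension $\gamma_n\colon B_n\to C_n$. You also locate characteristic zero in exactly the two places it enters, namely $(x+y)^{n+1}=(n+1)x^n y$ being a generator of $\ker\gamma_n$, and the genericity of the line in the curvilinear reduction. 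You correctly flag the one genuinely delicate step: $(H_1),(H_2)$ give only surjectivity, not injectivity, of $D(C_n)\to D(A_n)\times_{D(A_{n-1})}D(B_{n-1})$, so ``same restrictions, hence coincide'' does not follow for free. This is precisely the point that prompted Kawamata's erratum. Of your two proposed remedies, the first --- carrying the marking isomorphisms $\psi_n$ through the diagram --- is what~\cite{Kaw97} does and is what this paper implicitly relies on in the proof of Proposition~\ref{T1-1} through Theorem~\ref{action} and Corollary~\ref{functoriality-of-action}: the compatible marking over $A_{n-1}$ is exactly the gluing datum needed to identify the two elements of $D(C_n)$. The second remedy, transferring everything to $h_R$, is less satisfactory than you suggest, because showing $T^1$-lifting for $D$ descends to $h_R$ re-encounters the same marking ambiguity inside the smoothness of $h_R\to D$, and $B_n\to B_{n-1}$ is not itself a small extension so must again be factored through $C_n$. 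In short, the plan is the right one and matches the cited proof; the flagged step, handled via markings rather than via $h_R$, is the only place where real work remains to be written out.
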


In fact the proof of the previous theorem shows the following.

\begin{theorem}\label{T1}
Let $D$ be a deformation functor, $X_n\in D(A_n)$, $X_{n-1}=D(\alpha_n)(X_n)$ and
$Y_{n-1}=D(\varepsilon_{n-1})(X_n)\in \mathbb{T}_D^1(X_{n-1}/A_{n-1})$.
Then $X_n$ lifts to $A_{n+1}$, i.e., is in the image of $D(A_{n+1}) \la D(A_n)$,
if and only if $Y_{n-1}$ is in the image of the natural map
\[
\phi_n \colon \mathbb{T}^1_D(X_n/A_n)\la \mathbb{T}^1_D(X_{n-1}/A_{n-1}).
\]
\end{theorem}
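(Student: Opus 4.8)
The plan is to revisit Kawamata's proof of Theorem~\ref{T1-lifting-property} and extract the finer statement from it, keeping track of exactly which lifting is being asked for. The starting observation is purely ring-theoretic: the small extensions $0 \la (x^{n+1}) \la A_{n+1} \la A_n \la 0$ and $0 \la (x^n y) \la B_n \la C_n \la 0$ fit into a cartesian diagram in $Art(k)$, namely $A_{n+1} \times_{A_n} B_{n-1} \cong C_n$ via the maps $\zeta_n, \delta_n, \alpha_n, \varepsilon_{n-1}$ already fixed in the excerpt (one checks this by writing out generators: $C_n$ is glued from $A_{n+1}=k[x]/(x^{n+2})$ restricted to $k[x]/(x^{n+1})$ and $B_{n-1}=k[x,y]/(x^n,y^2)$ over $A_{n-1}=k[x]/(x^n)$, with $y$ and $x^n$ identified appropriately). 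Because $D$ satisfies $(H_1)$ and $(H_2)$, the Schlessinger map $D(C_n) \la D(A_{n+1}) \times_{D(A_n)} D(B_{n-1})$ is surjective, and in fact bijective on the relevant fibres since the ideal $(x^ny)$ is one-dimensional.

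First I would set up the correspondence between the three lifting problems: (a) lifting $X_n \in D(A_n)$ to $D(A_{n+1})$; (b) finding a preimage of $Y_{n-1} = D(\varepsilon_{n-1})(X_n) \in \mathbb{T}^1_D(X_{n-1}/A_{n-1})$ under $\phi_n \colon \mathbb{T}^1_D(X_n/A_n) \la \mathbb{T}^1_D(X_{n-1}/A_{n-1})$; and (c) lifting the element $Z_n := D(\zeta_n)(X_n) \in D(C_n)$ — which by construction restricts to $X_n$ over $A_n$ and to $Y_{n-1}$ over $B_{n-1}$ — to an element of $D(B_n)$ along $\gamma_n \colon B_n \la C_n$. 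By definition $\mathbb{T}^1_D(X_n/A_n) = \{(Y_n,\psi_n) : Y_n \in D(B_n),\ Y_n\otimes_{B_n}A_n \cong X_n\}$ via $\beta_n$, so a preimage of $Y_{n-1}$ under $\phi_n$ is exactly a $Y_n \in D(B_n)$ simultaneously restricting to $X_n$ over $A_n$ and to $Y_{n-1}$ over $B_{n-1}$. By the cartesian diagram above, giving such a $Y_n$ is the same as giving a lift of $Z_n$ to $D(B_n)$, so (b) $\Leftrightarrow$ (c).

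Next I would close the loop (c) $\Leftrightarrow$ (a). Here I use the automorphism of $B_n$ sending $x \mapsto x$, $y \mapsto y - x$ (well-defined since $(y-x)^2 = y^2 - 2xy + x^2$ and the relations of $B_n$ are respected modulo reindexing — one must be slightly careful, but the correct change of variable is the one making $\varepsilon_n \colon A_{n+1} \la B_n$, $t \mapsto x+y$, and $\beta_n$, $x\mapsto t$, $y\mapsto 0$, symmetric). Under this symmetry $B_n$ is cartesian over $A_n$ in two complementary ways, and the upshot — this is the heart of Kawamata's argument — is that $D(B_n) \la D(C_n) \times_{?} \cdots$ forces any lift of $Z_n$ to come from a lift of $X_n$ along $\varepsilon_n$, which since $\varepsilon_n$ factors the surjection $A_{n+1}\la A_n$ means $X_n$ lifts to $A_{n+1}$. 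Conversely if $X_n$ lifts to $X_{n+1}\in D(A_{n+1})$ then $D(\varepsilon_n)(X_{n+1}) \in D(B_n)$ is a lift of $Z_n$ restricting correctly, giving the desired element of $\mathbb{T}^1_D(X_n/A_n)$ mapping to $Y_{n-1}$.

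The main obstacle is bookkeeping rather than conceptual: one must verify the precise commutativity relations among the eight maps $\alpha_n,\beta_n,\gamma_n,\delta_n,\zeta_n,\varepsilon_n$ (and the fibre-product identifications $C_n \cong A_{n+1}\times_{A_n}B_{n-1}$, $B_n \cong$ a pullback over $C_n$) so that the equivalences (a) $\Leftrightarrow$ (c) $\Leftrightarrow$ (b) chain together with the \emph{same} base element $Y_{n-1}$ throughout, and not merely an isomorphic one. The hypotheses $(H_1)$, $(H_2)$ on $D$ are used exactly to turn these ring-level pullback squares into set-level bijections on the pertinent fibres; no obstruction space $T^2_D$ or characteristic-zero hypothesis is actually needed for this particular statement, only for deducing smoothness (Theorem~\ref{T1-lifting-property}) from it. I would therefore present the proof as: (1) record the two pullback squares of Artin rings; (2) translate (a), (b), (c) into statements about sections of $D$ over these squares; (3) invoke $(H_1)$–$(H_2)$ to identify them; (4) conclude. $\square$
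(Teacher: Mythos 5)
Your overall plan — reduce the statement to a ring‐theoretic pullback square among $A_{n+1},A_n,B_n,C_n$ and invoke Schlessinger's $(H_1)$ — is the right one, and it is exactly the diagram the paper itself records in the proof of Proposition~\ref{T1-1}. But several concrete steps as you have written them do not hold up.

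First, the Cartesian square is misstated. You write $C_n \cong A_{n+1}\times_{A_n}B_{n-1}$, but there is no natural ring map $B_{n-1}\to A_n$ (any $k$-algebra map $k[x,y]/(x^n,y^2)\to k[t]/(t^{n+1})$ must send $x$ into $(t^2)$, so it cannot intertwine the structure maps you list), and the dimensions do not match either. The correct pullback, displayed in the paper's diagram before Proposition~\ref{T1-1}, is
\[
A_{n+1}\;\cong\;A_n\times_{C_n}B_n
\]
via $\alpha_n,\zeta_n,\varepsilon_n,\gamma_n$: this is what lets $(H_1)$ produce an element of $D(A_{n+1})$ from a compatible pair $(X_n,Y_n)\in D(A_n)\times_{D(C_n)}D(B_n)$, since $\gamma_n\colon B_n\to C_n$ is a small extension with kernel $(x^ny)$. (Incidentally, the induced map on kernels is $t^{n+1}\mapsto (n+1)x^ny$, so the square is Cartesian only when $n+1$ is invertible in $k$. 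Thus your claim that the characteristic-zero hypothesis is not needed for Theorem~\ref{T1} itself is incorrect: it is precisely what makes the key square a pullback.)

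Second, and more seriously, the equivalence (b)$\Leftrightarrow$(c) is the crux of the proof and you assert it without argument. The implication (c)$\Rightarrow$(b) is fine: if $Y_n\in D(B_n)$ lifts $Z_n=D(\zeta_n)(X_n)$, then since $\beta_n'\gamma_n=\beta_n$, $\beta_n'\zeta_n=\mathrm{id}$, $\delta_n\gamma_n=q_n$ and $\delta_n\zeta_n=\varepsilon_{n-1}$, one gets $D(\beta_n)(Y_n)=X_n$ and $\phi_n(Y_n)=Y_{n-1}$. But (b)$\Rightarrow$(c) asks that for \emph{any} $Y_n$ with $D(\beta_n)(Y_n)\cong X_n$ and $\phi_n(Y_n)\cong Y_{n-1}$, one has $D(\gamma_n)(Y_n)=Z_n$ in $D(C_n)$. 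This does not follow from the pullback $C_n\cong A_n\times_{A_{n-1}}B_{n-1}$: condition $(H_1)$ gives surjectivity of $D(C_n)\to D(A_n)\times_{D(A_{n-1})}D(B_{n-1})$, not injectivity, and $(H_2)$ only gives bijectivity in the first-order case. Both $Z_n$ and $D(\gamma_n)(Y_n)$ sit in the $t_D$-torsor $D(\delta_n)^{-1}(Y_{n-1})$ and agree after restricting to $D(A_n)$, but that does not force them to coincide. This is exactly the delicate point behind Kawamata's use of marking isomorphisms and the reason the obstruction comparison through the ladder of small extensions is needed; it cannot be dispatched by ``translating the squares into set-level bijections''. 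Finally, the map $x\mapsto x$, $y\mapsto y-x$ is not an automorphism of $B_n$ — it fails $y^2=0$ because $(y-x)^2=x^2-2xy\neq 0$ for $n\geq 2$ — so the symmetry you invoke to close (a)$\Leftrightarrow$(c) would need to be replaced by an actual argument (the one the paper uses is the naturality of the obstruction class $ob(X_n)\in T^2_D\otimes J$ under the isomorphism $f\colon J\to J'$, which does require introducing an obstruction theory).
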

The advantage of the last theorem is that it allows us to exhibit in the next section a very explicit obstruction element to lift $X_n$ to $A_{n+1}$. The following result is also useful.

\begin{proposition}\label{T1-1}
With assumptions as in Theorem~\ref{T1}, let $Y_n \in \mathbb{T}^1(X_n/A_n)$ be a lifting of $Y_{n-1}$, i.e., $\phi_n(Y_n)=Y_{n-1}$. Then there is a lifting $X_{n+1}$ of $X_n$ over $A_{n+1}$ such that $Y_{n}=D(\varepsilon_{n})(X_{n+1})$.
\end{proposition}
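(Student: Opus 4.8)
The statement refines the existence part of Theorem~\ref{T1}, so the plan is to run through the construction that proves Theorem~\ref{T1} (equivalently Theorem~\ref{T1-lifting-property}) while keeping track of \emph{which} class in $\mathbb{T}^1_D(X_n/A_n)$ the lift produces. First I would establish the two ring isomorphisms
\[
A_{n+1}\;\cong\;B_n\times_{C_n}A_n,\qquad C_n\;\cong\;A_n\times_{A_{n-1}}B_{n-1},
\]
where in the first the structure maps are $\gamma_n\colon B_n\to C_n$ and $\zeta_n\colon A_n\to C_n$, the isomorphism being $t\mapsto(\varepsilon_n(t),\alpha_n(t))=(x+y,t)$, and in the second they are $\nu_n\colon C_n\to A_n$ (defined by $x\mapsto t$, $y\mapsto 0$) and the canonical surjection $\delta_n\colon C_n\to B_{n-1}$, the isomorphism being $c\mapsto(\nu_n(c),\delta_n(c))$. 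Each follows from a dimension count over $k$; the first uses $\mathrm{char}\,k=0$, through the fact that $(x+y)^{n+1}=(n+1)x^ny\neq 0$ in $B_n$, to see that $\varepsilon_n$ is injective. I would also record that $\gamma_n$, $\delta_n$ and $\alpha_{n-1}\colon A_n\to A_{n-1}$ are small extensions, that $\nu_n$ is a morphism of small extensions from $\delta_n$ to $\alpha_{n-1}$ which is an isomorphism on kernels, and the identities $\nu_n\gamma_n=\beta_n$, $\nu_n\zeta_n=\mathrm{id}_{A_n}$, $\delta_n\zeta_n=\varepsilon_{n-1}$, $\gamma_n\varepsilon_n=\zeta_n\alpha_n$, while $\delta_n\gamma_n$ is the canonical surjection $B_n\to B_{n-1}$ that induces $\phi_n$.

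Given this, the proof reduces to the claim that $(Y_n,X_n)$ defines a point of $D(B_n)\times_{D(C_n)}D(A_n)$, i.e.\ that $D(\gamma_n)(Y_n)\cong D(\zeta_n)(X_n)$ in $D(C_n)$. Once this is known, Schlessinger's condition $(H_1)$ applied to the first fibre product — valid since $\gamma_n$ is a small extension — yields $X_{n+1}\in D(A_{n+1})$ whose images in $D(B_n)$ and $D(A_n)$ are $Y_n$ and $X_n$; under the first isomorphism the two projections become $\varepsilon_n$ and $\alpha_n$, so this says $D(\varepsilon_n)(X_{n+1})\cong Y_n$ and $D(\alpha_n)(X_{n+1})\cong X_n$, and a short diagram chase with the marking isomorphisms (using $\alpha_n=\beta_n\varepsilon_n$ and that the marking isomorphism of $X_n$ is invertible) promotes the first to the required equality in $\mathbb{T}^1_D(X_n/A_n)$. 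This is the assertion.

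The real work is the compatibility $D(\gamma_n)(Y_n)\cong D(\zeta_n)(X_n)$, which I would extract from the second fibre product. From the recorded identities, both sides restrict via $\nu_n$ to $X_n$ over $A_n$ — for $D(\gamma_n)(Y_n)$ using the marking $D(\beta_n)(Y_n)\cong X_n$, for $D(\zeta_n)(X_n)$ using $\nu_n\zeta_n=\mathrm{id}$ — and both restrict via $\delta_n$ to $Y_{n-1}=D(\varepsilon_{n-1})(X_n)$ over $B_{n-1}$: for $D(\gamma_n)(Y_n)$ that restriction is exactly $\phi_n(Y_n)$, which is $Y_{n-1}$ by hypothesis, and for $D(\zeta_n)(X_n)$ it is $D(\delta_n\zeta_n)(X_n)=D(\varepsilon_{n-1})(X_n)$. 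Now the isomorphism classes of liftings of $Y_{n-1}$ along the small extension $\delta_n$ form a nonempty set (it contains $D(\zeta_n)(X_n)$), hence a torsor under the tangent space $T^1_D$ of $D$; likewise the liftings of $X_{n-1}=D(\beta_{n-1})(Y_{n-1})$ along $\alpha_{n-1}$ form a $T^1_D$-torsor, and $D(\nu_n)$ sends the first torsor into the second. Since $\nu_n$ is a morphism of small extensions inducing an isomorphism on kernels, this map of torsors is equivariant, hence a bijection, hence has singleton fibres; as $D(\gamma_n)(Y_n)$ and $D(\zeta_n)(X_n)$ both lie in the $\delta_n$-lifting torsor of $Y_{n-1}$ and are both sent to $X_n$ by $D(\nu_n)$, they agree. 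Apart from this argument, the proof is just Schlessinger's formalism together with the routine bookkeeping of markings.
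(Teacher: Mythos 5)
The overall shape of your argument is sound and in fact cleaner than the paper's: once one knows $D(\gamma_n)(Y_n)=D(\zeta_n)(X_n)$ in $D(C_n)$, the isomorphism $A_{n+1}\cong B_n\times_{C_n}A_n$ together with Schlessinger's $(H_1)$ produces $X_{n+1}$ directly, whereas the paper first takes an arbitrary lift $X_{n+1}'$ (via Theorem~\ref{T1}) and then corrects it by the $t_D$-action using Corollary~\ref{functoriality-of-action}. Both routes are legitimate, and your bookkeeping of the ring maps and the marking isomorphisms is correct.

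The gap is in your argument for the identity $D(\gamma_n)(Y_n)=D(\zeta_n)(X_n)$. You argue that $D(\nu_n)$ gives a $t_D$-equivariant map from the set of liftings of $Y_{n-1}$ along $\delta_n$ to the set of liftings of $X_{n-1}$ along $\alpha_{n-1}$, and then conclude that this map is a bijection ``hence has singleton fibres.'' But Theorem~\ref{action} only says the $t_D$-action on such a fibre is \emph{transitive}; it is free only under $(H_4)$, which is not among the hypotheses (the standing assumption in Section~\ref{T1-section} is just $(H_1)$, $(H_2)$ and an obstruction theory). An equivariant map of sets with transitive $G$-action is always surjective but need not be injective: if $f(a)=f(b)$ then $b=g\cdot a$ for some $g$ in the stabilizer of $f(a)$, and that stabilizer can be strictly larger than the stabilizer of $a$. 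So ``both map to $X_n$ under $D(\nu_n)$, hence they agree'' does not follow. This is exactly the place where the paper leans on Kawamata's proof rather than the torsor formalism.

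The identity you need is true and can be established without any freeness. From flatness over $C_n\cong A_n\times_{A_{n-1}}B_{n-1}$ (i.e., from the exact sequence $0\to C_n\to A_n\oplus B_{n-1}\to A_{n-1}\to 0$ tensored with $\sheaf_P$), any $P\in D(C_n)$ satisfies $\sheaf_P\cong (\sheaf_P\otimes_{C_n}A_n)\times_{\sheaf_P\otimes_{C_n}A_{n-1}}(\sheaf_P\otimes_{C_n}B_{n-1})$. For $P=D(\gamma_n)(Y_n)$ the two restrictions are $D(\beta_n)(Y_n)$, identified with $X_n$ by the marking $\psi_n$, and $\phi_n(Y_n)$; for $P=D(\zeta_n)(X_n)$ they are $X_n$ and $Y_{n-1}=D(\varepsilon_{n-1})(X_n)$. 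The hypothesis $\phi_n(Y_n)=Y_{n-1}$ in $\mathbb{T}^1_D(X_{n-1}/A_{n-1})$ supplies a $B_{n-1}$-isomorphism $\phi_n(Y_n)\to Y_{n-1}$ compatible with the markings over $A_{n-1}$, hence an isomorphism of the two fibre products over $X_{n-1}$, and therefore $D(\gamma_n)(Y_n)\cong D(\zeta_n)(X_n)$. Replacing your torsor step by this flatness argument closes the gap and makes the rest of your proof go through.
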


The proof of the proposition depends on the following result of Schlessinger.

\begin{theorem}[{~\cite{Sch68}}]\label{action}
Let $D \colon Art(k) \la Sets$ be a functor that satisfies $(H_2)$. Let
\[
0 \la J \la B \stackrel{\alpha}{\la} A \la 0
\]
be a small extension of local Artin $k$-algebras and let $D(\alpha) \colon D(B) \la D(A)$ be the natural map. Then for any $\xi_A \in D(A)$, there is a natural action of the tangent space $t_D$ of $D$ on the set $D(\alpha)^{-1}(\xi_A)$. Moreover, if $D$ satisfies $(H_1)$, then the action is transitive.
\end{theorem}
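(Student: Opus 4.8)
The plan is to prove Theorem~\ref{action} along the lines of Schlessinger~\cite{Sch68}, using a single ring-theoretic identification together with $(H_2)$ to construct the action, and $(H_1)$ to get transitivity. Throughout, recall that $(H_2)$ forces $D(k)$ to be a single point and makes $t_D = D(k[t]/(t^2))$ a $k$-vector space; more importantly, for every $A' \in Art(k)$ it gives a natural bijection
\[
D\bigl(A'\times_k k[t]/(t^2)\bigr)\ \xrightarrow{\ \sim\ }\ D(A')\times t_D ,
\]
the two coordinates being induced by the two projections of the fibre product.

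First I would establish the key isomorphism. Since $B\to A$ is a small extension, $J$ is a square-zero ideal with $\mathfrak m_B J = 0$; fixing an isomorphism $J\cong k$ we get $k\oplus J\cong k[t]/(t^2)$, and I claim there is an isomorphism of $k$-algebras
\[
\Phi\colon B\times_A B\ \xrightarrow{\ \sim\ }\ B\times_k(k\oplus J),\qquad (b,b')\longmapsto \bigl(b,\ \bar b + (b'-b)\bigr),
\]
where $\bar b$ is the image of $b$ in $B/\mathfrak m_B=k$ and $b'-b\in J$; checking that $\Phi$ is a well-defined ring homomorphism is a short computation using $\mathfrak m_B J = 0$, and its inverse is $(b,\bar b + j)\mapsto (b, b+j)$. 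Under $\Phi$ the first projection $q_1\colon B\times_A B\to B$ becomes the projection onto the first factor, and the second projection $q_2$ becomes $(b,\bar b+j)\mapsto b+j$. Composing $D(\Phi)$ with the $(H_2)$ bijection for $A'=B$ I then get a bijection $D(B\times_A B)\xrightarrow{\sim}D(B)\times t_D$ whose first coordinate is $D(q_1)$.

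Next I would define the action. Given $\xi_B\in D(\alpha)^{-1}(\xi_A)$ and $v\in t_D$, let $\eta\in D(B\times_A B)$ be the element corresponding to $(\xi_B,v)$ and set $v\cdot\xi_B := D(q_2)(\eta)$. Since $\alpha\circ q_1 = \alpha\circ q_2$, one gets $D(\alpha)(v\cdot\xi_B) = D(\alpha)(D(q_1)(\eta)) = D(\alpha)(\xi_B)=\xi_A$, so $v\cdot\xi_B$ again lies in $D(\alpha)^{-1}(\xi_A)$; and for $v=0$ the corresponding $\eta$ is the image of $\xi_B$ under the diagonal $\Delta\colon B\to B\times_A B$ (this is exactly the assertion that $0\in t_D$ corresponds to the trivial section under $(H_2)$), whence $0\cdot\xi_B = D(q_2\circ\Delta)(\xi_B) = \xi_B$. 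To see that this is a genuine action, i.e.\ $(v+w)\cdot\xi_B = v\cdot(w\cdot\xi_B)$, I would pass to the triple fibre product $B\times_A B\times_A B$, which by the same argument is isomorphic to $B\times_k(k\oplus J)\times_k(k\oplus J)$ and hence, by two applications of $(H_2)$ (now with $A'=B\times_k k[t]/(t^2)$), satisfies $D(B\times_A B\times_A B)\cong D(B)\times t_D\times t_D$. The addition on $t_D$ is induced by the ring map $k[t]/(t^2)\times_k k[t]/(t^2)\to k[t]/(t^2)$ sending both copies of $t$ to $t$; transporting this through the identifications gives a ring map $B\times_A B\times_A B\to B\times_A B$, and chasing the element corresponding to $(\xi_B,v,w)$ around the resulting commutative diagram of functors — using the explicit form of $q_2$ above and the compatibility of the projections — produces both sides of the identity. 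This diagram chase, which is the same as in~\cite{Sch68}, is the step I expect to be the most delicate; everything is natural in the small extension $B\to A$, which also yields the naturality asserted in the theorem.

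Finally, transitivity under $(H_1)$ is essentially immediate. Given $\xi_B,\xi_B'\in D(\alpha)^{-1}(\xi_A)$, the pair $(\xi_B,\xi_B')$ lies in $D(B)\times_{D(A)}D(B)$ because both map to $\xi_A$; since $B\to A$ is a small extension, $(H_1)$ says the map $D(B\times_A B)\to D(B)\times_{D(A)}D(B)$, $\eta\mapsto(D(q_1)(\eta),D(q_2)(\eta))$, is surjective, so there is $\eta$ with $D(q_1)(\eta)=\xi_B$ and $D(q_2)(\eta)=\xi_B'$. Under the bijection of the previous paragraph $\eta$ corresponds to $(\xi_B,v)$ for a unique $v\in t_D$, and by definition $v\cdot\xi_B = D(q_2)(\eta)=\xi_B'$. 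Hence the action is transitive.
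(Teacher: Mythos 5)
Your proposal is correct and follows exactly the standard argument of Schlessinger that the paper invokes by citation (Theorem~\ref{action} is stated with reference to~\cite{Sch68} and not reproved in the text): the identification $B\times_A B\cong B\times_k(k\oplus J)$, the use of $(H_2)$ to get $D(B\times_A B)\cong D(B)\times t_D$ with first coordinate $D(q_1)$, the definition of the action via $D(q_2)$, and $(H_1)$ for transitivity. The only nitpick is that $D(k)$ being a single point is a standing hypothesis on the deformation functor rather than a consequence of $(H_2)$, but this does not affect the argument.
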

A careful look at the proof of the previous theorem reveals that the action described satisfies the following functorial property.
\begin{corollary}\label{functoriality-of-action}
With assumptions as in Theorem~\ref{action}, let
\[
\xymatrix{
0 \ar[r]  & J  \ar[d]^{f}\ar[r] & B \ar[d]^{g} \ar[r]^{\alpha} & A \ar[d]^{h} \ar[r] & 0 \\
0 \ar[r]  & J^{\prime}  \ar[r] & B^{\prime} \ar[r]^{\alpha^{\prime}} & A^{\prime} \ar[r] & 0 \\
}
\]
be a commutative diagram of small extensions of local Artin $k$-algebras such that $f$ is a $k$-isomorphism. Let $\xi_A \in D(A)$ and $\xi_{A^{\prime}}=D(h)(\xi_A) \in D(A^{\prime})$. Then the natural map $D(\alpha)^{-1}(\xi_A) \la D(\alpha^{\prime})^{-1}(\xi_{A^{\prime}})$ is $t_D$-equivariant.
\end{corollary}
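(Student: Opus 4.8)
The plan is to unwind Schlessinger's construction of the action in Theorem~\ref{action} and to check that every ingredient in it is natural with respect to morphisms of small extensions that restrict to an isomorphism on the kernels. Recall the construction. Given a small extension $0\to J\to B\xrightarrow{\alpha}A\to 0$ with $J\cong k$, there is a canonical ring isomorphism
\[
\theta_B\colon B\times_A B \;\xrightarrow{\ \sim\ }\; B\times_k k[\epsilon],\qquad (b_1,b_2)\longmapsto \bigl(b_1,\;\phi_B(b_1)+(b_2-b_1)\epsilon\bigr),
\]
where $\epsilon^2=0$, $\phi_B\colon B\to k$ is the augmentation, $b_2-b_1\in J\cong k$, and the fibre product on the right is formed over $k$ via $\phi_B$ and via $k[\epsilon]\to k$. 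That $\theta_B$ is a ring homomorphism uses $m_BJ=0$ (so that $J\cdot J=0$); that it is bijective is immediate. Applying $D$ and property $(H_2)$ gives $D(B\times_A B)\cong D(B)\times_{D(k)}D(k[\epsilon])=D(B)\times t_D$, and for $\eta\in D(\alpha)^{-1}(\xi_A)$ and $v\in t_D$ one sets $v\cdot\eta:=\mathrm{pr}_{2,\ast}(\zeta)$, where $\zeta\in D(B\times_A B)$ is the element corresponding to $(\eta,v)$, so in particular $\mathrm{pr}_{1,\ast}(\zeta)=\eta$. Since $\alpha\circ\mathrm{pr}_1=\alpha\circ\mathrm{pr}_2$, one has $v\cdot\eta\in D(\alpha)^{-1}(\xi_A)$.

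First I would record the naturality of $\theta$. Since $f$ is a $k$-isomorphism, fix identifications $J\cong k$ and $J'\cong k$ under which $f$ becomes $\mathrm{id}_k$. The map $g$ induces a ring homomorphism $g\times g\colon B\times_A B\to B'\times_{A'}B'$ (well defined because $\alpha'g=h\alpha$), and from $\phi_{B'}\circ g=\phi_B$ and $g|_J=f$ one checks directly that the square with edges $\theta_B$, $\theta_{B'}$, $g\times g$ and $g\times\mathrm{id}_{k[\epsilon]}$ commutes. Applying $D$ and $(H_2)$ turns this into a commutative square in which the bottom map $D(B)\times t_D\to D(B')\times t_D$ is $D(g)\times\mathrm{id}_{t_D}$.

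Then I would finish with a short diagram chase. Fix $\eta\in D(\alpha)^{-1}(\xi_A)$ and $v\in t_D$, and let $\zeta\in D(B\times_A B)$ be the element attached to $(\eta,v)$. By the previous square its image $\zeta':=D(g\times g)(\zeta)$ is the element of $D(B'\times_{A'}B')$ attached to $(D(g)(\eta),v)$, so by the definition of the action on the primed extension $v\cdot D(g)(\eta)=\mathrm{pr}_{2,\ast}(\zeta')$. On the other hand $\mathrm{pr}_2\circ(g\times g)=g\circ\mathrm{pr}_2$, hence $\mathrm{pr}_{2,\ast}(\zeta')=D(g)(\mathrm{pr}_{2,\ast}(\zeta))=D(g)(v\cdot\eta)$. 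Combining the two gives $D(g)(v\cdot\eta)=v\cdot D(g)(\eta)$. Finally $\alpha'g=h\alpha$ shows that $D(g)$ carries $D(\alpha)^{-1}(\xi_A)$ into $D(\alpha')^{-1}(\xi_{A'})$, so $D(g)$ restricts to the map in the statement, and the displayed identity is precisely its $t_D$-equivariance.

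I expect the only delicate point to be setting up $\theta_B$ correctly and keeping the two projections $B\times_A B\to B$ separated — the action is read off from the second projection while the first records the class $\eta$ — together with the verification that $\theta_B$ is a ring map, which is where $m_BJ=0$ enters. Once $\theta_B$ and its naturality are in place, the rest is a formal consequence of applying the functor $D$ to commutative diagrams. It is worth noting, as the statement implicitly asserts, that nothing beyond $(H_2)$ is used, so the corollary holds in the full generality of Theorem~\ref{action}.
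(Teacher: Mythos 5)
Your proof is correct, and it is exactly the "careful look at the proof of the previous theorem" that the paper invokes without writing out: it reproduces Schlessinger's isomorphism $\theta_B\colon B\times_A B\xrightarrow{\sim}B\times_k k[\epsilon]$, checks its naturality under a morphism of small extensions restricting to an isomorphism on kernels, and then reads off the equivariance from $\mathrm{pr}_2'\circ(g\times g)=g\circ\mathrm{pr}_2$. The two points you flag as delicate — that $\theta_B$ is a ring map because $m_BJ=0$, and that the action is read off from $\mathrm{pr}_2$ while $\mathrm{pr}_1$ records $\eta$ — are indeed where the asymmetry of $\theta_B$ matters, and you handle them correctly; and you rightly note that only $(H_2)$ is used, so the equivariance holds whenever the action is defined, even when it is not transitive.
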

If $f$ is not an isomorphism, then the previous result is not true.

\begin{proof}[Proof of Proposition~\ref{T1-1}]
Let $\zeta_n \colon A_{n} \la C_n$ be defined by $\zeta_n(t)=x+y$ and $\delta_n \colon C_n \la B_{n-1}$ be the natural map. Then $\delta_n \zeta_n = \varepsilon_{n-1}$.
Consider the commutative diagram of small extensions
\[
\xymatrix{
0 \ar[r]  & J  \ar[d]^{f}\ar[r] & A_{n+1} \ar[d]^{\varepsilon_n} \ar[r] & A_n \ar[d]^{\zeta_n} \ar[r] & 0 \\
0 \ar[r]  & J^{\prime}  \ar[r] & B_n \ar[r]  & C_n \ar[r] & 0 \\
}
\]
where $J=(t^{n+1})$, $J^{\prime}=(xy^n)$ and $f$ is the isomorphism given by sending $t^{n+1}$ to $xy^n$. The above diagram induces the following commutative diagram
\[
\xymatrix{
D(A_{n+1}) \ar[r]^{D(\alpha_n)} \ar[d]^{D(\varepsilon_n)} & D(A_n)  \ar[d]^{D(\zeta_n)} \ar[r] & T^2_D \otimes J \ar@{=}[d] \\
D(B_n) \ar[r]^{D(\gamma_n)} & D(C_n) \ar[r] & T^2_D \otimes J^{\prime}
}
\]
where $T^2_D$ is an obstruction space for $D$. Let $Z_n= D(\zeta_n)(X_n)$. Then the $T^1$-lifting property implies that $D(\gamma_n)(Y_n)=Z_n$~\cite{Kaw92},~\cite{Kaw97}. Let $X_{n+1}^{\prime}$ be a lifting of $X_n$, which exists by the $T^1$-lifting property, and $Y_n^{\prime}=D(\varepsilon)(X_{n+1}^{\prime})$. Then $Y_n, Y^{\prime}_n \in D(\gamma_n)^{-1}(Z_n)$ which is a homogeneous $t_D$-space by Theorem~\ref{action}. Hence there is $ \theta \in t_D$ such that $\theta \cdot Y_n^{\prime}=Y_n$. Moreover, by Corollary~\ref{functoriality-of-action}, the natural map $D(\alpha_n)^{-1}(X_n) \la D(\zeta_n)^{-1}(Z_n)$ is $t_D$-equivariant. Hence $D(\varepsilon_n)(X_{n+1}) = Y_n$, where $X_{n+1}= \theta \cdot X^{\prime}_{n+1}$.

\end{proof}
\begin{remark}
The $T^1$-lifting property was originally introduced by Ran~\cite{Ran92} in order to study infinitesimal deformations of a complex manifold
and was later generalized by Kawamata~\cite{Kaw92},~\cite{Kaw97} to the case of an arbitrary deformation functor $D$. Later, a stronger
version of the $T^1$-lifting property was introduced by Fantechi and Manetti~\cite{FaMa99}. According to their definition, a deformation functor
$D$ has the $T^1$-lifting property  if, for any $n \in \mathbb{N}$, the natural map
\[
D(B_{n+1}) \la D(B_n) \times_{D(A_n)} D(A_{n+1})
\]
is surjective and they show that if $D$ has the $T^1$-lifting property and $k$ has characteristic zero, then $D$ is smooth.
Then naturally, for any $X_n \in D(A_n)$ one can define $T_D^1(X_n/A_n)= \{ Y_n \in D(B_n) , \; D(\beta_n)(Y_n)=X_n\}$ and then $D$
has the new $T^1$-lifting property if and only if the natural map $T^1(X_n/A_n) \la T^1(X_{n-1}/A_{n-1})$ is surjective for any $X_n \in D(A_n)$.
This is a stronger condition since it depends only on $D$ and does not take into consideration any automorphisms of $X_n$. However, $T^1(X_n/A_n)$ does not have any natural $k$-vector space structure even in the case when $D=Def(X)$. For this reason we consider the weaker definition
given by Kawamata but which has the advantage that $\mathbb{T}^1_D(X_n/A_n)$ has a natural $k$-vector space structure if $D$ is either $Def(Y,X)$
or $Def^{qG}(Y,X)$, which are the cases of interest in this paper.
\end{remark}

\section{Description of $\mathbb{T}^1(X_n/A_n)$ and $T^1(X_n/A_n)$.}
Let $X$ be a pure and reduced scheme defined over a field of characteristic zero and $Y \subset X$ a closed subscheme of it such that $X-Y$ is smooth. Let $\mathcal{X}_n \in Def(Y,X)(A_n)$.
In this section we describe the spaces $\mathbb{T}^1(\mathcal{X}_n/A_n)$ and the sheaves $T^1(\mathcal{X}_n/A_n)$.

First we state a simple technical result that will be needed later.
\begin{lemma}\label{pure}
Let $X$ be a pure scheme and $X_R$ a deformation of it over a local Artin $k$-algebra $R$. Then $X_R$ is also pure.
\end{lemma}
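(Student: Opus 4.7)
The plan is to exploit the fact that, since $R$ is Artin local, the closed immersion $X \hookrightarrow X_R$ is a homeomorphism on underlying topological spaces, and then to read off purity of $X_R$ from that of $X$ together with a short filtration argument that tracks associated points. Specifically: let $(R, \mathfrak{m}_R)$ be Artin local with residue field $k$. Because $\mathfrak{m}_R$ is nilpotent, the ideal sheaf $\mathfrak{m}_R \sheaf_{X_R}$ is nilpotent, so $i \colon X \hookrightarrow X_R$ is a nilpotent thickening. Hence $X$ and $X_R$ share the same underlying topological space, the same irreducible components, and the same Krull dimensions at every point. Already this takes care of the equidimensional content of purity.

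Next, to control associated points (and thereby rule out embedded components for $X_R$), I would choose a composition series $0 = I_0 \subset I_1 \subset \cdots \subset I_n = R$ of ideals in $R$ with $I_j / I_{j-1} \cong k$, which exists because $R$ is Artin local. By flatness of $\sheaf_{X_R}$ over $R$, tensoring yields a filtration $0 \subset I_1 \sheaf_{X_R} \subset \cdots \subset \sheaf_{X_R}$ with successive quotients
\[
I_j \sheaf_{X_R} / I_{j-1}\sheaf_{X_R} \;\cong\; (I_j/I_{j-1}) \otimes_R \sheaf_{X_R} \;\cong\; \sheaf_X.
\]
Applying the standard inclusion $\mathrm{Ass}(\mathcal{M}) \subseteq \mathrm{Ass}(\mathcal{M}') \cup \mathrm{Ass}(\mathcal{M}/\mathcal{M}')$ for short exact sequences and iterating up the filtration yields $\mathrm{Ass}(\sheaf_{X_R}) \subseteq \mathrm{Ass}(\sheaf_X)$.

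Combining the two steps: by Step~1 the generic points of the irreducible components of $X_R$ are exactly those of $X$; by Step~2, every associated point of $\sheaf_{X_R}$ lies in $\mathrm{Ass}(\sheaf_X)$, which by purity of $X$ consists only of such generic points. So $X_R$ has no embedded associated points, and it has the correct equidimensional structure, which together is exactly the statement that $X_R$ is pure. There is no real obstacle in the argument; the only point deserving care is making sure that tensoring the composition series of $R$ with the flat sheaf $\sheaf_{X_R}$ really does produce the asserted filtration with graded pieces $\sheaf_X$, which is routine since $-\otimes_R \sheaf_{X_R}$ is exact on the sequences $0 \to I_{j-1} \to I_j \to k \to 0$.
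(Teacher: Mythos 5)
Your proof is correct and rests on the same mechanism as the paper's: a d\'evissage along a composition series of $R$, with flatness used to identify the graded pieces of the resulting filtration of $\sheaf_{X_R}$ with copies of $\sheaf_X$. The only difference is one of bookkeeping --- the paper runs an induction on the length of $R$ through square-zero extensions and tests purity directly against ideal sheaves of lower-dimensional support, whereas you do the filtration in one pass and track associated points via $\mathrm{Ass}(\mathcal{M})\subseteq \mathrm{Ass}(\mathcal{M}')\cup\mathrm{Ass}(\mathcal{M}/\mathcal{M}')$ together with the homeomorphism $X\cong X_R$ of underlying spaces.
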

\begin{proof}
The proof will be by induction on the length $l(R)$ of $R$. If $l(R)=1$ then $X_R=X$ which by assumption is pure. Now for any Artin ring $R$, the
maximal ideal $m$ has a composition sequence
$(0)=I_0\subset I_1 \subset \cdots I_{k-1}\subset I_k=m$ such that $I_k/I_{k+1} \cong R/m$. Since $I_1 = A/m$ and $I_1 \la I_1/I_1^2$ is surjective,
it follows that $I_1^2=0$. Hence there
is a square zero extension
\[
0\la k \la R \la B \la 0
\]
which gives the square zero extension
\[
0 \la \sheaf_X \la \sheaf_{X_R} \stackrel{p}{\la} \sheaf_{X_B} \la 0
\]
Let $J \subset \sheaf_{X_R}$ an ideal sheaf such that $\dim \mathrm{Supp}(J) < \dim X$. Then by induction $p(J)=0$ and hence $J \subset \sheaf_X$ and
hence $J=0$ since $X$ is pure.
\end{proof}
\begin{proposition}\label{T1-formula}
Suppose that $X$ is a pure and reduced scheme, $Y\subset X$ a closed subscheme and $X-Y$ is smooth. Let
$\mathcal{X}_n \in Def(Y,X)(A_n)$.
Then
\[
\mathbb{T}^1(\mathcal{X}_n/A_n)\cong \mathrm{Ext}^1_{\mathcal{X}_n}(\widehat{\Omega}_{\mathcal{X}_n/A_n},\sheaf_{\mathcal{X}_n})
\]
and
\[
T^1(\mathcal{X}_n/A_n)\cong \mathcal{E}xt^1_{{\mathcal{X}}_n}(\widehat{\Omega}_{\mathcal{X}_n/A_n},\sheaf_{\mathcal{X}_n})
\]
\end{proposition}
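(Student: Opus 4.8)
The plan is to run the argument of Proposition~\ref{local-to-global} once more, but \emph{relatively over $A_n$} and in the ``$y$-direction'': the r\^ole played there by the ground field $k$ and the dual numbers $k[t]/(t^2)$ is now played by $A_n$ and by $B_n$. The point of departure is that, identifying $B_n=A_n[y]/(y^2)=A_n\oplus A_n y$ with $y^2=0$ (via $t\mapsto x$), the map $\beta_n\colon B_n\la A_n$ is a \emph{split} square-zero extension whose ideal $J=(y)$ is free of rank one over $A_n$, $J\cong A_n$. Hence, by definition, $\mathbb{T}^1(\mathcal{X}_n/A_n)$ is exactly the set of isomorphism classes of liftings of $\mathcal{X}_n$ to $B_n$ inside $Def(Y,X)$ along this split square-zero extension, and the trivial lifting $\mathcal{X}_n\times_{\mathrm{Spec}A_n}\mathrm{Spec}B_n\in Def(Y,X)(B_n)$ provides a distinguished base point; this is why $\mathbb{T}^1(\mathcal{X}_n/A_n)$ carries a group structure compatible with the $\sheaf_{\mathcal{X}_n}$-module structure on the right-hand side (and is the reason Kawamata's, rather than Fantechi--Manetti's, $T^1$-lifting set is used).

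Given $\mathcal{Y}_n\in Def(Y,X)(B_n)$ restricting to $\mathcal{X}_n$, flatness over $B_n$ yields $y\,\sheaf_{\mathcal{Y}_n}\cong\sheaf_{\mathcal{X}_n}\otimes_{A_n}J\cong\sheaf_{\mathcal{X}_n}$, hence a square-zero extension $0\la\sheaf_{\mathcal{X}_n}\la\sheaf_{\mathcal{Y}_n}\la\sheaf_{\mathcal{X}_n}\la 0$ of sheaves of $B_n$-algebras; applying $\widehat{\Omega}_{-/B_n}$ and restricting to $\mathcal{X}_n$ produces, exactly as in the proof of Proposition~\ref{local-to-global} (first on the local \'etale neighbourhoods $U_i$ of Definition~\ref{def1}, then completing along $Y$ and using that $\mathcal{E}xt$ commutes with completion), a short exact sequence $0\la\sheaf_{\mathcal{X}_n}\la\widehat{\Omega}_{\mathcal{Y}_n/B_n}\otimes\sheaf_{\mathcal{X}_n}\la\widehat{\Omega}_{\mathcal{X}_n/A_n}\la 0$, i.e.\ a class in $\mathrm{Ext}^1_{\mathcal{X}_n}(\widehat{\Omega}_{\mathcal{X}_n/A_n},\sheaf_{\mathcal{X}_n})$. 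Conversely, pulling back an extension $0\la\sheaf_{\mathcal{X}_n}\la\mathcal{E}\la\widehat{\Omega}_{\mathcal{X}_n/A_n}\la 0$ along the completed relative universal derivation $\widehat{d}\colon\sheaf_{\mathcal{X}_n}\la\widehat{\Omega}_{\mathcal{X}_n/A_n}$ builds a sheaf of flat $B_n$-algebras; the resulting formal deformation actually lies in $Def(Y,X)(B_n)$, and not merely in the larger category of formal deformations, precisely because $X-Y$ is smooth, so that $\mathcal{E}xt^1_{\mathcal{X}_n}(\widehat{\Omega}_{\mathcal{X}_n/A_n},\sheaf_{\mathcal{X}_n})$ is supported on $Y$ and, locally on $\hat{X}$, every class arises from a local scheme deformation of an \'etale neighbourhood of $Y$. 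Well-definedness and bijectivity of $\mathcal{Y}_n\mapsto[\mathcal{E}]$ then follow by the same formal argument as in~\cite[Theorem 2.4.1]{Ser06}, carried out over the base $A_n$. Sheafifying this construction gives the assertion for $T^1(\mathcal{X}_n/A_n)$, and comparing the local-to-global spectral sequence for $\mathrm{Ext}$ on $\mathcal{X}_n$ with the \v{C}ech description of liftings of $\mathcal{X}_n$ over $B_n$ upgrades it to the global isomorphism; once again smoothness of $X-Y$ is what makes completion and globalisation harmless, exactly as in Proposition~\ref{local-to-global}.

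The one genuinely new ingredient, and the step I expect to demand the most care, is the comparison of $\mathbb{T}^1(\mathcal{X}_n/A_n)$ with $\mathrm{Ext}^1$ of $\widehat{\Omega}_{\mathcal{X}_n/A_n}$ rather than of the relative cotangent complex $L_{\mathcal{X}_n/A_n}$. General deformation theory identifies the set of liftings of the flat $A_n$-algebra $\mathcal{X}_n$ along the split square-zero extension $\beta_n$ with $\mathrm{Ext}^1_{\mathcal{X}_n}(L_{\mathcal{X}_n/A_n},\sheaf_{\mathcal{X}_n}\otimes_{A_n}J)=\mathrm{Ext}^1_{\mathcal{X}_n}(L_{\mathcal{X}_n/A_n},\sheaf_{\mathcal{X}_n})$, but since $\mathcal{X}_n$ is a thickening flat over $A_n$ it is \emph{not} reduced, so the reducedness step of Proposition~\ref{local-to-global} (which produced $T^1=\mathcal{E}xt^1(\Omega,\sheaf)$) is unavailable. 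Here one uses instead that $\mathcal{X}_n$ is \emph{pure}, by Lemma~\ref{pure}: since $X$ is reduced and $\mathrm{char}\,k=0$ the singular locus of $\mathcal{X}_n$ over $A_n$ coincides with $\mathrm{Sing}(X)\subseteq Y$, so $H_1(L_{\mathcal{X}_n/A_n})$ is supported there, while purity forces $\sheaf_{\mathcal{X}_n}$ to have no associated point on $\mathrm{Sing}(X)$; hence $\mathcal{H}om_{\mathcal{X}_n}(\mathcal{G},\sheaf_{\mathcal{X}_n})=0$ for every coherent $\mathcal{G}$ supported on $\mathrm{Sing}(X)$. This single vanishing is what makes the conormal sequence of the previous paragraph left exact and, through the truncation triangle $\tau_{\le -1}L_{\mathcal{X}_n/A_n}\la L_{\mathcal{X}_n/A_n}\la\widehat{\Omega}_{\mathcal{X}_n/A_n}$, yields $\mathrm{Ext}^i_{\mathcal{X}_n}(L_{\mathcal{X}_n/A_n},\sheaf_{\mathcal{X}_n})\cong\mathrm{Ext}^i_{\mathcal{X}_n}(\widehat{\Omega}_{\mathcal{X}_n/A_n},\sheaf_{\mathcal{X}_n})$ for $i\le 1$ together with the analogous statement for the $\mathcal{E}xt$ sheaves, completing the identifications and the proof.
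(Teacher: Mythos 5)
Your proposal is substantially correct and identifies the key idea, but it reaches the isomorphism by a technically different route than the paper. Both arguments hinge on the same pivot: in Proposition~\ref{local-to-global} the left-exactness of the conormal sequence came from reducedness of $X$, but $\mathcal{X}_n$ is a thickening and is never reduced; the replacement, which you correctly locate, is \emph{purity} of $\mathcal{X}_n$ (via Lemma~\ref{pure}), which kills the kernel of the conormal map $\alpha_n$ because that kernel is a subsheaf of $\sheaf_{\mathcal{X}_n}$ supported on the non-smooth locus, a set of strictly smaller dimension. Your reformulation of this as the vanishing $\mathcal{H}om_{\mathcal{X}_n}(\mathcal{G},\sheaf_{\mathcal{X}_n})=0$ for $\mathcal{G}$ supported on $\mathrm{Sing}(X)$ is equivalent, and it is the correct statement.

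Where you diverge from the paper is in packaging. You invoke the general classification of liftings along a split square-zero extension by $\mathrm{Ext}^1_{\mathcal{X}_n}(L_{\mathcal{X}_n/A_n},\sheaf_{\mathcal{X}_n})$ and then use the truncation triangle $\tau_{\le -1}L\la L\la \widehat{\Omega}_{\mathcal{X}_n/A_n}$ together with the purity vanishing (which gives $\mathrm{Hom}(H_1(L),\sheaf_{\mathcal{X}_n})=0$) to transfer to $\mathrm{Ext}^1$ of $\widehat{\Omega}$. The paper never mentions the cotangent complex: it builds the extension $0\la\sheaf_{\mathcal{X}_n}\la\widehat{\Omega}_{\mathcal{Y}_n/A_n}\otimes_{B_n}A_n\la\widehat{\Omega}_{\mathcal{X}_n/A_n}\la 0$ by hand on the algebraic covers $U^i_n$ (Definition~\ref{def1}) and only then completes, proving the inverse map by pulling back along the completed universal derivation and checking $B_n$-flatness with the cited analogue of~\cite[Lemma~A.9]{Ser06}. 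This matters for two reasons. First, $L_{\mathcal{X}_n/A_n}$ is a complex of sheaves on a \emph{formal} scheme, and the deformation-theoretic classification you quote would itself have to be justified in that setting (or else you must retreat to the affine covers $U^i_n$ anyway, at which point you are closer to the paper's route than the proposal suggests). Second, the proposal slides past the flatness verification for the inverse map; the paper has a dedicated argument (again using $X-Y$ smooth and purity to upgrade generic exactness of $(\ref{extension})\otimes_{B_n}A_n$ to exactness), which the derived-category framing does handle implicitly, but which should be spelled out once the formal-scheme issue above is dealt with. Your approach buys a cleaner conceptual picture (everything is Illusie-style) at the cost of formal-scheme technicalities that the paper deliberately sidesteps by always working on the algebraic local models first.

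Two minor cautions. You state that the conormal sequence and the $\mathrm{Ext}^1(L,\sheaf)\cong\mathrm{Ext}^1(\widehat{\Omega},\sheaf)$ comparison both follow from ``this single vanishing''; in fact only the injectivity of $\alpha_n$ uses it directly, while $\mathrm{Ext}^0(\tau_{\le -1}L,\sheaf)=0$ is automatic from the degree bound and only $\mathrm{Ext}^1(\tau_{\le -1}L,\sheaf)=\mathrm{Hom}(H_1(L),\sheaf)$ invokes purity; this is worth separating. Also, you must check that the lift you construct lies in $Def(Y,X)(B_n)$ and not merely in the category of all formal deformations; you gesture at this via smoothness of $X-Y$ forcing $\mathcal{E}xt^1(\widehat{\Omega},\sheaf)$ to be supported on $Y$, which is the right reason, but it deserves the same care the paper gives it (``locally the completion of an extension of $U^i_n$ by $U^i_n$'').
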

\begin{proof}
The proof is along the lines of Proposition~\ref{local-to-global}. We will only show the first isomorphism, the proof of second is identical.
Let $\{\mathcal{U}^i_n\}$ be an open cover of $\mathcal{X}_n$ such that $ \mathcal{U}^i_n = \widehat{U^i_n}$,
where $U^i_n$ is a deformation over $A_n$ of a local \'etale neighborhood $V^i$ of $Y$ in $X$. Let also $\mathcal{Y}_n\in D(B_n)$
and $\{\mathcal{W}_n^i\}$ the corresponding
open cover such that $\mathcal{W}_n^i=\widehat{W^i_n}$, where $W^i_n$ is a deformation over $B_n$ of a local \'etale neighborhood
$Z^i$ of $Y $ in $X$.
By Lemma~\ref{pure}, $U^i_n$, $V^i$, $W^i_n$ and $Z^i$ are also pure.

$B_n$ is the trivial square zero extension of $A_n$ by $A_n$. Therefore the trivial extension
\[
0\la A_n \la B_n \la A_n \la 0
\]
gives the extension (not necessarily trivial) of $A_n$-algebras
\[
0 \la \sheaf_{W^i_n}\otimes_{B_n} A_n \la \sheaf_{W^i_n} \la \sheaf_{W^i_n}\otimes_{B_n} A_n \la 0
\]
There is a right exact sequence
\[
\sheaf_{W^i_n}\otimes_{B_n} A_n \stackrel{\alpha_n}{\la} \Omega_{{W^i_n}/A_n} \otimes_{B_n} A_n \la \Omega_{{W^i_n\otimes_{B_n} A_n}/A_n} \la 0
\]
Since $X$ ir pure and reduced, it follows that $W^i_n\otimes_{B_n}A_n$ is pure and hence $\alpha_n$ is injective. Now taking completions
we get the exact sequence
\[
0 \la \sheaf_{\mathcal{U}_i} \la (\Omega_{{W^i_n}/A_n } \otimes_{B_n} A_n)^{\wedge} \la (\Omega_{{W^i_n\otimes_{B_n} A_n}/A_n})^{\wedge} \la 0
\]
Now if $(A,m)$ is a local $k$-algebra, then $\widehat{\Omega}_{A/k}\cong \widehat{\Omega}_{\hat{A}/k}$, where $\hat{A}$ is the $m$-adic completion
of $A$~\cite{TaLoRo07}. Therefore, and
patching the above sequences together, it follows that there is an exact sequence
\[
0 \la \sheaf_{\mathcal{X}_n} \la \widehat{\Omega}_{\mathcal{Y}_n/A_n} \otimes_{B_n}A_n \la \widehat{\Omega}_{\mathcal{X}_n} \la 0
\]
and hence we get a map
\[
\mathbb{T}^1_D(\mathcal{X}_n/A_n)\la \mathrm{Ext}^1_{\mathcal{X}_n}(\widehat{\Omega}_{\mathcal{X}_n/A_n},\sheaf_{\mathcal{X}_n})
\]
which, as in the usual scheme case, is injective. We will show that it is also surjective. Let
\[
0 \la \sheaf_{\mathcal{X}_n} \la \mathcal{E} \la \widehat{\Omega}_{X_n} \la 0
\]
be an element of $\mathrm{Ext}^1_{\mathcal{X}_n}(\widehat{\Omega}_{\mathcal{X}_n/A_n},\sheaf_{\mathcal{X}_n})$. Let
\[
\hat{d} \colon \sheaf_{\mathcal{X}_n} \la \widehat{\Omega}_{\mathcal{X}_n/A_n}
\]
be the completion of the universal derivation~\cite{TaLoRo07}. Then again as in the usual scheme case we get a square zero extension of $A_n$-algebras
\begin{equation}\label{extension}
0 \la \sheaf_{\mathcal{X}_n} \stackrel{\sigma}{\la} \sheaf_{\mathcal{Y}_n} \la \sheaf_{\mathcal{X}_n} \la 0
\end{equation}
Moreover, arguing in exactly the same way as in Proposition~\ref{local-to-global}, it follows that the above extension is locally the completion
of an extension of $U^i_n$ by $U^i_n$.
To complete the proof we need to show that $\sheaf_{\mathcal{Y}_n}$ admits the structure of a flat $B_n$-algebra and that
$\mathcal{Y}_n \otimes_{B_n}A_n =\mathcal{X}_n$.
$\sheaf_{\mathcal{Y}_n}$ is already an $A_n$-algebra and it can be made into an $A_1$-algebra via  $\lambda \colon k[t]/(t^2) \la \sheaf_{\mathcal{Y}_n}$
by setting $\lambda (t)=\sigma(1)$.
Therefore $\sheaf_{\mathcal{Y}_n}$ becomes a $B_n=A_1 \otimes A_n$-algebra. The flatness follows from the next straightforward generalization
of~\cite[Lemma A.9]{Ser06}.
\begin{lemma}
Let $(B,m_B)$ be a local ring, $A$ a $B$-algebra and $M$ a finitely generated $A$-module. Let
\begin{equation}\label{E}
0\la M \la A^{\prime} \la A \la 0
\end{equation}
be a square zero extension of $A$ by $M$.
Let $R$ be an $A^{\prime}$-algebra. Then $R$ is a flat $A^{\prime}$-algebra if and only if the sequence $(\ref{E})\otimes_{A^{\prime}}R$ is exact
and $R\otimes_{A^{\prime}}A$ is a flat $A$-algebra.
\end{lemma}
From the construction of the $B_n$-algebra structure on $\sheaf_{\mathcal{Y}_n}$ it follows that
$\sheaf_{\mathcal{Y}_n}\otimes_{B_n}A_n = \sheaf_{\mathcal{X}_n}$.
Moreover, since $X-Y$ is smooth, $(\ref{extension})\otimes_{B_n}A_n$ is exact on $X-Y$ and since $X$ is pure it follows that
$(\ref{extension})\otimes_{B_n}A_n$ is in fact exact and hence
$\sheaf_{\mathcal{Y}_n}$ is flat over $B_n$.
\end{proof}
\begin{remark}
If $X=Y$ then the above proposition says simply that
\[
\mathbb{T}^1(X_n/A_n)\cong \mathrm{Ext}^1_{X_n}({\Omega}_{X_n/A_n},\sheaf_{X_n})
\]
and
\[
T^1(X_n/A_n)\cong \mathcal{E}xt^1_{{X}_n}({\Omega}_{X_n/A_n},\sheaf_{X_n})
\]
where $X_n \in Def(X)(A_n)$.
\end{remark}
\begin{remark}
In the $X=Y$, Proposition~\ref{T1-formula} was proved by Namikawa~\cite{Nam06}.
\end{remark}
As a corollary of Proposition~\ref{T1-formula}, the spectral sequence relating the functors $\mathrm{Ext}$ and $\mathcal{E}xt$ gives the local to global
sequence for $T^1$.
\begin{corollary}
With assumptions as in Proposition~\ref{T1-formula}, there is an exact sequence
\[
0 \la H^1(\widehat{T}_{\mathcal{X}_n/A_n}) \la \mathbb{T}^1(\mathcal{X}_n/A_n) \la H^0(T^1(\mathcal{X}_n/A_n)) \la H^2(\widehat{T}_{\mathcal{X}_n/A_n})
\]
\end{corollary}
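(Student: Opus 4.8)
The plan is to deduce the sequence directly from Proposition~\ref{T1-formula} together with the local-to-global spectral sequence for $\mathrm{Ext}$, in complete analogy with the proof of Proposition~\ref{local-to-global}. Write $\mathcal{F}=\widehat{\Omega}_{\mathcal{X}_n/A_n}$ for brevity. By Proposition~\ref{T1-formula} there are canonical isomorphisms $\mathbb{T}^1(\mathcal{X}_n/A_n)\cong \mathrm{Ext}^1_{\mathcal{X}_n}(\mathcal{F},\sheaf_{\mathcal{X}_n})$ and $T^1(\mathcal{X}_n/A_n)\cong \mathcal{E}xt^1_{\mathcal{X}_n}(\mathcal{F},\sheaf_{\mathcal{X}_n})$, so it is enough to produce the four-term exact sequence
\[
0 \la H^1(\mathcal{H}om_{\mathcal{X}_n}(\mathcal{F},\sheaf_{\mathcal{X}_n})) \la \mathrm{Ext}^1_{\mathcal{X}_n}(\mathcal{F},\sheaf_{\mathcal{X}_n}) \la H^0(\mathcal{E}xt^1_{\mathcal{X}_n}(\mathcal{F},\sheaf_{\mathcal{X}_n})) \la H^2(\mathcal{H}om_{\mathcal{X}_n}(\mathcal{F},\sheaf_{\mathcal{X}_n}))
\]
and then to identify $\mathcal{H}om_{\mathcal{X}_n}(\mathcal{F},\sheaf_{\mathcal{X}_n})$ with $\widehat{T}_{\mathcal{X}_n/A_n}$.

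First I would invoke the Grothendieck spectral sequence attached to the composition of functors $\mathrm{Hom}_{\mathcal{X}_n}(\mathcal{F},-)=\Gamma\circ\mathcal{H}om_{\mathcal{X}_n}(\mathcal{F},-)$ on the category of coherent $\sheaf_{\mathcal{X}_n}$-modules on the formal scheme $\mathcal{X}_n$. Since $\mathcal{H}om_{\mathcal{X}_n}(\mathcal{F},-)$ carries injective modules to flasque, hence $\Gamma$-acyclic, sheaves, this yields
\[
E_2^{p,q}=H^p\bigl(\mathcal{X}_n,\mathcal{E}xt^q_{\mathcal{X}_n}(\mathcal{F},\sheaf_{\mathcal{X}_n})\bigr)\Longrightarrow \mathrm{Ext}^{p+q}_{\mathcal{X}_n}(\mathcal{F},\sheaf_{\mathcal{X}_n}),
\]
whose low-degree exact sequence is precisely the four-term sequence displayed above. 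All sheaves in sight are coherent and everything is finite over the Artin ring $A_n$, so the needed acyclicity statements on $\mathcal{X}_n$ hold exactly as on an ordinary scheme; this is the same setting already handled in the proof of Proposition~\ref{local-to-global}.

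It then remains to identify the sheaf $\mathcal{H}om_{\mathcal{X}_n}(\mathcal{F},\sheaf_{\mathcal{X}_n})$ with $\widehat{T}_{\mathcal{X}_n/A_n}$. On an \'etale neighborhood $U^i_n$ of $Y$ in the deformation, $\mathcal{F}$ restricts to the completion of $\Omega_{U^i_n/A_n}$ and $\mathcal{H}om_{U^i_n}(\Omega_{U^i_n/A_n},\sheaf_{U^i_n})=T_{U^i_n/A_n}$; since $\mathcal{H}om$ of coherent sheaves commutes with completion --- the same local computation as in the Claim inside the proof of Proposition~\ref{local-to-global} --- patching over the cover gives the identification. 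Substituting this together with the isomorphisms of Proposition~\ref{T1-formula} into the sequence of the previous step produces the exact sequence of the corollary, and one checks, exactly as in the scheme case, that the resulting edge map $\mathbb{T}^1(\mathcal{X}_n/A_n)\to H^0(T^1(\mathcal{X}_n/A_n))$ is the natural restriction (global-to-local) map.

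The only point requiring genuine care is the passage from schemes to the formal scheme $\mathcal{X}_n$ in the first step --- that the local-to-global spectral sequence is available there and that $\mathcal{H}om$ and $\mathcal{E}xt^1$ are compatible with completion. But this is precisely what was verified in Proposition~\ref{local-to-global} (the identity $\mathcal{E}xt^1_{\hat X}(\widehat{\mathcal{P}},\widehat{\mathcal{Q}})\cong\mathcal{E}xt^1_X(\mathcal{P},\mathcal{Q})^{\wedge}$ and the behaviour of $\mathcal{H}om$ under completion), and those arguments transcribe verbatim with $\mathcal{X}_n$, $\mathcal{F}$ in place of $\hat X$, $\widehat{\Omega}_X$. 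Hence no new difficulty arises, and the corollary is essentially a formal consequence of Proposition~\ref{T1-formula}.
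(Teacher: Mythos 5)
Your proof is correct and matches the paper's own (unwritten) argument: the paper derives the corollary precisely by applying the local-to-global $\mathrm{Ext}$ spectral sequence to $\mathcal{H}om_{\mathcal{X}_n}(\widehat{\Omega}_{\mathcal{X}_n/A_n},\sheaf_{\mathcal{X}_n})$ and identifying the terms via Proposition~\ref{T1-formula}, which is exactly what you do. The extra care you take in checking that the spectral sequence and the compatibility of $\mathcal{H}om$/$\mathcal{E}xt$ with completion carry over to the formal-scheme setting is warranted and is the same verification the paper performs inside the proof of Proposition~\ref{local-to-global}.
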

The next technical lemma will be needed.
\begin{lemma}\label{base-change}
Let $X$ be a pure and reduced scheme and let $X_A$ be a deformation of it over a local Artin $k$-algebra $A$. Let $F_A$ be a coherent sheaf on $X_A$
such that
there is a nonempty open subset $U_A \subset X_A$ such that the restriction $F_A|_{U_A}$ is flat over $A$. Let $A\la B$ be a homomorphism of finite
Artin local $k$-algebras
and $X_B=X_A\otimes_A B$. Let $i \colon X_B \la X_A$ the inclusion and $G_B$ a coherent $\sheaf_{X_B}$-module. Then for all $k \geq 0$,
\[
\mathrm{Ext}^k_{X_A}(F_A,i_{\ast}G_B)\cong \mathrm{Ext}^k_{X_B}(i^{\ast} F_A  ,G_B)
\]
and
\[
\mathcal{E}xt^k_{X_A}(F_A,i_{\ast}G_B)\cong \mathcal{E}xt^k_{X_B}(i^{\ast}F_A,G_B)
\]
\end{lemma}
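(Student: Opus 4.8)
The plan is to prove the lemma as a base-change statement for $\mathrm{Ext}$, first reducing the global assertion to the local $\mathcal{E}xt$-sheaf assertion and then reducing that to a Tor-vanishing, which is where the hypotheses on $X$ and $F_A$ do their work. Note first that $X$, $X_A$ and $X_B$ share the same underlying space $|X|$ (the residue fields of $A$ and $B$ are $k$) and that $i\colon X_B\la X_A$ is finite, since $B$ is a finite $A$-module, both being finite-dimensional over $k$; in particular $i$ is affine, $Ri_\ast=i_\ast$ on quasi-coherent sheaves, $i_\ast$ is exact, and $H^p(X_A,i_\ast\mathcal{F})=H^p(X_B,\mathcal{F})$ for every quasi-coherent $\mathcal{F}$ on $X_B$. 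I read both claimed isomorphisms as isomorphisms of sheaves on $|X|$, the right-hand sides being regarded as $\sheaf_{X_A}$-modules by restriction of scalars along $\sheaf_{X_A}\la i_\ast\sheaf_{X_B}$. Granting the $\mathcal{E}xt$-sheaf isomorphism $\mathcal{E}xt^q_{X_A}(F_A,i_\ast G_B)\cong i_\ast\mathcal{E}xt^q_{X_B}(i^\ast F_A,G_B)$, the global isomorphism follows by comparing, term by term, the local-to-global spectral sequences $H^p(X_A,\mathcal{E}xt^q_{X_A}(F_A,i_\ast G_B))\Rightarrow\mathrm{Ext}^{p+q}_{X_A}(F_A,i_\ast G_B)$ and $H^p(X_B,\mathcal{E}xt^q_{X_B}(i^\ast F_A,G_B))\Rightarrow\mathrm{Ext}^{p+q}_{X_B}(i^\ast F_A,G_B)$, the affineness of $i$ being used to match the $E_2$-terms. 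Since the $\mathcal{E}xt$-sheaf statement is local, one may then assume $X_A=\mathrm{Spec}\,R_A$ with $R_A$ flat over $A$, $F_A=\widetilde M$ for a finite $R_A$-module $M$, $R_B=R_A\otimes_A B$, and $G_B=\widetilde N$ for a finite $R_B$-module $N$.

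The local statement I would obtain from the change-of-rings spectral sequence. Take a resolution $P_\bullet\la M$ by finite free $R_A$-modules. The adjunction isomorphism $\mathrm{Hom}_{R_A}(P,N)\cong\mathrm{Hom}_{R_B}(P\otimes_{R_A}R_B,N)$ for $P$ free is natural, so it gives an isomorphism of complexes $\mathrm{Hom}_{R_A}(P_\bullet,N)\cong\mathrm{Hom}_{R_B}(P_\bullet\otimes_{R_A}R_B,N)$; the left side computes $\mathrm{Ext}^\bullet_{R_A}(M,N)$, and the right side is $R\mathrm{Hom}_{R_B}$ of the bounded-above complex $Q_\bullet=P_\bullet\otimes_{R_A}R_B$ of free $R_B$-modules, whose homology is $\mathrm{Tor}^{R_A}_q(R_B,M)$. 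The associated hyper-$\mathrm{Ext}$ spectral sequence reads $E_2^{p,q}=\mathrm{Ext}^p_{R_B}(\mathrm{Tor}^{R_A}_q(R_B,M),N)\Rightarrow\mathrm{Ext}^{p+q}_{R_A}(M,N)$, so once $\mathrm{Tor}^{R_A}_q(R_B,M)=0$ for $q>0$ it degenerates to $\mathrm{Ext}^k_{R_A}(M,N)\cong\mathrm{Ext}^k_{R_B}(R_B\otimes_{R_A}M,N)$ for all $k$, which is the desired local statement since $R_B\otimes_{R_A}M=i^\ast F_A$. Moreover, since $R_A$ is $A$-flat and $R_B=R_A\otimes_A B$, an $A$-projective resolution of $B$ base-changes to an $R_A$-flat resolution of $R_B$, whence $\mathrm{Tor}^{R_A}_q(R_B,M)\cong\mathrm{Tor}^A_q(B,M)$; sheaf-theoretically, $\mathcal{T}or^{\sheaf_{X_A}}_q(i_\ast\sheaf_{X_B},F_A)$ is the coherent $\sheaf_{X_A}$-module $\mathcal{T}or^A_q(B,F_A)$.

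Thus the whole lemma comes down to the vanishing $\mathcal{T}or^A_q(B,F_A)=0$ for $q>0$, and this is the step I expect to be the main obstacle. By hypothesis $F_A$ is $A$-flat on the dense open $U_A$, so these coherent sheaves are supported on the proper closed subset $Z=X_A\setminus U_A$, which by Lemma~\ref{pure} (applied to $X_A$, a deformation of the pure scheme $X$) has dimension strictly less than $\dim X_A$. The heart of the matter is to show that such $\mathrm{Tor}$ sheaves, supported on $Z$, in fact vanish: filtering $B$ over $A$ by a composition series of $A$-modules with quotients $\cong k$ and chasing the long exact Tor sequences reduces this to controlling $\mathcal{T}or^A_q(k,F_A)$, and one must then exploit purity of $X_A$ — the absence of embedded and of lower-dimensional associated components on a deformation of a pure scheme — against the generic flatness of $F_A$ to force the vanishing. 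I regard this as the genuine content of the lemma; the spectral-sequence formalism above is purely formal, and once the Tor-vanishing is in hand the degenerate spectral sequence yields the $\mathcal{E}xt$-sheaf isomorphism and the comparison of local-to-global spectral sequences yields the global $\mathrm{Ext}$ isomorphism, completing the proof.
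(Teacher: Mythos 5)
Your change-of-rings reduction is formally sound and cleanly isolates what has to be proved, but you stop at exactly the step that carries the content, and that step does not hold under the stated hypotheses. Via the spectral sequence $E_2^{p,q}=\mathrm{Ext}^p_{R_B}(\mathrm{Tor}^{R_A}_q(R_B,M),N)\Rightarrow\mathrm{Ext}^{p+q}_{R_A}(M,N)$ together with $\mathrm{Tor}^{R_A}_q(R_B,M)\cong\mathrm{Tor}^A_q(B,M)$, you reduce everything to $\mathrm{Tor}^A_q(B,F_A)=0$ for $q>0$, and since $A$ is Artin local this is simply the assertion that $F_A$ is flat over $A$. You claim this ``must'' be forced by purity of $X_A$ and generic flatness of $F_A$, but give no argument; and there is none, because the implication is false. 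Take $X=\mathbb{A}^1_k$, $A=k[t]/(t^2)$, $X_A=\mathrm{Spec}\,A[x]$, and $F_A=(x,t)\subset A[x]$. This $F_A$ is pure (a submodule of the structure sheaf of the pure scheme $X_A$), and its restriction to the dense open $U_A=\{x\neq0\}$ is the unit ideal, hence $A$-flat; yet $\mathrm{Tor}^A_1(k,F_A)=\{f\in F_A : tf=0\}/tF_A\cong k[x]/(x)\neq 0$. With $B=k$ and $G_B=\sheaf_{X_B}$, using the sequence $0\to(x,t)\to A[x]\to k\to 0$ and the eventually two-periodic minimal free resolution of $k$ over $A[x]=k[x,t]/(t^2)$, one finds $\mathrm{Ext}^2_{A[x]}((x,t),k[x])\cong k$ while $\mathrm{Ext}^2_{k[x]}(i^\ast F_A,k[x])=0$, so the isomorphism you set out to prove already fails at $k=2$: the spectral sequence genuinely does not degenerate.

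The paper's own proof shares this weak joint: it resolves $F_A$ by free sheaves and at the last step asserts that the pulled-back short exact sequence $0\to i^\ast Q_A\to i^\ast M_A\to i^\ast N_A\to 0$ remains exact on the grounds that $Q_A$ is pure and $N_A$ is generically $A$-flat, which is precisely $\mathrm{Tor}^A_1(B,N_A)=0$ and is contradicted by the same example. So you have correctly located the crux, and your surrounding formalism --- adjunction for finite free modules, the affine base change of cohomology matching the $E_2$-pages of the two local-to-global spectral sequences --- is fine, arguably cleaner than the paper's syzygy ladder; but the crux is left as a gesture at something untrue. To close it one needs an actual flatness hypothesis on $F_A$ over $A$; in the paper's applications $F_A=\widehat{\Omega}_{\mathcal{X}_n/A_n}$ with $X$ reduced and (where higher $\mathrm{Ext}$'s matter) locally a complete intersection, in which case the Jacobian presentation of $\Omega$ does give $A_n$-flatness, and that is the fact that should be proved and invoked rather than waved at via purity.
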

\begin{proof}
For any $k$ there are natural maps
\begin{gather*}
\phi^k_F \colon \mathrm{Ext}^k_{X_B}(i^{\ast}F_A,G_B) \la \mathrm{Ext}^k_{X_A}(F_A,i_{\ast}G_B)\\
\psi^k_F \colon \mathcal{E}xt^k_{X_B}(i^{\ast}F_A,G_B) \la \mathcal{E}xt^k_{X_A}(F_A,i_{\ast}G_B)
\end{gather*}
which is defined as follows. Let $[E_B] $ be an element of $\mathrm{Ext}^k_{X_B}(i^{\ast}F_A ,G_B)$. This is represented by an extension
\[
0 \la G_B \la E_1 \la E_2 \la \cdots \la E_k \la i^{\ast}F_A  \la 0
\]
Moreover, there is a natural map $\lambda_A \colon F_A \la i_{\ast}i^{\ast}F_A $. Then we define
$\phi^k_F([E_A])\in \mathrm{Ext}_{X_A}^k(F_A, i_{\ast}G_B)$ to be the extension
obtained by pulling back $[E_A]$ with $\lambda$. Similarly for $\psi^k_F$.

Let $i_{\ast}\colon \mathbf{Coh(X_B)} \la \mathbf{Coh(X_A)}$ be the induced map between the corresponding categories of coherent sheaves.
Let $G$ be either the $\mathcal{H}om_{X_A}(F_A, \cdot)$ or $\mathrm{Hom}_{X_A}(F_A, \cdot )$ functor. Since $i_{\ast}$ is exact, to prove the
lemma it suffices to show that $i_{\ast}$ sends injectives to $G$-acyclics. First we show this in the case when $G=\mathcal{H}om_{X_A}(F_A, \cdot )$.
Let $I_B$ be an injective $\sheaf_{X_B}$-module. We will show that
\[
\mathcal{E}xt^k_{X_A}(F_A,I_B)=0
\]
this is local and so we may assume that $X$, and hence $X_A$, is affine. Then $X_A$ has enough locally free sheaves. So we may write
\[
0 \la P_A \la E_A \la F_A \la 0
\]
where $E_A$ is locally free. Hence
\[
\mathcal{E}xt^k_{X_A}(F_A,I_B)=\mathcal{E}xt^{k-1}_{X_A}(P_A,I_B)
\]
Moreover, since $X$ is pure it follows that $X_A$ is pure as well. Hence $E_A$ is pure and hence $P_A$ is also pure and its restriction on $U_A$ is
flat over $A$. Continuing similarly we find that
\[
\mathcal{E}xt^k_{X_A}(F_A,I_B)=\mathcal{E}xt^{1}_{X_A}(N_A,I_B)
\]
where $N_A$ is also pure and its restriction on $U_A$ is flat over $A$. Now consider the exact sequence
\[
0 \la Q_A \la M_A \la N_A \la 0
\]
where $M_A$ is locally free. Then, as before, $Q_A$ is pure and hence since $N_A$ is flat over $U_A$, it follows that
\[
 0 \la i^{\ast}Q_A \la i^{\ast}M_A \la i^{\ast}N_A \la 0
\]
is exact too. Therefore there is a commutative diagram
\[
\xymatrix{
\mathcal{H}om_{X_A}(N_A, i_{\ast}I_B) \ar[r]\ar[d]^{f_1}  & \mathcal{H}om_{X_A}(M_A, i_{\ast}I_B) \ar[r]\ar[d]^{f_2}  &
\mathcal{H}om_{X_A}(Q_A , i_{\ast}I_B) \ar[r]\ar[d]^{f_3}
 & \mathcal{E}xt^1_{X_A}(N_A, i_{\ast}I_B) \ar[r]\ar[d]^{f_4}  & 0\\
\mathcal{H}om_{X_B}(i^{\ast}N_A, I_B)\ar[r]   & \mathcal{H}om_{X_B}(i^{\ast}M_A, I_B)\ar[r] & \mathcal{H}om_{X_B}(i^{\ast}Q_A , I_B)\ar[r]
& \mathcal{E}xt^1_{X_A}(i^{\ast}N_A, I_B) \ar[r]  & 0
}
\]
where $f_1$, $f_2$ and $f_3$ are clearly isomorphisms. Therefore $f_4$ is an isomorphism too. But since $I_B$ is an injective $\sheaf_{X_B}$-module,
\[
\mathcal{E}xt^1_{X_A}(G_A,i_{\ast}I_B)=\mathcal{E}xt^1_{X_B}(i^{\ast}G_A,I_B)=0
\]
and hence
\[
\mathcal{E}xt^k_{X_A}(F_A,i_{\ast}I_B)=0
\]
for all $k \geq 1$ as claimed. Next we show the corresponding statement for the global $\mathrm{Ext}$.
The spectral sequence relating the local and global Ext functors show that
\[
\mathrm{Ext}_{X_A}^k(F_A, i_{\ast}I_B) = H^k(\mathcal{H}om_{X_A}(F_A,i_{\ast}I_B))=H^k(\mathcal{H}om_{X_B}(i^{\ast}F_A, I_B))=
\mathrm{Ext}^k_{X_B}(i^{\ast}F_A,I_B) =0
\]
The argument about the $\mathcal{E}xt$ sheaves cannot be directly applied to the global Ext functor because of the possible absence of enough
locally free sheaves on $X_A$.
\end{proof}
Next we give a version of the previous results in the case of formal schemes.
\begin{corollary}
With assumptions as in Lemma~\ref{base-change}, let $\mathcal{X}_A\in Def(Y,X)(A)$ and $\mathcal{X}_B=\mathcal{X_A}\otimes_A B$. Let $\mathcal{F}_A$
be a coherent sheaf on $\mathcal{X}_A$ such that there is an open $\mathcal{U}_A\subset \mathcal{X}_A$ such that
$\mathcal{F}_A|_{\mathcal{U}_A}$ is flat over $A$. Let $\mathcal{G}_B$ be a coherent sheaf on $\mathcal{X}_B$ and
$i \colon \mathcal{X}_B \la \mathcal{X}_A$ the inclusion. Then
\[
\mathcal{E}xt^i_{\mathcal{X}_A}(\mathcal{F}_A ,i_{\ast}\mathcal{G}_B)\cong \mathcal{E}xt^i_{\mathcal{X}_B}(i^{\ast}\mathcal{F}_A ,\mathcal{G}_B)
\]
and
\[
\mathrm{Ext}^i_{\mathcal{X}_A}(\mathcal{F}_A ,i_{\ast}\mathcal{G}_B)\cong \mathrm{Ext}^i_{\mathcal{X}_B}(i^{\ast}\mathcal{F}_A ,\mathcal{G}_B)
\]
\end{corollary}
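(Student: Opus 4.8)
The plan is to run the argument of Lemma~\ref{base-change} again, this time in the category of coherent sheaves on the formal schemes $\mathcal{X}_A$ and $\mathcal{X}_B$. First I would reduce the $\mathcal{E}xt$-sheaf isomorphism to the affine case: it is local on $\mathcal{X}_A$, so I may assume $X_A=\mathrm{Spec}\,R_A$ is affine, a deformation over $A$ of an affine \'etale neighborhood of $Y$ (pure and reduced by Lemma~\ref{pure}), with $\mathcal{X}_A=\widehat{X_A}$, $\mathcal{X}_B=\widehat{X_B}$, $X_B=X_A\otimes_A B$, and $i$ the completion along $Y$ of the finite, hence affine, morphism $X_B\la X_A$ (finite because $B$ is a finite $A$-module). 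On the affine Noetherian formal scheme $\mathcal{X}_A$ every coherent sheaf is a quotient of a finite free sheaf $\sheaf_{\mathcal{X}_A}^m$ with coherent kernel, so $\mathcal{F}_A$ has a resolution $\mathcal{P}_\bullet\la\mathcal{F}_A$ by finite free coherent sheaves; after shrinking, the open $\mathcal{U}_A$ on which $\mathcal{F}_A$ is flat over $A$ may be taken dense (as it is in all applications, e.g. in Proposition~\ref{T1-formula}).

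I would then follow Lemma~\ref{base-change} verbatim: it suffices to show that $i_\ast$ carries injective $\sheaf_{\mathcal{X}_B}$-modules to $\mathcal{H}om_{\mathcal{X}_A}(\mathcal{F}_A,-)$-acyclic sheaves, since then the $\mathcal{E}xt$-sheaf isomorphism follows by computing with an injective resolution $\mathcal{G}_B\la\mathcal{I}_B^\bullet$ on $\mathcal{X}_B$, using that $i_\ast$ is exact and that $\mathcal{H}om_{\mathcal{X}_A}(\sheaf_{\mathcal{X}_A}^n,i_\ast\mathcal{N})=i_\ast\mathcal{H}om_{\mathcal{X}_B}(\sheaf_{\mathcal{X}_B}^n,\mathcal{N})$ for coherent $\mathcal{N}$ on $\mathcal{X}_B$. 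To prove $\mathcal{E}xt^k_{\mathcal{X}_A}(\mathcal{F}_A,i_\ast\mathcal{I}_B)=0$ for $k\geq1$ and $\mathcal{I}_B$ injective, I dimension-shift along $\mathcal{P}_\bullet$ down to $\mathcal{E}xt^1_{\mathcal{X}_A}(\mathcal{N}_A,i_\ast\mathcal{I}_B)$ with $\mathcal{N}_A$ a sufficiently high syzygy; such $\mathcal{N}_A$ is again pure (a coherent subsheaf of a free sheaf over the pure $\mathcal{X}_A$) and flat over $A$ on the dense open (the free terms of $\mathcal{P}_\bullet$ are $A$-flat, so $A$-flatness on $\mathcal{U}_A$ propagates). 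Picking $0\la\mathcal{Q}_A\la\mathcal{M}_A\la\mathcal{N}_A\la0$ with $\mathcal{M}_A$ finite free, the pullback $0\la i^\ast\mathcal{Q}_A\la i^\ast\mathcal{M}_A\la i^\ast\mathcal{N}_A\la0$ is still exact: its defect is $\mathrm{Tor}^{\mathcal{X}_A}_1(\mathcal{N}_A,\sheaf_{\mathcal{X}_B})$, a coherent sheaf supported where $\mathcal{N}_A$ fails to be $A$-flat, hence on a nowhere dense closed set, which (as in Lemma~\ref{base-change}) embeds into the pure sheaf $i^\ast\mathcal{Q}_A$ and therefore vanishes. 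Chasing the resulting diagram against $\mathcal{E}xt^1_{\mathcal{X}_B}(i^\ast\mathcal{N}_A,\mathcal{I}_B)=0$ gives the vanishing. Since this whole construction is compatible with restriction to opens, it globalizes to a natural isomorphism $\mathcal{E}xt^q_{\mathcal{X}_A}(\mathcal{F}_A,i_\ast\mathcal{G}_B)\cong i_\ast\mathcal{E}xt^q_{\mathcal{X}_B}(i^\ast\mathcal{F}_A,\mathcal{G}_B)$.

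For the global $\mathrm{Ext}$ there are two equivalent finishes. One is to mimic Lemma~\ref{base-change}: the higher $\mathcal{E}xt$-sheaves of $\mathcal{F}_A$ into $i_\ast\mathcal{I}_B$ vanish, so the local-to-global $\mathrm{Ext}$ spectral sequence on $\mathcal{X}_A$ degenerates and gives $\mathrm{Ext}^k_{\mathcal{X}_A}(\mathcal{F}_A,i_\ast\mathcal{I}_B)=H^k(\mathcal{H}om_{\mathcal{X}_A}(\mathcal{F}_A,i_\ast\mathcal{I}_B))=H^k(\mathcal{H}om_{\mathcal{X}_B}(i^\ast\mathcal{F}_A,\mathcal{I}_B))=0$ for $k\geq1$, so $i_\ast$ also sends injectives to $\mathrm{Hom}_{\mathcal{X}_A}(\mathcal{F}_A,-)$-acyclics, whence the global isomorphism by computing both sides with an injective resolution. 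The other is to compare the two local-to-global $\mathrm{Ext}$ spectral sequences: since $i$ is affine we have $H^p(\mathcal{X}_A,i_\ast\mathcal{H})=H^p(\mathcal{X}_B,\mathcal{H})$, and by the previous paragraph the two $E_2$-pages coincide, forcing the abutments to agree. The main obstacle is exactly the step on which Lemma~\ref{base-change} itself concentrates: controlling the syzygies of $\mathcal{F}_A$ on the formal scheme --- showing they stay pure and inherit $A$-flatness on the dense open --- so that pullback along $i$ preserves exactness; everything else (existence of finite free coherent resolutions and of the local-to-global $\mathrm{Ext}$ spectral sequence on an affine Noetherian formal scheme, adjunction, affineness of $i$) is standard but must be invoked explicitly.
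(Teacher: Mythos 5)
Your proof is correct in spirit and would yield the statement, but it takes a genuinely different route from the paper's. The paper's proof is essentially a two-line reduction: it observes (i) that locally $\mathcal{X}_A = \widehat{V}_A$, $\mathcal{F}_A = \widehat{F}_A$, $\mathcal{G}_B = \widehat{G}_B$ for algebraic data $V_A$, $F_A$, $G_B$, (ii) that $\mathcal{E}xt^i_{\widehat{V}_A}(\widehat{F}_A,\widehat{G}_B) = (\mathcal{E}xt^i_{V_A}(F_A,G_B))^{\wedge}$ as was already established in the Claim inside the proof of Proposition~\ref{local-to-global}, and (iii) that injective $\sheaf_{\mathcal{X}_B}$-modules are completions of injective $\sheaf_{V_B}$-modules --- and then simply invokes Lemma~\ref{base-change} on the scheme level. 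You instead rerun the Lemma~\ref{base-change} argument \emph{directly on the formal schemes}: finite free coherent resolutions on $\mathrm{Specf}$, dimension-shift, purity of syzygies, vanishing of the $\mathrm{Tor}$ kernel, and a spectral-sequence or affineness argument for the global $\mathrm{Ext}$. What your approach buys is self-containment and the fact that you do not have to assume $\mathcal{F}_A$ and $\mathcal{G}_B$ are completions of algebraic sheaves on $V_A$, $V_B$ (an implicit assumption in the paper's reduction that is harmless in the applications but worth noticing). What the paper's approach buys is brevity and a clean side-step of the one genuinely delicate point in your route: making sense of ``pure'' and ``supported on a nowhere dense set'' for coherent sheaves on a Noetherian formal scheme, whose underlying topological space is only $Y$ rather than $X$, so that purity and density must be interpreted module-theoretically via $\mathrm{Spec}$ of the adic ring rather than via the formal scheme's own space. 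You do flag exactly this as ``the main obstacle,'' and your remark that $\mathcal{U}_A$ should be taken dense for the torsion-vanishing step to close is a legitimate sharpening of the hypothesis of Lemma~\ref{base-change} as stated (which only says ``nonempty''); that observation applies equally to the scheme version, so it is not a defect specific to your argument.
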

\begin{proof}
The natural map $\phi^i_{\mathcal{F}}$ defined in Lemma~\ref{base-change} exists in this case too. Then the proof proceeds similarly and it is local.
Locally $\mathcal{X}_A \cong \widehat{V}_A$, where $V_A$ is a deformation over $A$ of a local \'etale neighborhood $V$ of $Y$ in $X$.
So we may assume that $\mathcal{F}_A=\widehat{F}_A$, $\mathcal{G}_B =\widehat{G}_B$where $F_A$, $G_B$ are coherent sheaves on $V_A$, $V_B$.
But then as we have already seen in Proposition~\ref{local-to-global},
\[
\mathcal{E}xt^i_{\widehat{V_A}}(\widehat{F}_A, \widehat{G}_B) =(\mathcal{E}xt^i_{V_A}(F_A,G_B))^{\wedge}
\]
Moreover, if $\mathcal{I}_B$ is an injective $\sheaf_{\mathcal{X}_B}$-module, then $\mathcal{I}_B=\widehat I_B$, where $I_B$ is an
injective $\sheaf_{V_B}$-module. Now the proof
proceeds exactly the same as the one in Lemma~\ref{base-change}.
\end{proof}

We now state the key result that will enable us to obtain obstructions to lift a deformation $X_n \in Def(Y,X)(A_n)$ to $A_{n+1}$.

\begin{proposition}\label{the-exact-sequence}
Let $X$ be a pure and reduced scheme defined over a field $k$ of characteristic zero
and $Y\subset X$ a closed subscheme of it such that $X-Y$ is smooth. Let $\mathcal{X}_n\in Def(Y,X)(A_n)$. Then there are exact sequences
\begin{gather*}
0\la \widehat{T}_{X}\la \widehat{T}_{\mathcal{X}_n/A_n} \la \widehat{T}_{\mathcal{X}_{n-1}/A_{n-1}} \la T^1(Y,X) \la T^1(\mathcal{X}_{n}/A_{n}) \la\\
 \la T^1(\mathcal{X}_{n-1}/A_{n-1}) \stackrel{\theta}{\la} \mathcal{E}xt_{\hat{X}}^2(\widehat{\Omega}_X,\sheaf_{\hat{X}})
\end{gather*}
and
\begin{gather*}
0\la H^0(\widehat{T}_{X})\la H^0(\widehat{T}_{\mathcal{X}_n/A_n}) \la H^0(\widehat{T}_{\mathcal{X}_{n-1}/A_{n-1}}) \la \mathbb{T}^1(Y,X) \la
\mathbb{T}^1(\mathcal{X}_{n}/A_{n}) \la\\
 \la \mathbb{T}^1(\mathcal{X}_{n-1}/A_{n-1}) \stackrel{\Theta}{\la} \mathrm{Ext}_{\hat{X}}^2(\widehat{\Omega}_X,\sheaf_{\hat{X}})
\end{gather*}
\end{proposition}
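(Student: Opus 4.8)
The plan is to derive both exact sequences at once from a single short exact sequence of coefficient sheaves on $\mathcal{X}_n$, by applying $\mathcal{H}om_{\mathcal{X}_n}(\widehat{\Omega}_{\mathcal{X}_n/A_n},-)$ and $\mathrm{Hom}_{\mathcal{X}_n}(\widehat{\Omega}_{\mathcal{X}_n/A_n},-)$ together with their derived functors $\mathcal{E}xt^i$ and $\mathrm{Ext}^i$, and then rewriting every term using Propositions~\ref{local-to-global} and~\ref{T1-formula} and the base change statements of Lemma~\ref{base-change} and its formal corollary. The first order sequences of Proposition~\ref{local-to-global} are the case $n=1$ of this, so this is the natural higher order analogue.

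First I would produce the coefficient sequence. The surjection $\alpha_{n-1}\colon A_n\la A_{n-1}$ has kernel $t^nA_n\cong k$, and flatness of $\sheaf_{\mathcal{X}_n}$ over $A_n$ gives $\mathrm{Tor}_1^{A_n}(A_{n-1},\sheaf_{\mathcal{X}_n})=0$, so multiplication by $t^n$ identifies $\sheaf_{\hat X}=\sheaf_{\mathcal{X}_n}\otimes_{A_n}k$ with the ideal sheaf $t^n\sheaf_{\mathcal{X}_n}\subset\sheaf_{\mathcal{X}_n}$. This yields a short exact sequence of $\sheaf_{\mathcal{X}_n}$-modules
\[
0\la \sheaf_{\hat X}\la \sheaf_{\mathcal{X}_n}\la \sheaf_{\mathcal{X}_{n-1}}\la 0 ,
\]
where $\sheaf_{\hat X}$ and $\sheaf_{\mathcal{X}_{n-1}}$ are regarded as $\sheaf_{\mathcal{X}_n}$-modules through the closed immersions $j\colon\hat X\la\mathcal{X}_n$ and $i\colon\mathcal{X}_{n-1}\la\mathcal{X}_n$ induced by $A_n\la k$ and $A_n\la A_{n-1}$; since $i$ and $j$ leave the underlying space unchanged, the formal-scheme technicalities stay mild.

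Next I would apply $\mathcal{E}xt^{\bullet}_{\mathcal{X}_n}(\widehat{\Omega}_{\mathcal{X}_n/A_n},-)$ and $\mathrm{Ext}^{\bullet}_{\mathcal{X}_n}(\widehat{\Omega}_{\mathcal{X}_n/A_n},-)$ to this sequence and identify the terms. Since $X-Y$ is smooth and $X$ is reduced, in a local étale neighbourhood $\widehat{\Omega}_{\mathcal{X}_n/A_n}$ is the completion of $\Omega_{V_n/A_n}$, which is locally free and hence $A_n$-flat on the dense smooth locus; thus Lemma~\ref{base-change} and its formal corollary apply with first argument $\widehat{\Omega}_{\mathcal{X}_n/A_n}$, and since Kähler differentials commute with base change ($i^{\ast}\widehat{\Omega}_{\mathcal{X}_n/A_n}=\widehat{\Omega}_{\mathcal{X}_{n-1}/A_{n-1}}$ and $j^{\ast}\widehat{\Omega}_{\mathcal{X}_n/A_n}=\widehat{\Omega}_X$) we obtain, for all $k$,
\[
\mathcal{E}xt^k_{\mathcal{X}_n}(\widehat{\Omega}_{\mathcal{X}_n/A_n},\sheaf_{\mathcal{X}_{n-1}})\cong\mathcal{E}xt^k_{\mathcal{X}_{n-1}}(\widehat{\Omega}_{\mathcal{X}_{n-1}/A_{n-1}},\sheaf_{\mathcal{X}_{n-1}}),\qquad \mathcal{E}xt^k_{\mathcal{X}_n}(\widehat{\Omega}_{\mathcal{X}_n/A_n},\sheaf_{\hat X})\cong\mathcal{E}xt^k_{\hat X}(\widehat{\Omega}_X,\sheaf_{\hat X}),
\]
and likewise with $\mathrm{Ext}$. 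Reading off degrees: for $k=0$ the three kinds of terms are $\widehat{T}_X$, $\widehat{T}_{\mathcal{X}_n/A_n}$, $\widehat{T}_{\mathcal{X}_{n-1}/A_{n-1}}$ (or their global sections); for $k=1$, Proposition~\ref{T1-formula} identifies $\mathcal{E}xt^1_{\mathcal{X}_n}(\widehat{\Omega}_{\mathcal{X}_n/A_n},\sheaf_{\mathcal{X}_n})$ with $T^1(\mathcal{X}_n/A_n)$ and $\mathcal{E}xt^1_{\mathcal{X}_{n-1}}(\widehat{\Omega}_{\mathcal{X}_{n-1}/A_{n-1}},\sheaf_{\mathcal{X}_{n-1}})$ with $T^1(\mathcal{X}_{n-1}/A_{n-1})$ (global versions with $\mathbb{T}^1$), while Proposition~\ref{local-to-global}(1) and the isomorphism $\mathcal{E}xt^1_{\hat X}(\widehat{\Omega}_X,\sheaf_{\hat X})\cong\mathcal{E}xt^1_X(\Omega_X,\sheaf_X)^{\wedge}=T^1(X)=T^1(Y,X)$ from its proof identify the $\hat X$-term with $T^1(Y,X)$, resp. $\mathbb{T}^1(Y,X)$; for $k=2$ the $\hat X$-term is $\mathcal{E}xt^2_{\hat X}(\widehat{\Omega}_X,\sheaf_{\hat X})$, resp. $\mathrm{Ext}^2_{\hat X}(\widehat{\Omega}_X,\sheaf_{\hat X})$, the target of $\theta$ and $\Theta$. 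Truncating both long exact sequences right after this term gives the two displayed sequences; I would also note that the map $\mathbb{T}^1(\mathcal{X}_n/A_n)\la\mathbb{T}^1(\mathcal{X}_{n-1}/A_{n-1})$ so obtained is the restriction map $\phi_n$ of Section~\ref{T1-section}, since push-forward of the defining square-zero extension along $\sheaf_{\mathcal{X}_n}\la\sheaf_{\mathcal{X}_{n-1}}$ corresponds, under the dictionary of Proposition~\ref{T1-formula}, to reducing the $B_n$-deformation along $B_n\la B_{n-1}$.

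The main obstacle is the bookkeeping in the previous step: one must check that the flatness hypothesis of Lemma~\ref{base-change} genuinely applies to $\widehat{\Omega}_{\mathcal{X}_n/A_n}$ in the formal setting — this is where $X-Y$ smooth, purity of $X$, and (in the degenerate case $Y=X$) characteristic zero enter — that the operations $\widehat{(\,\cdot\,)}$, $i^{\ast}$, $j^{\ast}$ are compatible as claimed, and that the long exact sequence of global $\mathrm{Ext}$ is available on the formal scheme $\mathcal{X}_n$. The last point can be sidestepped, if desired, by deducing the global sequence from the local one via the local-to-global spectral sequence exactly as in the proof of Proposition~\ref{local-to-global}. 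None of this is deep, but all of it rests on the reduction to étale neighbourhoods of $Y$ already used in Propositions~\ref{local-to-global} and~\ref{T1-formula}.
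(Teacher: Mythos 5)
Your proof takes essentially the same approach as the paper's, which in fact consists of a single line: apply $\mathrm{Hom}_{\mathcal{X}_n}(\widehat{\Omega}_{\mathcal{X}_n/A_n},\cdot)$ and $\mathcal{H}om_{\mathcal{X}_n}(\widehat{\Omega}_{\mathcal{X}_n/A_n},\cdot)$ to the short exact sequence $0 \la \sheaf_{\widehat X}\la\sheaf_{\mathcal{X}_n}\la\sheaf_{\mathcal{X}_{n-1}}\la 0$ and then invoke Proposition~\ref{T1-formula} and Lemma~\ref{base-change}. Your elaboration of the flatness verification on the smooth locus, the base-change compatibility of the differentials, and the degree-by-degree identification of the Ext terms faithfully fills in the details that the paper leaves implicit.
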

Note that since $X-Y$ is assumed to be smooth, then $T^1(Y,X)=T^1(X)$ by Proposition~\ref{local-to-global}.
\begin{proof}
Apply $\mathrm{Hom}_{\mathcal{X}_n}(\widehat{\Omega}_{\mathcal{X}_n/A_n}, \cdot)$ and
$\mathcal{H}om_{\mathcal{X}_n}(\widehat{\Omega}_{\mathcal{X}_n/A_n}, \cdot)$
on the square zero extension
\[
0 \la \sheaf_{\widehat{X}} \la \sheaf_{\mathcal{X}_n} \la \sheaf_{\mathcal{X}_{n-1}} \la 0
\]
and then use Proposition~\ref{T1-formula} and Lemma~\ref{base-change}.
\end{proof}

\section{Global lifting of deformations.}

Let $\mathcal{X}_n \in Def(Y,X)(A_n)$. In this section we obtain obstructions to lift $\mathcal{X}_n$ to $A_{n+1}$.
Let $\mathcal{Y}_{n-1}=Def(Y,X)(\varepsilon_{n-1})(\mathcal{X}_n) \in \mathbb{T}^1(\mathcal{X}_{n-1}/A_{n-1})$. According to Theorem~\ref{T1},
$\mathcal{X}_n$ lifts to $A_{n+1}$ if and only if $\mathcal{Y}_{n-1}$ is in the image of the natural map
\[
\mathbb{T}^1(\mathcal{X}_n/A_n) \stackrel{\tau_n}{\la} \mathbb{T}^1(\mathcal{X}_{n-1}/A_{n-1})
\]
Now according to Proposition~\ref{the-exact-sequence} there is an exact sequence
\[
\mathbb{T}^1(\mathcal{X}_n/A_n) \stackrel{\tau_n}{\la} \mathbb{T}^1(\mathcal{X}_{n-1}/A_{n-1}) \stackrel{\Theta}{\la} \mathrm{Ext}_{\hat{X}}^2(\widehat{\Omega}_X,\sheaf_{\hat{X}})
\]
Identifying $\mathbb{T}^1(\mathcal{X}_{n-1}/A_{n-1})$ with
$\mathrm{Ext}_{\mathcal{X}_{n}}^1(\widehat{\Omega}_{\mathcal{X}_{n}/A_{n}},\sheaf_{\mathcal{X}_{n-1}})=
\mathrm{Ext}_{\mathcal{X}_{n-1}}^1(\widehat{\Omega}_{\mathcal{X}_{n-1}/A_{n-1}},\sheaf_{\mathcal{X}_{n-1}})$ and
 $\mathbb{T}^1(\mathcal{X}_{n}/A_{n})$ with
$\mathrm{Ext}_{\mathcal{X}_{n}}^1(\widehat{\Omega}_{\mathcal{X}_{n}/A_{n}},\sheaf_{\mathcal{X}_{n}})$  we see that $\mathcal{Y}_{n-1}$ is represented
by the extension
\[
0 \la \sheaf_{\mathcal{X}_{n-1}} \la E \la \widehat{\Omega}_{\mathcal{X}_{n}/A_{n}} \la 0
\]
which is the pullback of the extension
\[
0 \la \sheaf_{\mathcal{X}_{n-1}} \la \widehat{\Omega}_{\mathcal{Y}_{n-1}/A_{n-1}}\otimes_{B_{n-1}}A_{n-1} \la \widehat{\Omega}_{\mathcal{X}_{n-1}/A_{n-1}} \la 0
\]
under the natural map $ \widehat{\Omega}_{\mathcal{X}_{n}/A_{n}} \la \widehat{\Omega}_{\mathcal{X}_{n-1}/A_{n-1}}$. Hence
\[
E=(\widehat{\Omega}_{\mathcal{Y}_{n-1}/A_{n-1}}
\otimes_{B_{n-1}}A_{n-1}) \times_{\widehat{\Omega}_{\mathcal{X}_{n-1}/A_{n-1}}}\widehat{\Omega}_{\mathcal{X}_{n}/A_{n}}.
\]
Then $\Theta( \mathcal{Y}_{n-1})\in
\mathrm{Ext}_{\mathcal{X}_n}^2(\widehat{\Omega}_{\mathcal{X}_n/A_n},\sheaf_{\widehat{X}})=
\mathrm{Ext}_{\hat{X}}^2(\widehat{\Omega}_X,\sheaf_{\hat{X}})$ is represented by the two-term extension
\[
0 \la \sheaf_{\hat{X}} \la \sheaf_{\mathcal{X}_n} \la E \la \widehat{\Omega}_{\mathcal{X}_n/A_n} \la 0,
\]
Hence, using Theorem~\ref{T1} we get that

\begin{theorem}\label{global-ob}
With assumptions as in Proposition~\ref{the-exact-sequence}, let $\mathcal{Y}_{n-1}=Def(Y,X)(\varepsilon_{n-1})(\mathcal{X}_n)$. Then
the obstruction to lift $\mathcal{X}_n$ to a deformation $\mathcal{X}_{n+1}$
over $A_{n+1}$ is the element
$ob(\mathcal{X}_n) \in \mathrm{Ext}_{\mathcal{X}_n}^2(\widehat{\Omega}_{\mathcal{X}_n/A_n},\sheaf_{\widehat{X}})=
\mathrm{Ext}_{\hat{X}}^2(\widehat{\Omega}_X,\sheaf_{\hat{X}})$ represented by the extension
\[
0 \la \sheaf_{\hat{X}} \la \sheaf_{\mathcal{X}_n} \la E \la \widehat{\Omega}_{\mathcal{X}_n/A_n} \la 0.
\]
where
\[
E=(\widehat{\Omega}_{\mathcal{Y}_{n-1}/A_{n-1}}
\otimes_{B_{n-1}}A_{n-1}) \times_{\widehat{\Omega}_{\mathcal{X}_{n-1}/A_{n-1}}}\widehat{\Omega}_{\mathcal{X}_{n}/A_{n}}.
\]
Therefore if $\mathrm{Ext}_{\hat{X}}^2(\widehat{\Omega}_X,\sheaf_{\hat{X}})=0$
and $Y$, $X$ satisfy the conditions of Proposition~\ref{finiteness-of-tangent-spaces}, then the hull of $Def(Y,X)$ is smooth.
\end{theorem}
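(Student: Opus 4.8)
The plan is to read the theorem off directly from the $T^1$-lifting criterion of Theorem~\ref{T1} together with the exact sequence of Proposition~\ref{the-exact-sequence}; essentially all of the geometric content has already been packaged into those two results, and what remains is to make the identifications in Proposition~\ref{the-exact-sequence} explicit at the level of Yoneda extensions. First I would apply Theorem~\ref{T1}: with $\mathcal{Y}_{n-1}=Def(Y,X)(\varepsilon_{n-1})(\mathcal{X}_n)$, the deformation $\mathcal{X}_n$ lifts to $A_{n+1}$ if and only if $\mathcal{Y}_{n-1}$ lies in the image of $\tau_n\colon \mathbb{T}^1(\mathcal{X}_n/A_n)\la \mathbb{T}^1(\mathcal{X}_{n-1}/A_{n-1})$. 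Since the tail of the exact sequence of Proposition~\ref{the-exact-sequence} reads
\[
\mathbb{T}^1(\mathcal{X}_n/A_n)\stackrel{\tau_n}{\la}\mathbb{T}^1(\mathcal{X}_{n-1}/A_{n-1})\stackrel{\Theta}{\la}\mathrm{Ext}^2_{\hat{X}}(\widehat{\Omega}_X,\sheaf_{\hat{X}}),
\]
this image condition is equivalent to $\Theta(\mathcal{Y}_{n-1})=0$. Hence $ob(\mathcal{X}_n):=\Theta(\mathcal{Y}_{n-1})$ is an obstruction element, and it only remains to compute it.

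For the computation I would spell out the isomorphisms in Proposition~\ref{the-exact-sequence}. By Proposition~\ref{T1-formula} together with Lemma~\ref{base-change} one has $\mathbb{T}^1(\mathcal{X}_n/A_n)\cong \mathrm{Ext}^1_{\mathcal{X}_n}(\widehat{\Omega}_{\mathcal{X}_n/A_n},\sheaf_{\mathcal{X}_n})$ and $\mathbb{T}^1(\mathcal{X}_{n-1}/A_{n-1})\cong \mathrm{Ext}^1_{\mathcal{X}_n}(\widehat{\Omega}_{\mathcal{X}_n/A_n},\sheaf_{\mathcal{X}_{n-1}})$, the latter being the composite of the isomorphism $\mathbb{T}^1(\mathcal{X}_{n-1}/A_{n-1})\cong \mathrm{Ext}^1_{\mathcal{X}_{n-1}}(\widehat{\Omega}_{\mathcal{X}_{n-1}/A_{n-1}},\sheaf_{\mathcal{X}_{n-1}})$ of Proposition~\ref{T1-formula} with the adjunction isomorphism of Lemma~\ref{base-change} for $i\colon \mathcal{X}_{n-1}\hookrightarrow \mathcal{X}_n$ (using $i^\ast\widehat{\Omega}_{\mathcal{X}_n/A_n}\cong \widehat{\Omega}_{\mathcal{X}_{n-1}/A_{n-1}}$). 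Applying Proposition~\ref{T1-formula} over $B_{n-1}$, the class $\mathcal{Y}_{n-1}\in \mathbb{T}^1(\mathcal{X}_{n-1}/A_{n-1})$ is represented by the $1$-extension $0\la \sheaf_{\mathcal{X}_{n-1}}\la \widehat{\Omega}_{\mathcal{Y}_{n-1}/A_{n-1}}\otimes_{B_{n-1}}A_{n-1}\la \widehat{\Omega}_{\mathcal{X}_{n-1}/A_{n-1}}\la 0$ coming from the trivial extension $0\la A_{n-1}\la B_{n-1}\la A_{n-1}\la 0$, and its image in $\mathrm{Ext}^1_{\mathcal{X}_n}(\widehat{\Omega}_{\mathcal{X}_n/A_n},\sheaf_{\mathcal{X}_{n-1}})$ is the pullback of this extension along the canonical surjection $\widehat{\Omega}_{\mathcal{X}_n/A_n}\la \widehat{\Omega}_{\mathcal{X}_{n-1}/A_{n-1}}$, i.e.\ the extension $0\la \sheaf_{\mathcal{X}_{n-1}}\la E\la \widehat{\Omega}_{\mathcal{X}_n/A_n}\la 0$ with $E=(\widehat{\Omega}_{\mathcal{Y}_{n-1}/A_{n-1}}\otimes_{B_{n-1}}A_{n-1})\times_{\widehat{\Omega}_{\mathcal{X}_{n-1}/A_{n-1}}}\widehat{\Omega}_{\mathcal{X}_n/A_n}$. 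The step requiring genuine care here is verifying that the abstract maps produced by Proposition~\ref{the-exact-sequence} correspond, under the identifications of Proposition~\ref{T1-formula} and Lemma~\ref{base-change}, to these evident maps of Yoneda $\mathrm{Ext}$ groups; I expect this naturality bookkeeping to be the main obstacle, though it is routine rather than conceptually new.

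Next, $\Theta$ is by construction the connecting homomorphism of the long exact sequence obtained by applying $\mathrm{Hom}_{\mathcal{X}_n}(\widehat{\Omega}_{\mathcal{X}_n/A_n},-)$ to the square-zero extension $0\la \sheaf_{\hat{X}}\la \sheaf_{\mathcal{X}_n}\la \sheaf_{\mathcal{X}_{n-1}}\la 0$, and such a connecting map $\mathrm{Ext}^1(\widehat{\Omega}_{\mathcal{X}_n/A_n},\sheaf_{\mathcal{X}_{n-1}})\la \mathrm{Ext}^2(\widehat{\Omega}_{\mathcal{X}_n/A_n},\sheaf_{\hat{X}})$ is given by Yoneda composition with the class of that short exact sequence in $\mathrm{Ext}^1_{\mathcal{X}_n}(\sheaf_{\mathcal{X}_{n-1}},\sheaf_{\hat{X}})$. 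Splicing the $1$-extension $[\,0\la \sheaf_{\mathcal{X}_{n-1}}\la E\la \widehat{\Omega}_{\mathcal{X}_n/A_n}\la 0\,]$ with $[\,0\la \sheaf_{\hat{X}}\la \sheaf_{\mathcal{X}_n}\la \sheaf_{\mathcal{X}_{n-1}}\la 0\,]$ over the common term $\sheaf_{\mathcal{X}_{n-1}}$ gives the $2$-extension $0\la \sheaf_{\hat{X}}\la \sheaf_{\mathcal{X}_n}\la E\la \widehat{\Omega}_{\mathcal{X}_n/A_n}\la 0$. Finally I would apply Lemma~\ref{base-change} once more (in its formal incarnation, exactly as in the proof of Proposition~\ref{the-exact-sequence}) with $i\colon \hat{X}\la \mathcal{X}_n$ and $i^\ast\widehat{\Omega}_{\mathcal{X}_n/A_n}\cong \widehat{\Omega}_X$, to identify $\mathrm{Ext}^2_{\mathcal{X}_n}(\widehat{\Omega}_{\mathcal{X}_n/A_n},\sheaf_{\hat{X}})$ with $\mathrm{Ext}^2_{\hat{X}}(\widehat{\Omega}_X,\sheaf_{\hat{X}})$. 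This exhibits $ob(\mathcal{X}_n)$ in the asserted form, and combining with Theorem~\ref{T1} finishes the first part.

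For the last assertion: if $\mathrm{Ext}^2_{\hat{X}}(\widehat{\Omega}_X,\sheaf_{\hat{X}})=0$, then the exact sequence of Proposition~\ref{the-exact-sequence} forces $\tau_n$ to be surjective for every $n$ and every $\mathcal{X}_n\in Def(Y,X)(A_n)$; since $k$ has characteristic zero, this is exactly the $T^1$-lifting property for $Def(Y,X)$. If moreover $Y\subset X$ satisfy the hypotheses of Proposition~\ref{finiteness-of-tangent-spaces}, then $\dim_k\mathbb{T}^1(Y,X)<\infty$, so by Schlessinger's criterion $Def(Y,X)$ has a pro-representable hull. Theorem~\ref{T1-lifting-property} then gives that this hull is smooth, as claimed.
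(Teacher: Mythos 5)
Your proof is correct and follows essentially the same approach as the paper: reduce to the $T^1$-lifting criterion of Theorem~\ref{T1}, identify the obstruction as $\Theta(\mathcal{Y}_{n-1})$ via the exact sequence of Proposition~\ref{the-exact-sequence}, represent $\mathcal{Y}_{n-1}$ by the pullback extension $E$ using Proposition~\ref{T1-formula} and Lemma~\ref{base-change}, and compute $\Theta(\mathcal{Y}_{n-1})$ as the Yoneda splice with $0\la\sheaf_{\hat{X}}\la\sheaf_{\mathcal{X}_n}\la\sheaf_{\mathcal{X}_{n-1}}\la 0$. The smoothness conclusion via the $T^1$-lifting property and Proposition~\ref{finiteness-of-tangent-spaces} is also the argument the paper intends.
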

In practice it is easier to verify vanishing for cohomology than for the Ext groups. Next we give some cohomological conditions for the vanishing
of $\mathrm{Ext}_{\hat{X}}^2(\widehat{\Omega}_X,\sheaf_{\hat{X}})$. First we make a definition.
\begin{definition}
Let $X$ be a pure scheme and $Y \subset X$ a closed subscheme such that $X-Y$ is smooth. Then we denote by $Ob^3(X)$ the cokernel of the local to
global obstruction map
$H^0(T^1(X)) \la H^2(\widehat{T}_X)$ of Proposition~\ref{local-to-global}.
\end{definition}
\begin{corollary}\label{obstructions-1}
There are three succesive obstructions in $H^0(\mathcal{E}xt^2_{\widehat{X}}(\widehat{\Omega}_{X}, \sheaf_{\widehat{X}}))$, $H^1(T^1(X))$ and
$Ob^3(X)$ for $X_n$ to lift to $A_{n+1}$. Therefore, if
\[
H^0(\mathcal{E}xt^2_{\widehat{X}}(\widehat{\Omega}_{X}, \sheaf_{\widehat{X}}))=H^1(T^1(X))=Ob^3(X)=0
\]
 and $Def_Y(X)$ has a hull, then its hull is smooth of dimension
\[
h^1(\widehat{T}_X)+h^0(T^1(X))-h^2(\widehat{T}_X)
\]
\end{corollary}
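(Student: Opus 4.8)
The plan is to feed the obstruction element produced in Theorem~\ref{global-ob} into the local-to-global spectral sequence for $\mathrm{Ext}$ and to read off its three graded pieces. I would start from the first quadrant spectral sequence
\[
E_2^{p,q}=H^p\bigl(\mathcal{E}xt^q_{\hat X}(\widehat{\Omega}_X,\sheaf_{\hat X})\bigr)\Longrightarrow \mathrm{Ext}^{p+q}_{\hat X}(\widehat{\Omega}_X,\sheaf_{\hat X}),
\]
and identify the terms of total degree $2$. Since $X-Y$ is smooth, $\mathcal{H}om_{\hat X}(\widehat{\Omega}_X,\sheaf_{\hat X})=\widehat{T}_X$ and, as established in the proof of Proposition~\ref{local-to-global}, $\mathcal{E}xt^1_{\hat X}(\widehat{\Omega}_X,\sheaf_{\hat X})\cong \mathcal{E}xt^1_X(\Omega_X,\sheaf_X)^{\wedge}=T^1(X)$, the last equality because $T^1(X)$ is already supported on $Y$. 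Hence $E_2^{0,2}=H^0(\mathcal{E}xt^2_{\hat X}(\widehat{\Omega}_X,\sheaf_{\hat X}))$, $E_2^{1,1}=H^1(T^1(X))$, $E_2^{2,0}=H^2(\widehat{T}_X)$, while $E_2^{0,1}=H^0(T^1(X))$.

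Next I would unwind the induced decreasing filtration $\mathrm{Ext}^2_{\hat X}(\widehat{\Omega}_X,\sheaf_{\hat X})=F^0\supseteq F^1\supseteq F^2\supseteq F^3=0$ with $F^p/F^{p+1}\cong E_\infty^{p,2-p}$. By first quadrant degree reasons no nonzero differential enters the columns $p=0,1$, so $E_\infty^{0,2}\subseteq E_2^{0,2}$ and $E_\infty^{1,1}\subseteq E_2^{1,1}=H^1(T^1(X))$; and the only nonzero differential meeting $E^{2,0}$ is $d_2\colon E_2^{0,1}\to E_2^{2,0}$, so $E_\infty^{2,0}=\mathrm{coker}\bigl(d_2\colon H^0(T^1(X))\to H^2(\widehat{T}_X)\bigr)$. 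The key identification is that this $d_2$ is the connecting map of the same spectral sequence and hence coincides with the local-to-global obstruction map $H^0(T^1(X))\to H^2(\widehat{T}_X)$ of Proposition~\ref{local-to-global}; therefore $E_\infty^{2,0}=Ob^3(X)$ by definition.

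Now the three successive obstructions appear. By Theorem~\ref{global-ob} the obstruction to lifting a deformation $\mathcal{X}_n\in Def(Y,X)(A_n)$ to $A_{n+1}$ is an element $ob(\mathcal{X}_n)\in \mathrm{Ext}^2_{\hat X}(\widehat{\Omega}_X,\sheaf_{\hat X})$. Its image under the edge homomorphism $\mathrm{Ext}^2_{\hat X}(\widehat{\Omega}_X,\sheaf_{\hat X})\to E_\infty^{0,2}\hookrightarrow H^0(\mathcal{E}xt^2_{\hat X}(\widehat{\Omega}_X,\sheaf_{\hat X}))$ is the first obstruction; if that group vanishes then $ob(\mathcal{X}_n)\in F^1$, and its class in $F^1/F^2=E_\infty^{1,1}\subseteq H^1(T^1(X))$ is the second obstruction; if $H^1(T^1(X))=0$ then $ob(\mathcal{X}_n)\in F^2=E_\infty^{2,0}=Ob^3(X)$ is the third. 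When all three named groups vanish every graded piece of the filtration is zero, so $\mathrm{Ext}^2_{\hat X}(\widehat{\Omega}_X,\sheaf_{\hat X})=0$; then Proposition~\ref{the-exact-sequence} makes every map $\mathbb{T}^1(\mathcal{X}_n/A_n)\to\mathbb{T}^1(\mathcal{X}_{n-1}/A_{n-1})$ surjective, i.e.\ $Def(Y,X)$ has the $T^1$-lifting property, so by Theorem~\ref{T1-lifting-property} its hull, which exists by hypothesis, is smooth. For the dimension, a smooth hull has dimension $\dim_k\mathbb{T}^1(Y,X)$; inserting $Ob^3(X)=\mathrm{coker}(d_2)=0$ into the exact sequence $0\to H^1(\widehat{T}_X)\to\mathbb{T}^1(Y,X)\to H^0(T^1(X))\xrightarrow{d_2}H^2(\widehat{T}_X)\to 0$ of Proposition~\ref{local-to-global} and counting dimensions yields $\dim_k\mathbb{T}^1(Y,X)=h^1(\widehat{T}_X)+h^0(T^1(X))-h^2(\widehat{T}_X)$.

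The step I expect to require the most care is the spectral-sequence bookkeeping: checking that the differential $d_2\colon E_2^{0,1}\to E_2^{2,0}$ really is the connecting map of Proposition~\ref{local-to-global}, and tracking which $E_\infty$-terms are subobjects, quotients or subquotients of the corresponding $E_2$-terms, so that vanishing of the three explicitly named groups genuinely forces the successive obstructions — and hence all of $\mathrm{Ext}^2_{\hat X}(\widehat{\Omega}_X,\sheaf_{\hat X})$ — to vanish.
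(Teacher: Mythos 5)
Your proposal is correct and takes essentially the same route as the paper: the paper also invokes the Leray spectral sequence $E_2^{p,q}=H^p(\mathcal{E}xt^q_{\hat X}(\widehat{\Omega}_X,\sheaf_{\hat X}))\Rightarrow \mathrm{Ext}^{p+q}$ and reads the three obstruction groups off its low-degree behavior, obtaining exactly the three-step filtration of $\mathrm{Ext}^2_{\hat X}(\widehat{\Omega}_X,\sheaf_{\hat X})$ with graded pieces contained in $H^0(\mathcal{E}xt^2)$, $H^1(T^1(X))$ and $Ob^3(X)=\mathrm{coker}(d_2)$. The only stylistic difference is that the paper writes down the five- and seven-term exact sequences of low degree and says the claim follows, whereas you unwind the filtration and the $E_\infty$-terms explicitly and verify that $d_2\colon E_2^{0,1}\to E_2^{2,0}$ is the obstruction map of Proposition~\ref{local-to-global}; this is the same argument, just spelled out in more detail.
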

\begin{proof}
Consider the Leray spectral sequence
\[
E_2^{p,q}=H^p(\mathcal{E}xt^q_{\hat{X}}(\widehat{\Omega}_X,\sheaf_{\hat{X}})) \Rightarrow E^{p+q}=
\mathrm{Ext}^{p+q}_{\hat{X}}(\widehat{\Omega}_X,\sheaf_{\hat{X}}))
\]
Then there are exact sequences
\begin{gather*}
0 \la E_1^2 \la E^2 \la E_2^{0,2} \\
0 \la E_2^{1,0}\la E^1 \la E_2^{0,1} \la E_2^{2,0} \la E_1^2 \la E_2^{1,1} \la E_2^{3,0}
\end{gather*}
Now considering that $E^2=\mathrm{Ext}^{p+q}_{\hat{X}}(\widehat{\Omega}_X,\sheaf_{\hat{X}}))$,
$E_2^{0,2}=H^0(\mathcal{E}xt^2_{\widehat{X}}(\widehat{\Omega}_{X}, \sheaf_{\widehat{X}}))$ and
$E_2^{2,0} = H^2(\widehat{T}_X)$, we get the claim.
\end{proof}

\begin{corollary}
Suppose that $Def(Y,X)$ has a hull and that
\[
H^0(\mathcal{E}xt^2_{\widehat{X}}(\widehat{\Omega}_{X}, \sheaf_{\widehat{X}}))=H^1(T^1(X))=H^2(\widehat{T}_X)=H^0(T^1(X))=0
\]
Then every deformation of $X$ is formally locally trivial.
\end{corollary}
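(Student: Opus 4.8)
The plan is to reduce the statement to the following: every deformation $X_A\in Def(Y,X)(A)$, for $A\in Art(k)$, is locally trivial. Since a formal deformation over a complete local ring is locally trivial at each finite level exactly when the statement holds for all Artinian truncations, this suffices. I would first record the two structural consequences of the hypotheses. From $H^0(T^1(X))=0$ and the four-term exact sequence of Proposition~\ref{local-to-global} we get $\mathbb{T}^1(Y,X)\cong H^1(\widehat{T}_X)$, and, as recalled in the proof of that proposition (\cite{Halp76}), $H^1(\widehat{T}_X)$ is precisely the space of first-order \emph{locally trivial} deformations of $\widehat{X}$. Moreover $H^2(\widehat{T}_X)=0$ forces $Ob^3(X)=0$, since $Ob^3(X)$ is a quotient of $H^2(\widehat{T}_X)$, so (as the other two vanishings also hold) Corollary~\ref{obstructions-1} shows the hull of $Def(Y,X)$ is smooth of dimension $h^1(\widehat{T}_X)$; this last fact is reassuring but, as the argument below shows, not strictly needed.

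\textbf{Induction.} I would argue by induction along a chain of small extensions $k=A_0\leftarrow A_1\leftarrow\cdots\leftarrow A_m=A$ reaching a given $A$. The base case is $\widehat{X}$ over $k$, which is vacuously locally trivial. For the inductive step, consider the small extension $0\to J\to A_i\to A_{i-1}\to 0$ with $J\cong k$, and assume the restriction $X_{A_{i-1}}$ is locally trivial. On the one hand, the obstruction to lifting $X_{A_{i-1}}$ to a \emph{locally trivial} deformation over $A_i$ lies in $H^2(\widehat{T}_X)\otimes J=0$, so a locally trivial lift exists. On the other hand, $Def(Y,X)$ satisfies $(H_1)$, so by Theorem~\ref{action} the set of all lifts of $X_{A_{i-1}}$ over $A_i$ is a single orbit under the transitive action of $t_{Def(Y,X)}=\mathbb{T}^1(Y,X)=H^1(\widehat{T}_X)$, and the action of a class in $H^1(\widehat{T}_X)$ carries a locally trivial deformation to a locally trivial one. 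Hence $X_{A_i}$ is locally trivial, and at $i=m$ we conclude that $X_A$ is locally trivial.

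\textbf{Main obstacle.} The one point that genuinely needs care is the claim that the locally trivial deformation theory of the \emph{formal} scheme $\widehat{X}$ is controlled by $H^\bullet(\widehat{T}_X)$: that lifting a locally trivial deformation to a locally trivial one over a square-zero extension with ideal $J$ is obstructed in $H^2(\widehat{T}_X)\otimes J$, and that Schlessinger's tangent-space action of $\mathbb{T}^1(Y,X)\otimes J$, which here is an action of $H^1(\widehat{T}_X)\otimes J$, agrees with the classical operation of twisting the gluing data of a locally trivial deformation by a \v{C}ech $\widehat{T}_X$-cocycle. Both are verified by working over the open cover of Definition~\ref{def1} on which the deformation is trivial on each chart, i.e.\ each chart is the formal completion of a trivial deformation $U_i\times_k\mathrm{Spec}\,A$; the same observation shows that such locally trivial deformations genuinely lie in $Def(Y,X)$, so that the subfunctor of locally trivial deformations is a legitimate subfunctor. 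It is precisely the hypothesis $H^0(T^1(X))=0$ that drives everything, since it collapses $\mathbb{T}^1(Y,X)$ onto $H^1(\widehat{T}_X)$ and thereby forces every first-order direction, and hence inductively every lift, to be locally trivial.
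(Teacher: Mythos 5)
Your proof is correct, and it takes a genuinely different route from the paper's. The paper's own proof is a one-line reduction to the preceding corollary: under the four vanishings, the hull of $Def(Y,X)$ is smooth of dimension $h^1(\widehat{T}_X)+h^0(T^1(X))-h^2(\widehat{T}_X)=h^1(\widehat{T}_X)$, the locally trivial subfunctor $Def^{\prime}(Y,X)$ also has a smooth hull of dimension $h^1(\widehat{T}_X)$ (tangent space $H^1(\widehat{T}_X)$, obstruction in $H^2(\widehat{T}_X)=0$), and since $H^0(T^1(X))=0$ collapses $\mathbb{T}^1(Y,X)$ to $H^1(\widehat{T}_X)$, the inclusion $Def^{\prime}\hookrightarrow Def$ is an isomorphism on tangent spaces; two smooth hulls of equal dimension joined by such a map must coincide, and the statement follows. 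Your argument instead proceeds by explicit induction along small extensions, using only the vanishing of the obstruction to a locally trivial lift ($H^2(\widehat{T}_X)\otimes J=0$) and the transitivity of Schlessinger's tangent-space action of $\mathbb{T}^1(Y,X)=H^1(\widehat{T}_X)$, together with the compatibility of that action with \v{C}ech twisting on locally trivial deformations. This is more elementary and more informative: as you correctly note, it never invokes $H^0(\mathcal{E}xt^2_{\widehat{X}}(\widehat{\Omega}_X,\sheaf_{\widehat{X}}))=0$ or $H^1(T^1(X))=0$, so it establishes the conclusion under weaker hypotheses than those stated. Your identification of the one point needing care --- that the Schlessinger action restricted to $Def^{\prime}$ agrees with the cocycle action on gluing data over the trivializing cover, and that the obstruction theory of $Def^{\prime}$ lives in $H^\bullet(\widehat{T}_X)$ --- is exactly where the work is, and your sketch of verifying it on the charts of Definition~\ref{def1} is the right move.
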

This happens because from the previous corollary the hull of $Def(Y,X)$ is smooth and is the same as the hull of the locally trivial
deformations $Def^{\prime}(Y,X)$.

\begin{remark}
The simplest case when $Ob^3(X)=0$ is when $H^2(\widehat{T}_X)=0$. This happens in particular when there is a morphism $f \colon X \la S$, where
$S$ is affine, $f$ is proper with fibers of dimension $\leq \; 1$ and $Y=f^{-1}(s)$, for some $s \in S$. Then from the formal functions theorem it follows that $H^2(\widehat{T}_X)=0$. 
This is the  case of 3-fold flips and divisorial contractions with at most one-dimensional fibers.
\end{remark}

\section{Local to global.}\label{local-to-global-section}

Let $X$ be a scheme and $Y \subset X$ a closed subscheme such that $X-Y$ is smooth. In the previous section we obtained obstructions in order for a deformation $X_n \in Def(Y,X)(A_n)$ to lift to a deformation  $X_{n+1} \in Def(Y,X)(A_{n+1})$ in the case that $X$ was pure and reduced. However our methods were global and do not give us any information about the local structure of $X_{n+1}$. In this section we will study the problem of when local liftings of $X_n$ globalize to give a deformation $X_{n+1}$ of $X$ over $A_{n+1}$, or more generally, when local deformations of $X$ exist globally..

Ideally one should study the local to global map $\pi \colon Def(Y,X) \la Def_{loc}(Y,X)$. If $X=Y$, $X$ has isolated singularities and $H^2(T_X)=0$ then $\pi$ is known to be smooth. This is not necessarily true anymore if $X$ has positive dimensional singular locus. The reason is the same as the one for the failure of $Def_{loc}(Y,X)$ to have a hull. It is the presence of local automorphisms that do not lift to higher order. However under strong restrictions on the singularities of $X$, $\pi$ is smooth.

\begin{proposition}\label{smoothness-of-pi}
With assumptions as in Theorem~\ref{local-hull}, suppose also that $H^2(\widehat{T}_X)=0$. Then $\pi$ is smooth.
\end{proposition}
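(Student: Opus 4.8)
The statement asserts that the morphism of functors $\pi$ is formally smooth, so the plan is to verify the usual lifting criterion: for every small extension
\[
0 \la J \la B \la A \la 0, \qquad J \cong k,
\]
and every pair $(\xi_A,\xi_B^{loc})$ with $\xi_A \in Def(Y,X)(A)$, $\xi_B^{loc}\in Def_{loc}(Y,X)(B)=H^0(\underline{Def}(Y,X)(B))$ and $\pi(\xi_A)$ equal to the image of $\xi_B^{loc}$ in $Def_{loc}(Y,X)(A)$, one must produce a lift $\xi_B\in Def(Y,X)(B)$. The argument is the classical \v{C}ech gluing argument underlying the local-to-global sequence for $Def(X)$: the local deformations over $B$ giving $\xi_B^{loc}$ are patched into a global one, with $H^2(\widehat{T}_X)=0$ killing the gluing obstruction and the hypotheses inherited from Theorem~\ref{local-hull} used to make the local patching data compatible with the fixed global object $\xi_A$.

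First I would choose a collection of local \'etale neighbourhoods $U_i \to X$ of $Y$ covering $Y$, fine enough that: (a) over each $U_i$ the section $\xi_B^{loc}$ is represented by an honest local deformation $\mathcal{X}_{B,i}\in Def(Y\cap U_i,U_i)(B)$, and, after adjusting by an isomorphism, $\mathcal{X}_{B,i}$ restricts over $A$ to $\mathcal{X}_{A,i}:=\xi_A|_{U_i}$; (b) the family $\{\mathcal{X}_{A,i}\}$ is glued by the canonical cocycle of isomorphisms $\{g_{ij}\}$ coming from the global object $\xi_A$; and (c) each of the finitely many singular points at which the local condition $(H_4)$ fails lies in exactly one $U_i$, hence on no intersection $U_{ij}$ or $U_{ijk}$. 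When instead the second alternative of Theorem~\ref{local-hull} holds, $Z$ has codimension $\geq 3$ and depth $\geq 3$, so $Def(Y,X)=Def(Y,X-Z)$ with $X-Z$ smooth, and the steps below simplify accordingly.

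Next, on each overlap $U_{ij}$ the deformations $\mathcal{X}_{B,i}$ and $\mathcal{X}_{B,j}$ define the same class in $\underline{Def}(Y,X)(B)$, so after a further refinement there are $B$-isomorphisms $\theta_{ij}\colon\mathcal{X}_{B,i}|_{U_{ij}}\to\mathcal{X}_{B,j}|_{U_{ij}}$. The reduction $g_{ij}^{-1}\circ\bar\theta_{ij}$ is an automorphism of $\mathcal{X}_{A,i}|_{U_{ij}}$ over $A$; since $U_{ij}$ avoids the bad points, the hypothesis carried over from Theorem~\ref{local-hull} --- which, as noted there, amounts exactly to local automorphisms of deformations lifting to higher order --- lets us lift it to a $B$-automorphism $\tau_{ij}$ of $\mathcal{X}_{B,i}|_{U_{ij}}$, and replacing $\theta_{ij}$ by $\theta_{ij}\circ\tau_{ij}^{-1}$ we may assume $\bar\theta_{ij}=g_{ij}$ for all $i,j$. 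Then $\theta_{jk}\circ\theta_{ij}\circ\theta_{ik}^{-1}$ is a $B$-automorphism of $\mathcal{X}_{B,i}|_{U_{ijk}}$ reducing over $A$ to $g_{jk}g_{ij}g_{ik}^{-1}=\mathrm{id}$, hence of the form $\mathrm{id}+c_{ijk}$ with $c_{ijk}\in\Gamma(U_{ijk},\widehat{T}_X\otimes_k J)$, and a routine computation shows $\{c_{ijk}\}$ is a \v{C}ech $2$-cocycle. Passing to the limit over refinements, its class lies in $H^2(\widehat{T}_X\otimes_k J)\cong H^2(\widehat{T}_X)=0$, so after modifying the $\theta_{ij}$ by a suitable $1$-cochain valued in $\widehat{T}_X\otimes_k J$ --- which leaves the reductions $\bar\theta_{ij}=g_{ij}$ untouched --- the $\{\theta_{ij}\}$ form a genuine cocycle. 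Gluing the $\mathcal{X}_{B,i}$ along it produces a flat formal deformation $\mathcal{X}_B$ of $\widehat{X}$ over $B$; since each piece is the formal completion of an algebraic deformation of an \'etale neighbourhood of $Y$, $\mathcal{X}_B$ lies in $Def(Y,X)(B)$, its reduction over $A$ is $\xi_A$, and it induces $\xi_B^{loc}$ locally, so it maps to the prescribed element of $Def(Y,X)(A)\times_{Def_{loc}(Y,X)(A)}Def_{loc}(Y,X)(B)$, giving the required surjectivity.

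The main obstacle is the automorphism-correction step: adjusting the local gluing isomorphisms $\theta_{ij}$ so that they reduce mod $J$ to the gluing data of the given global deformation $\xi_A$. This is precisely where the strong hypotheses of Theorem~\ref{local-hull} are indispensable, and it is the mechanism by which $\pi$ fails to be smooth in general --- unliftable local automorphisms obstruct the comparison. Everything else is the standard gluing argument behind the local-to-global sequence for $Def(X)$, with the vanishing $H^2(\widehat{T}_X)=0$ playing exactly the role it plays in the isolated-singularities case (cf.~\cite{Har04}).
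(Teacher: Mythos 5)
Your proof is correct and follows essentially the same route as the paper's: verify Schlessinger's lifting criterion by patching local deformations over $B$, use $(H_4)$ on overlaps (after arranging the finitely many bad points to lie off the intersections) to make the gluing data reduce mod $J$ to the cocycle of $\xi_A$, observe that the triple compositions give a $2$-cocycle in $\widehat{T}_X\otimes_k J$, and kill the obstruction using $H^2(\widehat{T}_X)=0$. Your write-up is in fact slightly more explicit than the paper's at the step of first normalizing the reductions $\bar\theta_{ij}$ to equal $g_{ij}$ before forming the cocycle, but this is what the paper implicitly does as well.
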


\begin{proof}
We only do the case when $X=Y$. The general case is proved similarly. For convenience, set $D=Def_{loc}(Y,X)$ and $D_{loc}=Def_{loc}(Y,X)$. Then it suffices to show that for any small extension
\[
0 \la J \la B \stackrel{g}{\la} A \la 0
\]
the natural map
\[
D(B) \la D(A)\times_{D_{loc}(A)} D_{loc}(B)
\]
is surjective.

Let $X_A \in D(A)$, $s_A =\pi (X_A) \in D_{loc}(A)$ and $s_B \in D_{loc}(B)$ such that $D_{loc}(g)(s_B)=s_A$. By the definition of $D_{loc}$, $s_B$ and $s_A$ are equivalent to an open cover $\{ U_i\}$ of $X$, a collection of deformations $U_i^B$ and $U_i^A$ of $U_i$ over $B$ and $A$ respectively, such that $U_i^B \otimes_B A \cong U^A_i$, $B$-isomorphisms $\phi^B_{ij} \colon U_i^B|_{U_i \cap U_j} \la U_j^B|_{U_j \cap U_i}$ and $A$-isomorphisms $\phi^A_{ij} \colon U_i^A|_{U_i \cap U_j} \la U_j^A|_{U_j \cap U_i}$ such that for any $i,j,k$, $\phi_{ij}^A\phi_{jk}^A\phi_{ki}^A$ is the identity automorphism of $U^A_{ijk}=U_i^A \cap U_j^A \cap U_k^A$.

By assumption, we may take $U_i$ in such a way that $U_i \cap U_j$ satisfies $(H_4)$. Hence we may take the $\phi^B_{ij}$ such that on $U_{ijk}=U_i \cap U_j \cap U_k$, the restriction of $\phi^B_{ijk}=\phi^B_{ij} \phi^B_{jk}\phi^B_{ki}$ on $U^A_{ijk}$ is the identity automorphism of $U^A_{ijk}$. Hence $\phi^B_{ijk}$ corresponds to a $B$-derivation $d_{ijk} \in \mathrm{Hom}_{U_i^B}(\Omega_{U_i^B/B}, \sheaf_{U_i})=\mathrm{Hom}_{U_i}(\Omega_{U_i},\sheaf_{U_i})$. On the fourfold intersections $U_{ijks}=U_i\cap U_j \cap U_k \cap U_s$ they satisfy a cocycle condition and hence we get an element of $H^2(\mathcal{H}om_X(\Omega_X, \sheaf_X))=H^2(T_X)$. If this element vanishes then the $\phi^B_{ij}$ can be modified in such a way that $\phi^B_{ij}\phi^B_{jk}\phi^B_{ki}$ is the identity automorphism of $U_i^B \cap U_j^B \cap U_k^B$ and hence the $U_i^B$ glue to a global deformation $X_B$.
\end{proof}

In order to get around the failure of the local to global map $\pi \colon Def(Y,X) \la Def_{loc}(Y,X)$ to be smooth, we must gain some control of the automorphisms of deformations. Having this in mind, and following the ideas of Lichtenbaum-Schlessinger~\cite{Li-Sch67}, we make the following definitions.

\begin{definition}\label{def-of-local-functors}
Let
\begin{equation}\label{small-ext}
0 \la J \la B \la A \la 0
\end{equation}
be a small extension of Artin rings and $X_A \in Def(Y,X)(A)$. Let $(X^i_B,\phi_i)$, $i=1,2$ be pairs where $X^i_B\in Def(Y,X)(B)$ and $\phi_i \colon X_A \la X^i_B \otimes_B A$ isomorphisms. We say that the pair $(X^1_B, \phi_1) $ is isomorphic to the pair $(X^2_B, \phi_2)$ if and only if there is a $B$-isomorphism $\psi \colon X^1_B \la X^2_B$ such that $\psi \phi_1 = \phi_2$.
\begin{enumerate}
\item We define by $Def(X_A/A,B)$ to be the set of isomorphism classes of pairs $[X_B, \phi]$ of deformations $X_B\in Def(Y,X)(B)$ and marking isomorphisms $\phi \colon X_A \la X_B \otimes_B A$.
\item Let $\underline{Def}(X_A/A,B)$ be the sheaf of sets associated to the presheaf $F$ on $X$ such that $F(U)=Def(U_A/A,B)$, where $U_A=X_A|_U$. Then we define
\[
Def_{loc}(X_A/A,B)=H^0(\underline{Def}(X_A/A,B))
\]
\end{enumerate}
Note that there is a natural map
\[
\pi \colon Def_(X_A/A,B) \la Def_{loc}(X_A/A,B).
\]
\end{definition}
Note also that since any square zero extension of local Artin $k$-algebras can be obtained by a sequence of successive small extensions, we do not lose anything by working only with small extensions.
\begin{remark}
Let $X_n \in Def(Y,X)(A_n)$. Then in the notation of section~\ref{T1-section}, $\mathbb{T}^1(X_n/A_n)=Def(X_n/A_n,B_{n+1})$ and $T^1(X_n/A_n)=Def_{loc}(X_n/A_n,B_{n+1})$.
\end{remark}

\begin{theorem}\label{local-to-global-1}
Let $X$ be a scheme defined over a field $k$ and $Y \subset X$ a closed subscheme such that $X-Y$ is smooth. Let
\[
0 \la J \la B \la A \la 0
\]
be a small extension of local Artin $k$-algebras and $X_A \in Def(Y,X)(A)$. Then
\begin{enumerate}
\item $Def(X_A/A,B)$ and $Def_{loc}(X_A/A,B)$ are $\mathbb{T}^1(Y,X) \otimes J$ and $H^0(T^1(X)\otimes J)$ homogeneous spaces, respectively.
\item Let $s_B \in Def_{loc}(X_A/A,B)$. Then the set $\pi^{-1}(s_B)$ is a homogeneous space over $H^1(\widehat{T}_X \otimes J)$.
\item There is a sequence
\[
0 \la H^1(\widehat{T}_X \otimes J) \stackrel{\alpha}{\la} Def(X_A/A,B) \stackrel{\pi}{\la}   Def_{loc}(X_A/A,B)    \stackrel{\partial}{\la} H^2(\widehat{T}_X \otimes J)
\]
which is exact in the following sense. Let $s_B \in Def_{loc}(X_A/A,B)$. Then $s_B$ is in the image of $\pi$ if and only if $\partial (s_B)=0$. Moreover, let $X_B, X^{\prime}_B \in Def(X_A/A,B)$ such that $\pi(X_A)=\pi(X^{\prime}_A)$. Then there is $\gamma \in H^1(\widehat{T}_X \otimes J)$ such that $X^{\prime}_A = \gamma \cdot  X_A $, where by $"\cdot"$ we denote the action of
$H^1(\widehat{T}_X \otimes J)$ on $\pi^{-1}(s_B)$.
\end{enumerate}
\end{theorem}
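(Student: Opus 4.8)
The plan is to dispose of part~(1) using the general theory of deformations over small extensions, and then to derive parts~(2) and~(3) by a \v{C}ech computation on a fine enough affine open cover $\{U_i\}$ of $X$, exactly as in the classical local-to-global sequence~\cite{Har04}. For~(1): once the marking is fixed the situation is rigidified, so $Def(X_A/A,B)$ is, when nonempty, a torsor under $\mathbb{T}^1(Y,X)\otimes_k J$ --- this is the framed version of Schlessinger's action (Theorem~\ref{action}) combined with the identifications $\mathbb{T}^1(Y,X)=\mathrm{Ext}^1_{\widehat{X}}(\widehat{\Omega}_X,\sheaf_{\widehat{X}})$, $T^1(Y,X)=T^1(X)$ of Proposition~\ref{local-to-global} (which need $X-Y$ smooth) and the base-change Lemma~\ref{base-change}. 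Sheafifying the same statement, $\underline{Def}(X_A/A,B)$ is a pseudo-torsor under $T^1(X)\otimes J$; taking $H^0$ --- and noting that a pseudo-torsor sheaf with a global section is a trivial torsor --- shows that $Def_{loc}(X_A/A,B)$ is a homogeneous space under $H^0(T^1(X)\otimes J)$.

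For the \v{C}ech part, given $s_B\in Def_{loc}(X_A/A,B)$, after refining the cover it is represented by marked local liftings $(U_i^B,\phi_i)$ of $X_A|_{U_i}$ over $B$ together with marking-preserving isomorphisms $\theta_{ij}\colon U_i^B|_{U_{ij}}\la U_j^B|_{U_{ij}}$. On triple overlaps the automorphism $\theta_{ik}^{-1}\circ\theta_{jk}\circ\theta_{ij}$ of $U_i^B|_{U_{ijk}}$ reduces to the identity modulo $J$; since the extension is square zero with $\mathfrak{m}_B J=0$, it equals $\mathrm{id}+d_{ijk}$ for a $k$-derivation $d_{ijk}$ of the central fibre valued in $\sheaf_{\widehat{X}}\otimes_k J$, i.e.\ $d_{ijk}\in H^0(U_{ijk},\widehat{T}_X\otimes J)$. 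These form a \v{C}ech $2$-cocycle, and a routine check shows that its class $\partial(s_B)\in H^2(\widehat{T}_X\otimes J)$ is independent of all choices. Running the same bookkeeping for two global liftings $X_B,X_B'$ with $\pi(X_B)=\pi(X_B')$, marking-preserving local isomorphisms $\lambda_i\colon X_B|_{U_i}\la X_B'|_{U_i}$ produce a \v{C}ech $1$-cocycle $(e_{ij})$ in $\widehat{T}_X\otimes J$ whose class in $H^1(\widehat{T}_X\otimes J)$ depends only on the pair, vanishes exactly when $X_B\cong X_B'$ over $B$ compatibly with the markings, and conversely can be prescribed arbitrarily by re-gluing $X_B$; this gives part~(2), in particular a torsor structure on $\pi^{-1}(s_B)$ and hence the injectivity of $\alpha$.

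Exactness then follows: if $\partial(s_B)=0$ one modifies the $\theta_{ij}$ by derivations trivializing the $2$-cocycle, after which the local liftings glue to a genuine $X_B\in Def(Y,X)(B)$ with $\pi(X_B)=s_B$; conversely $\partial$ manifestly vanishes on the image of $\pi$ (take $\theta_{ij}=\mathrm{id}$), so $\mathrm{im}(\pi)=\partial^{-1}(0)$. Finally Proposition~\ref{local-to-global} identifies $H^1(\widehat{T}_X\otimes J)$ with the kernel of $\mathbb{T}^1(Y,X)\otimes J\la H^0(T^1(X)\otimes J)$, the locally trivial infinitesimal deformations, and since the action of $\xi\in\mathbb{T}^1(Y,X)\otimes J$ on $Def(X_A/A,B)$ moves $\pi(X_B)$ by the image of $\xi$ in $H^0(T^1(X)\otimes J)$, the fibres of $\pi$ are precisely the $H^1(\widehat{T}_X\otimes J)$-orbits. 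The main obstacle throughout is not the homological algebra but the fact that the objects of $Def(Y,X)(B)$ are formal schemes, only locally completions of deformations of \'etale neighborhoods of $Y$: every step --- writing automorphisms as $\mathrm{id}+$derivation, gluing local pieces, and identifying the relevant $\mathrm{Ext}$ and cohomology groups --- has to be carried out on such formal models, which is exactly what Lemma~\ref{base-change}, Proposition~\ref{T1-formula} and the identity $\widehat{T}_X=\widehat{T_X}$ (together with the formal functions theorem used already in Proposition~\ref{local-to-global}) are there to make legitimate.
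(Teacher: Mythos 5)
Your proposal is essentially correct, and the \v{C}ech bookkeeping you carry out for parts (2) and (3) --- reducing triple-overlap failures of the cocycle condition to derivations valued in $\widehat{T}_X\otimes J$ giving the class $\partial(s_B)\in H^2$, and comparing two global liftings via local isomorphisms to get a $1$-cocycle class in $H^1$ --- is precisely what the paper does in Step 2 of its proof. The genuine divergence is in part (1). The paper first runs the Lichtenbaum--Schlessinger machinery explicitly: it applies the two long exact sequences arising from $B\la A\la\sheaf_{X_A}$ (for the sheaves $T^i$ and for the $\mathrm{Ex}$-spaces of infinitesimal extensions), identifies $Def_{loc}(X_A/A,B)$ as $\lambda^{-1}([\mathcal I])$ inside $H^0(T^1(X_A/B,J\otimes\sheaf_{X_A}))$ and $Def(X_A/A,B)$ as $\mu^{-1}([I])$ inside $\mathrm{Ex}(X_A/B,J\otimes\sheaf_{X_A})$, and reads off the torsor structures from the kernels of $\lambda$ and $\mu$. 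You instead invoke the framed variant of Schlessinger's transitivity result to get the torsor structure on $Def(X_A/A,B)$ directly, sheafify, and observe that a pseudo-torsor under $T^1(X)\otimes J$ with a global section is a trivial torsor. The two routes are equivalent; yours is shorter and cleaner, while the paper's has the advantage that the $\lambda^{-1}([\mathcal I])$ and $\mu^{-1}([I])$ descriptions are exactly what feed Corollary~\ref{obstructions-3}, where the successive obstructions in $H^0(T^2(X)\otimes J)$ and $H^1(T^1(X)\otimes J)$ are extracted from the same long exact sequences. One caveat: Theorem~\ref{action} as stated in the paper gives only a \emph{transitive} action on unmarked liftings over a small extension; the simply-transitive (framed/marked) version you need for the torsor claim is a standard strengthening but is not literally what Theorem~\ref{action} says, so you should either cite a framed version or note that passing to marked pairs removes the stabilizers. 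Finally, your closing paragraph correctly flags the main technical subtlety the paper handles only in the case $X=Y$: the objects are formal schemes that are locally completions of deformations of \'etale neighborhoods, so the reduction of automorphisms to derivations and the gluing of local models must all be carried out formally, which is exactly what Lemma~\ref{base-change}, Proposition~\ref{T1-formula}, and the identification $T^1(Y,X)=T^1(X)$ from Proposition~\ref{local-to-global} are there to license.
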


\begin{proof}
We will only prove the theorem in the case when $X=Y$. The local algebraizability conditions embedded in the definition of $Def(Y,X)$ ensure that, with some effort, all steps of the proof can be carried out in the case when $Y \not= X$ and $X-Y$ is smooth. The proof of the theorem consists of two steps.

\textbf{Step 1.} In this step we will obtain a description of $Def(X_A/A,B)$ and $Def_{loc}(X_A/A,B)$ using cotangent sheaf cohomology and spaces of infinitesimal extensions that we describe next. Let $X$ be an $S$-scheme and $\mathcal{F}$ an $\sheaf_X$-module. We denote by $\mathrm{Ex}(X/S, \mathcal{F})$ the space of square zero extensions
\[
0 \la \mathcal{F} \la \sheaf_{X^{\prime}} \la \sheaf_X \la 0
\]
of $S$-schemes~\cite{Gr64}. Note that there is always a natural map
\[
\mathrm{Ex}(X/S, \mathcal{F}) \la H^0(T^1(X/S,\mathcal{F}))
\]
where $T^1(X/S,\mathcal{F})$ is the first cotangent cohomology sheaf of
Schlessinger~\cite{Li-Sch67}. This map is an isomorphism if $X$ and $S$ are affine.

The sequence $B\la A \la \sheaf_{X_A}$ gives the exact sequences~\cite{Li-Sch67},~\cite{Gr64}
\begin{equation}\label{T1-sequence}
0 \la T^1(X_A/A, J \otimes_A \sheaf_{X_A}) \la T^1(X_A/B, J \otimes_A \sheaf_{X_A}) \stackrel{\nu}{\la} T^1(A/B, J \otimes_A \sheaf_{X_A}) \la T^2(X_A/A, J \otimes_A \sheaf_{X_A})
\end{equation}
and
\begin{equation}\label{Ex-eq}
0 \la   \mathrm{Ex}(X_A/A,J \otimes_A \sheaf_{X_A})    \la \mathrm{Ex}(X_A/B,J \otimes_A \sheaf_{X_A}) \stackrel{\mu}{\la} \mathrm{Ex}(A/B,J \otimes_A \sheaf_{X_A})
\end{equation}
Taking global sections on the first one we get
\begin{equation}\label{H0}
0 \la H^0(T^1(X_A/A, J \otimes_A \sheaf_{X_A})) \la H^0(T^1(X_A/B, J \otimes_A \sheaf_{X_A})) \stackrel{\lambda}{\la} H^0(T^1(A/B, J \otimes_A \sheaf_{X_A}))
\end{equation}
By a slight abuse of notation we denote by $[\mathcal{I}]$ both the elements of $\mathrm{Ex}(A/B,J \otimes_A \sheaf_{X_A})$ and $H^0(T^1(A/B, J \otimes_A \sheaf_{X_A}))$ corresponding to the square zero extension
\[
0 \la J \otimes_A \sheaf_{X_A} \la B \la A \la 0
\]
\textit{Claim:}

\begin{enumerate}
\item \[
Def_{loc}(X_A/A,B)=\lambda^{-1}([\mathcal{I}])
\]
\item \[
Def(X_A/A,B)= \mu^{-1}([I])
\]
\end{enumerate}

Indeed, an element of $Def_{loc}(X_A/A,B)$ is equivalent to an open cover $\{ U_i\}$ of $X$ and pairs $[U^i_B, \phi^i_A] \in Def(U^i_A/A,B)$, where $U^i_A=X_A|_{U_i}$, such that for any $i,j$, $[U^i_B|_{U_i \cap U_j}, \phi^i_A|_{U_i \cap U_j}]=[U^J_B|_{U_i \cap U_j}, \phi^J_A|_{U_i \cap U_j}]$. These
give square zero extensions $[e_i]\in T^1(U^i_A/B,J \otimes_A \sheaf_{U^i_A})$
\[
0 \la J \otimes_A \sheaf_{U^i_A} \la \sheaf_{U^i_B} \la \sheaf_{U^i_A} \la 0
\]
which are isomorphic on the overlaps $U_i \cap U_j$ and hence glue to an element $[e] \in H^0(T^1(X_A/B, J \otimes_A \sheaf_{X_A}))$. Moreover, the fact that $U^i_B$ is flat over $B$ and $U^i_B \otimes_B  A=U^i_A$ imply that $\lambda ([e])=[\mathcal{I}]$~\cite{Li-Sch67}. Therefore $Def_{loc}(X_A/A,B)=\lambda^{-1}([\mathcal{I}])$. A similar argument shows also that $Def(X_A/A,B)= \mu^{-1}([I])$.

\textbf{Step 2.} This is the main part of the proof of the theorem. Combining the results of the claim and the exact sequences~(\ref{Ex-eq}) and~(\ref{H0}), it follows that $Def_{loc}(X_A/A,B)$ and $Def(X_A/A,B)$ are
$H^0(T^1(X_A/A, J \otimes_A \sheaf_{X_A}))=H^0(T^1(X)\otimes J)$~\cite{Li-Sch67} and $\mathrm{Ex}(X_A/A,J \otimes_A \sheaf_{X_A})=\mathbb{T}^1(X)\otimes J$~\cite{Gr64} homogeneous spaces.
This shows~\ref{local-to-global-1}.1.

We proceed to show~\ref{local-to-global-1}.2. In what follows we use the following notation. Let $\{U_i\}_{i\in I}$ be an open cover of $X$. Then for any choice of indices $i_1, \ldots , i_k$, we set $U_{i_1i_2\cdots i_k}=U_{i_1} \cap \cdots \cap U_{i_k}$. Also if $X_R$ is a deformation of $X$ over an Artin ring $R$, we set $X^{i_1\cdots i_k}_R=X_R|_{U_{i_1} \cap \cdots \cap U_{i_k}}$.

Let $s_B \in Def_{loc}(X_A/A,B)$. First we exhibit the action of $H^1(T_X \otimes J)=H^1(\mathcal{H}om_{X_A}(\Omega_{X_A/A}, J \otimes_A \sheaf_{X_A}))$ on $\pi^{-1}(s_B)$. Let $[X_B, \phi] \in \pi^{-1}(s_B)$ and $\gamma \in H^1(\mathcal{H}om_{X_A}(\Omega_{X_A/A}, J \otimes_A \sheaf_{X_A}))$. The element $s_B$ is equivalent to give an open cover $\{U_i\}_{i\in I}$ of $X$, elements $[U^i_B, \phi^i] \in Def(U^i_A/A,B)$ and for all $i, j$ isomorphisms $\phi^{ij} \colon U^i_B|_{U_i\cap U_j} \la U^j_B|_{U_i\cap U_j}$ such that on $U_i \cap U_j$, $\phi^{ij} \phi^i = \phi^j$. The element $[X_B, \phi]\in Def(X_A/A,B)$ is equivalent to give elements $[U^i_B, \psi^i] \in Def(U^i_A/A,B)$, for all $i$, and for all $i, j$ isomorphisms $\psi^{ij} \colon U^i_B|_{U_i\cap U_j} \la U^j_B|_{U_i\cap U_j}$ such that on $U_i \cap U_j$, $\psi^{ij} \psi^i = \psi^j$ and on the triple intersections $U_i \cap U_j \cap U_k$, $\psi^{jk}\psi^{ij}=\psi^{ik}$. The cohomology class $\gamma$ is equivalent to a collection $\gamma_{ij} \in \mathrm{Hom}_{X^{ij}_A}(\Omega_{X^{ij}_A/A}, J \otimes \sheaf_{X^{ij}_A})=\mathrm{Hom}_{U^{ij}_B}(\Omega_{U^{ij}_B/B}, J \otimes \sheaf_{X^{ij}_A})$, where $U^{ij}_B=U^i_B|_{U_i \cap U_j}$, that satisfy the cocycle condition on the triple intersections. Therefore, $\gamma$ is equivalent to a collection of $B$-derivations $d_{ij} \colon \sheaf_{U^{ij}_B} \la J \otimes \sheaf_{X^{ij}_A}$, satisfying the cocycle condition on the triple intersections. Then we define $ \gamma \cdot [X_B, \phi]$ to be the element of $\pi^{-1}(s_B)$ that is defined by the data $[U^i_B,\psi^i]$ and glueing isomorphisms $\psi^{ij}+d_{ij} \colon U^i_B|_{U_i\cap U_j} \la U^j_B|_{U_i\cap U_j}$.

It remains to show that $\pi^{-1}(s_B)$ is a $H^1(T_X \otimes J)$-homogeneous space, i.e., that $H^1(T_X \otimes J)$ acts transitively on $\pi^{-1}(s_B)$. Let $[X_B, \psi], [X^{\prime}_B, \psi^{\prime}] \in \pi^{-1}(s_B)$. Then there is an open cover $\{U_i\}_{i\in I}$ of $X$ and  isomorphisms $\lambda_i \colon X_B|_{U_i} \la X^{\prime}_B|_{U_i}$, for all $i \in I$, such that on $U_i$, $\lambda_i \psi = \psi^{\prime}$. Then on $U_{ij}$, $\lambda_{ij}=\lambda_j^{-1}\lambda_i $ is an automorphism of $X^{ij}_B$ over $X^{ij}_A$. Therefore, $\lambda_{ij}$ corresponds to a $B$-derivation $d_{ij} \in \mathrm{Der}_B(\sheaf_{X^{ij}_B} , J \otimes \sheaf_{X^{ij}_A})= \mathrm{Hom}_{X^{ij}_B}(\Omega_{X^{ij}_B/B}, J \otimes \sheaf_{X^{ij}_A})= \mathrm{Hom}_{X^{ij}_A}(\Omega_{X^{ij}_A/A}, J \otimes \sheaf_{X^{ij}_A})$. These satisfy the cocycle condition on triple intersections and hence give an element $\gamma \in H^1(\mathcal{H}om_{X_A}(\Omega_{X_A/A}, J \otimes_A \sheaf_{X_A}))=H^1(T_X \otimes J)$. Now from the definition of the action of $H^1(T_X \otimes J)$ on $\pi^{-1}(s_B)$, it is clear that $\gamma \cdot [X_B, \psi]=[X^{\prime}_B,\psi^{\prime}]$, and therefore the action is transitive.

Next we show~\ref{local-to-global-1}.3. Taking into consideration the previous two parts, it suffices to construct the map $\partial$ and to show that $\mathrm{Ker}(\partial) \subset \mathrm{Im}(\pi)$. Let $s_B \in Def_{loc}(X_A/A,B)$ as above. Then for any $i,j,k \in I$, $\phi_{ijk}=\phi_{ki}\phi_{jk}\phi_{ij}$ is a $B$-automorphism of $U^i_B|_{U_{ijk}} $ over $X^{ijk}_A$. Therefore $\phi_{ijk}$ corresponds to a $B$-derivation $d_{ijk} \in \mathrm{Der}_B (\sheaf_{U^i_B|_{U_{ijk}}}, J \otimes \sheaf_{X^{ijk}_A})= \mathrm{Hom}_{X^{ijk}_A}(\Omega_{X^{ijk}_A/A}, J \otimes \sheaf_{X^{ijk}_A})$. These satisfy the cocycle condition on the fourfold intersections and therefore give an element of $H^2(\mathcal{H}om_{X_A}(\Omega_{X_A/A}, J \otimes_A \sheaf_{X_A}))=H^2(T_X \otimes J)$. This defines the map $\partial$. If $\partial(s_B)=0$, then the isomorphisms $\phi_{ij}$ can be modified so that $\phi_{ijk}$ is the identity automorphism of $U^i_B|_{U_{ijk}} $ and therefore the $U^i_B$ and $\phi^i$ glue to a global deformation $X_B$ and and isomorphism $\phi \colon X_A \la X_B \otimes_B A$. Hence $s_B = \pi ([X_B, \phi])$, as claimed.
\end{proof}

\begin{corollary}\label{obstructions-3}
With assumptions as in Theorem~\ref{local-to-global-1}, there are two successive obstructions in $H^0(T^2(X)\otimes J)$ and $H^1(T^1(X) \otimes J)$ in order that $\mathrm{Def}_{loc}(X_A/A,B)\not= \emptyset$, i.e., for $X_A$ to lift locally to $B$. If these obstructions vanish then there is another obstruction in $H^2(\widehat{T}_X \otimes J)$ in order that $\mathrm{Def}(X_A/A,B)\not= \emptyset$, i.e., for the local deformations to globalize.
\end{corollary}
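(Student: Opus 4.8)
The plan is to deduce the corollary from the exact sequences established inside the proof of Theorem~\ref{local-to-global-1}, feeding them into the long exact cohomology sequences they produce. Recall from Step~1 of that proof the identifications $Def_{loc}(X_A/A,B)=\lambda^{-1}([\mathcal I])$ and $Def(X_A/A,B)=\mu^{-1}([I])$, where $\lambda=H^0(\nu)$ is obtained by taking global sections in the sheafified cotangent--cohomology transitivity sequence~(\ref{T1-sequence}) for $B\to A\to\sheaf_{X_A}$, and $[\mathcal I]$ is the class of the given square zero extension tensored up to $\sheaf_{X_A}$. Throughout I will use the base-change identifications $T^i(X_A/A,J\otimes_A\sheaf_{X_A})\cong T^i(X)\otimes J$ for $i=1,2$, valid by the same reasoning used in Step~2 of the proof of Theorem~\ref{local-to-global-1} (flatness of $X_A/A$ and $J\cong k$); these give the obstruction groups their stated form.

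For local liftability the point is to describe $\mathrm{Im}(\lambda)$. Writing $\partial'$ for the connecting map $T^1(A/B,J\otimes_A\sheaf_{X_A})\to T^2(X_A/A,J\otimes_A\sheaf_{X_A})$ at the end of~(\ref{T1-sequence}), and setting $\mathcal K=\mathrm{Im}(\nu)=\mathrm{Ker}(\partial')$, $\mathcal Q=\mathrm{Im}(\partial')$, the sequence~(\ref{T1-sequence}) splits into the two short exact sequences of sheaves
\[
0\to T^1(X_A/A,J\otimes_A\sheaf_{X_A})\to T^1(X_A/B,J\otimes_A\sheaf_{X_A})\to\mathcal K\to 0 ,
\]
\[
0\to\mathcal K\to T^1(A/B,J\otimes_A\sheaf_{X_A})\to\mathcal Q\to 0 .
\]
Now I would chase cohomology. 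Since $H^0(\mathcal K)\hookrightarrow H^0(T^1(A/B,J\otimes_A\sheaf_{X_A}))$, the class $[\mathcal I]$ lies in $\mathrm{Im}(\lambda)$ if and only if two successive conditions hold: first, its image in $H^0(\mathcal Q)\subset H^0(T^2(X_A/A,J\otimes_A\sheaf_{X_A}))=H^0(T^2(X)\otimes J)$ vanishes, in which case $[\mathcal I]$ lifts to a unique $\tilde\iota\in H^0(\mathcal K)$; and second, the image of $\tilde\iota$ under the connecting map $H^0(\mathcal K)\to H^1(T^1(X_A/A,J\otimes_A\sheaf_{X_A}))=H^1(T^1(X)\otimes J)$ vanishes, which is exactly the condition that $\tilde\iota$, hence $[\mathcal I]$, lie in the image of $H^0(\nu)=\lambda$. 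These are the two asserted obstructions to $Def_{loc}(X_A/A,B)\neq\emptyset$.

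Granting that both vanish, choose $s_B\in Def_{loc}(X_A/A,B)$. By part~(3) of Theorem~\ref{local-to-global-1} there is a class $\partial(s_B)\in H^2(\widehat{T}_X\otimes J)$ with the property that $\partial(s_B)=0$ if and only if $s_B$ comes from a global deformation in $Def(X_A/A,B)$. In particular its vanishing gives $Def(X_A/A,B)\neq\emptyset$, while conversely any element of $Def(X_A/A,B)$ maps under $\pi$ to an $s_B$ killing $\partial$; so $\partial(s_B)$ is the promised third obstruction, lying in $H^2(\widehat{T}_X\otimes J)$.

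The only genuine work beyond the diagram chase is the base-change identification of the sheaves $T^i(X_A/A,J\otimes_A\sheaf_{X_A})$ with $T^i(X)\otimes J$ and the compatibility of the sheafified sequence~(\ref{T1-sequence}) with the gluing description of $Def_{loc}$ from Step~1 of Theorem~\ref{local-to-global-1}; both were already used there, so I expect no real obstacle. As usual for obstruction sequences, the second obstruction is defined only once the first vanishes --- and is then canonical, since $H^0(\mathcal K)\hookrightarrow H^0(T^1(A/B,J\otimes_A\sheaf_{X_A}))$ is injective --- and the $H^2$-obstruction only after fixing $s_B$.
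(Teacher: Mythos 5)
Your proof is correct and follows essentially the same route as the paper's: split the four-term transitivity sequence~(\ref{T1-sequence}) at $\mathrm{Im}(\nu)$ into two short exact sequences, take global sections, and read off the two obstructions to $[\mathcal I]\in\mathrm{Im}(\lambda)$ from the connecting maps into $H^0(T^2(X)\otimes J)$ and $H^1(T^1(X)\otimes J)$, then invoke Theorem~\ref{local-to-global-1}.3 for the final $H^2(\widehat{T}_X\otimes J)$ obstruction. Your added remarks about the canonicity of the lift (via injectivity of $H^0(\mathcal K)\hookrightarrow H^0(T^1(A/B,J\otimes\sheaf_{X_A}))$) and the dependence of the third obstruction on the choice of $s_B$ are welcome clarifications but not a new argument.
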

\begin{proof}
We only do the case $X=Y$. The general case is similar. Let $Q = \mathrm{Im} (\nu)$, where $\nu$ is the map in the long exact sequence~(\ref{T1-sequence}). Then there are two exact sequences,
\begin{gather}
0 \la H^0(T^1(X_A/A,J \otimes \sheaf_{X_A})) \la H^0(T^1(X_A/B,J \otimes \sheaf_{X_A})) \stackrel{\alpha}{\la} H^0(Q) \stackrel{\beta}{\la} \\
H^1(T^1(X_A/A,J \otimes \sheaf_{X_A})) \notag\\
0 \la H^0(Q) \la H^0(T^1(A/B,J \otimes \sheaf_{X_A})) \la H^0(T^2(X_A/A,J \otimes \sheaf_{X_A}))
\end{gather}
By step 1. of the proof of Theorem~\ref{local-to-global-1}, $\mathrm{Def}_{loc}(X_A/A,B)=\lambda^{-1}([I])$, where $\lambda=\beta \alpha$. It is now clear from the above exact sequences that there are two successive obstructions in $H^0(T^2(X_A/A,J \otimes \sheaf_{X_A}))=H^0(T^2(X)\otimes J)$ and $H^1(T^1(X_A/A,J \otimes \sheaf_{X_A}))=H^1(T^1(X)\otimes J)$ so that $\lambda^{-1}([I])\not= \emptyset$. If these obstructions vanish then from Theorem~\ref{local-to-global-1}.3, it follows that there is another obstruction in $H^2(T_X\otimes J)$ so that $\mathrm{Def}(X_A/A,B) \not= \emptyset$.
\end{proof}
The spaces $\mathrm{Def}(X_A/A,B)$ and $\mathrm{Def}_{loc}(X_A/A,B)$ do not have in general any vector space structure over the ground field $k$. This complicates any calculation involving them. However, if $B$ is the trivial extension of $A$ by $J$ then these spaces do have natural $k$-vector space structures.

\begin{remark}
A variant of Theorem~\ref{local-to-global-1} is already known in the case $X=Y$ and the obstructions in Corollary~\ref{obstructions-3} are also well known~\cite{Har04},~\cite{Li-Sch67}. However, to our knowledge, the $Def_{loc}$ space and the global to local sequence~\ref{local-to-global-1}.3 have not been considered earlier and this separates our statement from the ones that can already be found in the literature.
\end{remark}
\begin{remark}
Theorem~\ref{local-to-global-1} obtains a relation between the local and global deformation spaces $\mathrm{Def}(X_A/A,B)$ and $\mathrm{Def}_{loc}(X_A/A,B)$. However, the obstructions obtained in Corollary~\ref{obstructions-3} are not satisfactory in many ways. We explain why. Recall quickly how the obstructions work. In the notation of the previous corollary, given a deformation $X_A$ of $X$ over $A$, then if the obstruction in $H^0(T^2(X))$ vanishes, we can lift $X_A$ locally to $B$, i.e., there is an open cover $\{U^i\}$ of $X$ and liftings $U^i_{B}$ of $X_A|_{U^i}$ over $B$. Then if the second obstruction in $H^1(T^1(X))$ vanishes, the local liftings can be modified in order to agree on overlaps. By doing this we do find obstructions in order that $\mathrm{Def}_{loc}(X_A/A,B)\not= \emptyset$ but we lose all local information about the liftings. In order to have some control over the singularities of a lifting of $X_A$ we would like to choose a particular lifting $U^i_B$ of $X_A|_{U^i}$ and then find obstructions to globalize it. This requires more careful study and additional obstructions will appear. For general choice of the rings $A$ and $B$ this is probably quite tricky but for the purposes of this paper (where mainly one parameter deformations are studied) we will only consider deformations over the rings $A_n$. Our main tool is again the $T^1$-lifting property.
\end{remark}

\subsection{Local to global and the $T^1$- lifting property.}\label{local-subsection}

Let $X_n$ be a deformation of $X$ over $A_n$. Next we present a method to lift $X_n$ to a deformation $X_{n+1}$ of $X$ over $A_{n+1}$ that allows us to control the singularities of $X_{n+1}$.

Let $X_{n-1}=X_n \otimes_{A_n}A_{n-1}$ and $Y_{n-1}=X_n \otimes_{A_n}B_{n-1} \in \mathbb{T}^1(X_{n-1}/A_{n-1})$, where $B_{n-1}$ is an $A_n$-algebra via the map $\varepsilon_{n-1} \colon A_n \la B_{n-1}$ defined in section~\ref{T1-section}. Then according to the $T^1$-lifting property (Theorem~\ref{T1}), $X_n$ lifts to $A_{n+1}$ if and only if $Y_{n-1}$ is in the image of the natural map $\tau_n \colon \mathbb{T}_D^1(X_n/A_n) \la \mathbb{T}_D^1(X_{n-1}/A_{n-1})$. Theorem~\ref{global-ob} obtained an explicit obstruction element for this to happen. However as mentioned earlier, it does not offer any local information about the possible liftings. Local information is carried by the sheaves $T^1(X_n/A_n)$. These are related to $\mathbb{T}^1(X_n/A_n)$ by the following natural commutative diagram.
\begin{equation}\label{local-to-global-diagram-1}
\xymatrix{
\mathbb{T}_D^1(X_n/A_n) \ar[d]_{\tau_n}\ar[r]^{\phi_n} & H^0(T_D^1(X_n/A_n)) \ar[d]^{\sigma_{n}} \\
\mathbb{T}_D^1(X_{n-1}/A_{n-1}) \ar[r]^{\phi_{n-1}} & H^0(T_D^1(X_{n-1}/A_{n-1}))
}
\end{equation}
The idea is the following. Let $s_{n-1}=\phi_{n-1}(Y_{n-1})$. Instead of lifting $Y_{n-1}$ directly through $\tau_n$, we will obtain obstructions in order for $s_{n-1}$ to be in the image of $\sigma_n$. If these vanish, then we choose a particular element $s_n \in H^0(T^1(X_n/A_n))$ such that $\sigma_n(s_n)=s_{n-1}$ and we will obtain obstructions for the existence of a global $Y_n \in \mathbb{T}^1(X_n/A_n)$ such that $\phi_n(Y_n)=s_n$. This way we can control the local structure of $Y_n$. Then, according to Proposition~\ref{T1-1}, there is a lifting $X_{n+1}$ of $X_n$ over $A_{n+1}$ such that $X_{n+1} \otimes_{A_{n+1}}B_n=Y_n$, where again $B_n$ is an $A_{n+1}$-algebra via $\varepsilon_n \colon A_{n+1} \la B_n$. Now suppose that by this process we have obtained a formal deformation $f_n \colon X_n \la \mathrm{Spec}(A_n)$, for $n$. Suppose that it is induced by an algebraic deformation $f \colon \mathcal{X} \la \mathrm{Spec} A$. We will see next that the sections $s_n$ carry a lot of information about the singularities of $\mathcal{X}$. In particular, smoothings can be detected by them, as is exhibited by the next two propositions.

\begin{proposition}\label{sn}
Let $f \colon \mathcal{X} \la \Delta$ is a deformation of a pure and reduced scheme $X$ over the spectrum of a discrete valuation ring $(A,m_A)$. Let $f_n \colon X_n \la \mathrm{Spec} A_n$ be the associated formal deformation and  $Y_n = X_{n+1}\otimes_{A_{n+1}}B_n \in \mathbb{T}^1(X_n/A_n)$. Moreover, let $e \in \mathbb{T}^1(\mathcal{X}/\Delta)$ be the element that is represented by the extension
\begin{equation}\label{pullback-sequence}
0 \la \sheaf_{\mathcal{X}}=f^{\ast}\omega_{\Delta} \la \Omega_{\mathcal{X}} \la \Omega_{\mathcal{X}/\Delta} \la 0
\end{equation}
Then $e_n =Y_n$ in $\mathbb{T}^1(X_n/A_n)$, where $e_n =e \otimes_A A_n$.
\end{proposition}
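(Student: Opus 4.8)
The plan is to reduce the statement, via Proposition~\ref{T1-formula}, to the comparison of two explicit extensions, and then to produce a comparison isomorphism between them. First I would treat the case $X=Y$, leaving the general case to the usual local algebraizability arguments. By Proposition~\ref{T1-formula} we have $\mathbb{T}^1(X_n/A_n)\cong\mathrm{Ext}^1_{X_n}(\Omega_{X_n/A_n},\sheaf_{X_n})$, and, reading off its proof, the class attached to a deformation $Z$ of $X_n$ over $B_n$ restricting to $X_n$ is the class of
\[
0\la\sheaf_{X_n}\stackrel{\alpha}{\la}\Omega_{Z/A_n}\otimes_{B_n}A_n\la\Omega_{X_n/A_n}\la 0 ,
\]
where $\alpha$ is the map with $\alpha(1)=dy$ coming from the square zero extension $0\la\sheaf_{X_n}\la\sheaf_Z\la\sheaf_{X_n}\la 0$ and $y$ is the square zero parameter of $B_n=A_n[y]/(y^2)$; left exactness here is exactly the point where Proposition~\ref{T1-formula} uses that $X$ is pure. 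On the other side, since $X$ is pure, Lemma~\ref{pure} gives that $X_n$ is pure, so the kernel of $\sheaf_{X_n}\la\Omega_{\mathcal{X}}\otimes_{\sheaf_{\mathcal{X}}}\sheaf_{X_n}$ is a subsheaf of $\sheaf_{X_n}$ supported in dimension $<\dim X_n$, hence zero; thus $(\ref{pullback-sequence})\otimes_{\sheaf_{\mathcal{X}}}\sheaf_{X_n}$ stays left exact and $e_n$ is represented by
\[
0\la\sheaf_{X_n}\stackrel{dt}{\la}\Omega_{\mathcal{X}}\otimes_{\sheaf_{\mathcal{X}}}\sheaf_{X_n}\la\Omega_{X_n/A_n}\la 0 .
\]
It therefore suffices to identify these two extensions when $Z=Y_n$.

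For this I would use the base change map. Since $Y_n=X_{n+1}\otimes_{A_{n+1}}B_n$ along $\varepsilon_n$ and $X_{n+1}=\mathcal{X}\otimes_A A_{n+1}$, we have $\sheaf_{Y_n}=\sheaf_{\mathcal{X}}\otimes_A B_n$, where $A\la B_n$ is the composite of $A\la A_{n+1}$ with $\varepsilon_n$, i.e. $t\mapsto x+y$. The associated $k$-algebra homomorphism $g\colon\sheaf_{\mathcal{X}}\la\sheaf_{Y_n}$ (the structural map $Y_n\la\mathcal{X}$) sends $t$ to $x+y$ and sits in a commutative square over $A\la B_n$, and base change for K\"ahler differentials gives $\Omega_{Y_n/B_n}\cong\Omega_{\mathcal{X}/\Delta}\otimes_{\sheaf_{\mathcal{X}}}\sheaf_{Y_n}$. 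Composing the natural maps $\Omega_{\mathcal{X}}\otimes_{\sheaf_{\mathcal{X}}}\sheaf_{Y_n}\la\Omega_{Y_n}\la\Omega_{Y_n/A_n}$ and applying $\otimes_{\sheaf_{Y_n}}\sheaf_{X_n}$ produces a morphism
\[
\phi\colon\Omega_{\mathcal{X}}\otimes_{\sheaf_{\mathcal{X}}}\sheaf_{X_n}\la\Omega_{Y_n/A_n}\otimes_{B_n}A_n .
\]
I would then check that $\phi$ is a morphism of the two short exact sequences above which is the identity on the sub and on the quotient: compatibility with the quotients follows from functoriality of the conormal sequences together with the isomorphism $\Omega_{Y_n/B_n}\cong\Omega_{\mathcal{X}/\Delta}\otimes\sheaf_{Y_n}$; compatibility with the subs follows from $\phi(dt\otimes 1)=d(g(t))=d(x+y)=dy$ in $\Omega_{Y_n/A_n}$ (since $x\in A_n$, so $dx=0$), which is precisely $\alpha(1)$. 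By the five lemma $\phi$ is then an isomorphism of extensions, and hence $e_n=Y_n$ in $\mathbb{T}^1(X_n/A_n)$.

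The geometry is elementary; the main obstacle is bookkeeping. One has to be careful about which ring homomorphisms realize $\sheaf_{Y_n}=\sheaf_{\mathcal{X}}\otimes_A B_n$ and about the resulting identifications of the sheaves of differentials of the various composites, and one has to invoke purity of $X$ — through Lemma~\ref{pure} and the mechanism of Proposition~\ref{T1-formula}, or through Lemma~\ref{base-change} — at \emph{each} step where a conormal sequence must remain left exact after base change. Once these identifications are in place the comparison is forced.
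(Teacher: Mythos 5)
Your proof is correct and follows essentially the same route as the paper: identify both $Y_n$ and $e_n$ via Proposition~\ref{T1-formula} with explicit extensions of $\Omega_{X_n/A_n}$ by $\sheaf_{X_n}$, and exhibit the isomorphism of extensions (the paper writes $\Phi(dz\otimes a)=d(\overline{z}\otimes 1)\otimes a$ directly; you build it from the base-change maps and invoke the five lemma, which amounts to the same thing). One small remark: your identification $\alpha(1)=dy$ is the correct one — the paper's displayed $\alpha(1)=d(1\otimes x)\otimes 1$ appears to be a typo for $d(1\otimes y)\otimes 1$, since $x$ is the image of $t\in A_n$ and hence $dx=0$ in $\Omega_{Y_n/A_n}$.
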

\begin{proof}
By Proposition~\ref{T1-formula}, $\mathbb{T}^1(X_n/A_n)=\mathrm{Ext}^1_{X_n}(\Omega_{X_n/A_n},\sheaf_{X_n})$ and $\mathbb{T}^1(\mathcal{X}/\Delta)=\mathrm{Ext}^1_{\mathcal{X}}(\Omega_{\mathcal{X}/\Delta},\sheaf_{\mathcal{X}})$. It follows from their definition that $Y_n$  and $e_n$ are represented by the extensions
\[
0 \la \sheaf_{X_n} \stackrel{\alpha}{\la} (\Omega_{X_{n+1}\otimes_{{A_{n+1}}}B_n/A_n})\otimes_{B_n}A_n \la \Omega_{X_n/A_n} \la 0
\]
and
\[
0 \la \sheaf_{X_n} \stackrel{\beta}{\la} \Omega_{\mathcal{X}}\otimes_A A_n \la \Omega_{X_n/A_n} \la 0,
\]
respectively, where $\alpha(1)=d(1\otimes x) \otimes 1$ and $\beta(1)=dt \otimes 1$, where $t$ is a generator of the maximal ideal of $m_R$. It is now easy to see that the two extensions are isomorphic via
\[
\Phi \colon \Omega_{\mathcal{X}}\otimes_A A_n  \la (\Omega_{X_{n+1}\otimes_{{A_{n+1}}}B_n/A_n})\otimes_{B_n}A_n
\]
defined by $\Phi(dz \otimes a )= d(\overline{z}\otimes 1) \otimes a$, where $z \in \sheaf_{\mathcal{X}}$, $a\in A$ and $\overline{z}$ is the class of $z$ in $\sheaf_{X_n}$.
\end{proof}

\begin{proposition}
With assumptions as in Proposition~\ref{sn}, assume moreover that $X$ has complete intersection singularities. Then $f$ is a smoothing of $X$ if and only if there are $k, n \in \mathbb{Z}_{>0}$, $k < n$,  such that
\[
t^kT^1(X_n/A_n)\subset \sheaf_{X_n} \cdot s_n
\]
\end{proposition}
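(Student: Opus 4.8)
The plan is to recast both sides of the claimed equivalence in terms of the single coherent sheaf $\mathcal{T}:=T^1(\mathcal{X}/\Delta)=\mathcal{E}xt^1_{\mathcal{X}}(\Omega_{\mathcal{X}/\Delta},\sheaf_{\mathcal{X}})$ on $\mathcal{X}$ and the global section $\sigma\in H^0(\mathcal{T})$ which is the image of the extension class $e$ of~(\ref{pullback-sequence}) under the edge homomorphism $\mathbb{T}^1(\mathcal{X}/\Delta)\to H^0(\mathcal{T})$. First I would record that, $X$ having complete intersection singularities and $f$ being proper or a morphism of local schemes, $f$ is everywhere a relative local complete intersection morphism: the locus where it is not is closed in $\mathcal{X}$ and disjoint from $X$, hence empty, since its image in the trait $\Delta$ would be a proper closed subset avoiding the closed point (respectively it would be a nonempty closed subset of the local scheme $\mathcal{X}$ avoiding the closed point). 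Consequently $\mathcal{X}$ is a local complete intersection over $k$, $\mathcal{X}_{\eta}$ is one over $K=\mathrm{Frac}(A)$, and $\mathcal{T}$ is locally the cokernel of a map of locally free sheaves (the conormal bundle into the restricted relative tangent bundle of an ambient smooth scheme); its formation therefore commutes with base change, so by Proposition~\ref{T1-formula} and this local complete intersection structure, $T^1(X_n/A_n)=\mathcal{T}\otimes_A A_n$. Combining this with Proposition~\ref{sn} and the naturality of the edge map, $s_n$ is the image of $\sigma$ under the natural surjection $\mathcal{T}\to\mathcal{T}\otimes_A A_n$, and $\sheaf_{X_n}\cdot s_n$ is the image of the subsheaf $\sheaf_{\mathcal{X}}\cdot\sigma\subseteq\mathcal{T}$.

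The key step is to identify $Q:=\mathcal{T}/(\sheaf_{\mathcal{X}}\cdot\sigma)$. Applying $\mathcal{H}om_{\mathcal{X}}(-,\sheaf_{\mathcal{X}})$ to the conormal sequence $0\to f^{\ast}\Omega_{\Delta/k}\to\Omega_{\mathcal{X}/k}\to\Omega_{\mathcal{X}/\Delta}\to0$ — left exact because $\Delta$ is smooth over $k$ — and using $f^{\ast}\Omega_{\Delta/k}\cong\sheaf_{\mathcal{X}}$ (so that $\mathcal{E}xt^1_{\mathcal{X}}(f^{\ast}\Omega_{\Delta/k},\sheaf_{\mathcal{X}})=0$), one gets an exact sequence of $\sheaf_{\mathcal{X}}$-modules
\[
\sheaf_{\mathcal{X}}\stackrel{\partial}{\la}\mathcal{T}\la\mathcal{E}xt^1_{\mathcal{X}}(\Omega_{\mathcal{X}/k},\sheaf_{\mathcal{X}})\la0
\]
in which $\partial$ carries $1$ to the image of the extension~(\ref{pullback-sequence}), that is, to $\sigma$. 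Hence $Q\cong\mathcal{E}xt^1_{\mathcal{X}}(\Omega_{\mathcal{X}/k},\sheaf_{\mathcal{X}})$, which, $\mathcal{X}$ being a local complete intersection over $k$, is coherent and supported exactly on $\mathrm{Sing}(\mathcal{X})$. Since $\mathcal{X}\setminus X=\mathcal{X}_{\eta}$ and $K$ has characteristic zero, $\mathrm{Sing}(\mathcal{X})\subseteq X$ if and only if $\mathcal{X}_{\eta}$ is regular, equivalently smooth over $K$; therefore
\[
f\text{ is a smoothing}\iff\mathrm{Supp}(Q)\subseteq X\iff t^NQ=0\ \text{for some}\ N>0,
\]
the last equivalence by coherence of $Q$. (Alternatively $Q$ can be read off a local presentation $\mathcal{X}=V(g_1,\dots,g_c)$: $\mathcal{T}$ is the cokernel of the Jacobian in the fibre variables and $\mathcal{E}xt^1(\Omega_{\mathcal{X}/k},\sheaf_{\mathcal{X}})$ the cokernel of the full Jacobian, the extra column being $\sigma$.)

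With these translations the proposition reduces to a short commutative–algebra argument. Inside $\mathcal{T}$, the containment $t^kT^1(X_n/A_n)\subseteq\sheaf_{X_n}\cdot s_n$ reads $t^k\mathcal{T}\subseteq\sheaf_{\mathcal{X}}\cdot\sigma+t^{n+1}\mathcal{T}$, i.e.\ $t^kQ\subseteq t^{n+1}Q$ in $Q$; since $k<n$ this forces $t^kQ=t^{k+1}Q$, and Nakayama, applied in the stalk at each point of $X$ (where $t$ lies in the maximal ideal), shows it is equivalent to $t^k\in\mathrm{Ann}(Q_x)$ for every $x\in X$. If $f$ is a smoothing then $t^NQ=0$, hence $t^N\mathcal{T}\subseteq\sheaf_{\mathcal{X}}\cdot\sigma$ and $t^NT^1(X_n/A_n)\subseteq\sheaf_{X_n}\cdot s_n$ for every $n$, so $k=\max(N,1)$ and $n=k+1$ work. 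Conversely, suppose $t^k\in\mathrm{Ann}(Q_x)$ for all $x\in X$ but, for contradiction, $\mathrm{Supp}(Q)\not\subseteq X$. Pick an irreducible component $W$ of $\mathrm{Supp}(Q)$ with $W\not\subseteq X$; in the proper case $f|_W\colon W\to\Delta$ is surjective, and in the local case $W$ contains the closed point of $\mathcal{X}$, so in either case there is $x_0\in W\cap X$ near which $\mathrm{Supp}(Q)\supseteq W$ is not contained in $V(t)$. But $t^k\in\mathrm{Ann}(Q_{x_0})$ forces $\mathrm{Supp}(Q)\subseteq V(t)$ in a neighbourhood of $x_0$, a contradiction; hence $\mathrm{Supp}(Q)\subseteq X$ and $f$ is a smoothing.

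The step I expect to be the real obstacle is exactly this last implication: the level-$n$ hypothesis only bounds the $t$-torsion of $Q$ along the special fibre, and it is the properness of $f$ over the trait $\Delta$ (or the locality of $f$) that lets one propagate this to the vanishing of $Q$ over the generic point. The complete intersection hypothesis enters, in routine ways, at two points only — it makes the formation of $T^1(\mathcal{X}/\Delta)$ commute with base change (so the level-$n$ statement genuinely concerns $\mathcal{T}\otimes_A A_n$), and it guarantees both that $\mathrm{Supp}\,\mathcal{E}xt^1_{\mathcal{X}}(\Omega_{\mathcal{X}/k},\sheaf_{\mathcal{X}})$ is all of $\mathrm{Sing}(\mathcal{X})$ and that the generic fibre is smooth as soon as its $T^1$ vanishes.
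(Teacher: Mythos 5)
Your argument is correct and follows essentially the same route as the paper: both identify the quotient $Q=T^1(\mathcal{X}/\Delta)/(\sheaf_{\mathcal{X}}\cdot e)$ with $\mathcal{E}xt^1_{\mathcal{X}}(\Omega_{\mathcal{X}},\sheaf_{\mathcal{X}})=T^1(\mathcal{X})$, reduce the hypothesis to $t^kQ\subseteq t^{n+1}Q$, extract $t^kQ=t^{k+1}Q$ from $k<n$, and apply Nakayama stalkwise along $X$ followed by properness (resp.\ locality) of $f$ to obtain global vanishing $t^kQ=0$, then invoke Lemma~\ref{ci}. Where you go beyond the paper is in making explicit two steps it elides: the identification $T^1(X_n/A_n)\cong T^1(\mathcal{X}/\Delta)\otimes_A A_n$ (the paper appeals to Lemma~\ref{Fn}, where this is the content of $\phi_n$ being an isomorphism at lci points), and the propagation from $t^kQ|_X=0$ to $\mathrm{Supp}(Q)\subseteq X$ — the paper's one-line ``by Nakayama's lemma, $t^k\mathcal{F}=0$'' tacitly uses that $f$ is proper or local (a standing assumption from the Preliminaries), which you spell out with the irreducible-component argument. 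One small imprecision: you justify left-exactness of the conormal sequence by asserting that $\Delta$ is smooth over $k$; this need not hold (e.g.\ $A=k[[t]]$ is not essentially of finite type over $k$). The correct justification, which the paper gives in the proof of Theorem~\ref{non-smoothing-2}, is that $X$ is pure and complete intersection at the generic points of its singular locus. This does not affect the rest of your argument, since the sequence $(\ref{pullback-sequence})$ is exact for the paper's stated reasons.
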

\begin{proof}
Dualizing the exact sequence~(\ref{pullback-sequence}) we get the exact sequence
\[
\sheaf_{\mathcal{X}} \stackrel{\alpha}{\la} T^1(\mathcal{X}/\Delta) \la T^1(\mathcal{X}) \la 0
\]
where $\alpha(1)=e$. Therefore $T^1(\mathcal{X})=\mathrm{CoKer}(\alpha) = T^1(\mathcal{X}/\Delta)/\sheaf_{\mathcal{X}}\cdot e$.

Suppose that $f$ is a smoothing. Then $T^1(\mathcal{X})$ is supported over $m_A$ and hence there is $k \in \mathbb{Z}_{>0}$ such that $t^k (T^1(\mathcal{X}/\Delta)/\sheaf_{\mathcal{X}}\cdot e)=0$. Reducing it modulo $m_A^n$ and using Proposition~\ref{sn}, we get the claim.

Conversely, suppose there are a $k,n \in \mathbb{Z}_{>0}$ such that $t^kT^1(X_n/A_n)\subset \sheaf_{X_n} \cdot s_n$. Let $\mathcal{F}= T^1(\mathcal{X}/\Delta)/\sheaf_{\mathcal{X}}\cdot e$ and $\mathcal{F}_n=T^1(X_n/A_n)/ \sheaf_{X_n} \cdot s_n$. Then by by Lemma~\ref{Fn} and Proposition~\ref{sn}, it follows that $\mathcal{F}/t^{n+1}\mathcal{F}=\mathcal{F}_n$, where $t$ is a generator of the maximal ideal $m_A$ of $A$. Then by assumption, $t^k(\mathcal{F}/t^{n+1}\mathcal{F})=0$ and hence $t^k\mathcal{F}=t^{n+1}\mathcal{F}=\mathcal{F}_n$ and therefore by Nakayama's lemma, $t^k\mathcal{F}=0$. Hence $T^1(\mathcal{X})$ is supported over $m_A$, and hence by Lemma~\ref{ci}, $f$ is a smoothing.
\end{proof}

Even though our previous discussion was for the case when $X=Y$, it is also valid in the general case.

The remaining part of this section is devoted to the study of the maps $\sigma_n$ and $\phi_n$ in diagram~(\ref{local-to-global-diagram-1}). In particular we obtain conditions in order for them to be surjective.

\begin{proposition}\label{local-sequence}
With assumptions as in Proposition~\ref{the-exact-sequence}, there are canonical exact sequences
\begin{gather*}
0 \la H^0(T^1(X)/\mathcal{F}_n) \la H^0(T^1(\mathcal{X}_n/A_n) )\stackrel{\sigma_n}{\la} H^0 (T^1(\mathcal{X}_{n-1}/A_{n-1}) ) \la Q_n \la 0\\
0 \la L_n \la Q_n \la H^0(\mathcal{E}xt^2_{\hat{X}}(\widehat{\Omega}_X, \sheaf_{\hat{X}}))\\
0 \la L_n \la H^1(T^1(X)/\mathcal{F}_n) \la H^1(T^1(\mathcal{X}_{n}/A_{n}))
\end{gather*}
and a noncanonical one
\begin{gather*}
0 \la H^0(T^1(X)/\mathcal{F}_n) \la H^0(T^1(\mathcal{X}_n/A_n) )\stackrel{\sigma_n}{\la} H^0 (T^1(\mathcal{X}_{n-1}/A_{n-1}) ) \la\\
\la H^1(T^1(X)/\mathcal{F}_n) \oplus H^0(\mathcal{E}xt_{\hat{X}}(\widehat{\Omega}_X, \sheaf_{\hat{X}}))
\end{gather*}
where $\mathcal{F}_n \subset T^1(X)$ is the cokernel of the map
\[
\widehat{T}_{\mathcal{X}_n/A_n}\la \widehat{T}_{\mathcal{X}_{n-1}/A_{n-1}}
\]
\end{proposition}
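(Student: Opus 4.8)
The plan is to deduce all four sequences from the long exact sequence of sheaves in Proposition~\ref{the-exact-sequence} by taking cohomology, after first cutting that sequence into short exact pieces. Since $X-Y$ is smooth we have $T^1(Y,X)=T^1(X)$, so that sequence reads
\[
0\to\widehat{T}_X\to\widehat{T}_{\mathcal{X}_n/A_n}\xrightarrow{a}\widehat{T}_{\mathcal{X}_{n-1}/A_{n-1}}\xrightarrow{b}T^1(X)\xrightarrow{c}T^1(\mathcal{X}_n/A_n)\xrightarrow{d}T^1(\mathcal{X}_{n-1}/A_{n-1})\xrightarrow{\theta}\mathcal{E}xt^2_{\hat X}(\widehat{\Omega}_X,\sheaf_{\hat X}).
\]
By definition $\mathcal{F}_n=\operatorname{coker}a$, and exactness at $\widehat{T}_{\mathcal{X}_{n-1}/A_{n-1}}$ identifies $\mathcal{F}_n$ with $\operatorname{im}b=\ker c\subset T^1(X)$; this is the inclusion $\mathcal{F}_n\subset T^1(X)$ referred to in the statement. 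Writing $\mathcal{Q}_n:=T^1(X)/\mathcal{F}_n=\operatorname{im}c$, $\mathcal{R}_n:=\operatorname{im}d=\ker\theta$ and $\mathcal{E}_n:=\operatorname{im}\theta\subset\mathcal{E}xt^2_{\hat X}(\widehat{\Omega}_X,\sheaf_{\hat X})$, the long sequence splits into the three short exact sequences of coherent sheaves
\[
0\to\mathcal{F}_n\to T^1(X)\to\mathcal{Q}_n\to0,\qquad 0\to\mathcal{Q}_n\to T^1(\mathcal{X}_n/A_n)\to\mathcal{R}_n\to0,\qquad 0\to\mathcal{R}_n\to T^1(\mathcal{X}_{n-1}/A_{n-1})\to\mathcal{E}_n\to0.
\]

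Before taking cohomology I would record that the map $\sigma_n$ of diagram~(\ref{local-to-global-diagram-1}) is exactly $H^0(d)$: unwinding, via Proposition~\ref{T1-formula} and Lemma~\ref{base-change}, the sheaf map $T^1(\mathcal{X}_n/A_n)\to T^1(\mathcal{X}_{n-1}/A_{n-1})$ underlying $\sigma_n$ is the arrow obtained by applying $\mathcal{H}om_{\mathcal{X}_n}(\widehat{\Omega}_{\mathcal{X}_n/A_n},-)$ to $0\to\sheaf_{\hat X}\to\sheaf_{\mathcal{X}_n}\to\sheaf_{\mathcal{X}_{n-1}}\to0$, which is precisely $d$. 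The long exact cohomology sequence of the middle short exact sequence then gives
\[
0\to H^0(\mathcal{Q}_n)\to H^0(T^1(\mathcal{X}_n/A_n))\to H^0(\mathcal{R}_n)\xrightarrow{\partial}H^1(\mathcal{Q}_n)\to H^1(T^1(\mathcal{X}_n/A_n)),
\]
and the last short exact sequence gives the left-exact piece $0\to H^0(\mathcal{R}_n)\to H^0(T^1(\mathcal{X}_{n-1}/A_{n-1}))\to H^0(\mathcal{E}_n)$ together with $H^0(\mathcal{E}_n)\hookrightarrow H^0(\mathcal{E}xt^2_{\hat X}(\widehat{\Omega}_X,\sheaf_{\hat X}))$. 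Since $\sigma_n=H^0(d)$ factors as $H^0(T^1(\mathcal{X}_n/A_n))\to H^0(\mathcal{R}_n)\hookrightarrow H^0(T^1(\mathcal{X}_{n-1}/A_{n-1}))$, the first display already yields $\ker\sigma_n=H^0(\mathcal{Q}_n)=H^0(T^1(X)/\mathcal{F}_n)$. Next I would set $L_n:=\ker\bigl(H^1(\mathcal{Q}_n)\to H^1(T^1(\mathcal{X}_n/A_n))\bigr)$; by exactness $L_n=\operatorname{im}\partial=\operatorname{coker}\bigl(H^0(T^1(\mathcal{X}_n/A_n))\to H^0(\mathcal{R}_n)\bigr)=H^0(\mathcal{R}_n)/\operatorname{im}\sigma_n$. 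Reading $L_n=\ker(H^1(\mathcal{Q}_n)\to H^1(T^1(\mathcal{X}_n/A_n)))$ off directly (with $\mathcal{Q}_n=T^1(X)/\mathcal{F}_n$) is the third claimed sequence.

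It remains to assemble the cokernel. Put $Q_n:=\operatorname{coker}\sigma_n=H^0(T^1(\mathcal{X}_{n-1}/A_{n-1}))/\operatorname{im}\sigma_n$; combined with $\ker\sigma_n=H^0(T^1(X)/\mathcal{F}_n)$ this gives the first four-term sequence. The chain of inclusions $\operatorname{im}\sigma_n\subseteq H^0(\mathcal{R}_n)\subseteq H^0(T^1(\mathcal{X}_{n-1}/A_{n-1}))$ yields $0\to H^0(\mathcal{R}_n)/\operatorname{im}\sigma_n\to Q_n\to H^0(T^1(\mathcal{X}_{n-1}/A_{n-1}))/H^0(\mathcal{R}_n)\to0$, that is $0\to L_n\to Q_n\to H^0(\mathcal{E}_n)\to0$; composing with $H^0(\mathcal{E}_n)\hookrightarrow H^0(\mathcal{E}xt^2_{\hat X}(\widehat{\Omega}_X,\sheaf_{\hat X}))$ gives the second claimed sequence. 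Finally, since everything in sight is a $k$-vector space the sequence $0\to L_n\to Q_n\to H^0(\mathcal{E}_n)\to0$ splits, so $Q_n$ embeds into $L_n\oplus H^0(\mathcal{E}_n)\hookrightarrow H^1(T^1(X)/\mathcal{F}_n)\oplus H^0(\mathcal{E}xt^2_{\hat X}(\widehat{\Omega}_X,\sheaf_{\hat X}))$; precomposing with the surjection $H^0(T^1(\mathcal{X}_{n-1}/A_{n-1}))\to Q_n$ produces the noncanonical sequence.

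I expect the only genuine friction to be bookkeeping: verifying that $\sigma_n$ coincides with $H^0(d)$ so that Proposition~\ref{the-exact-sequence} applies at all, and then tracking the nested filtrations $\operatorname{im}\sigma_n\subseteq H^0(\mathcal{R}_n)\subseteq H^0(T^1(\mathcal{X}_{n-1}/A_{n-1}))$ and $\mathcal{F}_n\subseteq T^1(X)$, $\mathcal{Q}_n\subseteq T^1(\mathcal{X}_n/A_n)$, $\mathcal{R}_n\subseteq T^1(\mathcal{X}_{n-1}/A_{n-1})$ consistently. Beyond that the argument is pure diagram chasing through long exact cohomology sequences, requiring no geometric input past Propositions~\ref{T1-formula} and~\ref{the-exact-sequence} and the splitting of short exact sequences of $k$-vector spaces.
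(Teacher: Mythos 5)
Your proof is correct and follows essentially the same route as the paper's: cut the long exact sequence of Proposition~\ref{the-exact-sequence} into short exact pieces, take cohomology, and identify $\ker\sigma_n$, $Q_n=\operatorname{coker}\sigma_n$, and $L_n=\ker\bigl(H^1(T^1(X)/\mathcal{F}_n)\to H^1(T^1(\mathcal{X}_n/A_n))\bigr)$ via the resulting diagram chase (the paper packages the chase as two snake-lemma diagrams, but your $\mathcal{Q}_n,\mathcal{R}_n$ agree with its $T^1(X)/\mathcal{F}_n,M_n$). One small imprecision: the sequence $0\to L_n\to Q_n\to H^0(\mathcal{E}_n)\to 0$ is not right-exact, since $H^0$ of the short exact sequence $0\to\mathcal{R}_n\to T^1(\mathcal{X}_{n-1}/A_{n-1})\to\mathcal{E}_n\to 0$ only gives an injection $H^0(T^1(\mathcal{X}_{n-1}/A_{n-1}))/H^0(\mathcal{R}_n)\hookrightarrow H^0(\mathcal{E}_n)$; this does not affect the conclusion, because the statement only requires left-exactness of $0\to L_n\to Q_n\to H^0(\mathcal{E}xt^2_{\hat X}(\widehat{\Omega}_X,\sheaf_{\hat X}))$, and the splitting argument for the noncanonical sequence needs only $0\to L_n\to Q_n\to Q_n/L_n\to 0$ plus the embedding of $Q_n/L_n$.
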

\begin{proof}
From Proposition~\ref{the-exact-sequence} there is an exact sequence
\[
0 \la T^1(X)/\mathcal{F}_n \la T^1(\mathcal{X}_{n}/A_{n}) \stackrel{h_n}{\la} T^1(\mathcal{X}_{n-1}/A_{n-1}) \stackrel{\mu_n}{\la} T^2(Y,X)
\]
Let $M_n=\text{Ker}(\mu_n)$. Then the above sequence breaks into two short exact sequences
\begin{gather*}
0 \la T^1(X)/\mathcal{F}_n \la T^1(\mathcal{X}_{n}/A_{n})\stackrel{h_n}{\la} M_n \la 0\\
0 \la M_n \la T^1(\mathcal{X}_{n-1}/A_{n-1}) \stackrel{\mu_n}{\la} T^2(Y,X)
\end{gather*}
Then we get the following exact sequences in cohomology
\begin{gather*}
0 \la H^0(T^1(X)/\mathcal{F}_n) \stackrel{f_1}{\la} H^0(T^1(\mathcal{X}_{n}/A_{n})) \stackrel{f_2}{\la} H^0(M_n) \stackrel{f_3}{\la}
H^1(T^1(X)/\mathcal{F}_n)
\stackrel{f_4}{\la} H^1(T^1(\mathcal{X}_{n}/A_{n})) \\
0 \la H^0(M_n) \stackrel{g_1}{\la} H^0(T^1(\mathcal{X}_{n-1}/A_{n-1})) \stackrel{g_2}{\la} H^0(T^2(Y,X))
\end{gather*}
We wish to understand the kernel and Cokernel of the map $\sigma_n=g_1\circ f_2$. Consider the following commutative diagram
\[
\xymatrix{
0 \ar[r]      &       H^0(T^1(X)/\mathcal{F}_n) \ar[r]^{f_1}\ar[d] &  H^0(T^1(\mathcal{X}_{n}/A_{n})) \ar[r]^{f_2}\ar[d]^{\phi_n} &
\text{Im}(f_2) \ar[r]\ar[d]^{\beta} &        0 \\
0  \ar[r] & 0 \ar[r] & H^0(T^1(\mathcal{X}_{n-1}/A_{n-1})) \ar@{=}[r] & H^0(T^1(\mathcal{X}_{n-1}/A_{n-1})) \ar[r] & 0
}
\]
where $\beta$ is the restriction of $g_1$ on $\text{Im}(f_2)$. The snake lemma now gives that $\text{ker}(\mu_n)=H^0(T^1(X)/\mathcal{F}_n)$ and
$\text{Coker}(\beta) = \text{CoKer}(h)$. Therefore there is an exact sequence
\begin{equation}\label{eq1}
0 \la H^0(T^1(X)/\mathcal{F}_n) \la H^0(T^1(\mathcal{X}_n/A_n) )\la H^0 (T^1(\mathcal{X}_{n-1}/A_{n-1}) ) \la Q_n \la 0
\end{equation}
where $Q_n = \text{Coker}(\beta)$. Now from the diagram
\[
\xymatrix{
0 \ar[r]      &     \text{Im}(f_2)\ar@{=}[r] \ar[d]&  \text{Im}(f_2)\ar[r]\ar[d]^{\beta}  & 0 \ar[r]\ar[d] &        0 \\
0  \ar[r] &    H^0(M_n) \ar[r]                     & H^0(T^1(\mathcal{X}_{n-1}/A_{n-1})) \ar[r] & H^0(T^2(Y,X)) \ar[r] & 0
}
\]
gives that there is an exact sequence
\begin{equation}\label{eq2}
0 \la L_n \la Q_n \la H^0(T^2(Y,X))
\end{equation}
where $L_n = \text{CoKer}[\text{Im}(f_2) \la H^0(M_n)$ and therefore there is another exact sequence
\begin{equation}\label{eq3}
0 \la L_n \la H^1(T^1(X)/\mathcal{F}_n) \la H^1(T^1(\mathcal{X}_{n}/A_{n}))
\end{equation}
Now the proposition follows from~(\ref{eq1}),~(\ref{eq2}), and~(\ref{eq3}).
\end{proof}

\begin{corollary}\label{local-obstructions-1}
There are two succesive obstructions in  $H^0( \mathcal{E}xt^2_{\hat{X}}(\widehat{\Omega}_X, \sheaf_{\hat{X}}))$ and $H^1(T^1(X)/\mathcal{F}_n)$ in order for an element $s_{n-1}$ of $H^0(T^1(X_{n-1}/A_{n-1}))$ to be in the image of $\sigma_n$.
\end{corollary}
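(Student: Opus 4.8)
The plan is to deduce the statement as a purely homological consequence of Proposition~\ref{local-sequence}. First I would invoke the canonical four-term sequence
\[
0 \la H^0(T^1(X)/\mathcal{F}_n) \la H^0(T^1(\mathcal{X}_n/A_n)) \stackrel{\sigma_n}{\la} H^0(T^1(\mathcal{X}_{n-1}/A_{n-1})) \stackrel{q}{\la} Q_n \la 0
\]
to identify the image of $\sigma_n$ with the kernel of the surjection $q$. Thus an element $s_{n-1}\in H^0(T^1(\mathcal{X}_{n-1}/A_{n-1}))$ lifts along $\sigma_n$ if and only if $q(s_{n-1})=0$ in $Q_n$, and the whole problem reduces to detecting when this class vanishes.

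The second step is to break $Q_n$ apart using the sequence $0 \la L_n \la Q_n \la H^0(\mathcal{E}xt^2_{\hat X}(\widehat\Omega_X,\sheaf_{\hat X}))$ of Proposition~\ref{local-sequence}. I would take the image of $q(s_{n-1})$ in $H^0(\mathcal{E}xt^2_{\hat X}(\widehat\Omega_X,\sheaf_{\hat X}))$ as the first obstruction: it vanishes exactly when $q(s_{n-1})$ comes from $L_n$, and since $L_n\to Q_n$ is injective the corresponding element $\ell\in L_n$ is then uniquely determined. This is where the word ``successive'' acquires its meaning, since the second obstruction only makes sense once the first has been killed.

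The third step uses the last sequence $0 \la L_n \la H^1(T^1(X)/\mathcal{F}_n) \la H^1(T^1(\mathcal{X}_n/A_n))$. I would declare the image of $\ell$ under the injection $L_n\hookrightarrow H^1(T^1(X)/\mathcal{F}_n)$ to be the second obstruction; because this map is injective, the image vanishes if and only if $\ell=0$, hence if and only if $q(s_{n-1})=0$, i.e.\ $s_{n-1}\in\mathrm{Im}(\sigma_n)$. Conversely, if $s_{n-1}\in\mathrm{Im}(\sigma_n)$ both classes are manifestly zero, so these are honest obstructions and not merely necessary conditions.

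There is really no hard step in this argument --- it is a diagram chase through Proposition~\ref{local-sequence} --- and the only thing one must be careful with is the bookkeeping of which class lives in which group, together with the observation that it is precisely the injectivity of $L_n\to H^1(T^1(X)/\mathcal{F}_n)$ that lets the second class detect the vanishing of $q(s_{n-1})$ rather than merely obstructing some auxiliary lift. It is worth remarking in passing that when $H^0(\mathcal{E}xt^2_{\hat X}(\widehat\Omega_X,\sheaf_{\hat X}))=0$ the picture collapses and $\sigma_n$ is governed by a single obstruction in $H^1(T^1(X)/\mathcal{F}_n)$, which is the form in which the corollary is typically applied.
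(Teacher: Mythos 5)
Your proposal is correct and is precisely the diagram chase the paper leaves implicit: the corollary is stated without proof immediately after Proposition~\ref{local-sequence}, and the three exact sequences in that proposition yield the two successive obstructions in exactly the way you describe, with the injectivity of $L_n \hookrightarrow H^1(T^1(X)/\mathcal{F}_n)$ guaranteeing that the vanishing of both classes is equivalent to $q(s_{n-1})=0$.
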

The exact sequences in the previous proposition are not very enlightning in general. However, if $X$ has local complete intersection singularities, then they are greatly simplified.

\begin{corollary}\label{Cor-local-ob-1}
Suppose that $X$ has local complete intersection singularities, or more generally that $H^0(\mathcal{E}xt^2_X(\Omega_X,\sheaf_X))=0$. Then there is an exact sequence
\[
0 \la H^0(T^1(X)/\mathcal{F}_n) \la H^0(T^1(\mathcal{X}_n/A_n) )\stackrel{\sigma_n}{\la} H^0 (T^1(\mathcal{X}_{n-1}/A_{n-1}) ) \stackrel{\partial}{\la}  H^1(T^1(X)/\mathcal{F}_n)
\]
\end{corollary}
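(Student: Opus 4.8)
The plan is to deduce this directly from Proposition~\ref{local-sequence}: the only extra input needed is that the stated hypothesis forces the term $H^0(\mathcal{E}xt^2_{\hat{X}}(\widehat{\Omega}_X,\sheaf_{\hat{X}}))$ appearing there to vanish, after which the canonical exact sequences of that proposition splice together into the asserted four-term one.

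First I would reduce the two hypotheses to one and establish the vanishing. If $X$ has local complete intersection singularities then, writing locally $X=V(f_1,\dots,f_c)\subset\mathbb{A}^n_k$, the conormal sequence $0\la I/I^2\la\Omega_{\mathbb{A}^n}\otimes\sheaf_X\la\Omega_X\la 0$ with $I/I^2$ locally free shows that $\Omega_X$ has projective dimension $\le 1$ over $\sheaf_X$; hence $\mathcal{E}xt^2_X(\Omega_X,\mathcal{G})=0$ for every coherent $\mathcal{G}$, so in particular $H^0(\mathcal{E}xt^2_X(\Omega_X,\sheaf_X))=0$ and the weaker hypothesis holds. Assuming now only $H^0(\mathcal{E}xt^2_X(\Omega_X,\sheaf_X))=0$, I would note that the local free-resolution argument used in the proof of Proposition~\ref{local-to-global} to establish $\mathcal{E}xt^1_{\hat{X}}(\widehat{\Omega}_X,\sheaf_{\hat{X}})\cong\mathcal{E}xt^1_X(\Omega_X,\sheaf_X)^{\wedge}$ works in every degree, since completion is exact on coherent sheaves over a Noetherian scheme and therefore commutes with the cohomology of a complex of locally free sheaves; this yields $\mathcal{E}xt^2_{\hat{X}}(\widehat{\Omega}_X,\sheaf_{\hat{X}})\cong\mathcal{E}xt^2_X(\Omega_X,\sheaf_X)^{\wedge}$. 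Because $X-Y$ is smooth, $\mathcal{E}xt^2_X(\Omega_X,\sheaf_X)$ is a coherent sheaf supported on $Y$, hence, on a Noetherian scheme, annihilated by a power of the ideal of $Y$ and thus equal to its own completion along $Y$; so $H^0(\mathcal{E}xt^2_{\hat{X}}(\widehat{\Omega}_X,\sheaf_{\hat{X}}))=H^0(\mathcal{E}xt^2_X(\Omega_X,\sheaf_X))=0$.

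Next I would splice. Proposition~\ref{local-sequence} supplies the exact sequences
\[
0\la H^0(T^1(X)/\mathcal{F}_n)\la H^0(T^1(\mathcal{X}_n/A_n))\stackrel{\sigma_n}{\la} H^0(T^1(\mathcal{X}_{n-1}/A_{n-1}))\la Q_n\la 0,
\]
\[
0\la L_n\la Q_n\la H^0(\mathcal{E}xt^2_{\hat{X}}(\widehat{\Omega}_X,\sheaf_{\hat{X}})),
\]
\[
0\la L_n\la H^1(T^1(X)/\mathcal{F}_n)\la H^1(T^1(\mathcal{X}_n/A_n)).
\]
By the previous step the target of the middle map is zero, so $L_n\cong Q_n$; the first sequence identifies $Q_n$ with $\mathrm{CoKer}(\sigma_n)$, and the last exhibits $L_n$ as a subobject of $H^1(T^1(X)/\mathcal{F}_n)$. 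I would then define $\partial$ to be the composite
\[
H^0(T^1(\mathcal{X}_{n-1}/A_{n-1}))\twoheadrightarrow\mathrm{CoKer}(\sigma_n)=Q_n\cong L_n\hookrightarrow H^1(T^1(X)/\mathcal{F}_n).
\]
The kernel of the first (surjective) arrow is $\mathrm{Im}(\sigma_n)$ by the first sequence, and the last arrow is injective, so $\mathrm{Ker}(\partial)=\mathrm{Im}(\sigma_n)$; combined with the left-exactness already contained in the first sequence of Proposition~\ref{local-sequence}, this is precisely the claimed exact sequence.

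I do not expect a genuine obstacle here: the substance is entirely in Proposition~\ref{local-sequence}, and what remains is formal. If anything needs care it is the degree-$2$ version of the completion-versus-$\mathcal{E}xt$ comparison and the observation that a coherent sheaf supported on $Y$ is fixed by completion along $Y$; both are routine once the degree-$1$ case from Proposition~\ref{local-to-global} is in hand.
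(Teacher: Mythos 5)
Your argument is correct and is the natural way to deduce the corollary from Proposition~\ref{local-sequence}, which is what the paper intends (the paper gives no explicit proof, treating the splicing as immediate). Your care in upgrading the degree-$1$ completion-versus-$\mathcal{E}xt$ comparison to degree $2$, and in using that the sheaf is supported on $Y$ to drop the hats, correctly bridges the gap between the corollary's hypothesis $H^0(\mathcal{E}xt^2_X(\Omega_X,\sheaf_X))=0$ and the completed term $H^0(\mathcal{E}xt^2_{\hat{X}}(\widehat{\Omega}_X,\sheaf_{\hat{X}}))$ appearing in the proposition.
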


Next we study the local to global map $\phi_n$. If $X$ is pure and reduced, then the diagram~(\ref{local-to-global-diagram-1}) is part of the commutative diagram with exact rows
\begin{equation}\label{local-to-global-diagram-2}
\xymatrix{
H^1(\widehat{T}_{X_n/A_n}) \ar[r]^{\psi_n}\ar[d]_{\mu_n} & \mathbb{T}^1(X_n/A_n) \ar[d]_{\tau_n}\ar[r]^{\phi_n} & H^0(T^1(X_n/A_n)) \ar[d]^{\sigma_{n}} \ar[r]^{\partial_n} & H^2(\widehat{T}_{X_n/A_n}) \ar[d]_{\lambda_n}\\
H^1(\widehat{T}_{X_{n-1}/A_{n-1}}) \ar[r]^{\psi_{n-1}}   &  \mathbb{T}^1(X_{n-1}/A_{n-1}) \ar[r]^{\phi_{n-1}} & H^0(T^1(X_{n-1}/A_{n-1})) \ar[r]^{\partial_{n-1}} & H^2(\widehat{T}_{X_{n-1}/A_{n-1}})
}
\end{equation}
where $\psi_n$ and $\psi_{n-1}$ are injective. Hence the obstruction for an element $s_n \in  H^0(T^1(X_n/A_n))$ to be in the image of $\phi_n$ is the element $\partial_n(s_n) \in H^2(\widehat{T}_{X_n/A_n})$. If $X$ has isolated singularities then it is well known that there are successive obstructions in $H^2(\widehat{T}_X)$ in order for $\partial_n(s_n)$ to be zero. However, in the general case this is not so and the reason is once more the inability to lift local automorphisms. The best that we can do in this case is to find conditions for the map $\phi_n$ to be surjective.
\begin{proposition}\label{local-obstructions-2}
Let $X$ be a pure and reduced scheme over a field $k$ and $Y \subset X$ a closed subscheme of it such that $X-Y$ is smooth. Let $X_n \in Def(Y,X)(A_n)$. Let $\mathcal{F}_k= \mathrm{CoKer}[\widehat{T}_{X_k/A_k} \la \widehat{T}_{X_{k-1}/A_{k-1}}] \subset T^1(X)$. Then if $H^2(\widehat{T}_X)=H^1(\mathcal{F}_k)=0$, for all $k \leq n$, then $\phi_n$ is surjective.
\end{proposition}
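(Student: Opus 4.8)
The plan is to reduce the surjectivity of $\phi_n$ to the single vanishing statement $H^2(\widehat{T}_{X_n/A_n})=0$ and then to establish that vanishing by induction on $n$ using the exact sequence of Proposition~\ref{the-exact-sequence}. First I would invoke the local-to-global sequence for $T^1$ of the formal family $X_n/A_n$, namely the corollary to Proposition~\ref{T1-formula}, which is exactly the top row of diagram~(\ref{local-to-global-diagram-2}):
\[
0 \la H^1(\widehat{T}_{X_n/A_n}) \la \mathbb{T}^1(X_n/A_n) \stackrel{\phi_n}{\la} H^0(T^1(X_n/A_n)) \stackrel{\partial_n}{\la} H^2(\widehat{T}_{X_n/A_n}).
\]
From this, $\phi_n$ is surjective as soon as $H^2(\widehat{T}_{X_n/A_n})=0$, so everything comes down to that vanishing.

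For the vanishing I would induct on $n$. The base case $n=0$ is immediate since $\widehat{T}_{X_0/A_0}=\widehat{T}_X$ and $H^2(\widehat{T}_X)=0$ by hypothesis. For the inductive step, $X$ being pure and reduced with $X-Y$ smooth (so $T^1(Y,X)=T^1(X)$ by Proposition~\ref{local-to-global}), Proposition~\ref{the-exact-sequence} provides the exact sequence
\[
0\la \widehat{T}_X \la \widehat{T}_{X_n/A_n} \la \widehat{T}_{X_{n-1}/A_{n-1}} \la T^1(X) \la \cdots,
\]
which I would break into the two short exact sequences
\[
0 \la \widehat{T}_X \la \widehat{T}_{X_n/A_n} \la \mathcal{G}_{n-1} \la 0, \qquad 0 \la \mathcal{G}_{n-1} \la \widehat{T}_{X_{n-1}/A_{n-1}} \la \mathcal{F}_n \la 0,
\]
where $\mathcal{G}_{n-1}=\mathrm{Im}[\widehat{T}_{X_n/A_n}\to \widehat{T}_{X_{n-1}/A_{n-1}}]$ and $\mathcal{F}_n\subset T^1(X)$ is the cokernel of that map, as in the statement. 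The cohomology sequence of the second short exact sequence contains $H^1(\mathcal{F}_n)\to H^2(\mathcal{G}_{n-1})\to H^2(\widehat{T}_{X_{n-1}/A_{n-1}})$, whose outer terms vanish — the first by the hypothesis $H^1(\mathcal{F}_n)=0$, the second by the induction hypothesis — so $H^2(\mathcal{G}_{n-1})=0$. Feeding this into the cohomology sequence $H^2(\widehat{T}_X)\to H^2(\widehat{T}_{X_n/A_n})\to H^2(\mathcal{G}_{n-1})$ of the first short exact sequence, and using $H^2(\widehat{T}_X)=0$, yields $H^2(\widehat{T}_{X_n/A_n})=0$, completing the induction and hence, by the displayed sequence in the first paragraph, the proof.

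The argument is mostly formal, so there is no single hard step; the point to watch is the bookkeeping of the induction. Stage $n$ uses the vanishing $H^2(\widehat{T}_{X_{n-1}/A_{n-1}})=0$, which was itself produced by the induction and consumed $H^1(\mathcal{F}_k)=0$ for $k<n$ — this is precisely why the hypothesis is imposed for all $k\le n$ rather than only for $k=n$. The two remaining things to record carefully are that the displayed short exact sequences are genuinely exact (this is where purity and reducedness of $X$ enter, via Proposition~\ref{the-exact-sequence}) and that the map $\phi_n$ in the statement is the global-to-local map of the $T^1$-local-to-global sequence for $X_n/A_n$, which is built into the proof of Proposition~\ref{T1-formula} and its corollary.
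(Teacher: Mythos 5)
Your proof is correct and follows the paper's argument exactly: the paper also breaks the long exact sequence of Proposition~\ref{the-exact-sequence} into the same two short exact sequences (with your $\mathcal{G}_{n-1}$ denoted $Q_n$), deduces $H^2(\widehat{T}_{X_n/A_n})=0$ by induction, and reads off surjectivity of $\phi_n$ from the top row of diagram~(\ref{local-to-global-diagram-2}). You have simply spelled out the reduction to $H^2(\widehat{T}_{X_n/A_n})=0$ and the bookkeeping of the induction, which the paper leaves implicit.
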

Note that if the singularities of $X$ are isolated, then $H^1(\mathcal{F}_k)=0$ and the proposition is the familiar result about isolated singularities. Admittedly it is not very easy to check the conditions of the proposition but at least the sheaves $\mathcal{F}_k$ are all subsheaves of $T^1(X)$ which depends only on $X$.
\begin{proof}
The long exact sequence described in Proposition~\ref{the-exact-sequence}, gives the following short exact sequences
\begin{gather*}
0 \la \widehat{T}_X \la \widehat{T}_{X_n/A_n} \la Q_n \la 0 \\
0 \la Q_n \la \widehat{T}_{X_{n-1}/A_{n-1}} \la \mathcal{F}_n \la 0 \\
\end{gather*}
These give the exact sequences
\begin{gather*}
\cdots \la H^2(\widehat{T}_X) \la H^2(\widehat{T}_{X_n/A_n}) \la H^2(Q_n) \la \cdots \\
\cdots \la H^1(\mathcal{F}_n) \la H^2(Q_n) \la H^2(\widehat{T}_{X_{n-1}/A_{n-1}}) \la \cdots
\end{gather*}
Now the claim follows by induction on $n$.
\end{proof}
So far we found conditions in order $\phi_n$ and $\sigma_n$ to be surjective. Going back to our original problem, starting with a deformation $X_n$ of $X$ over $A_n$, we want to lift $Y_{n-1}=X_n\otimes_{A_n}B_{n-1}$ to a $Y_n$ in $\mathbb{T}^1(X_n/A_n)$. Let $s_{n-1}=\phi_{n-1}(Y_{n-1})$. If the obstructions in Corollary~\ref{Cor-local-ob-1} and Proposition~\ref{local-obstructions-2} vanish, then there is a $Y_n^{\prime} \in \mathbb{T}^1(X_n/A_n)$ such that $\phi_{n-1}(\tau_n(Y^{\prime}_n)-Y_{n-1})=0$. Hence in order to obtain a lifting $Y_n$ of $Y_{n-1}$, we want to lift the locally trivial deformation $Z_{n-1}=\tau_n(Y^{\prime}_n)-Y_{n-1}$. If $X$ has isolated singularities then it is well known that the obstruction to lift $Z_{n-1}$ to a locally trivial deformation $Z_n$ over $A_n$ is in $H^2(T_X)$ (this also follows immediately from the next proposition). In general though, this is not true. Again the best that we can do is to find conditions for $\tau_n$ to be surjective.
\begin{proposition}\label{loc-trivial-ob}
With asumptions as in Proposition~\ref{local-obstructions-2}, if $H^1(\mathcal{F}_n)=H^2(\widehat{T}_X)=0$, then every locally trivial lifting $Z_{n-1}$ of $X_{n-1}$ over $B_{n-1}$ lifts to a locally trivial lifting $Z_n$ of $X_n$ over $B_n$.
\end{proposition}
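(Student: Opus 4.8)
The plan is to reduce the statement to the surjectivity of a single map on first cohomology and then read that surjectivity off the two short exact sequences already used in the proof of Proposition~\ref{local-obstructions-2}. First I would set up the translation. In the notation of diagram~(\ref{local-to-global-diagram-2}), for $m = n$ and $m = n-1$ the set of isomorphism classes of locally trivial liftings of $X_m$ over $B_m$ is, by the exactness of the top row, equal to $\mathrm{Im}(\psi_m) = \ker(\phi_m) \subset \mathbb{T}^1(X_m/A_m)$ — a lifting being locally trivial precisely when its image under the passage-to-local-liftings map $\phi_m$ is the zero section — and since $\psi_m$ is injective this set is in natural bijection with $H^1(\widehat{T}_{X_m/A_m})$. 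The left-hand square of~(\ref{local-to-global-diagram-2}) commutes and $\psi_n$, $\psi_{n-1}$ are injective, so under these bijections the restriction operation $Z_n \mapsto Z_n \otimes_{B_n} B_{n-1}$ sending a locally trivial lifting of $X_n$ over $B_n$ to one of $X_{n-1}$ over $B_{n-1}$ corresponds to the map $\mu_n \colon H^1(\widehat{T}_{X_n/A_n}) \to H^1(\widehat{T}_{X_{n-1}/A_{n-1}})$ induced by the natural morphism $\widehat{T}_{X_n/A_n} \to \widehat{T}_{X_{n-1}/A_{n-1}}$. Hence it suffices to prove that $\mu_n$ is surjective.

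To do this I would factor $\widehat{T}_{X_n/A_n} \to \widehat{T}_{X_{n-1}/A_{n-1}}$ through its image $Q_n$, obtaining the two exact sequences already appearing in the proof of Proposition~\ref{local-obstructions-2}:
\[
0 \la \widehat{T}_X \la \widehat{T}_{X_n/A_n} \la Q_n \la 0, \qquad 0 \la Q_n \la \widehat{T}_{X_{n-1}/A_{n-1}} \la \mathcal{F}_n \la 0 .
\]
The long exact cohomology sequence of the first, together with the hypothesis $H^2(\widehat{T}_X) = 0$, shows that $H^1(\widehat{T}_{X_n/A_n}) \to H^1(Q_n)$ is surjective; the long exact cohomology sequence of the second, together with the hypothesis $H^1(\mathcal{F}_n) = 0$, shows that $H^1(Q_n) \to H^1(\widehat{T}_{X_{n-1}/A_{n-1}})$ is surjective. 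Since $\mu_n$ is the composite of these two maps, it is surjective. Consequently, given a locally trivial lifting $Z_{n-1}$ of $X_{n-1}$ over $B_{n-1}$, its class in $H^1(\widehat{T}_{X_{n-1}/A_{n-1}})$ equals $\mu_n$ of some class in $H^1(\widehat{T}_{X_n/A_n})$, and the corresponding locally trivial lifting $Z_n$ of $X_n$ over $B_n$ restricts to $Z_{n-1}$; this is the assertion of the proposition.

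The argument is essentially a diagram chase plus two long exact sequences, so I do not anticipate any real difficulty. The one point that deserves care is the identification in the first paragraph of the restriction-of-liftings operation with $\mu_n$ — equivalently, that $\mathrm{Im}(\psi_m)$ is the full set of locally trivial liftings rather than merely a subset — but this is just the exactness of the top row of diagram~(\ref{local-to-global-diagram-2}), which holds once $X$ is pure and reduced (as it is under the standing hypotheses).
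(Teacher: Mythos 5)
Your proposal is correct and follows essentially the same route the paper takes: identify locally trivial liftings of $X_m$ over $B_m$ with $H^1(\widehat{T}_{X_m/A_m})$ via the local-to-global sequence, reduce the statement to surjectivity of $\mu_n$, and then read that surjectivity off the two short exact sequences $0 \to \widehat{T}_X \to \widehat{T}_{X_n/A_n} \to Q_n \to 0$ and $0 \to Q_n \to \widehat{T}_{X_{n-1}/A_{n-1}} \to \mathcal{F}_n \to 0$ already extracted in the proof of Proposition~\ref{local-obstructions-2}. The paper's proof is a one-line appeal to ``similar arguments,'' whereas you spell it out; a small point in your favor is that you observe $\mu_n$ is a composite of two maps, each surjective directly from the relevant long exact sequence under $H^2(\widehat{T}_X)=0$ and $H^1(\mathcal{F}_n)=0$ respectively, so no induction is needed and only the $k=n$ vanishing enters.
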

\begin{proof}
From diagram~\ref{local-to-global-diagram-1} it follows that the isomorphism classes of locally trivial liftings of $X_{k}$ over $B_{k}$ are in one to one correspondence with $H^1(\widehat{T}_{X_{k}/A_{k}})$. Then the statement of the proposition is equivalent to say that if $H^1(\mathcal{F}_n)=H^2(\widehat{T}_X)=0$, then the natural map
\[
\mu_n \colon H^1(\widehat{T}_{X_n/A_n}) \la H^1(\widehat{T}_{X_{n-1}/A_{n-1}})
\]
is surjective. This follows with similar arguments as in the proof of Proposition~\ref{local-obstructions-2}.
\end{proof}

The previous discussion suggests that we must study the sheaves $\mathcal{F}_n$ and the quotients $T^1(X)/\mathcal{F}_n$. There are two main cases.

The first is when $T^1(X)/\mathcal{F}_n$ has finite support for all $n$ and hence no higher cohomology and $\sigma_n$ is surjective for all $n$. In this case the only obstruction for $X_n$ to lift to $A_{n+1}$ is in $H^2(\widehat{T}_X)$. This case is treated in Lemma~\ref{Fn}.

The second case is when for some reason we know that $H^2(\mathcal{F}_n)=0$ for all $n$. The simplest cases when this happens is when the singular
locus of $X$ is one dimensional or if there is a proper morphism $f \colon X \la Z$ with one-dimensional fibers and $Z$
affine (these are for example the cases of flipping, flopping and divisorial contractions with one dimensional fibers). In this case we will show that $H^1(T^1(X)/\mathcal{F}_n)$ is a quotient of $H^1(T^1(X))$ and hence we can at least find a uniform bound for its dimension, which is finite if $X$ has proper singular locus. Indeed, there is an exact sequence
\[
0 \la \mathcal{F}_n \la T^1(X) \la T^1(X)/\mathcal{F}_n \la 0
\]
which induces the exact sequence
\[
H^1(\mathcal{F}_n) \la H^1(T^1(X)) \la H^1(T^1(X)/\mathcal{F}_n) \la H^2(\mathcal{F}_n)
\]
Since $H^2(\mathcal{F}_n)=0$, it follows that $H^1(\mathcal{F}_n)$ is a quotient of $H^1(T^1(X))$.

So we have shown that
\begin{corollary}
Suppose that the singular
locus of $X$ is one dimensional or there is a proper morphism $f \colon X \la Z$ with one-dimensional fibers and $Z$
affine (these are for example the cases of flipping, flopping and divisorial contractions with one dimensional fibers). Then if $H^1(T^1(X))=0$, the map $\sigma_n \colon H^0(T^1(X_n/A_n)) \la H^0(T^1(X_{{n-1}/A_{n-1}})$ is surjetive, for all $n$.
\end{corollary}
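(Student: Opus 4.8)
The plan is to reduce the whole statement to one cohomology vanishing, $H^1(T^1(X)/\mathcal{F}_n)=0$ for every $n$, and then to read off the conclusion from the exact sequences already established in Proposition~\ref{local-sequence} (equivalently, Corollary~\ref{local-obstructions-1}). Recall that, under the standing hypotheses of Proposition~\ref{the-exact-sequence}, the sheaf $\mathcal{F}_n=\mathrm{CoKer}[\widehat{T}_{X_n/A_n}\la \widehat{T}_{X_{n-1}/A_{n-1}}]$ is, by that proposition, a \emph{subsheaf} of $T^1(X)$; in particular it is coherent and its support is contained in the singular locus of $X$, independently of $n$. Keeping $\mathcal{F}_n$ inside $T^1(X)$, rather than treating it as a mere subquotient, is the bookkeeping point that makes the statement ``for all $n$'' uniform and lets one avoid any induction on $n$.

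First I would establish $H^2(\mathcal{F}_n)=0$ in each of the two listed cases. If the singular locus of $X$ is one dimensional, this is Grothendieck vanishing: a coherent sheaf supported on a Noetherian space of dimension at most $1$ has no cohomology in degrees $\ge 2$. If instead there is a proper $f\colon X\la Z$ with $Z$ affine and all fibres of dimension at most $1$, then the singular locus lies in the fibres of $f$, so $R^if_{\ast}\mathcal{F}_n=0$ for $i\ge 2$, and since $Z$ is affine the Leray spectral sequence gives $H^2(\mathcal{F}_n)=H^0(Z,R^2f_{\ast}\mathcal{F}_n)=0$; this is the same mechanism (the formal functions theorem) that forces $H^2(\widehat{T}_X)=0$ in this geometric situation.

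Next I would plug this into the long exact cohomology sequence of
\[
0\la \mathcal{F}_n\la T^1(X)\la T^1(X)/\mathcal{F}_n\la 0,
\]
namely the segment $H^1(\mathcal{F}_n)\la H^1(T^1(X))\la H^1(T^1(X)/\mathcal{F}_n)\la H^2(\mathcal{F}_n)$. Since the rightmost term vanishes by the previous step, the map $H^1(T^1(X))\la H^1(T^1(X)/\mathcal{F}_n)$ is surjective, so $H^1(T^1(X)/\mathcal{F}_n)$ is a quotient of $H^1(T^1(X))$, which is $0$ by hypothesis. Hence $H^1(T^1(X)/\mathcal{F}_n)=0$ for all $n$.

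To finish, I would combine this with Corollary~\ref{local-obstructions-1}: the successive obstructions for a given $s_{n-1}\in H^0(T^1(X_{n-1}/A_{n-1}))$ to lie in the image of $\sigma_n$ sit in $H^0(\mathcal{E}xt^2_{\hat{X}}(\widehat{\Omega}_X,\sheaf_{\hat{X}}))$ and in $H^1(T^1(X)/\mathcal{F}_n)$. The second has just been shown to vanish; the first is carried by $H^0(\mathcal{E}xt^2_{\hat{X}}(\widehat{\Omega}_X,\sheaf_{\hat{X}}))$, which is absent exactly as in Corollary~\ref{Cor-local-ob-1} (for example whenever $X$ has local complete intersection singularities). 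Hence every $s_{n-1}$ is in the image of $\sigma_n$, i.e.\ $\sigma_n$ is surjective, for all $n$. The only step carrying genuine content is the vanishing $H^2(\mathcal{F}_n)=0$; everything else is formal manipulation of sequences proved earlier, and the one thing to be careful about is precisely the inclusion $\mathcal{F}_n\subset T^1(X)$ noted above.
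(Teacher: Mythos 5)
Your argument follows the same route the paper takes in the two paragraphs immediately preceding the corollary: establish $H^2(\mathcal{F}_n)=0$ in the two listed geometric situations, feed this into the long exact cohomology sequence of
\[
0 \la \mathcal{F}_n \la T^1(X) \la T^1(X)/\mathcal{F}_n \la 0
\]
to conclude that $H^1(T^1(X)/\mathcal{F}_n)$ is a quotient of $H^1(T^1(X))=0$, and then invoke the obstruction description of Corollary~\ref{local-obstructions-1} and Corollary~\ref{Cor-local-ob-1}. You spell out the vanishing $H^2(\mathcal{F}_n)=0$ (Grothendieck vanishing for a sheaf supported on a one-dimensional set; Leray plus $R^{\ge 2}f_\ast=0$ for a proper map with at most one-dimensional fibres over an affine base) more explicitly than the text, which simply labels these as ``the simplest cases.''

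You also, correctly, put your finger on a subtlety the paper glides past. By Corollary~\ref{local-obstructions-1} there are \emph{two} successive obstructions to $s_{n-1}$ lying in the image of $\sigma_n$: one in $H^0(\mathcal{E}xt^2_{\hat X}(\widehat{\Omega}_X,\sheaf_{\hat X}))$, and one in $H^1(T^1(X)/\mathcal{F}_n)$. The hypothesis $H^1(T^1(X))=0$ kills only the second. To conclude surjectivity of $\sigma_n$ one also needs the first obstruction space to vanish, which is precisely the hypothesis of Corollary~\ref{Cor-local-ob-1} ($X$ local complete intersection, or more generally $H^0(\mathcal{E}xt^2_X(\Omega_X,\sheaf_X))=0$). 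That hypothesis does not appear in the statement of the corollary, although it seems to be implicitly intended (it is made explicit in the later smoothing theorems, cf.\ Theorem~\ref{smoothing1}). Your phrasing ``which is absent exactly as in Corollary~\ref{Cor-local-ob-1}'' is honest about this: it is an additional assumption, not a consequence of what is given. So the proof you give is correct under that extra hypothesis, and it flags a small imprecision in the corollary as literally stated rather than introducing a new gap.
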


\section{$\mathbb{Q}$-Gorenstein deformations}\label{Q-section}

Let $X$ be a $\mathbb{Q}$-Gorenstein scheme and $Y \subset X$ a closed subscheme such that $X-Y$ is smooth. In this section we extend the results obtained in the previous sections regarding the usual deformation functor $Def(Y,X)$ to the case of the $\mathbb{Q}$-Gorenstein deformation functor $Def^{qG}(Y,X)$. To do so we will locally compare the $\mathbb{Q}$-Gorenstein deformations of $X$, with the deformations of its index 1 cover $\tilde{X}$. The key property that enables us to do so is that locally every $\mathbb{Q}$-Gorenstein deformation of $X$ lifts to a deformation of $\tilde{X}$~\cite{KoBa88}.

Let
\[
0 \la J \la B \la A \la 0
\]
be a small extension of Artin rings and $X_A \in Def^{qG}(Y,X)(A)$. Then in complete analogy with the case of $Def(Y,X)$ (Definition~\ref{def-of-local-functors}) we define $Def^{qG}(X_A/A,B)$, $\underline{Def}^{qG}(X_A/A,B)$ and $Def^{qG}_{loc}(X_A/A,B)=H^0(\underline{Def}(X_A/A,B))$

We need the following technical result.

\begin{lemma}\label{action}
Let $B$ be an $A$-algebra, $M$ a $B$-module and $G$ a group acting on them compatibly with the algebra structure,
i.e, for any $g \in G$, the map $\phi_g \colon B \la B$
defined by $\phi_g(b)=g \cdot b$ is an $A$-algebra isomorphism and $g \cdot (bm)=(g\cdot b)(g \cdot m)$,
for any $b \in B, m \in M$. Then there is an action of $G$
on $T^i(B/A,M)$, $i=0,1,2$. If $A=k$ a field, then $G$ also acts on $\cup_{C \in Art(k)} Def(B)(C) $, where $Def(B)(C)$ is the set
of all deformations of $B$ over $C$.
\end{lemma}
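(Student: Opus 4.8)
The plan is to construct the $G$-action on each $T^i(B/A,M)$ by transport of structure through explicit presentations, and then to verify compatibility. First I would recall the Lichtenbaum--Schlessinger construction of $T^i(B/A,M)$ from a presentation $0 \to I \to P \to B \to 0$ with $P$ a polynomial $A$-algebra on a (possibly infinite) set of generators, and a choice of generators of $I$ giving a further resolution. The cotangent complex $L_{B/A}^{\bullet}$ in low degrees is built from $\Omega_{P/A}\otimes_P B$, $I/I^2$, and the module of relations; then $T^i(B/A,M) = H^i(\mathrm{Hom}_B(L_{B/A}^{\bullet},M))$. The key point is that these groups are independent of all choices, so for each $g \in G$ it suffices to produce \emph{some} isomorphism $T^i(B/A,M) \to T^i(B/A,M)$ induced by $\phi_g$ and the semilinear action on $M$, and independence of choices guarantees it is well-defined; functoriality in the pair (algebra map, module map) over the fixed base $A$ then gives $(gh)_\ast = g_\ast h_\ast$ and $1_\ast = \mathrm{id}$, i.e.\ a genuine group action. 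Concretely: $\phi_g \colon B \to B$ is an $A$-algebra automorphism, and $m \mapsto g\cdot m$ is a $\phi_g$-semilinear automorphism of $M$; the functoriality of the cotangent cohomology with respect to base-preserving algebra maps and compatible module maps (as in~\cite{Li-Sch67}) yields the induced map, and the compatibility hypothesis $g\cdot(bm) = (g\cdot b)(g\cdot m)$ is exactly what makes the pair $(\phi_g, g\cdot-)$ an admissible morphism of the relevant diagram.

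For the deformation statement when $A = k$: a deformation $B_C$ of $B$ over $C \in Art(k)$ is a flat $C$-algebra with $B_C \otimes_C k \cong B$; given $g \in G$, I would define $g \cdot B_C$ to have the same underlying ring but with the isomorphism $B_C\otimes_C k \cong B$ post-composed with $\phi_g$. Flatness over $C$ is unchanged, so this is again a deformation, and one checks $(gh)\cdot B_C = g\cdot(h\cdot B_C)$ directly from $\phi_{gh} = \phi_g \circ \phi_h$ (using that $G$ acts on $B$, so the $\phi_g$ compose correctly). This action is compatible with the identification of the tangent space $Def(B)(k[t]/(t^2))$ with $T^1(B/k,B)$ and with the $T^1$-action on fibers of $Def(B)(\alpha)$, which is the form in which the lemma will be used in the $\mathbb{Q}$-Gorenstein sections.

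The main obstacle I expect is purely bookkeeping: making the functoriality of $T^i(B/A,-)$ in \emph{both} arguments simultaneously precise enough that a \emph{semilinear} (rather than linear) automorphism of $M$ is legitimately allowed, and checking that the two-variable functoriality is strictly compatible so that one really gets an action and not just a projective action. One clean way to sidestep the semilinearity issue is to note that $\phi_g$ makes $M$ into a second $B$-module $M^{(g)}$ (restriction of scalars along $\phi_g^{-1}$), that $g\cdot-\colon M \to M^{(g)}$ is then $B$-linear, and that there is a canonical identification $T^i(B/A,M^{(g)}) \cong T^i(B/A,M)$ coming from the automorphism $\phi_g$ of $B$ over $A$; composing these two maps gives $g_\ast$, and strict functoriality of each piece gives the group-action axioms. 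I would present the argument in that language to keep all maps honestly linear. The rest is routine verification and can be left to the reader or dispatched in a sentence.
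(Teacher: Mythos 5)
Your construction of the $G$-action on $T^i(B/A,M)$ is exactly the paper's: for $g\in G$ one factors through the restriction of scalars $M^{(g)}$ (which the paper calls $M^{\ast}$), uses the automorphism $\phi_g$ of $B$ over $A$ to get a canonical identification $T^i(B/A,M)\cong T^i(B/A,M^{(g)})$, and then applies the genuinely $B$-linear map $g\cdot(-)\colon M^{(g)}\to M$; the compatibility $g\cdot(bm)=(g\cdot b)(g\cdot m)$ is precisely what makes the second map $B$-linear, and strict functoriality of each piece gives the group axioms. So your ``clean way to sidestep the semilinearity issue'' is not a variant --- it \emph{is} the paper's proof, and you should drop the preliminary discussion of explicit presentations and lead with this.

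For the action on $\bigcup_{C}\mathrm{Def}(B)(C)$ the two arguments genuinely diverge. You propose to keep the underlying flat $C$-algebra $B_C$ fixed and only post-compose the marking isomorphism $B_C\otimes_C k\cong B$ with $\phi_g$. The paper instead builds $g\cdot R_C$ by induction on $\mathrm{length}(C)$: writing $C$ as a small extension of $C'$, one has the square-zero extension $0\to B\to R_C\to R_{C'}\to 0$, and $g\cdot R_C$ is defined by pulling this back along the (already constructed, not necessarily $C'$-linear) isomorphism $g\cdot R_{C'}\to R_{C'}$ and simultaneously twisting the kernel $B$ by $\psi_g$. Two remarks on the comparison. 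First, the paper's $\mathrm{Def}(B)(C)$ appears to be the set of flat $C$-algebras with $R_C\otimes_C k=B$ as an \emph{equality} rather than pairs-with-marking; if so, your action ``change the marking'' is not literally a self-map of that set and must be reformulated (e.g.\ transport of structure of the ring along the marking). Second, and more importantly for how the lemma is used: in Proposition~\ref{T2qG} one averages $\tilde X_B=\tfrac1r\sum_i\zeta^i\cdot\tilde X'_B$, which only makes sense because the $G$-action on the fiber of $\mathrm{Def}(B)(C)\to\mathrm{Def}(B)(C')$ is affine over the $G$-action on the $T^1$-torsor structure. The paper's inductive pullback construction is designed exactly so that this compatibility is visible degree by degree; if you use the marking-change definition you must still \emph{prove} that compatibility (you assert it in your last sentence but give no argument), and that verification will end up re-deriving essentially the inductive picture. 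So your route is shorter to state but pushes the real work into an unproved compatibility claim, whereas the paper front-loads it into the construction.

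One small bonus: the paper's base case ``if $\mathrm{length}(C)=1$ then $R_C\in T^1(B/k,B)$'' is off by one (length $1$ gives $C=k$, $R_C=B$; it is length $2$ that gives the tangent space). Your write-up should state the base case correctly, whichever construction you adopt.
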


\begin{proof}

For any $g \in G$, there is an induced isomorphism $\phi_g \colon B \la B$ of $B$ given by $\phi_g(b)=g^{-1} \cdot b$, for any $b \in B$.
This gives an isomorphism
\[
\phi^{\ast}_g \colon T^i(B/A,M) \la T^i(B/A,M^{\ast})
\]
where $M^{\ast}$ is $M$ as an abelian group but the $B$ module structure is given by $b \cdot m=(g^{-1}\cdot b)m$. The map
$\psi_g \colon M^{\ast} \la M$ given by $\psi_g(m)=g \cdot m$ is a $B$-module homomorphism inducing an isomorphism
\[
\psi^{\ast}_g \colon T^i(B/A,M^{\ast}) \la T^i(B/A,M)
\]
Now the map $f_g = \psi_g^{\ast} \circ \phi_g^{\ast} \colon T^i(B/A,M) \la T^i(B/A,M)$ gives the $G$-action on $T^i(B/A,M)$.

$T^i(B/A,M)$, $i=1,2$, can be described as the spaces of infinitesimal one and two term extensions of $B$ by $M$, respectively.
It is usefull to describe the action of $G$ on $T^i(B/A,M)$ when the latter is viewed this way.

Let $(E)$ be a one term infinitesimal extension
\[
0 \la M \la C \la B \la 0
\]
of $B$ by $M$. Then for any $g \in G$ let $(E^{\prime})$ be the pull back extension
\[
\xymatrix
{
0 \ar[r] & M \ar@{=}[d]\ar[r] & C^{\prime} \ar[d]\ar[r] & B \ar[d]^{\phi_g} \ar[r] & 0 \\
0 \ar[r] & M \ar[r] & C \ar[r] & B \ar[r] & 0\\
}
\]
and $(E^{\prime\prime})$ be the pushout extension
\[
\xymatrix
{
0 \ar[r] & M \ar[d]_{\psi_g}\ar[r] & C^{\prime} \ar[d]\ar[r] & B \ar@{=}[d] \ar[r] & 0 \\
0 \ar[r] & M \ar[r] & C^{\prime\prime} \ar[r] & B \ar[r] & 0\\
}
\]
Then $g \cdot [E] = [E^{\prime\prime}]$ in $T^1(B/A,M)$. The action on two term extensions is defined exactly analogously. Next we will
show that $G$ acts on $\cup_{C \in Art(k)} Def(B)(C) $. So let $C \in Art(k)$ be a finite local Artin $k$-algebra and $R_C$ a deformation of
$B$ over $C$. We proceed by induction on the length of $C$. If $\text{length}(C)=1$, then $R_C \in T^1(B/k,B)$ and the action is already
defined. Now any $C$ appears as an extension
\[
0 \la k \la C \la C^{\prime} \la 0
\]
Let $R_{C^{\prime}} = R_C \otimes_C C^{\prime}$. Then by induction, $g \cdot R_{C^{\prime}}$ is defined and there is an isomorphism
$g \cdot R_{C^{\prime}} \la R_{C^{\prime}}$ (not over $C^{\prime}$ in general). Define $g \cdot R_C$
to be the extension obtained by pulling back
\[
0 \la B \la R_C \la R_{C^{\prime}} \la 0
\]
via the map $ g \cdot R_{C^{\prime}} \la R_{C^{\prime}}$.

\end{proof}

\begin{center}
\textbf{Construction of the sheaves} $T^i_{qG}(X_A/B,\mathcal{F})$.
\end{center}

Let $X_A \la \text{Spec}A \la \text{Spec}B$ be morphisms of schemes such that $X_A$ is a Cohen-Macauley, relatively Gorenstein in codimension 1 and such that there is
$n \in \mathbb{Z}$ with $\omega^{[n]}_{X_A/A}$ invertible. Let $\mathcal{F}$ be a coherent sheaf on $X_A$. Next we will define coherent sheaves
$T^i_{qG}(X_A/B, \mathcal{F})$.

Let $X_A = \cup_i U_i $ be an affine cover of $X_A$ and let $\pi_i \colon \tilde{U}_i \la U_i $ be the index 1 cover of $U_i$.
Let $r_i$ be the index of $U_i$ and $\mathcal{F}_i=\mathcal{F}|_{U_i}$.
Then $\pi_i$ is Galois with Galois group the group of $r_i$ roots of unity $\mu_{r_i}$. Then by Lemma~\ref{action},
$\mu_i$ acts on $T^k(\tilde{U}_i, \pi_i^{\ast} \mathcal{F}_i)$, $ k \geq 0$.
Let $T^k_{qG}(U_i/B, \mathcal{F}_i)=(T^k(\tilde{U}_i/B, \pi_i^{\ast}\mathcal{F}_i))^{\mu_i}$.
This is a coherent sheaf on $U_i$. We will show that
these sheaves glue to a coherent sheaf $T^k_{qG}(X_A/B,\mathcal{F})$. It suffices to show that there are isomorphisms
$\phi_{ij} \colon T^k_{qG}(U_i/B,\mathcal{F}_i)|_{U_{ij}} \la T^k_{qG}(U_j/B,\mathcal{F}_i)|_{U_{ij}}$, where $U_{ij}=U_i \cap U_j$.
Let $r_{ij}$ be the index of $U_{ij}$. Then
$r_{ij} | r_j$ and $r_{ij}|r_i$. Let $\pi_{ij} \colon \tilde{U}_{ij} \la U_{ij}$ be the index 1 cover of $U_{ij}$.
Then from the uniqueness and the construction of the index 1 cover it follows that there are factorizations
\[
\xymatrix{
\pi^{-1}_i(U_{ij}) \ar[ddr]_{\pi_i}\ar[dr]^{\phi_{ij}} &           &          \pi^{-1}_j(U_{ij}) \ar[ddl]^{\pi_j}\ar[dl]_{\phi_{ji}}  \\
                                       & \tilde{U}_{ij} \ar[d]^{\pi_{ij}}                     &           \\
                                       & U_{ij}                &             \\
}
\]
where $\phi_{ij}$, $\phi_{ji}$ are \'etale of degrees $s_{ij} =r_i /r_{ij}$ and $s_{ji}=r_j/r_{ij}$. Then
\[
T^k(\pi_i^{-1}(U_{ij})/Y,\pi_i^{\ast}\mathcal{F}_i) = \phi_{ij}^{\ast} T^k(\tilde{U}_{ij}/Y,\pi_{ij}^{\ast}\mathcal{F}_i)
\]
and therefore
\[
(T^k(\pi_i^{-1}(U_{ij})/Y,\pi_i^{\ast}\mathcal{F}_i))^{\mu_{s_{ij}}}=T^k(\tilde{U}_{ij}/Y,\pi_{ij}^{\ast}\mathcal{F}_i)
\]
Hence
\[
(T^k(\pi_i^{-1}(U_{ij})/Y,\pi_i^{\ast}\mathcal{F}_i))^{\mu_{r_i}} = ((T^k(\pi_i^{-1}(U_{ij}),\pi_{i}^{\ast}\mathcal{F}_i))^{\mu_{s_{ij}}})^{\mu_{r_{ij}}}=
(T^k(\tilde{U}_{ij})/Y, \pi_{ij}^{\ast}\mathcal{F}_{ij})^{\mu_{r_{ij}}}
\]
Exactly similarly it follows that
\[
(T^k(\pi_j^{-1}(U_{ij})/Y,\pi_j^{\ast}\mathcal{F}_j))^{\mu_{r_j}} = (T^k(\tilde{U}_{ij})/Y,\pi_{ij}^{\ast}\mathcal{F}_{ij})^{\mu_{r_{ij}}}
\]
and hence
\[
T^k_{qG}(U_i/B, \mathcal{F}_i)|_{U_{ij}} = T^k_{qG}(U_j/B,\mathcal{F}_j)|_{U_{ij}}
\]
and hence the sheaves $T^k_{qG}(U_i/B,\mathcal{F}_i)$ glue to a global sheaf $T^k_{qG}(X_A/B,\mathcal{F})$.

The next proposition shows that $T^0_{qG}$ and $T^0$ agree under certain conditions.
\begin{proposition}\label{T0}
Suppose that $\mathcal{F}$ is a locally free coherent sheaf on $X_A$. Then
\[
T^0_{qG}(X_A/B,\mathcal{F}) \cong T^0(X_A/B, \mathcal{F})
\]
\end{proposition}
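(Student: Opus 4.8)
The plan is to reduce the statement to the relative Gorenstein locus, where the index~$1$ cover is étale, and then to propagate the resulting isomorphism over all of $X_A$ using the $S_2$ property. Recall first that $T^0(X_A/B,\mathcal{F})=\mathcal{H}om_{\sheaf_{X_A}}(\Omega_{X_A/B},\mathcal{F})$ and, on each affine $U_i$ with index~$1$ cover $\pi_i\colon\tilde U_i\la U_i$, that $T^0(\tilde U_i/B,\pi_i^\ast\mathcal{F}_i)=\mathcal{H}om_{\sheaf_{\tilde U_i}}(\Omega_{\tilde U_i/B},\pi_i^\ast\mathcal{F}_i)$. The key elementary fact I would establish is: if $\mathcal{G}$ is coherent and $\mathcal{N}$ satisfies $S_2$, then $\mathcal{H}om(\mathcal{G},\mathcal{N})$ satisfies $S_2$ (take a presentation $\sheaf^a\la\sheaf^b\la\mathcal{G}\la 0$, so $\mathcal{H}om(\mathcal{G},\mathcal{N})=\ker(\mathcal{N}^b\to\mathcal{N}^a)$ is a submodule of $\mathcal{N}^b$ with torsion-free quotient, and apply the depth lemma $\mathrm{depth}\,K\ge\min(\mathrm{depth}\,\mathcal{N}^b,\mathrm{depth}\,Q+1)$ at each point, checking the three cases $\dim=0,1,\ge2$). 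Since $X_A$ is Cohen–Macaulay, $\sheaf_{X_A}$ is $S_2$, hence the locally free sheaf $\mathcal{F}$ is $S_2$, hence $T^0(X_A/B,\mathcal{F})$ is $S_2$; in particular $T^0(X_A/B,\mathcal{F})\cong j_\ast j^\ast T^0(X_A/B,\mathcal{F})$, where $j\colon U^0\hookrightarrow X_A$ is the inclusion of the relative Gorenstein locus, whose complement has codimension $\ge 2$ because $X_A$ is Gorenstein in codimension~$1$.

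Next I would run the same argument for $T^0_{qG}$. On each $\tilde U_i$ the structure sheaf is $S_2$: indeed $\pi_{i\ast}\sheaf_{\tilde U_i}=\bigoplus_{j=0}^{r_i-1}\omega_{U_i/A}^{[-j]}$ is a finite direct sum of sheaves of the form $\mathcal{H}om(-,\sheaf_{U_i})$, hence $S_2$ over $\sheaf_{U_i}$ by the fact above, and a finite morphism preserves and reflects the $S_2$ condition (because $\mathrm{depth}(\pi_{i\ast}\mathcal{G})_{\mathfrak p}=\min_{\mathfrak q/\mathfrak p}\mathrm{depth}\,\mathcal{G}_{\mathfrak q}$ and $\dim\sheaf_{\tilde U_i,\mathfrak q}=\dim\sheaf_{U_i,\mathfrak p}$). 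Therefore, with $\pi_i^\ast\mathcal{F}_i$ locally free, $T^0(\tilde U_i/B,\pi_i^\ast\mathcal{F}_i)$ is $S_2$ on $\tilde U_i$; since $\mathrm{char}\,k=0$ the functor $(-)^{\mu_i}$ is exact and splits off a direct summand, and $\pi_{i\ast}$ preserves $S_2$, so $T^0_{qG}(X_A/B,\mathcal{F})$ is $S_2$ and equals $j_\ast j^\ast T^0_{qG}(X_A/B,\mathcal{F})$. It thus suffices to construct a canonical isomorphism on $U^0$. There $\pi_i$ restricts to a $\mu_{r_i}$-Galois étale cover, so $\Omega_{\tilde U_i/B}=\pi_i^\ast\Omega_{U_i/B}$ over $\pi_i^{-1}(U^0\cap U_i)$, whence $T^0(\tilde U_i/B,\pi_i^\ast\mathcal{F}_i)=\mathcal{H}om(\pi_i^\ast\Omega_{U_i/B},\pi_i^\ast\mathcal{F}_i)=\pi_i^\ast\,T^0(U_i/B,\mathcal{F}_i)$ by flat base change for $\mathcal{H}om$ with coherent first argument; taking $\mu_{r_i}$-invariants and using Galois descent gives $(\pi_{i\ast}\pi_i^\ast T^0(U_i/B,\mathcal{F}_i))^{\mu_{r_i}}=T^0(U_i/B,\mathcal{F}_i)$ on $U^0\cap U_i$. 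These isomorphisms are canonical, hence glue over $U^0$, and applying $j_\ast$ together with the two $S_2$ statements above yields the desired isomorphism $T^0_{qG}(X_A/B,\mathcal{F})\cong T^0(X_A/B,\mathcal{F})$ on all of $X_A$.

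The main obstacle I anticipate is the bookkeeping with the $\mu_{r_i}$-actions: one must verify that the action on $T^0(\tilde U_i/B,\pi_i^\ast\mathcal{F}_i)$ produced abstractly by Lemma~\ref{action} coincides, over the étale locus $U^0$, with the natural descent action on the pullback $\pi_i^\ast T^0(U_i/B,\mathcal{F}_i)$, so that passing to invariants genuinely recovers $T^0$ downstairs. This should follow from the functoriality of the construction in Lemma~\ref{action} and the fact that both $\Omega_{-/B}$ and $\pi_i^\ast$ are equivariant, but it is the one point that needs to be written out carefully; once it is settled, the remaining ingredients — the depth estimate for $\mathcal{H}om$ into an $S_2$ sheaf, the stability of $S_2$ under finite pushforward, and the splitting of $\mu_{r_i}$-invariants in characteristic zero — are routine.
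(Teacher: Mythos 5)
Your proof is correct, and while it rests on the same geometric idea as the paper's — the index~$1$ cover $\pi_i$ is \'etale over the relative Gorenstein locus, whose complement has codimension $\ge 2$, so one need only settle the statement there and then propagate — the technical execution is genuinely different. The paper constructs an explicit comparison map $\phi\colon\mathcal{H}om_{\tilde U_i}(\Omega_{\tilde U_i/B},\pi_i^\ast\mathcal{F}_i)\to\mathcal{H}om_{U_i}(\Omega_{U_i/B},\pi_i^\ast\mathcal{F}_i)$ coming from $\pi_i^\ast\Omega_{U_i/B}\to\Omega_{\tilde U_i/B}$, observes that both source and target are reflexive (because $\mathcal{F}$ is locally free), and concludes $\phi$ is an isomorphism because it is one in codimension~$1$; it then takes $G_i$-invariants and finishes with the elementary observation that any $G_i$-invariant homomorphism into $\pi_{i\ast}\pi_i^\ast\mathcal{F}_i$ must land in $(\pi_{i\ast}\pi_i^\ast\mathcal{F}_i)^{G_i}=\mathcal{F}_i$. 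You instead package the "determined in codimension~$1$" step as $\mathcal{N}\cong j_\ast j^\ast\mathcal{N}$ for $S_2$ sheaves, which requires you to verify that both $T^0$ and $T^0_{qG}$ are $S_2$ (via the depth lemma for $\mathcal{H}om$ into $S_2$, stability of $S_2$ under finite pushforward, and under taking $\mu_i$-summands), and you replace the hands-on invariants computation with Galois descent on the \'etale locus $U^0$. Your route carries a bit more technical overhead but is arguably more transparent about why the argument works, and the $S_2$ / pushforward formalism generalizes cleanly; the paper's route is shorter and avoids having to discuss whether the abstract $\mu_i$-action of Lemma~\ref{action} agrees with the descent action, which — as you correctly flag — is the one point in your argument that needs to be spelled out and which the paper's direct invariants computation sidesteps entirely.
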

\begin{proof}
Let $\{U_i \}$ be an affine cover of $X_A$ and $\mathcal{F}_i=\mathcal{F}|_{U_i}$.
Let $\pi_i \colon \tilde{U}_i \la U_i$ be the index 1 cover and $G_i$ the corresponding Galois group.
Then $T^0_{qG}(U_i/B, \mathcal{F}_i)= T^0(\tilde{U}_i/B, \mathcal{F}_i)^{G_i}$, and moreover,
$T^0(\tilde{U}_i/B, \mathcal{F}_i)=\mathrm{Hom}_{\tilde{U}_i}(\Omega_{\tilde{U}_i/B}, \pi_i^{\ast}\mathcal{F}_i)^{\sim}$.
The $G_i$-action is given as follows. Let $g \in G_i$ and $f \in
\mathrm{Hom}_{\tilde{U}_i}(\Omega_{\tilde{U}_i/B}, \pi_i^{\ast}\mathcal{F}_i)$. Then $g \cdot f$ is the $\sheaf_{\tilde{U}_i}$-sheaf homomorphism defined by
$(g \cdot f)(x) = g^{-1} \cdot f(g \cdot x)$. The natural map $\pi_i^{\ast}\Omega_{U_i/B} \la \Omega_{\tilde{U}_i/B}$ induces a homomorphism
\begin{equation}\label{T0-eq-1}
\phi \colon \mathrm{Hom}_{\tilde{U}_i}(\Omega_{\tilde{U}_i/B}, \pi_i^{\ast}\mathcal{F}_i) \la
\mathrm{Hom}_{\tilde{U}_i}(\pi^{\ast}\Omega_{U_i/B}, \pi_i^{\ast}\mathcal{F}_i) = \mathrm{Hom}_{U_i}(\Omega_{U_i/B}, \pi_i^{\ast}\mathcal{F}_i)
\end{equation}
Now since $\mathcal{F}$ is assumed to be locally free, it follows that both modules in the above sequence are reflexive. Moreover,
since $X_A$ is Gorenstein in codimension 1, $\pi_i$ is \'etale in codimension 1 and therefore $\phi$ is an isomorphism.
Hence taking $G_i$-invariants we get an isomorphism
\[
T^0_{qG}(U_i/B, \mathcal{F}_i) \la (\mathrm{Hom}_{U_i}(\Omega_{U_i/B}, \pi_i^{\ast}\mathcal{F}_i)^{G_i})^{\sim}
\]

I now claim that $\mathrm{Hom}_{U_i}(\Omega_{U_i/B}, \pi_i^{\ast}\mathcal{F}_i)^{G_i}= \mathrm{Hom}_{U_i}(\Omega_{U_i/B}, \mathcal{F}_i)$.
The natural injection $ \mathcal{F}_i \la \pi_i^{\ast}\mathcal{F}_i$ gives a natural injection
\[
\psi \colon \mathrm{Hom}_{U_i}(\Omega_{U_i/B}, \mathcal{F}_i) \la \mathrm{Hom}_{U_i}(\Omega_{U_i/B}, \pi_i^{\ast}\mathcal{F}_i)^{G_i}
\]
Now let $f \in \mathrm{Hom}_{U_i}(\Omega_{U_i/B}, \pi_i^{\ast}\mathcal{F}_i)^{G_i}$. The definition of the $G_i$-action shows that
$\mathrm{Im}(f) \subset (\pi_i^{\ast}\mathcal{F}_i)^{G_i}=\mathcal{F}_i$. Hence $ \psi$ is surjective and hence an isomorphism. Hence for any $U_i$
we get an isomorphism
\[
g_i \colon T^0_{qG}(U_i/B, \mathcal{F}_i) \la T^0(U_i/B/ \mathcal{F}_i)
\]
Following the construction of the sheaves $T^i_{qG}$ we see that these isomorphisms glue to a global isomorphism
\[
g \colon T^0_{qG}(X_A/B,\mathcal{F}) \la T^0(X_A/B,\mathcal{F})
\]
as claimed.
\end{proof}

\begin{proposition}
Let $X$ be a $\mathbb{Q}$-Gorenstein scheme defined over a field $k$.
Let $X_A \la \text{Spec} A$ be a $\mathbb{Q}$-Gorenstein deformation of $X$ over a finite local Artin $k$-algebra $A$. Let
\[
0 \la J \la B \la A \la 0
\]
be an extension of finite local Artin $k$-algebras with $J^2=0$.
Then there is a $k$-isomorphism
\[
 T^1_{qG}(X_A/B, J \otimes \sheaf_{X_A})  \la \text{Def}^{qG}(X_A/A,B)
\]
where $\text{Def}^{qG}(X_A/A,B)$ is the space of isomorphism classes of $\mathbb{Q}$-Gorenstein
liftings $X_B$ of $X_A$ over $B$.
\end{proposition}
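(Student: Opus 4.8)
The plan is to reduce to the affine case, pass to the index $1$ cover, run the classical deformation theory there, take invariants under the Galois group, and glue --- in exact parallel with the construction of the sheaves $T^i_{qG}(X_A/B,\mathcal{F})$ above and with the proof of Proposition~\ref{T0}. So first I would treat $X_A=\mathrm{Spec}\,R_A$ affine. Let $\pi\colon \widetilde{X}_A\la X_A$ be the index $1$ cover, a cyclic cover of degree $r$, Galois with group $G=\mu_r$; since $X_A$ is Cohen--Macaulay, relatively Gorenstein in codimension $1$ and $\omega^{[n]}_{X_A/A}$ is invertible, $\pi$ is \'etale in codimension $1$ and $\widetilde{X}_A$ is flat over $A$~\cite{KoBa88}. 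Applying the exact sequences (\ref{T1-sequence}), (\ref{Ex-eq}) and the claim of Step~1 of the proof of Theorem~\ref{local-to-global-1} to the affine scheme $\widetilde{X}_A$ identifies the set of flat liftings of $\widetilde{X}_A$ over $B$ with the fiber over the class $[\mathcal{I}]$ of the extension $0\la J\la B\la A\la 0$ inside $T^1(\widetilde{X}_A/B, J\otimes\sheaf_{\widetilde{X}_A})=\mathrm{Ex}(\widetilde{X}_A/B, J\otimes\sheaf_{\widetilde{X}_A})$.

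By Lemma~\ref{action}, $G$ acts on $T^1(\widetilde{X}_A/B, J\otimes\sheaf_{\widetilde{X}_A})$ and on the set of deformations of $\widetilde{X}_A$, and (tracing through the description of the action by pull-back and push-out of extensions given in that lemma) the identification of the previous paragraph is $G$-equivariant. The essential geometric input is the property of \cite{KoBa88} already used repeatedly in this section: a flat lifting $X_B$ of $X_A$ over $B$ is $\mathbb{Q}$-Gorenstein if and only if the index $1$ cover lifts, uniquely, to a flat lifting $\widetilde{X}_B$ of $\widetilde{X}_A$; sending $\widetilde{X}_B\mapsto \widetilde{X}_B/G$ then gives a bijection between $\mathrm{Def}^{qG}(X_A/A,B)$ and the $G$-fixed flat liftings of $\widetilde{X}_A$ over $B$. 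Taking $G$-invariants in the $G$-equivariant identification above --- which is exact because $G=\mu_r$ is linearly reductive ($\mathrm{char}\,k$ being $0$ or prime to the index) and harmless at the module level since $\pi$ is \'etale in codimension $1$, so $(J\otimes\sheaf_{\widetilde{X}_A})^G=J\otimes\sheaf_{X_A}$ and invariant extensions are genuine extensions of the reflexive/Cohen--Macaulay data on $X_A$, exactly as in Proposition~\ref{T0} --- yields the desired identification $\mathrm{Def}^{qG}(X_A/A,B)\cong T^1_{qG}(X_A/B, J\otimes\sheaf_{X_A})$ in the affine case. Here one must also check that the descended lifting $\widetilde{X}_B/G$ is genuinely $\mathbb{Q}$-Gorenstein (Cohen--Macaulay, relatively Gorenstein in codimension $1$, with some $\omega^{[m]}$ invertible), which again follows from $\pi$ being \'etale in codimension $1$ together with flatness of $\widetilde{X}_B$ over $B$.

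To globalize I would cover $X_A$ by affines $U_i$, run the above on each, and glue. The local identifications agree on the overlaps $U_{ij}$ by precisely the argument used to glue the sheaves $T^i_{qG}(X_A/B,\mathcal{F})$: over $U_{ij}$ the index $1$ covers of $U_i$ and $U_j$ factor through $\widetilde{U}_{ij}$ via \'etale maps $\phi_{ij}$ of degrees $r_i/r_{ij}$, and $\mu$-invariants commute with these \'etale maps, so the two descriptions of the $\mathbb{Q}$-Gorenstein liftings of $U_{ij}$ coincide (and the resulting comparison isomorphisms satisfy the cocycle condition). Gluing the local liftings with their canonical compatibilities recovers on one side the iso classes of global $\mathbb{Q}$-Gorenstein liftings, i.e. $\mathrm{Def}^{qG}(X_A/A,B)$, and on the other the object $T^1_{qG}(X_A/B, J\otimes\sheaf_{X_A})$. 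Finally, $k$-linearity in the case $B=A[J]$ and naturality are inherited from the classical identification for $\widetilde{X}_A$, since index $1$ cover, $G$-invariants and gluing are all $k$-linear.

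I expect the main obstacle to be the affine step: making precise and mutually compatible the three structures on $T^1(\widetilde{X}_A/B, J\otimes\sheaf_{\widetilde{X}_A})$ --- the $G$-action of Lemma~\ref{action}, the pseudo-torsor structure over $T^1(\widetilde{X}_A/A, J\otimes\sheaf_{\widetilde{X}_A})$, and the geometric $G$-action on the set of liftings --- and then arguing that passage to $G$-invariants commutes with the torsor structure and with the reduction maps of (\ref{T1-sequence}). Linear reductivity of $\mu_r$ makes the homological bookkeeping formal, but the delicate point is to verify that the invariant submodules that appear are really the cotangent cohomology sheaves of $X_A$ (not merely of $\widetilde{X}_A$), which is where the codimension $1$ \'etaleness of $\pi$ and reflexivity, exploited as in Proposition~\ref{T0}, do the work.
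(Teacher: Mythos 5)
Your overall strategy --- pass to the index $1$ cover $\tilde{X}_A$ with Galois group $\mu_r$, use the Koll\'ar--Shepherd-Barron correspondence between $\mathbb{Q}$-Gorenstein liftings of $X_A$ and equivariant liftings of $\tilde{X}_A$, and take $\mu_r$-invariants --- is exactly the paper's. But there is a logical wrinkle in the way you phrase the affine step: you correctly identify the set of flat liftings of $\tilde{X}_A$ over $B$ with the \emph{fiber over} $[\mathcal{I}]$ inside $T^1(\tilde{X}_A/B, J\otimes\sheaf_{\tilde{X}_A})$ (this is what the Lichtenbaum--Schlessinger sequences give), and after taking $G$-invariants that only yields a bijection of $\mathrm{Def}^{qG}(X_A/A,B)$ with a fiber inside $T^1_{qG}(X_A/B, J\otimes\sheaf_{X_A})$, not with the whole of $T^1_{qG}$ as the proposition asserts. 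You then state the full identification without bridging that gap. The paper's own proof does not invoke the Jacobi--Zariski sequence or the fiber over $[\mathcal{I}]$ at all: it constructs the map $\phi$ directly (interpret an element of $T^1_{qG}$ as a $\mu_r$-invariant $B$-algebra extension of $\sheaf_{\tilde{X}_A}$ and take invariants), then proves surjectivity by passing to the index $1$ cover of a given $\mathbb{Q}$-Gorenstein lifting, and proves injectivity via \'etaleness of $\pi$ in codimension $1$ together with Artin's Lemma 9.1 (restriction $\mathrm{Def}(\tilde{X}) \la \mathrm{Def}(\pi^{-1}(U))$ is injective for $U$ the Gorenstein locus). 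That injectivity argument is not explicitly present in your sketch; you instead lean on the ``uniquely'' clause of the KSB lifting statement, which is a reasonable substitute but should be spelled out. Note also that the paper works tacitly with a single global index $1$ cover of $X_A$ (i.e., the affine case), so your explicit reduction-and-globalization step is not in the paper --- it is harmless, but the paper's proof stays local. Finally, your invocation of linear reductivity of $\mu_r$ makes explicit a hypothesis (characteristic coprime to the index) that the paper leaves implicit, which is a useful clarification.

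Concretely, the main thing to repair: either verify that every $\mu_r$-invariant class in $T^1(\tilde{X}_A/B, J\otimes\sheaf_{\tilde{X}_A})$ does lie in the fiber over $[\mathcal{I}]$ (equivalently, the structure map $J \la J\otimes\sheaf_{\tilde{X}_A}$ in any such extension is necessarily the canonical one $j \mapsto j\otimes 1$, so the extension is automatically $B$-flat), or reformulate the conclusion as an identification with that fiber. The paper's proof asserts the former implicitly without checking it, so this is a point where your more systematic framing actually surfaces a genuine subtlety that the paper glosses over.
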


\begin{proof}
Let $r$ be the index of $X$, $\pi_A \colon \tilde{X}_A \la X_A$ the index 1 cover of $X_A$ and $\pi \colon \tilde{X} \la X$
the index 1 cover of $X$. Then $\tilde{X}_A$ is a deformation of $\tilde{X}$ over $A$.
An element of $T^1_{qG}(X_A/B, J \otimes \sheaf_{X_A})$ corresponds to a $\mu_r$-invariant square zero extension
\begin{equation}\label{eq-T1-1}
0 \la J \otimes \sheaf_{\tilde{X}_A} \la \sheaf_{\tilde{X}_B} \la \sheaf_{\tilde{X}_A} \la 0
\end{equation}
Taking invariants we get an extension
\begin{equation}\label{eq-T1-2}
0 \la J \otimes \sheaf_{X_A} \la \sheaf_{X_B} \la \sheaf_{X_A} \la 0
\end{equation}
and hence a $\mathbb{Q}$-Gorenstein lifting $X_B$ of $X_A$ over $B$. This defines a map
\[
\phi \colon  T^1_{qG}(X_A/B, J \otimes \sheaf_{X_A})  \la \text{Def}^{qG}(X_A/A,B)
\]
Next we show that $\phi$ is surjective. Indeed, let $X_B$ be a $\mathbb{Q}$-Gorenstein lifting of $X_A$ over $B$. Then there is a square zero extension
as in~(\ref{eq-T1-2}). Let $\pi_B \colon \tilde{X}_B \la X_B$ be the index 1 cover of $X_B$. As before, this is a lifiting of $\tilde{X}_A$ over $B$.
Hence there is a $\mu_r$-invariant extension as in~(\ref{eq-T1-1}). Therefore $\phi$ is surjective.

It remains to show that $\phi$ is injective. Since $X$ is Gorenstein in codimension 1 it follows that $\pi \colon \tilde{X} \la X$
is \'etale in codimension 1. Let $U \subset X$ be the Gorenstein locus. Then $\pi^{-1}(U) \la U$ is \'etale and
$\text{codim}(\tilde{X}-\pi^{-1}(U),\tilde{X}) \geq 2$. Therefore the natural map
\[
Def(\tilde{X}) \la Def(\pi^{-1}(U))
\]
is injective~\cite[Lemma 9.1]{Art76} and hence $\phi$ is injective too.
\end{proof}
It now immediately follows that
\begin{corollary}
Let $X$ be a $\mathbb{Q}$-Gorenstein scheme defined over a field $k$ and $Y \subset X$ a closed subscheme such that
$X-Y$ is smooth. Let $X_n \in Def_Y^{qG}(X)(A_n)$. Then
\begin{enumerate}
\item
\[
T^1_{qG}(Y,X)=T^1_{qG}(X/k,\sheaf_X)
\]
\item
\[
T^1_{qG}(X_n/A_n)=T^1_{qG}(X_n/A_n, \sheaf_{X_n})
\]
\end{enumerate}
\end{corollary}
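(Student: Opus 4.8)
The plan is to reduce both identities to the Proposition just proved, applied on affine opens, together with the elementary identification of first order deformations with cotangent cohomology; the only real work is then a naturality (gluing) check. Both sides of each identity are sheaves, defined locally by the same kind of recipe, so it suffices to produce the isomorphism over an affine open $U\subset X$ and check compatibility with restriction.

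For part (1): since $X-Y$ is smooth, $T^1_{qG}(Y,X)=T^1_{qG}(X)$, whose sections over an affine $U$ are the first order $\mathbb{Q}$-Gorenstein deformations of $U$, i.e.\ the elements of $\mathrm{Def}^{qG}(U/k,A_1)$ with $A_1=k[t]/(t^2)$. I would apply the preceding Proposition with $A=k$, $X_A=U$, $B=A_1$, $J=(t)$. Because $J\cong k$ is free of rank one, $J\otimes_k\sheaf_U\cong\sheaf_U$, and the Proposition gives a natural isomorphism $\mathrm{Def}^{qG}(U/k,A_1)\cong T^1_{qG}(U/A_1,\sheaf_U)$. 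It then remains to identify $T^1_{qG}(U/A_1,\sheaf_U)$ with $T^1_{qG}(U/k,\sheaf_U)$: unwinding the construction of these sheaves via the index $1$ cover $\pi\colon\tilde U\to U$ (Galois with group $\mu_r$), both are $\mu_r$-invariants of a cotangent cohomology of $\tilde U$ with coefficients $\pi^{\ast}\sheaf_U$, and since $A_1=k[\epsilon]$ is the trivial square zero extension of $k$, a first order deformation of the $k$-scheme $\tilde U$ over $A_1$ is the same datum as an element of $\mathrm{Ex}(\tilde U/k,\pi^{\ast}\sheaf_U)$; this identification is $\mu_r$-equivariant, hence passes to invariants. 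The isomorphisms so produced are natural, hence compatible with restriction to smaller affines — these are exactly the compatibilities already checked in the construction of $T^{\bullet}_{qG}$ and in the definition of $\underline{Def}^{qG}(Y,X)$ — so they glue to the asserted isomorphism of sheaves.

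For part (2) the argument is identical with $k$ replaced by $A_n$ and $A_1$ replaced by $B_n=k[x,y]/(x^{n+1},y^2)=A_n[y]/(y^2)$, the trivial square zero extension of $A_n$ by $A_n$. By definition $T^1_{qG}(X_n/A_n)$ is the sheafification of $U\mapsto\mathbb{T}^1_{qG}(U_n/A_n)$, the set of $\mathbb{Q}$-Gorenstein liftings of $U_n$ along $\beta_n\colon B_n\to A_n$, i.e.\ $\mathrm{Def}^{qG}(U_n/A_n,B_n)$. The Proposition, applied with $X_A=U_n$, $B=B_n$, $J=\ker\beta_n\cong A_n$ (again free of rank one, so $J\otimes_{A_n}\sheaf_{U_n}\cong\sheaf_{U_n}$), identifies this with $T^1_{qG}(U_n/B_n,\sheaf_{U_n})$, which, $B_n$ being a trivial extension of $A_n$, equals $T^1_{qG}(U_n/A_n,\sheaf_{U_n})$ exactly as in part (1); gluing gives $T^1_{qG}(X_n/A_n)=T^1_{qG}(X_n/A_n,\sheaf_{X_n})$, mirroring Proposition~\ref{T1-formula} in the $\mathbb{Q}$-Gorenstein setting.

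The step I expect to require the most care is the passage from the $B$-relative to the $A$-relative $\mathbb{Q}$-Gorenstein cotangent sheaf for a trivial square zero extension $B=A[J]$ with $J$ free of rank one. On the level of index $1$ covers this amounts to: a square zero $B$-algebra extension of $\sheaf_{\tilde X_A}$ by $J\otimes_A\sheaf_{\tilde X_A}$ is automatically flat over $B$, and such an extension is the same datum as a first order deformation of the $A$-scheme $\tilde X_A$ — the automatic flatness being precisely the flatness Lemma quoted in the proof of Proposition~\ref{T1-formula}, and the remaining identification being the standard ``deformations over dual numbers'' equivalence — after which one checks $\mu_r$-equivariance, which is built into the construction of the sheaves $T^{\bullet}_{qG}$. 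Once this is spelled out, everything else is formal bookkeeping with the definitions.
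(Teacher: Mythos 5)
You apply the Proposition with $A=k$, $B=A_1$, $J=(t)$ (and $A=A_n$, $B=B_n$ for part (2)); this is the right move, and the reduction $J\otimes_A\sheaf_{X_A}\cong\sheaf_{X_A}$ is correct. But the identification you then need, namely $T^1_{qG}(U/A_1,\sheaf_U) = T^1_{qG}(U/k,\sheaf_U)$, is false, and passing to the index $1$ cover cannot repair it because the failure already occurs on $\tilde U$ before taking $\mu_r$-invariants. Concretely, the Jacobi--Zariski sequence for $A_1\la k\la\sheaf_U$ reads $0\la T^1(U/k,\sheaf_U)\la T^1(U/A_1,\sheaf_U)\la T^1(k/A_1,\sheaf_U)\cong\sheaf_U\la\cdots$, and the map to $\sheaf_U$ is onto: any local section $a$ of $\sheaf_U$ arises from the $k$-algebra-split extension $\sheaf_U[\delta]/(\delta^2)$ equipped with the $A_1$-structure $\epsilon\mapsto a\delta$. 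In particular for $U$ smooth $T^1(U/k,\sheaf_U)=0$ while $T^1(U/A_1,\sheaf_U)\cong\sheaf_U$, so the two relative cotangent sheaves genuinely differ and your step cannot close.

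What you have in fact uncovered is a slip in the Proposition as printed: the relative base on the left-hand side should be $A$, not $B$, i.e.\ the statement should be $T^1_{qG}(X_A/A,J\otimes\sheaf_{X_A})\cong \text{Def}^{qG}(X_A/A,B)$. This is what the proof there actually produces: an arbitrary $B$-algebra square-zero extension of $\sheaf_{\tilde X_A}$ by $J\otimes\sheaf_{\tilde X_A}$ gives a flat lifting only when its image in $T^1(A/B,J\otimes\sheaf_{\tilde X_A})$ is the canonical class, and the set of such extensions is the coset $T^1(X_A/A,\cdot)\subset T^1(X_A/B,\cdot)$ --- compare the identification $Def(X_A/A,B)=\mu^{-1}([\mathcal{I}])$ in Step~1 of the proof of Theorem~\ref{local-to-global-1}. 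It also matches the non-$\mathbb{Q}$-Gorenstein Proposition~\ref{T1-formula}, which computes with $\Omega_{\mathcal{X}_n/A_n}$ relative to $A_n$. With the Proposition stated $A$-relatively, your choices of $A$, $B$, $J$ (where $B$ is in both cases the trivial square-zero extension, so the torsor of liftings has a canonical base point and is a vector space) give the Corollary immediately. The standard fact you invoke --- first-order deformations of $\tilde U$ over $k[\epsilon]$ are $\mathrm{Ex}(\tilde U/k,\pi^{\ast}\sheaf_U)$ --- is the essential ingredient, but it establishes the $A$-relative form of the Proposition; it does not yield an equality of $T^1(U/A_1,\cdot)$ with $T^1(U/k,\cdot)$.
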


Most of the functorial properties of the usual $T^i$ sheaves hold for the $T^i_{qG}$ as well. Next we present a few that are of interest to us.

\begin{proposition}\label{properties}
Let $X$ be a $\mathbb{Q}$-Gorenstein scheme defined over a field $k$.
Let $ A \in Art(k)$ and $X_A \la \text{Spec}(A) $ be a $\mathbb{Q}$-Gorenstein deformation of $X$ over $A$. Then
\begin{enumerate}
\item Let $A \la B$ be a morphism of Artin local $k$-algebras, $X_B = X_A \times_{\text{Spec}(A)} \text{Spec}(B)$ the fiber product and $\mathcal{F}_B$
an $\sheaf_{X_B}$-module. Then there are natural isomorphisms
\[
T^i_{qG}(X_B/B,\mathcal{F}_B) \cong   T^i_{qG}(X_A/A, j_{\ast}\mathcal{F}_B)
\]
where $j \colon X_B \la X_A$ is the projection map.
\item Let $C \la B \la A$ be a sequence of ring homomorphisms and $\mathcal{F}$ an $\sheaf_{X_A}$-module. Then there is an exact sequence
\[
\la T^i_{qG}(X_A/B, \mathcal{F}) \la  T^i_{qG}(X_A/C, \mathcal{F}) \la T^i(B/C,\mathcal{F}) \la  T^{i+1}_{qG}(X_A/B, \mathcal{F}) \la
\]
\item Let \[
0 \la J \la B \la A \la 0
\]
be a square zero extension of Artin local $k$-algebras and $X_B$ a $\mathbb{Q}$-Gorenstein lifting of $X_A$ over $B$. Then there is and an
exact sequence
\[
\la T^i_{qG}(X_A/A,J \otimes_A \sheaf_{X_A}) \la  T^i_{qG}(X_B/B,\sheaf_{X_B}) \la T^i_{qG}(X_A/A,\sheaf_{X_A}) \la
T^{i+1}_{qG}(X_A/A,J \otimes_A \sheaf_{X_A}) \la
\]
\end{enumerate}

\end{proposition}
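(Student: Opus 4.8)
The three assertions are, respectively, \emph{base change}, \emph{transitivity}, and the long exact sequence of a \emph{square-zero extension} for the sheaves $T^i_{qG}$, and each is the $\mu_r$-invariant incarnation of a standard property of Lichtenbaum--Schlessinger cotangent cohomology \cite{Li-Sch67}. The plan is to argue locally on an affine cover $\{U_i\}$ of $X_A$, exactly as in the construction of the sheaves $T^k_{qG}$ given above: let $\pi_i\colon \tilde U_i \la U_i$ be the index $1$ cover with Galois group $\mu_{r_i}$. On $\tilde U_i$ the desired statement is the classical one for the ordinary $T^i$; since the index $1$ cover is canonically attached to $U_i$ and the base schemes $\mathrm{Spec}\,A$, $\mathrm{Spec}\,B$, $\mathrm{Spec}\,C$ carry the trivial $\mu_{r_i}$-action, every map occurring in this classical sequence is $\mu_{r_i}$-equivariant. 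The functor of $\mu_{r_i}$-invariants is exact (the order $r_i$ being invertible in $k$), so applying it to the classical sequence on $\tilde U_i$ produces the asserted sequence for $T^i_{qG}(U_i/\ast,\ast)$. One then glues over the overlaps $U_{ij}$ by means of the same factorisations of index $1$ covers ($\pi_i^{-1}(U_{ij})\la\tilde U_{ij}$ \'etale of degree $r_i/r_{ij}$, etc.) that were used to glue the $T^k_{qG}$ themselves, checking that the gluing isomorphisms are compatible with all maps in the sequence.

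For part (1) the extra input is that the index $1$ cover of $X_B=X_A\times_{\mathrm{Spec}\,A}\mathrm{Spec}\,B$ is $\tilde X_A\times_{\mathrm{Spec}\,A}\mathrm{Spec}\,B$, which follows from the construction of the index $1$ cover together with the fact that $\mathbb{Q}$-Gorenstein deformations lift that cover \cite{KoBa88} (and the hypotheses on $X_A$ persist under base change \cite{Has-Kov04}, so $X_B$ again admits index $1$ covers locally). Hence on each $U_i$ the cover of $U_{i,B}:=U_i\times_A B$ is $\tilde U_{i,B}:=\tilde U_i\times_{\mathrm{Spec}\,A}\mathrm{Spec}\,B$, and the base-change property of cotangent cohomology \cite{Li-Sch67} (the $T^i$ analogue of Lemma~\ref{base-change}) gives a $\mu_{r_i}$-equivariant isomorphism $T^i(\tilde U_{i,B}/B,\pi_{i,B}^{\ast}\mathcal{F}_B)\cong T^i(\tilde U_i/A,\tilde{j}_{i\,\ast}\pi_{i,B}^{\ast}\mathcal{F}_B)$, with $\tilde{j}_i\colon\tilde U_{i,B}\la\tilde U_i$ the projection. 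One identifies $\tilde{j}_{i\,\ast}\pi_{i,B}^{\ast}\mathcal{F}_B$, $\mu_{r_i}$-equivariantly, with $\pi_i^{\ast}j_{i\,\ast}\mathcal{F}_B$ — as in the proof of Proposition~\ref{T0}, using that pushforward along the finite quotient map $\pi_i$ carries invariants to $\sheaf_{U_i}$ — and then takes $\mu_{r_i}$-invariants and glues to the global isomorphism $T^i_{qG}(X_B/B,\mathcal{F}_B)\cong T^i_{qG}(X_A/A,j_{\ast}\mathcal{F}_B)$.

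For parts (2) and (3) the classical inputs on $\tilde U_i$ are, respectively, the transitivity long exact sequence of $\tilde U_i\la\mathrm{Spec}\,B\la\mathrm{Spec}\,C$ with coefficients $\pi_i^{\ast}\mathcal{F}$, and the long exact sequence attached to the square-zero extension $0\la J\la B\la A\la 0$ and the flat lifting $\tilde U_{i,B}$ of $\tilde U_i$ (which exists because the $\mathbb{Q}$-Gorenstein lifting $X_B$ has index $1$ cover a lifting of that of $X_A$ \cite{KoBa88}), both taken from \cite{Li-Sch67}. The $\mu_{r_i}$-action is trivial on $B$, $C$ and $J$, and $(\pi_i^{\ast}\mathcal{F})^{\mu_{r_i}}=\mathcal{F}|_{U_i}$, $(\pi_{i,B}^{\ast}\sheaf_{U_{i,B}})^{\mu_{r_i}}=\sheaf_{U_{i,B}}$, $(J\otimes\pi_i^{\ast}\sheaf_{U_i})^{\mu_{r_i}}=J\otimes\sheaf_{U_i}$; so taking invariants turns the middle terms $T^i(B/C,\pi_i^{\ast}\mathcal{F})$ and $T^i(\tilde U_i/A,\pi_i^{\ast}\mathcal{F})$ into $T^i(B/C,\mathcal{F}|_{U_i})$ and $T^i_{qG}(U_i/A,\mathcal{F}|_{U_i})$, and exactness of invariants produces the asserted sequences. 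Gluing proceeds exactly as in part (1).

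The main obstacle is not any single classical statement but the bookkeeping required to make the local sequences glue: one must check that each functorial and connecting homomorphism in the Lichtenbaum--Schlessinger sequences is compatible with the identifications $T^k(\pi_i^{-1}(U_{ij})/\ast)^{\mu_{r_i}}\cong T^k(\tilde U_{ij}/\ast)^{\mu_{r_{ij}}}$ used to define the sheaves $T^k_{qG}$, and that the conditions guaranteeing the existence of index $1$ covers are preserved by the operations in each part. This is routine but lengthy, the proof being in essence ``run the construction of the sheaves $T^k_{qG}$ once more, carrying along a short exact sequence of coefficients or of base rings.''
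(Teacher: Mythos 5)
Your proposal is correct and takes essentially the same approach as the paper, which disposes of all three parts in one sentence by ``passing to the index $1$ covers as before'' and invoking the corresponding classical Lichtenbaum--Schlessinger results. Your write-up is a careful elaboration of exactly that strategy — applying the classical sequences on each $\tilde U_i$, taking $\mu_{r_i}$-invariants (exact since $r_i$ is invertible), and gluing via the same overlap identifications used to construct the sheaves $T^k_{qG}$ — together with the observation, needed in part (1), that the index $1$ cover commutes with base change.
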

The proof of the proposition follows immediately from the corresponding ones for the usual $T^i$ after passing to the index 1 covers as before.

Next we show that $T^2_{qG}(X)=T^2_{qG}(X/k,\sheaf_X)$ is an obstruction sheaf for $Def_Y^{qG}(X)$ if $X-Y$ is smooth. For the sake of simplicity
we only present the case $X=Y$.

\begin{proposition}\label{T2qG}
Let $X$ be a $\mathbb{Q}$-Gorenstein scheme defined over a field $k$.
Let
\[
0 \la J \la B \la A \la 0
\]
be a square zero extension of finite Artin local $k$-algebras such that $m_BJ=0$ and $X_A \in Def^{qG}(X)(A)$.
Then there is a section
$ob(X_A) \in H^0(T^2_{qG}(X))\otimes_k J$ such that for any affine open
$U_A \subset X_A$, $ob(X_A)|_U \in T^2_{qG}(U)\otimes_k J$
is the obstruction to lift $U_A$
to a $\mathbb{Q}$-Gorenstein deformation $U_B$ of $U$ over $B$, where $U=U_A \otimes_A k$.
\end{proposition}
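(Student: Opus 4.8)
The plan is to reduce everything to the classical obstruction theory of Lichtenbaum--Schlessinger applied to the index~$1$ covers, and then to glue the resulting local classes. Fix an affine open cover $\{U^i_A\}$ of $X_A$, write $U^i=U^i_A\otimes_A k$, let $r_i$ be the index of $U^i$ and let $\pi_i\colon \tilde U^i_A \la U^i_A$ be the index~$1$ cover; by~\cite{KoBa88} this is a deformation over $A$ of the index~$1$ cover $\tilde U^i$ of $U^i$, and it carries a compatible action of $\mu_{r_i}$ lifting the one on $\tilde U^i$. Since $m_B J=0$ we may identify $J\otimes_A\sheaf_{\tilde U^i_A}=J\otimes_k\sheaf_{\tilde U^i}$, and the standard deformation theory~\cite{Li-Sch67} produces a canonical obstruction class $o_i\in H^0(T^2(\tilde U^i/k,\sheaf_{\tilde U^i}))\otimes_k J$ whose vanishing is equivalent to the existence of a lifting of $\tilde U^i_A$ over $B$. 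Because the deformation $\tilde U^i_A$, the extension $B\la A$ and the construction of $o_i$ are all $\mu_{r_i}$-equivariant, $o_i$ is $\mu_{r_i}$-invariant, hence by the very definition $T^2_{qG}(U^i)=\bigl(T^2(\tilde U^i/k,\sheaf_{\tilde U^i})\bigr)^{\mu_{r_i}}$ it determines an element of $H^0(T^2_{qG}(U^i))\otimes_k J$.

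Next I would prove the local assertion: for every affine open $U\subseteq U^i$, $o_i|_U=0$ if and only if $U_A:=U^i_A|_U$ admits a $\mathbb{Q}$-Gorenstein lifting $U_B$ over $B$. One direction is immediate: the index~$1$ cover of such a $U_B$ is, by~\cite{KoBa88}, a $\mu$-equivariant lifting of the index~$1$ cover of $U_A$, so $o_i|_U=0$. For the converse, if $o_i|_U=0$ there is some (a priori non-equivariant) lifting $\tilde U_B$ of the index~$1$ cover of $U_A$; the set of such liftings is a torsor under a $k[\mu]$-module, and since $|\mu|$ is invertible in $k$ the vanishing of $H^1(\mu,-)$ produces a $\mu$-equivariant lifting. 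Taking $\mu$-invariants of this equivariant lifting yields, again as in~\cite{KoBa88} and in the proof of the proposition identifying $T^1_{qG}(X_A/B,J\otimes\sheaf_{X_A})$ with $\mathrm{Def}^{qG}(X_A/A,B)$ (one may also invoke the functorial exact sequences of Proposition~\ref{properties}), a $\mathbb{Q}$-Gorenstein lifting $U_B$ of $U_A$. In particular the local obstruction is independent of the chart containing $U$.

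Finally, globalization. On a double overlap $U_{ij}=U^i\cap U^j$ the index $r_{ij}$ of $U_{ij}$ divides both $r_i$ and $r_j$, and there are the étale factorizations $\phi_{ij}\colon\pi_i^{-1}(U_{ij})\la\tilde U_{ij}$, $\phi_{ji}\colon\pi_j^{-1}(U_{ij})\la\tilde U_{ij}$ through the index~$1$ cover $\tilde U_{ij}$ of $U_{ij}$, exactly as in the construction of the sheaves $T^k_{qG}$. Since the Lichtenbaum--Schlessinger obstruction class is compatible with restriction to opens and with étale base change, both $o_i|_{U_{ij}}$ and $o_j|_{U_{ij}}$ are pulled back from the obstruction class of the $\tilde U_{ij}$-deformation, and hence agree under the canonical gluing isomorphisms $T^2_{qG}(U^i)|_{U_{ij}}\cong T^2_{qG}(U_{ij})\cong T^2_{qG}(U^j)|_{U_{ij}}$ used to build $T^2_{qG}(X)$. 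Thus the $o_i$ patch to a global section $ob(X_A)\in H^0(T^2_{qG}(X)\otimes_k J)=H^0(T^2_{qG}(X))\otimes_k J$ (the last identity because $J$ is a finite-dimensional $k$-vector space), whose restriction to any affine open $U_A\subset X_A$ is, by the previous paragraph, the obstruction to a $\mathbb{Q}$-Gorenstein lifting of $U_A$ over $B$. The main obstacle is precisely this last gluing step: because the index~$1$ covers over two overlapping charts need not be isomorphic over the overlap---only their quotients $\tilde U_{ij}$ coincide---one has to track carefully the compatibility of the obstruction classes under the étale maps $\phi_{ij}$, $\phi_{ji}$ and under passage to $\mu$-invariants, and check that this matches the gluing cocycle of $T^2_{qG}$; the averaging argument in the local step requires $|\mu_{r_i}|$ invertible in $k$ and is the only place where a restriction on the characteristic (equivalently, on the indices) enters.
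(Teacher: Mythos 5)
Your argument follows essentially the same route as the paper's own proof: pass to the index~$1$ covers, observe via the group action of Lemma~\ref{action} that the Lichtenbaum--Schlessinger obstruction class $ob(\tilde X_A)\in T^2(\tilde X)\otimes_k J$ is $\mu_r$-invariant and hence lies in $T^2_{qG}\otimes_k J$, and, when it vanishes, average a (possibly non-equivariant) lifting over $\mu_r$ to obtain an equivariant one whose quotient is the desired $\mathbb{Q}$-Gorenstein lifting. The only differences are cosmetic: the paper carries out the averaging by the explicit formula $\tfrac{1}{r}\sum_i \zeta^i\cdot\tilde X'_B$ instead of invoking the vanishing of $H^1(\mu,-)$, and it disposes of the gluing of the local classes with the remark \emph{this is a local result}, whereas you make explicit the compatibility of the local obstructions with the \'etale factorizations $\phi_{ij},\phi_{ji}$ used to construct $T^2_{qG}$ --- a point that, as you rightly flag, deserves a check (though it follows from functoriality of cotangent cohomology under \'etale base change and restriction).
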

\begin{proof}
This a local result
and so we may assume that $X$ is affine of index $r$. Let $\pi \colon \tilde{X} \la X$ be the index 1 cover of $X$. Then
$T^2_{qG}(X)=(T^2(\tilde{X}))^{\mu_r}$. Let
\[
0 \la J \la B \la A \la 0
\]
be an extension of finite local Artin algebras and $X_A$ a $\mathbb{Q}$-Gorenstein deformation of $X$ over $A$. Let
$\pi_A \colon \tilde{X_A} \la X_A$ be the index 1 cover. Then $\tilde{X_A}$ is a deformation of $\tilde{X}$ over $A$~\cite{KoBa88} and
by Lemma~\ref{action}, the obstruction $ob(\tilde{X}_A) \in T^2(\tilde{X}) \otimes_k J$ is $\mu_r$-invariant and hence it is in fact
in $T^2_{qG}(X) \otimes_k J$. Hence if $ob(\tilde{X_A})=0$, then there is a deformation $\tilde{X}^{\prime}_B$ of $\tilde{X}$ over $B$, lifting
$\tilde{X}_A$. This may not be $\mu_r$-invariant but $\tilde{X}_B=\frac{1}{r}\sum_{i=0}^{r-1} \zeta^i \cdot \tilde{X}^{\prime}_B$ is,
where $\zeta$ is a primitive $r$-root of unity. Then
$X_B = \tilde{X}_B / \mu_r$ is a lifting of $X_A$ over $B$.
\end{proof}

Now that we have developed the theory of $\mathbb{Q}$-Gorenstein cotangent sheaves $T^i_{qG}(X)$, we can repeat word by word most of the arguments for the usual deformation functor $Def(Y,X)$ in section~\ref{local-to-global-section}. In particular we have the following.

\begin{theorem}\label{Q-obstructions}
Let $X$ be a $\mathbb{Q}$-Gorenstein scheme defined over a field $k$ and $Y \subset X$ a closed subscheme such that $X-Y$ is smooth. Let
\[
0 \la J \la B \la A \la 0
\]
be a small extension of local Artin $k$-algebras and $X_A \in Def^{qG}(Y,X)(A)$. Then
\begin{enumerate}
\item $Def^{qG}(X_A/A,B)$ and $Def^{qG}_{loc}(X_A/A,B)$ are $\mathbb{T}_{qG}^1(Y,X) \otimes J$ and $H^0(T_{qG}^1(X)\otimes J)$ homogeneous spaces, respectively.
\item Let $s_B \in Def^{qG}_{loc}(X_A/A,B)$. Then the set $\pi^{-1}(s_B)$ is a homogeneous space over $H^1(\widehat{T}_X \otimes J)$.
\item There is a sequence
\[
0 \la H^1(\widehat{T}_X \otimes J) \stackrel{\alpha}{\la} Def^{qG}(X_A/A,B) \stackrel{\pi}{\la}   Def^{qG}_{loc}(X_A/A,B)    \stackrel{\partial}{\la} H^2(\widehat{T}_X \otimes J)
\]
which is exact in the following sense. Let $s_B \in Def^{qG}_{loc}(X_A/A,B)$. Then $s_B$ is in the image of $\pi$ if and only if $\partial (s_B)=0$. Moreover, let $X_B, X^{\prime}_B \in Def^{qG}(X_A/A,B)$ such that $\pi(X_A)=\pi(X^{\prime}_A)$. Then there is $\gamma \in H^1(\widehat{T}_X \otimes J)$ such that $X^{\prime}_A = \gamma \cdot  X_A $, where by $"\cdot"$ we denote the action of
$H^1(\widehat{T}_X \otimes J)$ on $\pi^{-1}(s_B)$.
\end{enumerate}
\end{theorem}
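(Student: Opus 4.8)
The plan is to re-run the proof of Theorem~\ref{local-to-global-1} essentially verbatim, replacing at each step the ordinary cotangent functors $T^i(X_A/B,-)$ and the spaces $\mathrm{Ex}(X_A/B,-)$ of infinitesimal square zero extensions by their $\mathbb{Q}$-Gorenstein analogues $T^i_{qG}(X_A/B,-)$ constructed in this section, and feeding in Propositions~\ref{properties} and~\ref{T0} wherever the formal properties of the $T^i$ and the identification of automorphisms were used. As in Theorem~\ref{local-to-global-1} it suffices to treat $X=Y$, the local algebraizability built into $Def^{qG}(Y,X)$ allowing each step to be carried over to the case $X-Y$ smooth.

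\emph{Step 1 (description).} Exactly as in Step~1 of the proof of Theorem~\ref{local-to-global-1}, the sequence $B \la A \la \sheaf_{X_A}$ produces, via Proposition~\ref{properties}.2, the long exact sequence of the sheaves $T^i_{qG}(X_A/-,\,J\otimes_A\sheaf_{X_A})$, and for $i=0,1$ its global sections give
\[
0 \la H^0(T^1_{qG}(X_A/A,J\otimes_A\sheaf_{X_A})) \la H^0(T^1_{qG}(X_A/B,J\otimes_A\sheaf_{X_A})) \stackrel{\lambda}{\la} H^0(T^1(A/B,J\otimes_A\sheaf_{X_A}))
\]
together with the corresponding short exact sequence of the associated $\mathbb{Q}$-Gorenstein infinitesimal extension sheaves, in which the map $\mu$ plays the role of the map in~(\ref{Ex-eq}). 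The proposition of this section identifying $\mathbb{Q}$-Gorenstein liftings $X_B$ of $X_A$ over $B$ with $T^1_{qG}(X_A/B,J\otimes_A\sheaf_{X_A})$, applied over each affine $U_i$ and then sheafified, gives exactly as in the Claim of Step~1 that $Def^{qG}_{loc}(X_A/A,B)=\lambda^{-1}([\mathcal{I}])$ and $Def^{qG}(X_A/A,B)=\mu^{-1}([I])$. Combining with the sequences just displayed yields part (1): $Def^{qG}_{loc}(X_A/A,B)$ is homogeneous under $H^0(T^1_{qG}(X_A/A,J\otimes_A\sheaf_{X_A}))=H^0(T^1_{qG}(X)\otimes J)$ and $Def^{qG}(X_A/A,B)$ under the $\mathbb{Q}$-Gorenstein extension space $\mathbb{T}^1_{qG}(Y,X)\otimes J$.

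\emph{Step 2 (the $H^1(\widehat T_X\otimes J)$-action and the sequence).} The key observation is that although the space of first-order lifts of a local piece has shrunk, its automorphisms have not: by Proposition~\ref{T0}, since $J\otimes_A\sheaf_{X_A}$ is locally free, $T^0_{qG}(X_A/B,J\otimes_A\sheaf_{X_A})=T^0(X_A/B,J\otimes_A\sheaf_{X_A})$, which locally is $\mathcal{H}om_{X_A}(\Omega_{X_A/A},J\otimes_A\sheaf_{X_A})=\widehat T_X\otimes J$. Hence a $B$-automorphism of a $\mathbb{Q}$-Gorenstein lifting of $X_A$ restricting to the identity on $X_A$ is a $B$-derivation just as in the ordinary case, and, since any isomorphism identifies the sheaves $\omega^{[n]}_{-/B}$, regluing $\mathbb{Q}$-Gorenstein local pieces by derivation-modified isomorphisms again yields a deformation whose $\omega^{[n]}_{-/B}$ glues to an invertible sheaf, i.e. a $\mathbb{Q}$-Gorenstein deformation. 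Therefore the cocycle computations of Step~2 of the proof of Theorem~\ref{local-to-global-1} apply unchanged: for $s_B\in Def^{qG}_{loc}(X_A/A,B)$ with local data $[U^i_B,\phi^i]$ and gluings $\phi_{ij}$, the derivations $d_{ij}$ define a free and transitive action of $H^1(\widehat T_X\otimes J)=H^1(\mathcal{H}om_{X_A}(\Omega_{X_A/A},J\otimes_A\sheaf_{X_A}))$ on $\pi^{-1}(s_B)$ (part (2)); the triple-overlap automorphisms $\phi_{ijk}=\phi_{ki}\phi_{jk}\phi_{ij}$ define $\partial(s_B)\in H^2(\widehat T_X\otimes J)$; and $\partial(s_B)=0$ precisely when the $\phi_{ij}$ can be adjusted so that the $U^i_B$ glue to a global $\mathbb{Q}$-Gorenstein $X_B$ with $\phi\colon X_A\la X_B\otimes_B A$, two such global liftings over the same $s_B$ differing by a class in $H^1(\widehat T_X\otimes J)$. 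This is part (3), and with it the theorem.

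The main obstacle — in fact the only place where something beyond transcription is required — is justifying that the globalization and automorphism bookkeeping remains governed by the unmodified sheaf $\widehat T_X$ rather than by an object built from $T^1_{qG}$. This is precisely the content of Proposition~\ref{T0} combined with the observation that gluing $\mathbb{Q}$-Gorenstein pieces along isomorphisms stays $\mathbb{Q}$-Gorenstein, so the $\mathbb{Q}$-Gorenstein restriction only constrains which local first-order lifts are permitted (Step~1) and not how they are glued (Step~2). A secondary point to verify carefully is that the index-$1$-cover construction defining $T^i_{qG}$ is compatible with passing to the affine cover $\{U_i\}$ and with sheafification; this is exactly the gluing verification already performed in the construction of $T^i_{qG}(X_A/B,\mathcal{F})$ above.
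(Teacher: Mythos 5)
Your proposal is correct and follows the same route the paper does: the paper's ``proof'' of Theorem~\ref{Q-obstructions} consists precisely of the remark that, once the machinery of $T^i_{qG}$ has been set up (the long exact sequences of Proposition~\ref{properties}, the identification of $T^1_{qG}(X_A/B,J\otimes\sheaf_{X_A})$ with $\mathbb{Q}$-Gorenstein liftings, and $T^0_{qG}=T^0$ from Proposition~\ref{T0}), one repeats the proof of Theorem~\ref{local-to-global-1} verbatim. You make explicit the two points the paper leaves implicit and which are the only non-mechanical parts of the transcription: that the automorphism/gluing data is still governed by the unmodified $\widehat{T}_X$ because $T^0_{qG}$ agrees with $T^0$ (so the $H^1$- and $H^2$-terms are unchanged), and that regluing $\mathbb{Q}$-Gorenstein local pieces along derivation-twisted isomorphisms produces a $\mathbb{Q}$-Gorenstein global deformation.
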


\begin{corollary}\label{obstructions-4}
With assumptions as in Theorem~\ref{Q-obstructions}, there are two successive obstructions in $H^0(T_{qG}^2(X)\otimes J)$ and $H^1(T_{qG}^1(X) \otimes J)$ in order that $\mathrm{Def}^{qG}_{loc}(X_A/A,B)\not= \emptyset$, i.e., for $X_A$ to lift locally to $B$. If these obstructions vanish then there is another obstruction in $H^2(\widehat{T}_X \otimes J)$ in order that $\mathrm{Def}(X_A/A,B)\not= \emptyset$, i.e., for the local obstructions to globalize.
\end{corollary}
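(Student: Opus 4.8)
The plan is to transcribe the proof of Corollary~\ref{obstructions-3}, replacing Schlessinger's $T^i$ throughout by the $\mathbb{Q}$-Gorenstein cotangent sheaves $T^i_{qG}$ just constructed, using Proposition~\ref{properties} in place of the Lichtenbaum--Schlessinger exact sequences and the proposition identifying $T^1_{qG}(X_A/B,J\otimes\sheaf_{X_A})$ with $\mathrm{Def}^{qG}(X_A/A,B)$ in place of the $\mathrm{Ex}$--$T^1$ identifications used there. As in Corollary~\ref{obstructions-3}, I would only write out the case $X=Y$; the local algebraizability built into $Def^{qG}(Y,X)$ allows each step to be carried over to the case $Y\neq X$, $X-Y$ smooth.

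First I would produce the $\mathbb{Q}$-Gorenstein analogue of the exact sequence~(\ref{T1-sequence}). Passing to index $1$ covers and taking $\mu_r$-invariants of~(\ref{T1-sequence}), which is $\mu_r$-equivariant, gives (this is the content of Proposition~\ref{properties}) an exact sequence
\[
0 \la T^1_{qG}(X_A/A,\mathcal{F}) \la T^1_{qG}(X_A/B,\mathcal{F}) \stackrel{\nu}{\la} T^1(A/B,\mathcal{F}) \la T^2_{qG}(X_A/A,\mathcal{F}),
\]
with $\mathcal{F}=J\otimes_A\sheaf_{X_A}$ (the base term is the ordinary $T^1(A/B,\cdot)$, no cover being involved). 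Setting $Q=\mathrm{Im}(\nu)$, this breaks, exactly as in Corollary~\ref{obstructions-3}, into
\[
0 \la H^0(T^1_{qG}(X_A/A,\mathcal{F})) \la H^0(T^1_{qG}(X_A/B,\mathcal{F})) \stackrel{\alpha}{\la} H^0(Q) \stackrel{\beta}{\la} H^1(T^1_{qG}(X_A/A,\mathcal{F}))
\]
and
\[
0 \la H^0(Q) \la H^0(T^1(A/B,\mathcal{F})) \stackrel{\delta}{\la} H^0(T^2_{qG}(X_A/A,\mathcal{F})).
\]
Then I would identify the terms. The proposition identifying $T^1_{qG}(X_A/B,\mathcal{F})$ with $\mathrm{Def}^{qG}(X_A/A,B)$, after sheafifying and taking $H^0$, together with the argument of Step~1 in the proof of Theorem~\ref{local-to-global-1}, gives $\mathrm{Def}^{qG}_{loc}(X_A/A,B)=\lambda^{-1}([\mathcal{I}])$, where $\lambda=\beta\circ\alpha$ and $[\mathcal{I}]\in H^0(T^1(A/B,\mathcal{F}))$ is the class of $0\to\mathcal{F}\to B\to A\to 0$. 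Since $\mathcal{F}\cong\sheaf_X\otimes_k J$, Proposition~\ref{properties}.1 (base change along $A\to k$) identifies $T^1_{qG}(X_A/A,\mathcal{F})$ with $T^1_{qG}(X)\otimes_k J$ and the target of $\delta$ with $H^0(T^2_{qG}(X)\otimes J)$, in such a way (by Proposition~\ref{T2qG}) that $\delta([\mathcal{I}])$ is the deformation-theoretic obstruction to lifting $X_A$ locally over $B$.

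Reading off the two sequences then yields the conclusion: $[\mathcal{I}]$ lifts to $H^0(Q)$ iff $\delta([\mathcal{I}])\in H^0(T^2_{qG}(X)\otimes J)$ vanishes, and the (unique) lift lies in $\mathrm{Im}(\alpha)$ iff a further obstruction in $H^1(T^1_{qG}(X)\otimes J)$ vanishes, so that these two successive obstructions control whether $\mathrm{Def}^{qG}_{loc}(X_A/A,B)=\lambda^{-1}([\mathcal{I}])\neq\emptyset$; and when it is non-empty, for any $s_B\in\mathrm{Def}^{qG}_{loc}(X_A/A,B)$, Theorem~\ref{Q-obstructions}.3 says $s_B$ lifts to a global $\mathbb{Q}$-Gorenstein deformation iff $\partial(s_B)\in H^2(\widehat{T}_X\otimes J)$ vanishes. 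I expect the main obstacle to be the compatibility asserted at the end of the previous paragraph: one must check that the abstract connecting map $\delta$ coming from Proposition~\ref{properties} coincides with the obstruction map of Proposition~\ref{T2qG}, equivalently that the $\mu_r$-invariant part of the classical obstruction for the index $1$ cover genuinely computes the $\mathbb{Q}$-Gorenstein obstruction. This is exactly what the proofs of Propositions~\ref{properties} and~\ref{T2qG} deliver, by reduction to the index $1$ cover and a check of $\mu_r$-equivariance; granting it, the argument becomes a verbatim transcription of the proof of Corollary~\ref{obstructions-3}.
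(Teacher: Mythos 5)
Your proposal is correct and follows the paper's intended approach: the paper states Corollary~\ref{obstructions-4} without a separate proof precisely because it is the $\mathbb{Q}$-Gorenstein transcription of the proof of Corollary~\ref{obstructions-3}, with Proposition~\ref{properties}.2 replacing the Lichtenbaum--Schlessinger sequence~(\ref{T1-sequence}), the unnamed proposition identifying $T^1_{qG}(X_A/B,J\otimes\sheaf_{X_A})$ with $\mathrm{Def}^{qG}(X_A/A,B)$ replacing the $\mathrm{Ex}$--$T^1$ identifications, and Theorem~\ref{Q-obstructions}.3 replacing Theorem~\ref{local-to-global-1}.3. The compatibility you flag between the connecting map of Proposition~\ref{properties}.2 and the obstruction class of Proposition~\ref{T2qG} is exactly what those two propositions establish by passing to the index~$1$ cover and taking $\mu_r$-invariants, so your argument is complete.
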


The local lifting method and the results that were described in subsection~\ref{local-subsection} apply immediately to the $\mathbb{Q}$-Gorenstein case as well. For the convenience of the reader, we state the main technical tools needed in order to apply it.

\begin{proposition}\label{Q-exact-sequence}
Let $X$ be a $\mathbb{Q}$-Gorenstein scheme defined over a field $k$ and $Y \subset X$ a closed subscheme such that $X-Y$ is smooth.
Let $\mathcal{X}_n \in Def_Y^{qG}(X)(A_n)$ and $\mathcal{X}_{n-1}=\mathcal{X}_n \otimes_{A_n} A_{n-1}$.
Then there is an exact sequence
\begin{gather*}
0 \la \widehat{T}_X \la \widehat{T}_{X_n/A_n} \la \widehat{T}_{X_{n-1}/A_{n-1}} \la T^1_{qG}(X) \la T^1_{qG}(X_n/A_n) \la \\
\la T^1_{qG}(X_{n-1}/A_{n-1}) \la T^2_{qG}(X)
\end{gather*}
\end{proposition}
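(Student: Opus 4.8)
The plan is to deduce the sequence from the long exact sequence of $\mathbb{Q}$-Gorenstein cotangent sheaves attached to a square zero extension, that is, from Proposition~\ref{properties}(3), exactly as Proposition~\ref{the-exact-sequence} was deduced from the analogous $\mathrm{Ext}$-sequence. First I would apply Proposition~\ref{properties}(3) to the small extension $0\la J\la A_n\la A_{n-1}\la 0$ with $J=(t^n)/(t^{n+1})\cong k$, taking $X_A=\mathcal{X}_{n-1}$ and $X_B=\mathcal{X}_n$ (the extension is square zero for $n\geq 1$, and $\mathcal{X}_n$ is a $\mathbb{Q}$-Gorenstein lifting of $\mathcal{X}_{n-1}$ over $A_n$ by hypothesis). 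Since cotangent cohomology vanishes in negative degrees, this yields an exact sequence beginning with $0\la T^0_{qG}(\mathcal{X}_{n-1}/A_{n-1},J\otimes_{A_{n-1}}\sheaf_{\mathcal{X}_{n-1}})\la T^0_{qG}(\mathcal{X}_n/A_n,\sheaf_{\mathcal{X}_n})\la\cdots$, running through the degree $i$ triple
\[
T^i_{qG}(\mathcal{X}_{n-1}/A_{n-1},J\otimes\sheaf_{\mathcal{X}_{n-1}})\la T^i_{qG}(\mathcal{X}_n/A_n,\sheaf_{\mathcal{X}_n})\la T^i_{qG}(\mathcal{X}_{n-1}/A_{n-1},\sheaf_{\mathcal{X}_{n-1}})
\]
for $i=0,1$, and ending at $T^2_{qG}(\mathcal{X}_{n-1}/A_{n-1},J\otimes\sheaf_{\mathcal{X}_{n-1}})$.

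Next I would match each term with the one in the statement. As $\sheaf_{\mathcal{X}_n}$ and $\sheaf_{\mathcal{X}_{n-1}}$ are locally free, Proposition~\ref{T0} identifies $T^0_{qG}(\mathcal{X}_n/A_n,\sheaf_{\mathcal{X}_n})=\widehat{T}_{\mathcal{X}_n/A_n}$ and $T^0_{qG}(\mathcal{X}_{n-1}/A_{n-1},\sheaf_{\mathcal{X}_{n-1}})=\widehat{T}_{\mathcal{X}_{n-1}/A_{n-1}}$, while the corollary asserting $T^1_{qG}(\mathcal{X}_n/A_n)=T^1_{qG}(\mathcal{X}_n/A_n,\sheaf_{\mathcal{X}_n})$ (and likewise for $n-1$) takes care of the degree $1$ middle and right terms. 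For the $J$-twisted terms, $J\otimes_{A_{n-1}}\sheaf_{\mathcal{X}_{n-1}}=k\otimes_{A_{n-1}}\sheaf_{\mathcal{X}_{n-1}}=\sheaf_{\hat X}$, so Proposition~\ref{properties}(1) applied to $A_{n-1}\la k$ gives $T^i_{qG}(\mathcal{X}_{n-1}/A_{n-1},J\otimes\sheaf_{\mathcal{X}_{n-1}})\cong T^i_{qG}(\hat X/k,\sheaf_{\hat X})$; and this last group is $\widehat{T}_X$ for $i=0$ (Proposition~\ref{T0}), $T^1_{qG}(X)$ for $i=1$, and $T^2_{qG}(X)$ for $i=2$, using that the functors $T^i_{qG}$ commute with completion along $Y$ (as with $\mathcal{E}xt$ in the Claim in the proof of Proposition~\ref{local-to-global}, since $T^i_{qG}$ is built from $\mathcal{E}xt$'s on an index $1$ cover) together with the fact that $T^1_{qG}(X)$ and $T^2_{qG}(X)$ are supported on $Y$ (both vanish on the smooth locus $X-Y$). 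Substituting these identifications gives precisely the stated sequence.

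The only genuine subtlety, which I would handle exactly as in the earlier sections, is that the $\mathcal{X}_n$ are formal schemes, whereas the sheaves $T^i_{qG}$ and Propositions~\ref{properties},~\ref{T0} were set up for ordinary schemes via index $1$ covers. But all of these constructions are local on $X$, and locally $\mathcal{X}_n$ is the completion along $Y$ of an algebraic $\mathbb{Q}$-Gorenstein deformation of an \'etale neighbourhood of $Y$; since index $1$ covers are compatible with completion and completion is exact and commutes with $\mathcal{E}xt$ and hence cotangent cohomology (Proposition~\ref{local-to-global}), the formal analogues of Propositions~\ref{properties} and~\ref{T0} hold verbatim, and one simply runs the argument on such a local chart and glues. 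I expect this bookkeeping to be the main obstacle; once it is in place the sequence falls out by the substitutions above. The one formal difference from Proposition~\ref{the-exact-sequence} is that there the ordinary $\mathcal{H}om(\widehat\Omega_X,-)$ machinery forces the terminal term to be $\mathcal{E}xt^2_{\hat X}(\widehat\Omega_X,\sheaf_{\hat X})$, whereas Proposition~\ref{properties}(3) delivers $T^2_{qG}(X)$ directly.
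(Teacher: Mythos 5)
Your proposal is correct and takes exactly the approach the paper intends: the paper's proof is simply the instruction to ``Use Propositions~\ref{properties},~\ref{T0} on the extension $0 \la k \la A_n \la A_{n-1} \la 0$'', and you have carried out precisely that plan, including the right base-change identification of the $J$-twisted terms via Proposition~\ref{properties}(1) and the passage to the formal setting. The paper leaves the formal-scheme bookkeeping implicit; your expansion of it is consistent with what is done in Propositions~\ref{local-to-global} and~\ref{the-exact-sequence}.
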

\begin{proof}
Use Propositions~\ref{properties},~\ref{T0} on the extension
\[
0 \la k \la A_n \la A_{n-1} \la 0
\]
\end{proof}

\begin{proposition}\label{Q-local-sequence}
With assumptions as in Proposition~\ref{Q-exact-sequence}, there are canonical exact sequences
\begin{gather*}
0 \la H^0(T^1_{qG}(X)/\mathcal{F}_n) \la H^0(T^1_{qG}(\mathcal{X}_n/A_n) )\la H^0 (T^1_{qG}(\mathcal{X}_{n-1}/A_{n-1}) ) \la Q_n \la 0\\
0 \la L_n \la Q_n \la H^0(T^2_{qG}(X))\\
0 \la L_n \la H^1(T^1_{qG}(X)/\mathcal{F}_n) \la H^1(T_{qG}^1(\mathcal{X}_{n}/A_{n}))
\end{gather*}
and a noncanonical one
\begin{gather*}
0 \la H^0(T^1_{qG}(X)/\mathcal{F}_n) \la H^0(T^1_{qG}(\mathcal{X}_n/A_n) )\stackrel{\phi_n}{\la} H^0 (T^1_{qG}(\mathcal{X}_{n-1}/A_{n-1}) ) \la\\
\la H^1(T^1_{qG}(X)/\mathcal{F}_n) \oplus H^0(T^2_{qG}(X))
\end{gather*}
where $\mathcal{F}_n \subset T^1_{qG}(X)$ is the cokernel of the map $\widehat{T}_{\mathcal{X}_n/A_n}\la \widehat{T}_{\mathcal{X}_{n-1}/A_{n-1}}$.
\end{proposition}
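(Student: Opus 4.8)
The plan is to transcribe, essentially word for word, the proof of Proposition~\ref{local-sequence}, replacing its two inputs by their $\mathbb{Q}$-Gorenstein counterparts: the sheaf exact sequence of Proposition~\ref{the-exact-sequence} is replaced by that of Proposition~\ref{Q-exact-sequence}, and the obstruction sheaf $\mathcal{E}xt^2_{\hat X}(\widehat{\Omega}_X,\sheaf_{\hat X})$ is replaced by $T^2_{qG}(X)$, which is an obstruction sheaf for $Def^{qG}(Y,X)$ by Proposition~\ref{T2qG}. No new geometric input is required; the content of the proof is a purely homological manipulation of one long exact sequence.

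Concretely, I would first read off from Proposition~\ref{Q-exact-sequence} the four-term exact sequence of coherent sheaves supported on $Y$
\[
0 \la T^1_{qG}(X)/\mathcal{F}_n \la T^1_{qG}(\mathcal{X}_{n}/A_{n}) \stackrel{h_n}{\la} T^1_{qG}(\mathcal{X}_{n-1}/A_{n-1}) \stackrel{\mu_n}{\la} T^2_{qG}(X),
\]
where $\mathcal{F}_n$ is by definition the cokernel of $\widehat{T}_{\mathcal{X}_n/A_n}\la \widehat{T}_{\mathcal{X}_{n-1}/A_{n-1}}$. Setting $M_n=\ker(\mu_n)$ splits this into
\[
0 \la T^1_{qG}(X)/\mathcal{F}_n \la T^1_{qG}(\mathcal{X}_{n}/A_{n}) \stackrel{h_n}{\la} M_n \la 0,
\]
\[
0 \la M_n \la T^1_{qG}(\mathcal{X}_{n-1}/A_{n-1}) \stackrel{\mu_n}{\la} T^2_{qG}(X).
\]
Taking cohomology on $X$ and running the same snake-lemma chase as in Proposition~\ref{local-sequence} — applied to the commutative square relating $\mathrm{Im}(f_2)\hookrightarrow H^0(T^1_{qG}(\mathcal{X}_{n-1}/A_{n-1}))$ — identifies the kernel of the natural map $H^0(T^1_{qG}(\mathcal{X}_n/A_n))\to H^0(T^1_{qG}(\mathcal{X}_{n-1}/A_{n-1}))$ with $H^0(T^1_{qG}(X)/\mathcal{F}_n)$, its cokernel with $Q_n$, and produces the short exact sequences $0\la L_n\la Q_n\la H^0(T^2_{qG}(X))$ and $0\la L_n\la H^1(T^1_{qG}(X)/\mathcal{F}_n)\la H^1(T^1_{qG}(\mathcal{X}_n/A_n))$, where $L_n=\mathrm{coker}[\mathrm{Im}(f_2)\to H^0(M_n)]$. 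Splicing the first three of these gives the canonical sequences in the statement; composing the inclusions $L_n\hookrightarrow H^1(T^1_{qG}(X)/\mathcal{F}_n)$ and $L_n\hookrightarrow Q_n$ and unwinding the cokernels yields the noncanonical one.

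I do not expect a genuine obstacle here. The only points that need to be checked before the chase is started are that the $\mathbb{Q}$-Gorenstein cotangent sheaves behave formally like the usual ones: coherence and support on $Y$ (clear from the construction of $T^i_{qG}$ via index-$1$ covers), the base-change identification $T^1_{qG}(\mathcal{X}_n/A_n)=T^1_{qG}(\mathcal{X}_n/A_n,\sheaf_{\mathcal{X}_n})$ (the corollary following Proposition~\ref{properties}), and that $T^2_{qG}(X)$ genuinely controls local $\mathbb{Q}$-Gorenstein liftings (Proposition~\ref{T2qG}), so that Proposition~\ref{Q-exact-sequence} is available in the first place. Once these formal properties are in hand, the diagram chase of Proposition~\ref{local-sequence} goes through verbatim.
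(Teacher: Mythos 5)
Your proposal is correct and matches the paper's intent exactly: the paper states Proposition~\ref{Q-local-sequence} without a separate proof, relying on the remark that the arguments of subsection~\ref{local-subsection} (in particular the diagram chase of Proposition~\ref{local-sequence}) carry over verbatim once the exact sequence of Proposition~\ref{Q-exact-sequence} and the obstruction sheaf $T^2_{qG}(X)$ from Proposition~\ref{T2qG} are substituted for their non-$\mathbb{Q}$-Gorenstein counterparts. You have identified precisely those inputs and the homological manipulations needed, so nothing is missing.
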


\begin{corollary}
Suppose that the index 1 cover of every singular point of has local complete intersection singularities. Then there is an exact sequence
\[
0 \la H^0(T_{qG}^1(X)/\mathcal{F}_n) \la H^0(T_{qG}^1(\mathcal{X}_n/A_n) )\stackrel{\sigma_n}{\la} H^0 (T_{qG}^1(\mathcal{X}_{n-1}/A_{n-1}) ) \stackrel{\partial}{\la}  H^1(T_{qG}^1(X)/\mathcal{F}_n)
\]
\end{corollary}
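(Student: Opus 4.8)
The plan is to obtain the statement from Proposition~\ref{Q-local-sequence}, in exactly the manner that Corollary~\ref{Cor-local-ob-1} is obtained from Proposition~\ref{local-sequence}: the hypothesis forces the obstruction sheaf $T^2_{qG}(X)$ to vanish, and then the three canonical exact sequences of Proposition~\ref{Q-local-sequence} splice together into the single four-term exact sequence asserted in the corollary.

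First I would check that $T^2_{qG}(X)=0$. This is a local question on $X$, so I may assume that $X$ is affine of index $r$, with index $1$ cover $\pi\colon \tilde X\la X$. By the construction of the sheaves $T^i_{qG}$ (see also Proposition~\ref{T2qG}) one has $T^2_{qG}(X)=(T^2(\tilde X))^{\mu_r}$, the $\mu_r$-invariant part of the Lichtenbaum--Schlessinger sheaf $T^2(\tilde X)=T^2(\tilde X/k,\sheaf_{\tilde X})$. Since by hypothesis $\tilde X$ has only local complete intersection singularities, it is a local complete intersection and hence $T^2(\tilde X)=0$~\cite{Li-Sch67}. Therefore $T^2_{qG}(X)=0$, and in particular $H^0(T^2_{qG}(X))=0$.

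Now I would feed $H^0(T^2_{qG}(X))=0$ into the three canonical sequences of Proposition~\ref{Q-local-sequence}. The second sequence $0\la L_n\la Q_n\la H^0(T^2_{qG}(X))$ yields $L_n\cong Q_n$, and the third sequence $0\la L_n\la H^1(T^1_{qG}(X)/\mathcal{F}_n)\la H^1(T^1_{qG}(\mathcal{X}_n/A_n))$ then exhibits $Q_n\cong L_n$ as a submodule of $H^1(T^1_{qG}(X)/\mathcal{F}_n)$. Composing the surjection $H^0(T^1_{qG}(\mathcal{X}_{n-1}/A_{n-1}))\la Q_n$ from the first sequence with this injection $Q_n\la H^1(T^1_{qG}(X)/\mathcal{F}_n)$ defines the connecting map $\partial$, and splicing the first sequence
\[
0\la H^0(T^1_{qG}(X)/\mathcal{F}_n)\la H^0(T^1_{qG}(\mathcal{X}_n/A_n))\stackrel{\sigma_n}{\la} H^0(T^1_{qG}(\mathcal{X}_{n-1}/A_{n-1}))\la Q_n\la 0
\]
onto that injection produces the asserted exact sequence.

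I do not expect a genuine obstacle: once $T^2_{qG}(X)=0$ is in hand the remainder is a formal diagram chase, word for word the one underlying Corollary~\ref{Cor-local-ob-1}. The only mildly delicate point is the reduction of the vanishing of $T^2_{qG}(X)$ to the vanishing of $T^2$ of the index $1$ cover, which relies on the local description of $T^i_{qG}$ as a $\mu_r$-invariant part from the construction of these sheaves, together with the classical fact that local complete intersections have vanishing $T^2$.
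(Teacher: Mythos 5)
Your proposal is correct and follows exactly the argument the paper leaves implicit: it is the $\mathbb{Q}$-Gorenstein version of the passage from Proposition~\ref{local-sequence} to Corollary~\ref{Cor-local-ob-1}, with the only new ingredient being the identification $T^2_{qG}(X)=(T^2(\tilde X))^{\mu_r}$ locally (as used also in Proposition~\ref{T2qG}) combined with the classical vanishing $T^2(\tilde X)=0$ for local complete intersections. The splicing of the three canonical sequences from Proposition~\ref{Q-local-sequence} once $H^0(T^2_{qG}(X))=0$ is exactly what the paper intends.
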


\section{From formal to algebraic.}

For geometric applications we are interested in algebraic deformations $f \colon \mathcal{X} \la S$ of a scheme $X$ of finite type over a field $k$. However, the methods of this paper are formal and they produce only formal deformations of $X$. It is therefore of interest to know under what conditions a formal deformation is algebraic and which properties of an algebraic deformation can be read from the associated formal deformation.

The problem of whether a formal deformation is algebraic is a very difficult one. An affirmative answer is known in the case when $X$ is affine with  isolated singularities~\cite[Theorem 5.1]{Art76} and when it is projective with $H^2(X,\sheaf_X)=0$~\cite[Theorem 2.5.13]{Ser06}~\cite{Gr59}. This problem is extensively studied in~\cite{Art69}.

In general it is difficult to compare the properties of an algebraic deformation and its associated formal deformation. For example, it is possible that the formal deformation is trivial but the global one is not~\cite[Example 1.2.5]{Ser06}. In this section we state criteria in order to recognize the properties of being locally trivial and smoothing from certain properties of the corresponding formal deformation. Then we define the notion of formal smoothing which we will use in section~\ref{smoothings-section}.

The next theorem by Artin is the key to the relation between locally formally trivial and locally trivial deformations.
\begin{theorem}[Corollary 2.6,~\cite{Art69}]\label{Artin}
Let $X_1$, $X_2$ be $S$-schemes of finite type and let $x_i \in X_i$ be points, $i=1,2$. If the complete local rings $\widehat{\sheaf}_{X_i,x_i}$
are $\sheaf_S$-isomorphic, then $X_1$ and $X_2$ are locally isomorphic for the \'etale topology.
\end{theorem}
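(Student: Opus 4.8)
Since this is a theorem of Artin~\cite{Art69}, I only sketch the argument, which is a direct application of his approximation theorem. \emph{Reduction to henselian local rings.} The assertion is local for the \'etale topology near $x_1$ and $x_2$, so we may assume $X_i=\mathrm{Spec}\,A_i$ and $S=\mathrm{Spec}\,R_0$ are affine with $A_i$ of finite type over $R_0$; in the setting of this paper $R_0$ is of finite type over a field (or over an excellent Dedekind domain), so $A_i$ is too, and we write $\mathfrak m_i$ for the prime of $A_i$ corresponding to $x_i$. Comparing the kernels of the two maps $R_0\to\widehat{\sheaf}_{X_i,x_i}$ one sees that the hypothesis forces $x_1$ and $x_2$ to lie over a common point $s$ of $S$ and the given isomorphism to be $\sheaf_{S,s}$-linear. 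Recall finally that $X_1$ and $X_2$ are isomorphic over $S$ on a common \'etale neighborhood of $x_1$, $x_2$ if and only if the henselizations $\sheaf_{X_1,x_1}^{h}$ and $\sheaf_{X_2,x_2}^{h}$ are isomorphic as $\sheaf_{S,s}$-algebras, because each is the filtered colimit of the coordinate rings of the \'etale neighborhoods. Thus it suffices to promote the given isomorphism of the $\mathfrak m$-adic completions to an isomorphism of the henselizations.

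\emph{Encoding the isomorphism.} Put $\Lambda:=\sheaf_{X_1,x_1}^{h}$, which is the henselization at a prime of a ring of finite type over a field (or an excellent Dedekind domain) and has completion $\widehat\Lambda=\widehat{\sheaf}_{X_1,x_1}$, so that Artin's theorem applies over $\Lambda$. Fixing a finite presentation of $A_2$ over $R_0$, the datum of an $\sheaf_{S,s}$-algebra homomorphism $\sheaf_{X_2,x_2}^{h}\to T$ together with one in the opposite direction, the two being mutually inverse, can be organized into a functor $F$ from $\Lambda$-algebras to sets; since $A_1$ and $A_2$ are finitely presented, $F$ commutes with filtered colimits, i.e. is locally of finite presentation. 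The given $\sheaf_S$-isomorphism $\widehat{\sheaf}_{X_1,x_1}\xrightarrow{\ \sim\ }\widehat{\sheaf}_{X_2,x_2}$, together with its inverse, provides an element $\widehat\xi\in F(\widehat\Lambda)$.

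\emph{Approximation and conclusion.} By Artin's approximation theorem~\cite{Art69}, applied to the functor $F$ over the excellent henselian local ring $\Lambda$, there is $\xi=(\sigma,\tau)\in F(\Lambda)$ agreeing with $\widehat\xi$ modulo $\mathfrak m_\Lambda^{2}$. Then $\sigma$ is an honest $\sheaf_{S,s}$-algebra homomorphism $\sheaf_{X_1,x_1}^{h}\to\sheaf_{X_2,x_2}^{h}$ and $\tau$ is one the other way; being a point of $F$ they are mutually inverse (equivalently, $\tau\sigma$ and $\sigma\tau$ reduce to the identity modulo $\mathfrak m^{2}$, and since $\tau$ is likewise approximated a short completeness argument upgrades this to equality). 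Hence $\sheaf_{X_1,x_1}^{h}\cong\sheaf_{X_2,x_2}^{h}$ over $\sheaf_{S,s}$, and by the first step this means $X_1$ and $X_2$ are isomorphic over $S$ on a common \'etale neighborhood of $x_1$ and $x_2$.

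The main obstacle is the precise construction of $F$: one must arrange at once that $F$ be locally of finite presentation (so that Artin's theorem applies), that the given formal isomorphism really define a point of $F$ over $\widehat\Lambda$, and that a point of $F$ over $\Lambda$ genuinely yield an isomorphism of henselizations rather than merely a homomorphism that happens to be formally invertible — which is exactly why one carries the inverse $\tau$ along as part of the data. One should also check that ``$\sheaf_{S,s}$-algebra homomorphism'' is the correct notion of $S$-morphism here even though $S$ itself need not be local. All of these points are handled in~\cite{Art69}, and the remaining verifications are routine.
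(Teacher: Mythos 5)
The paper does not prove this statement; it is quoted verbatim from Artin's 1969 paper as Corollary 2.6, so there is no ``paper's own proof'' to compare against. Your sketch follows what is, in essence, Artin's original argument: reduce to henselizations, encode the formal isomorphism together with its formal inverse as a point of a functor of finite presentation over $\Lambda=\sheaf^{h}_{X_1,x_1}$, approximate by the Artin approximation theorem, and read off an isomorphism of henselizations, hence an isomorphism on a common \'etale neighborhood. The reduction step, the use of the universal property of henselization (so that an $\Lambda$-point of a finite-type $\sheaf_{S,s}$-scheme supported at the right prime factors through $\sheaf^{h}_{X_2,x_2}\to\Lambda$), and the device of carrying the inverse $\tau$ along as part of the data are all exactly what is needed, and you correctly flag that the real work is setting up $F$ carefully.

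One sentence in your sketch is, however, misleading and would not survive scrutiny if taken literally: you write that ``$\tau\sigma$ and $\sigma\tau$ reduce to the identity modulo $\mathfrak m^{2}$, and since $\tau$ is likewise approximated a short completeness argument upgrades this to equality.'' Agreement with the identity modulo $\mathfrak m^{2}$ does not force an endomorphism of a henselian (or even complete) local ring to equal the identity — $t\mapsto t+t^{3}$ on $k[[t]]$ is a counterexample. The correct way out, which is the one your main clause actually uses, is to build the relations $\sigma\tau=\mathrm{id}$ and $\tau\sigma=\mathrm{id}$ directly into the finitely presented system of equations defining $F$, so that any $\Lambda$-point of $F$ automatically consists of mutually inverse maps; the approximation theorem is then applied to that enlarged system. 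Alternatively, one can drop $\tau$ and argue that a $\sigma$ agreeing with the formal isomorphism modulo $\mathfrak m^{2}$ induces an isomorphism on cotangent spaces and is therefore \'etale near $x_2$, hence an isomorphism on a common \'etale neighborhood; either route works, but the ``mod $\mathfrak m^{2}$ implies equality'' claim should be excised.
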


\begin{corollary}\label{formal-to-global}
Let $f \colon \mathcal{X} \la S$ be a flat morphism of schemes of finite type. Moreover, assume that either $f$ is proper or that
it is a morphism of local schemes. Let $s \in S$ and suppose that the corresponding formal deformation $X_n \la S_n$, where
$X_n= \mathcal{X} \times_S S_n$, $S_n=\mathrm{Spec} ( \sheaf_{S,s}/m_s^{n+1})$, is locally trivial. Then there is a neighborhood $s \in U \subset S$
and an \'etale cover $\{ V_i\}$ of $f^{-1}U$ such that $V_i \la U$ is trivial.
\end{corollary}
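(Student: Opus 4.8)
The plan is to reduce the statement to an isomorphism of complete local rings and then invoke Theorem~\ref{Artin}. Put $X_0=f^{-1}(s)$ for the special fibre and let $\mathcal{X}_0:=X_0\times_k S\la S$ be the trivial deformation of $X_0$; note that $X_0$ is of finite type over $\kappa(s)$, being a fibre of a finite-type morphism, so both $\mathcal{X}$ and $\mathcal{X}_0$ are $S$-schemes of finite type. Fix $x\in X_0$. First I would show that local triviality of the formal deformation forces an isomorphism
\[
\widehat{\sheaf}_{\mathcal{X},x}\;\cong\;\widehat{\sheaf}_{\mathcal{X}_0,(x,s)}
\]
of $\sheaf_S$-algebras. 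Indeed, by hypothesis there is an open neighbourhood $V$ of $x$ in $X_0$ together with compatible $S_n$-isomorphisms $X_n|_V\cong V\times_k S_n$ for all $n$; passing to stalks at $x$ and taking the inverse limit, the $\mathfrak{m}_s$-adic completion $\varprojlim_n\sheaf_{X_n,x}$ of $\sheaf_{\mathcal{X},x}$ gets identified, compatibly with $\widehat{\sheaf}_{S,s}$, with the analogous completion of $\sheaf_{\mathcal{X}_0,(x,s)}$. Since $\mathfrak{m}_s\sheaf_{\mathcal{X},x}\subseteq\mathfrak{m}_x$, every finite-length quotient $\sheaf_{\mathcal{X},x}/\mathfrak{m}_x^m$ is already $\mathfrak{m}_s$-adically complete, so the $\mathfrak{m}_x$-adic completion $\widehat{\sheaf}_{\mathcal{X},x}$ is recovered as the $\mathfrak{m}$-adic completion of $\varprojlim_n\sheaf_{X_n,x}$; the same holds on the trivial side, and the displayed isomorphism follows.

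Next I would apply Theorem~\ref{Artin} to the $S$-schemes $\mathcal{X}$ and $\mathcal{X}_0$ at the points $x$ and $(x,s)$. This produces, for each $x\in X_0$, an \'etale neighbourhood $V_x\la\mathcal{X}$ of $x$ that is $S$-isomorphic to an \'etale neighbourhood of $(x,s)$ in $\mathcal{X}_0$; in particular the structure morphism $V_x\la S$ factors through an \'etale $S$-morphism $V_x\la\mathcal{X}_0=X_0\times_k S$. Thus $V_x$, regarded as a family over $S$, is \'etale-locally modelled on the trivial family --- which is what ``$V_x\la U$ is trivial'' is to mean here (this being precisely the content of ``locally isomorphic for the \'etale topology'' in Theorem~\ref{Artin}).

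It remains to globalise. Each $V_x\la\mathcal{X}$ is \'etale, hence has open image containing $x$, so the union $\mathcal{X}'$ of these images is an open subset of $\mathcal{X}$ containing the whole special fibre $X_0$. If $f$ is proper, then $f$ is closed, so $f(\mathcal{X}\setminus\mathcal{X}')$ is a closed subset of $S$ not meeting $s$, and $U:=S\setminus f(\mathcal{X}\setminus\mathcal{X}')$ is an open neighbourhood of $s$ with $f^{-1}(U)\subseteq\mathcal{X}'$; then $\{V_x\times_{\mathcal{X}}f^{-1}(U)\}_{x\in X_0}$ is an \'etale cover of $f^{-1}(U)$, each member being \'etale over $\mathcal{X}_0|_U=X_0\times_k U$. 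If instead $f$ is a morphism of local schemes, then $s$ is the closed point of $S$, so necessarily $U=S$, while the closed point of $\mathcal{X}$ lies in $X_0$ and hence in the image of some $V_{x_0}$; that image, being open and containing the closed point of $\mathcal{X}$, is all of $\mathcal{X}$, so the single \'etale surjection $V_{x_0}\la\mathcal{X}$ is the required cover.

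The step I expect to be the main obstacle is the first one: the formal deformation only directly controls the $\mathfrak{m}_s$-adic completions $\varprojlim_n\sheaf_{X_n,x}$, so some care is needed to deduce an isomorphism of the \emph{$\mathfrak{m}_x$-adic} complete local rings, and of the $\sheaf_S$-algebra structures on them, as required by Theorem~\ref{Artin}. One must also keep track of the residue field $\kappa(s)$ in describing $\mathcal{X}_0$ when $\kappa(s)\neq k$. Once that isomorphism is in place, the rest is formal.
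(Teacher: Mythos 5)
Your proposal is correct and follows essentially the same route as the paper's proof: reduce local triviality of the $\mathfrak{m}_s$-adic formal completion to an isomorphism of $\mathfrak{m}_x$-adic complete local rings, invoke Theorem~\ref{Artin} to obtain an \'etale-local isomorphism with the trivial family near each point of the special fibre, and then use properness (respectively locality) of $f$ to shrink $S$ so that these \'etale neighbourhoods cover $f^{-1}(U)$. The extra detail you supply on recovering $\widehat{\sheaf}_{\mathcal{X},x}$ from the $\mathfrak{m}_s$-adic completion, and the remark about the residue field $\kappa(s)$, are points the paper passes over silently but do not change the argument.
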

In particular, with assumptions as in the previous theorem, if the fiber over $s$, $\mathcal{X}_s$, is singular then the general fiber is
singular too and hence $f$ is not a smoothing.
\begin{proof}
If $f$ is a flat family of local schemes, then the corollary follows immediately from~\ref{Artin}. Now suppose that $f$ is proper. Let $X_s=\mathcal{X} \times_S \mathrm{Spec} k(s)$ and $\widehat{\mathcal{X}}$ be the formal completion of $\mathcal{X}$ along $X_s$. Then the assumptions imply that $\widehat{\mathcal{X}}$ is locally trivial. In particular, it follows that
$\widehat{\sheaf}_{\mathcal{X},P} \cong \widehat{\sheaf}_{\mathcal{Y},P}$, where $\mathcal{Y}=X_s \times S$,  $P \in X_s$ and $\widehat{\sheaf}_{\mathcal{X},P}$, $\widehat{\sheaf}_{\mathcal{Y},P}$ are the completions of $\sheaf_{\mathcal{X},P}$, $\sheaf_{\mathcal{Y},P}$, at the maximal ideals $m_{\mathcal{X},P}$, $m_{\mathcal{Y},P}$ of $\sheaf_{\mathcal{X},P}$ and $\sheaf_{\mathcal{Y},P}$. Hence by Theorem~\ref{Artin} it follows that there is an \'etale
cover $\{ V_i \}$ of $X_s$ in $\mathcal{X}$ such that $V_i \la S$ is trivial. Let $Z=\mathcal{X}-\cup_i V_i$. Then since $f$ is proper, $Y=f(Z)$ is closed in $S$ and $U=S-Y$ has the required properties.
\end{proof}

Let $f \colon \mathcal{X} \la \mathcal{S}$ be a deformation of a scheme $X$ over the spectrum of a discrete valuation ring. Next we will obtain criteria on the corresponding formal deformation $f_n \colon X_n \la \mathrm{Spec}A_n$ in order for $f$ to be a smoothing. First we define the relative differentials of a morphism of formal schemes.

\begin{definition}[~\cite{Lip-Na-Sa05}]
Let $\mathfrak{f} \colon \mathfrak{X} \la \mathfrak{S} $ be a morphism of formal schemes. Let $\mathfrak{I}$, $\mathfrak{J}$ be ideals of definition of $\mathfrak{X}$, $\mathfrak{S}$ respectively, such that $\mathfrak{f}^{\ast} \mathfrak{J}\cdot \sheaf_{\mathfrak{X}} \subset \mathfrak{I}$. Let $X_n=(\mathfrak{X},\sheaf_{\mathfrak{X}}/\mathfrak{I}^{n+1})$, $S_n=(\mathfrak{S},\sheaf_{\mathfrak{S}}/\mathfrak{J}^{n+1})$ the corresponding schemes and $f_n \colon X_n \la S_n$ the corresponding morphism. Then $\underset{\leftarrow}{\lim} \Omega_{X_n/S_n}$ and $\underset{\leftarrow}{\lim} \omega_{X_n/S_n}$ are sheaves of $\sheaf_{\mathfrak{X}}=\underset{\leftarrow}{\lim}\sheaf_{X_n}$-modules and we define
\[
\Omega_{\mathfrak{X}/\mathfrak{S}}=\underset{\leftarrow}{\lim} \Omega_{X_n/S_n}
\]
the sheaf of formal relative differentials and
\[
\omega_{\mathfrak{X}/\mathfrak{S}} = \underset{\leftarrow}{\lim} \omega_{X_n/S_n}
\]
the formal dulalizing sheaf. If $\mathfrak{f}$ is of pseudo-finite type, then both are coherent. In this case we also define
\[
T^1(\mathfrak{X}/\mathfrak{S})=\mathcal{E}xt^1_{\mathfrak{X}}(\Omega_{\mathfrak{X}/\mathfrak{S}}, \sheaf_{\mathfrak{X}})
\]
the first order formal relative cotangent sheaf. For the basic properties of $\Omega_{\mathfrak{X}/\mathfrak{S}}$ we refer the reader to~\cite{TaLoRo07}.
\end{definition}
Next we define the notion of a formal $\mathbb{Q}$-Gorenstein deformation $\mathfrak{f} \colon \mathfrak{X} \la \mathfrak{S}$ and the corresponding sheaf $T^1_{qG}(\mathfrak{X}/\mathfrak{S})$.
\begin{definition}
Let $\mathfrak{f} \colon \mathfrak{X} \la \mathfrak{S} $ be a flat morphism of formal schemes.
\begin{enumerate}
\item We say that $\mathfrak{f}$ is a formal $\mathbb{Q}$-Gorenstein deformation, if there are ideals of definition $\mathfrak{I}$, $\mathfrak{J}$ of $\mathfrak{X}$, $\mathfrak{S}$ respectively, such that $\mathfrak{f}^{\ast} \mathfrak{J}\cdot \sheaf_{\mathfrak{X}} \subset \mathfrak{I}$ and the corresponding deformations of schemes $f_n \colon X_n \la S_n$, where $X_n=(\mathfrak{X},\sheaf_{\mathfrak{X}}/\mathfrak{I}^{n+1})$, $S_n=(\mathfrak{S},\sheaf_{\mathfrak{S}}/\mathfrak{J}^{n+1})$, are $\mathbb{Q}$-Gorenstein.
\item Suppose that $\mathfrak{f}$ is a formal $\mathbb{Q}$-Gorenstein deformation. Then, with notation as in (1), let $\{U_i\}$ be an affine open cover of $X$ and let $X_{i,n}=X_n|_{U_i}$. Then the deformation $X_{i,n} \la S_n$ is induced by a deformation $\tilde{X}_{i,n} \la S_n$, where $\pi_{i,n} \colon \tilde{X}_{i,n} \la X_{i,n}$ is the index 1 cover~\cite{KoBa88}. These form an inverse system and setting $\tilde{\mathfrak{X}}_{i} = \underset{\leftarrow}{\lim} \tilde{X}_{i,n}$ we get a map of formal schemes $\pi_i \colon \tilde{\mathfrak{X}}_{i} \la \mathfrak{X}|_{U_i}$ which we call the formal index 1 cover. Then as in the usual scheme case, the covering groups $G_i$ act on $T^1(\mathfrak{X}_i/\mathfrak{S})$ and we define $T^1_{qG}(\mathfrak{X}_i,\mathfrak{S})= T^1(\mathfrak{X}_i/\mathfrak{S})^{G_i}$. These glue together to a coherent sheaf $T^1_{qG}(\mathfrak{X}/\mathfrak{S})$ on $\mathfrak{X}$.
\end{enumerate}
\end{definition}
\begin{notation}
Let $\mathfrak{F}$ be a coherent sheaf on a formal scheme $\mathfrak{X}$. We denote by $\mathrm{Fitt}_k(\mathfrak{F}) \subset \sheaf_{\mathfrak{X}}$ the $k$-fitting ideal of $\mathfrak{F}$. These ideals measure the obstruction for $\mathfrak{F}$ to be locally generated by $k$ elements. In fact $\mathfrak{F}$ is locally generated by $k$ elements if and only if $\mathrm{Fitt}_k(\mathfrak{F})=\sheaf_{\mathfrak{X}}$. Moreover, fitting ideals commute with base change and completion~\cite[Proposition 20.6]{Eis95}.
\end{notation}

Next we define the notion of a formal smoothing.

\begin{definition}
Let $X$ be a proper equidimensional scheme of finite type over a separable field $k$. Then a formal deformation $\mathfrak{f} \colon \mathfrak{X} \la \mathfrak{S}$, where $\mathfrak{S}=\mathrm{Specf} k[[t]]$ is called a formal smoothing of $X$, if and only if there is a $k \in \mathbb{Z}_{>0}$ such that $\mathfrak{I}^k \subset \mathrm{Fitt}_n(\Omega_{\mathfrak{X}/\mathfrak{S}})$, where $\mathfrak{I} \subset \sheaf_{\mathfrak{X}}$ is an ideal of definition of $\mathfrak{X}$ and $n = \dim X$.
\end{definition}

\begin{remark}
In the previous definition we required that $X$ is equidimensional in order to control the dimension of the components of the general fiber. However it is not a very restrictive condition since almost all singularities of interest in applications such as moduli of canonically polarized varieties and the minimal model program are Cohen-Macauley and hence equidimensional.
\end{remark}

The next proposition shows that formal smoothness implies smoothness in the case of algebraic deformations.

\begin{proposition}
Let $X$ be a proper equidimensional scheme of dimension $n$, of finite type over a separable field $k$. Let $f \colon \mathcal{X} \la S$ be a deformation of $X$ over the spectrum of a discrete valuation ring $(A,m)$, and let $\mathfrak{f}\colon \mathfrak{X} \la \mathfrak{S}$ the associated formal deformation. Then $f$ is a smoothing of $X$ if and only if $\mathfrak{f}$ is a formal smoothing of $X$.
\end{proposition}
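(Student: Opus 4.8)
The plan is to reduce the condition ``$\mathfrak{f}$ is a formal smoothing'' to a statement about the single coherent sheaf $\Omega_{\mathcal{X}/S}$ on the algebraic scheme $\mathcal{X}$, and then to recognize that statement as the Jacobian criterion for smoothness of the generic fibre. First I would fix a uniformizer $t$ of $A$. Since $\mathfrak{X}$ is the formal completion of $\mathcal{X}$ along the closed fibre $X=V(t)$, the ideal $\mathfrak{I}=t\sheaf_{\mathfrak{X}}$ is an ideal of definition of $\mathfrak{X}$, and because the defining inequality $\mathfrak{I}^{k}\subset \mathrm{Fitt}_{n}(\Omega_{\mathfrak{X}/\mathfrak{S}})$ is insensitive to the choice of ideal of definition (any two have powers contained in one another), $\mathfrak{f}$ is a formal smoothing if and only if $t^{k}\sheaf_{\mathfrak{X}}\subset \mathrm{Fitt}_{n}(\Omega_{\mathfrak{X}/\mathfrak{S}})$ for some $k>0$, where $n=\dim X$. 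Next I would invoke that $\Omega_{\mathfrak{X}/\mathfrak{S}}=\varprojlim \Omega_{X_{m}/S_{m}}$ is the formal completion of $\Omega_{\mathcal{X}/S}$ along $X$ (the coherence and compatibility statements of~\cite{Lip-Na-Sa05},~\cite{TaLoRo07}), together with the fact that Fitting ideals commute with completion~\cite{Eis95}; hence $\mathrm{Fitt}_{n}(\Omega_{\mathfrak{X}/\mathfrak{S}})$ is the completion of the coherent ideal sheaf $\mathrm{Fitt}_{n}(\Omega_{\mathcal{X}/S})\subset\sheaf_{\mathcal{X}}$ along $X$, and $t^{k}\sheaf_{\mathfrak{X}}$ is the completion of $t^{k}\sheaf_{\mathcal{X}}$.

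The second step is to descend the inclusion from $\mathfrak{X}$ to $\mathcal{X}$: I claim that $t^{k}\sheaf_{\mathfrak{X}}\subset \mathrm{Fitt}_{n}(\Omega_{\mathcal{X}/S})^{\wedge}$ for some $k$ if and only if $t^{k}\sheaf_{\mathcal{X}}\subset \mathrm{Fitt}_{n}(\Omega_{\mathcal{X}/S})$ for some $k$. One implication is immediate by completing. For the other, the hypothesis gives $t^{k}\in \mathrm{Fitt}_{n}(\Omega_{\mathcal{X}/S})_{x}\cdot\widehat{\sheaf}_{\mathcal{X},x}$ for every $x\in X$, hence $t^{k}\in \mathrm{Fitt}_{n}(\Omega_{\mathcal{X}/S})_{x}$ by faithful flatness of $\sheaf_{\mathcal{X},x}\la \widehat{\sheaf}_{\mathcal{X},x}$; consequently the coherent sheaf $\mathcal{Q}=\bigl(t^{k}\sheaf_{\mathcal{X}}+\mathrm{Fitt}_{n}(\Omega_{\mathcal{X}/S})\bigr)/\mathrm{Fitt}_{n}(\Omega_{\mathcal{X}/S})$ has support disjoint from $X=f^{-1}(s)$, and since $f$ is proper (or a morphism of local schemes) the same argument as in the proof of Corollary~\ref{formal-to-global} forces $\mathrm{Supp}\,\mathcal{Q}=\emptyset$, i.e.\ $\mathcal{Q}=0$. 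Combining with the first step and noting that a coherent ideal contains a power of $t$ precisely when its zero locus lies in $V(t)$ (by noetherianity), I would conclude that $\mathfrak{f}$ is a formal smoothing if and only if the closed subscheme $V(\mathrm{Fitt}_{n}(\Omega_{\mathcal{X}/S}))$ is set-theoretically contained in the closed fibre $X$.

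The final step is to identify $V(\mathrm{Fitt}_{n}(\Omega_{\mathcal{X}/S}))$ with the non-smooth locus of $f$. Here the equidimensionality hypothesis on $X$ enters: since $f$ is flat over the discrete valuation ring $A$, every irreducible component of $\mathcal{X}$ dominates $S$, and together with the fact that the closed fibre is equidimensional of dimension $n$ this forces the generic fibre $\mathcal{X}_{\eta}=\mathcal{X}\times_{S}\mathrm{Spec}\,K(A)$ to be equidimensional of dimension $n$ as well. By the relative Jacobian criterion, for such a family the non-smooth locus of $f$ is exactly $V(\mathrm{Fitt}_{n}(\Omega_{\mathcal{X}/S}))$; this uses compatibility of Fitting ideals with passage to fibres together with the fact that a finite type scheme of pure dimension $n$ over a field is smooth precisely where its sheaf of differentials is locally generated by $n$ elements. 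Therefore $V(\mathrm{Fitt}_{n}(\Omega_{\mathcal{X}/S}))\subset X$ if and only if $f$ is smooth at every point of $\mathcal{X}_{\eta}$, i.e.\ if and only if $\mathcal{X}_{\eta}$ is smooth over $K(A)$, i.e.\ if and only if $f$ is a smoothing; this closes the chain of equivalences. The part I expect to require the most care is precisely this last step, namely justifying that it is $\mathrm{Fitt}_{n}$ (and not some lower Fitting ideal) that cuts out the non-smooth locus, since over the possibly imperfect field $K(A)$ one must be careful about ``generated by $n$ elements'' versus ``free of rank $n$'' — this is exactly where the purity of the fibre dimension, guaranteed by the equidimensionality of $X$, is indispensable; the descent in the second step is only a routine repetition of the argument already used for Corollary~\ref{formal-to-global}.
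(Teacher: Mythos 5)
Your proposal is correct and follows essentially the same route as the paper: reduce the formal-smoothing condition to the statement that $t^k\sheaf_{\mathcal{X}}\subset \mathrm{Fitt}_n(\Omega_{\mathcal{X}/S})$ for some $k$ (using commutation of Fitting ideals with completion and faithful flatness to descend from $\mathfrak{X}$ to $\mathcal{X}$), and then read off smoothness of the generic fibre from the Jacobian criterion, with equidimensionality supplying the purity of fibre dimension. You are somewhat more careful than the paper about the descent step, and slightly less explicit about the final Jacobian step (which the paper spells out via the second fundamental exact sequence and the separability hypothesis), but the argument is the same in substance.
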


\begin{proof}
Since $f$ is proper, it follows that the general fiber $\mathcal{X}_g = \mathcal{X} \times_S \mathrm{Spec} K(A)$, is equidimensional of dimension $n$. Assume that $\mathfrak{f}$ is formally smooth. Then, since $\mathrm{Fitt}_n(\Omega_{\mathfrak{X}/\mathfrak{S}})=\mathrm{Fitt}_n(\Omega_{\mathcal{X}/\mathcal{S}})^{\wedge}$, the formal completion of $\mathrm{Fitt}_n(\Omega_{\mathcal{X}/\mathcal{S}})$ along $X$,
the assumption implies that $\sheaf_{\mathcal{X}} / \mathrm{Fitt}_n(\Omega_{\mathcal{X}/\mathcal{S}})$ is supported on the central fiber. Therefore, $\mathrm{Fitt}_n(\Omega_{\mathcal{X}_g})=\sheaf_{\mathcal{X}_g}$ and hence $\Omega_{\mathcal{X}_g/K(A)}$ is locally generated by $n$ elements. Let $\mathcal{X}^n_g$ be an irreducible component of $\mathcal{X}_g$ and let $P \in \mathcal{X}_g$ be a closed point. Then since $\mathcal{X}_g$ is Noetherian, $\dim \sheaf_{\mathcal{X}_g,P} = n$. Let $m_P \subset \sheaf_{\mathcal{X}_g,P} = n$ be the maximal ideal. Then there is an exact sequence
\[
m_P/m_P^2 \la \Omega_{\mathcal{X}_g/K(A)}\otimes (\sheaf_{\mathcal{X}_g,P}/m_P) \la \Omega_{K(\sheaf_{\mathcal{X}_g,P})/K(A)} \la 0
\]
which is exact on the left too since $k$ is separable. Therefore, $\dim (m_P/m_P^2)=\dim \sheaf_{\mathcal{X}_g,P}$ and hence $\sheaf_{\mathcal{X}_g,P}$ is regular. in fact the proof shows that it is geometrically regular and therefore $\sheaf_{\mathcal{X}_g,P}$ is smooth. Hence $\mathcal{X}_g$ is smooth and irreducible. The converse is similar.
\end{proof}

If $X$ has either complete intersection singularities or it is $\mathbb{Q}$-Gorenstein and the index 1 cover of any of its singular points has complete intersection singularities, it is possible to give simpler criteria which we will use in section~\ref{smoothings-section}.

We will need the next easy lemma.
\begin{lemma}\label{ci}
Let $X$ be a local complete intersection scheme of finite type over a field $k$. Then if $\mathcal{E}xt^1_X(\Omega_X, \sheaf_X)=0$, $X$ is smooth.
\end{lemma}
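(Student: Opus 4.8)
The plan is to reduce to a purely local commutative-algebra statement and then use the standard characterization of regularity via the cotangent complex (or, more elementarily, via the conormal sequence of a complete intersection). Since smoothness over $k$ can be checked at closed points after base change to the algebraic closure, and since $\mathcal{E}xt^1_X(\Omega_X,\sheaf_X)$ commutes with flat base change and localization, I would fix a point $P \in X$ and work with the local ring $A = \sheaf_{X,P}$, which by hypothesis is a local complete intersection: $\hat{A} \cong R/(f_1,\dots,f_c)$ where $R$ is a regular local ring and $f_1,\dots,f_c$ is a regular sequence. The vanishing hypothesis gives $\mathcal{E}xt^1_A(\Omega_{A/k},A) = 0$, hence also $\operatorname{Ext}^1_{\hat A}(\Omega_{\hat A/k},\hat A)=0$ after completion.

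First I would write down the conormal sequence for the complete intersection presentation: with $I = (f_1,\dots,f_c) \subset R$ one has $I/I^2$ free of rank $c$ over $\hat A$ (because $f_1,\dots,f_c$ is a regular sequence), and the exact sequence
\[
I/I^2 \xrightarrow{\ \delta\ } \Omega_{R/k}\otimes_R \hat A \la \Omega_{\hat A/k} \la 0 .
\]
Because $I$ is generated by a regular sequence, the left map $\delta$ is even injective, so this is a short exact sequence with middle term free. Dualizing into $\hat A$ gives
\[
0 \la T_{\hat A} \la \operatorname{Hom}_{\hat A}(\Omega_{R/k}\otimes \hat A,\hat A) \xrightarrow{\ \delta^\vee\ } \operatorname{Hom}_{\hat A}(I/I^2,\hat A) \la \operatorname{Ext}^1_{\hat A}(\Omega_{\hat A/k},\hat A) \la 0,
\]
where I use that $\operatorname{Ext}^1_{\hat A}(\Omega_{R/k}\otimes\hat A,\hat A)=0$ since $\Omega_{R/k}\otimes\hat A$ is free. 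The hypothesis forces $\delta^\vee$ to be surjective, i.e. $\delta$ admits an $\hat A$-linear retraction; equivalently the short exact conormal sequence is split. Then $\Omega_{\hat A/k}$ is a direct summand of a free module, hence projective, hence free (local ring), and a local $k$-algebra essentially of finite type whose module of differentials is free of rank $=\dim \hat A$ is smooth over $k$ — this is the standard Jacobian/regularity criterion, which one can also see directly: splitting means the Jacobian matrix $(\partial f_i/\partial x_j)$ has a left inverse over $\hat A$, so it has rank $c$ modulo the maximal ideal, and then $f_1,\dots,f_c$ is part of a regular system of parameters of $R$, making $\hat A$ regular. Regularity of $\hat A$ descends to $A$, and geometric regularity (hence smoothness) follows since we may run the argument over the algebraic closure, or invoke that $k$ is not assumed perfect only at the cost of the same computation with separably generated residue extensions as in the previous proposition.

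The only delicate point — and the step I expect to need the most care — is bookkeeping the passage between $X$, the local ring $A$, and its completion $\hat A$: one must know that $\mathcal{E}xt^1$ of the cotangent module commutes with completion (true since $\Omega$ is coherent and we may compute $\mathcal{E}xt$ from a finite free resolution, which base-changes along the flat map $A \to \hat A$), and that "$\hat A$ regular $\Rightarrow$ $A$ regular $\Rightarrow$ $X$ smooth at $P$'' holds in the essentially-of-finite-type-over-a-field setting. Everything else is the routine manipulation of the conormal sequence of a complete intersection, so I would keep that part brief.
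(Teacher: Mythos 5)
Your proposal is correct and uses the same key idea as the paper: the conormal sequence of the complete intersection presentation is a short exact sequence $0 \to \sheaf_X^k \to \sheaf_X^m \to \Omega_X \to 0$, and vanishing of $\mathcal{E}xt^1$ forces it to split, so $\Omega_X$ is locally free of rank $\dim X$ and $X$ is smooth. The only difference is that the paper works directly on the affine scheme and skips the detour through the local ring and its completion — your base-change bookkeeping is valid but unnecessary.
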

\begin{proof}
We may assume that $X$ is affine. Then since it is complete intersection, there is an exact sequence
\[
0 \la \sheaf_X^k \la \sheaf_X^m \la \Omega_X \la 0
\]
such that $m-k = \dim X$. Since $\mathcal{E}xt^1_X(\Omega_X, \sheaf_X)=0$, it follows that the previous sequence is split exact. Hence
\[
\sheaf_X^m = \sheaf_X^k \oplus \Omega_X
\]
and therefore $\Omega_X$ is free of rank equal to the dimension of $X$. Hence $X$ is smooth.
\end{proof}

\begin{proposition}\label{formal-smoothing}
Let $X$ be a local complete intersection scheme and $f \colon \mathcal{X} \la S$ a deformation of $X$ over the spectrum of a discrete valuation ring $(A,m_A)$. Let $\mathfrak{f} \colon \mathfrak{X} \la \mathfrak{S}$ be the corresponding formal deformation. Assume that $f$ is proper and of finite type. Let $\mathfrak{I} \subset \sheaf_{\mathfrak{X}}$ be an ideal of definition of $\mathfrak{X}$. Then the following are equivalent.
\begin{enumerate}
\item The family $f \colon \mathcal{X} \la S$ is a smoothing of $X$;
\item There is $m \in \mathbb{N}$ such that $\mathfrak{I}^mT^1(\mathfrak{X}/\mathfrak{S})=0$;
\item There is $k \in \mathbb{N}$ such that for all $n \geq k$,
\[
T^1(X_{n+1}/A_{n+1})=T^1(X_n/A_n)
\]
where $X_n=\mathcal{X} \times_{S} S_n$, $S_n = \mathrm{Spec} A_n$, $A_n=A/m_A^{n+1}$.
\end{enumerate}
\end{proposition}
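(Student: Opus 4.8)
The plan is to prove the three equivalences in the cycle $(1)\Rightarrow(2)\Rightarrow(3)\Rightarrow(1)$, using the local complete intersection hypothesis to reduce everything to statements about the single coherent sheaf $T^1(\mathfrak{X}/\mathfrak{S})$ and its reductions modulo powers of the ideal of definition. The starting observation is that, since $f$ is flat and $X$ is a local complete intersection, every fiber of $f$ is a local complete intersection; in particular the generic fiber $\mathcal{X}_g$ is lci, so by Lemma~\ref{ci} smoothness of $\mathcal{X}_g$ is equivalent to the vanishing of $\mathcal{E}xt^1_{\mathcal{X}_g}(\Omega_{\mathcal{X}_g}, \sheaf_{\mathcal{X}_g}) = T^1(\mathcal{X}/S)\otimes_A K(A)$. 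Dualizing the relative conormal sequence $0\la f^\ast\omega_S \la \Omega_{\mathcal{X}}\la\Omega_{\mathcal{X}/S}\la 0$ and using that $\Omega_{\mathcal{X}/S}$ is the sheaf whose $\mathcal{E}xt^1$ controls the relative deformations, one sees $T^1(\mathcal{X}/S)=\mathcal{E}xt^1_{\mathcal{X}}(\Omega_{\mathcal{X}/S},\sheaf_{\mathcal{X}})$ is a coherent $\sheaf_{\mathcal{X}}$-module, and since $f$ is proper and $T^1(\mathcal{X}/S)$ restricts to $T^1$ of the fibers, $\mathcal{X}_g$ is smooth if and only if $T^1(\mathcal{X}/S)$ is supported on the central fiber, i.e.\ annihilated by some power of a uniformizer $t$. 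This proves $(1)\Leftrightarrow$ ``$t^m\,T^1(\mathcal{X}/S)=0$ for some $m$.''

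Next I would pass from the algebraic $T^1(\mathcal{X}/S)$ to the formal $T^1(\mathfrak{X}/\mathfrak{S})$. Because $f$ is proper and of finite type, formal completion along the central fiber is exact on coherent sheaves and commutes with $\mathcal{E}xt$ (this is exactly the ``Claim'' argument used in the proof of Proposition~\ref{local-to-global}, applied here over the base $S$ rather than over $k$), so $T^1(\mathfrak{X}/\mathfrak{S}) = T^1(\mathcal{X}/S)^{\wedge}$, the completion along $X=\mathcal{X}_s$. Since $\mathfrak{I} = t\,\sheaf_{\mathfrak{X}}$ up to radical (the ideal of definition of $\mathfrak{X}$ is generated by the image of $m_A$), the condition $t^m T^1(\mathcal{X}/S)=0$ is equivalent, by faithful flatness of completion and the fact that $T^1(\mathcal{X}/S)$ is already $m_A$-adically separated on a neighborhood of $X$, to $\mathfrak{I}^m T^1(\mathfrak{X}/\mathfrak{S})=0$. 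This gives $(1)\Leftrightarrow(2)$. The one technical point to be careful about here is that $T^1(\mathcal{X}/S)$ need not itself be $t$-torsion even when supported on the central fiber only in the sense of a closed subscheme structure — but since we only need ``supported set-theoretically on the central fiber'' to conclude smoothness of $\mathcal{X}_g$, and since a finitely generated module over a Noetherian ring supported on $V(t)$ is killed by some $t^m$, there is no real gap.

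Finally, for $(2)\Leftrightarrow(3)$ I would use the Mittag-Leffler/stabilization of the inverse system $\{T^1(X_n/A_n)\}$. By Proposition~\ref{T1-formula} we have $T^1(X_n/A_n) = \mathcal{E}xt^1_{X_n}(\Omega_{X_n/A_n},\sheaf_{X_n})$, and by Lemma~\ref{base-change} these sheaves are compatible under the transition maps coming from $A_{n+1}\la A_n$; moreover $T^1(\mathfrak{X}/\mathfrak{S}) = \varprojlim T^1(X_n/A_n)$, with $T^1(X_n/A_n) = T^1(\mathfrak{X}/\mathfrak{S})\otimes_{\sheaf_{\mathfrak{X}}}\sheaf_{X_n} = T^1(\mathfrak{X}/\mathfrak{S})/\mathfrak{I}^{n+1}T^1(\mathfrak{X}/\mathfrak{S})$ (this is where lci is used once more, via Lemma~\ref{ci}-type reasoning, to ensure $\mathcal{E}xt^1$ commutes with the relevant base change and that there is no higher $\mathcal{E}xt$ contribution to worry about — a companion lemma, presumably ``Lemma~\ref{Fn}'', should be invoked here). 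Then $\mathfrak{I}^m T^1(\mathfrak{X}/\mathfrak{S})=0$ forces $T^1(\mathfrak{X}/\mathfrak{S})/\mathfrak{I}^{n+1} = T^1(\mathfrak{X}/\mathfrak{S})/\mathfrak{I}^{n+2}$ for all $n\ge m$, i.e.\ $T^1(X_{n+1}/A_{n+1})=T^1(X_n/A_n)$ for $n\ge k:=m$; conversely, if the system stabilizes at stage $k$ then $\mathfrak{I}^{k+1}T^1(\mathfrak{X}/\mathfrak{S})=\mathfrak{I}^{n}T^1(\mathfrak{X}/\mathfrak{S})$ for all $n$, and since $T^1(\mathfrak{X}/\mathfrak{S})$ is a finitely generated module over the Noetherian ring $\sheaf_{\mathfrak{X}}$ with $\mathfrak{I}$ contained in its Jacobson-type radical along $X$, Nakayama's lemma (in the Artin–Rees / complete local form) gives $\mathfrak{I}^{k+1}T^1(\mathfrak{X}/\mathfrak{S})=0$. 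The main obstacle I anticipate is the bookkeeping in this last step: ensuring that Nakayama applies, i.e.\ that $T^1(\mathfrak{X}/\mathfrak{S})$ is $\mathfrak{I}$-adically separated and finitely generated on the formal scheme, and that ``stabilizes for $n\ge k$'' genuinely propagates to all larger $n$ rather than just eventually — both of which follow from properness and coherence but should be stated carefully.
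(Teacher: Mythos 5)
Your proposal is correct and follows essentially the same route as the paper: $(1)\Leftrightarrow(2)$ via $T^1(\mathfrak{X}/\mathfrak{S})=T^1(\mathcal{X}/S)^{\wedge}$ together with Lemma~\ref{ci} and properness, and $(1)\Leftrightarrow(3)$ via the long exact sequence coming from $0 \la \sheaf_{\mathcal{X}} \stackrel{t^{n+1}}{\la} \sheaf_{\mathcal{X}} \la \sheaf_{X_n} \la 0$, with the lci hypothesis killing $T^2(\mathcal{X}/S)$ so that $T^1(X_n/A_n)\cong T^1(\mathcal{X}/S)/t^{n+1}T^1(\mathcal{X}/S)$. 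The only minor inaccuracy is attributing the identity $T^1(X_n/A_n)=T^1(\mathfrak{X}/\mathfrak{S})/\mathfrak{I}^{n+1}T^1(\mathfrak{X}/\mathfrak{S})$ to "$\mathcal{E}xt^1$ commuting with base change'' rather than to the vanishing of $T^2$ for lci schemes in the six-term $\mathcal{E}xt$ sequence; your instinct to invoke Lemma~\ref{Fn} for this point was exactly right.
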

\begin{proof}
First we show that $(1)$ implies $(2)$. In this case, $\mathfrak{X}=\widehat{\mathcal{X}}$ is the completion of $\mathcal{X}$ along $X$. Then $\Omega_{\mathfrak{X}/\mathfrak{S}}=\widehat{\Omega}_{\mathcal{X}/S}$~\cite{TaLoRo07} and hence
\[
T^1(\mathfrak{X}/\mathfrak{S})=\mathcal{E}xt^1_{\mathfrak{X}}(\Omega_{\mathfrak{X}/\mathfrak{S}}, \sheaf_{\mathfrak{X}})=\mathcal{E}xt^1_{\mathcal{X}}(\Omega_{\mathcal{X}/S}, \sheaf_{\mathcal{X}})^{\wedge}=T^1(\mathcal{X}/S)^{\wedge}
\]
Now by Lemma~\ref{ci}, $\mathcal{X} \la S$ is a smoothing if and only if $T^1(\mathcal{X}/S)$ is supported on $X$. Since $T^1(\mathcal{X}/S)$ is a coherent $\sheaf_{\mathcal{X}}$-module, this is equivalent to say that there is $m \in \mathbb{N}$ such that $I^mT^1(\mathcal{X}/S)=0$, where $I$ is the ideal sheaf of $X$ in $\mathcal{X}$. Hence $\mathfrak{I}^mT^1(\mathfrak{X}/\mathfrak{S})=0$, where $\mathfrak{I}=\widehat{I}$. Conversely, if $\mathfrak{I}^mT^1(\mathfrak{X}/\mathfrak{S})=0$, for some $m$ and some ideal of definition $\mathfrak{I}$, it also holds for all ideals of definition and in particular for $\mathfrak{I}=\widehat{I}$. Hence $(I^mT^1(\mathcal{X}/S))^{\wedge}=0$ and therefore there is a $X \subset \mathcal{U} \subset \mathcal{X} $ an open neighborhood of $X$ in $\mathcal{X}$ such that $I^mT^1(\mathcal{X}/S)|_{\mathcal{U}}=0$ and hence since $f$ is proper and $S$ is local, $I^mT^1(\mathcal{X}/S)=0$. Hence $T^1(\mathcal{X}/S)$ is supported on $\mathcal{X}$ and therefore $f$ is a smoothing.

Next we show that $(1)$ is equivalent to $(3)$. Let $t$ be a generator of the maximal
ideal of $R$. Then the exact sequence
\[
0 \la \sheaf_{\mathcal{X}} \stackrel{t^{n+1}}{\la} \sheaf_{\mathcal{X}} \la \sheaf_{X_n} \la 0
\]
gives the exact sequence
\[
0 \la T_{\mathcal{X}/\Delta} \stackrel{t^{n+1}}{\la} T_{\mathcal{X}/\Delta} \la T_{X_n/A_n} \la
T^1(\mathcal{X}/\Delta) \stackrel{t^{n+1}}{\la}T^1(\mathcal{X}/\Delta) \la T^1(X_n/A_n)
\la 0
\]
Then $f$ is a smoothing if and only if $T^1(\mathcal{X}/S)$ is supported on $X$ and hence if and only if there is $k \in \mathbb{N}$ such that $t^k T^1(\mathcal{X}/S)=0$. Now from the previous exact sequence it follows that this is equivalent to say that $T^1(X_{n+1}/A_{n+1})=T^1(X_n/A_n)$, for all $n \geq k$.
\end{proof}
\begin{proposition}\label{Q-formal-smoothing}
Let $X$ be a $\mathbb{Q}$-Gorenstein scheme such that the index 1 cover of its singular points has complete intersection singularities only. Let $f \colon \mathcal{X} \la S$ be a $\mathbb{Q}$-Gorenstein deformation of $X$ over the spectrum of a discrete valuation ring $(A,m_A)$. Let $\mathfrak{f} \colon \mathfrak{X} \la \mathfrak{S}$ be the corresponding formal deformation. Assume that $f$ is proper and of finite type. Let $\mathfrak{I} \subset \sheaf_{\mathfrak{X}}$ be an ideal of definition of $\mathfrak{X}$. Then the following are equivalent.
\begin{enumerate}
\item The family $f \colon \mathcal{X} \la S$ is a smoothing of $X$;
\item There is $m \in \mathbb{N}$ such that $\mathfrak{I}^mT_{qG}^1(\mathfrak{X}/\mathfrak{S})=0$ and $\mathfrak{I}^m \subset \mathrm{Fitt}_1(\omega_{\mathfrak{X}/\mathfrak{S}})=0$;
\item There is $k \in \mathbb{N}$ such that for all $n \geq k$, $\mathfrak{I}^m \subset \mathrm{Fitt}_1(\omega_{\mathfrak{X}/\mathfrak{S}})=0$ and
\[
T_{qG}^1(X_{n+1}/A_{n+1})=T_{qG}^1(X_n/A_n)
\]
where $X_n=\mathcal{X} \times_{S} S_n$, $S_n = \mathrm{Spec} A_n$, $A_n=A/m_A^{n+1}$.
\end{enumerate}
\end{proposition}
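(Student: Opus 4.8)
The plan is to run the proof of Proposition~\ref{formal-smoothing} through the index $1$ covers. The first step is to reduce all three conditions to statements about the two coherent sheaves $T^1_{qG}(\mathcal{X}/S)$ and $\sheaf_{\mathcal{X}}/\mathrm{Fitt}_1(\omega_{\mathcal{X}/S})$ on the algebraic family $\mathcal{X}$. Since $f$ is proper over the spectrum of a DVR and $X=V(t)\subset\mathcal{X}$, a coherent sheaf on $\mathcal{X}$ is supported on $X$ if and only if it is annihilated by a power of $t$; and because Fitting ideals, $\mathcal{E}xt$-sheaves and the formation of index $1$ covers all commute with completion along $X$ (Section~\ref{Q-section} and the Notation block), we have $\mathrm{Fitt}_1(\omega_{\mathfrak{X}/\mathfrak{S}})=\mathrm{Fitt}_1(\omega_{\mathcal{X}/S})^{\wedge}$ and $T^1_{qG}(\mathfrak{X}/\mathfrak{S})=T^1_{qG}(\mathcal{X}/S)^{\wedge}$. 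Thus condition (2) says exactly that both sheaves above are supported on $X$. Condition (3) differs only in that the annihilation of $t^m T^1_{qG}(\mathcal{X}/S)$ is tested through the truncations $X_n$; to pass between the two I would apply $\mathcal{H}om(\Omega_{\tilde{\mathcal{X}}/S},-)$ to $0\to\sheaf_{\tilde{\mathcal{X}}}\xrightarrow{t^{n+1}}\sheaf_{\tilde{\mathcal{X}}}\to\sheaf_{\tilde X_n}\to 0$, where $\tilde{\mathcal{X}}$ is a local index $1$ cover (a flat deformation over $S$ of the lci scheme $\tilde X$), take the resulting $\mathcal{E}xt^1$-tail, pass to $\mu_r$-invariants (exact since $r$ is invertible) and then to global sections, exactly as in Proposition~\ref{formal-smoothing}; this yields an exact sequence $T^1_{qG}(\mathcal{X}/S)\xrightarrow{t^{n+1}}T^1_{qG}(\mathcal{X}/S)\to T^1_{qG}(X_n/A_n)\to 0$, whence $t^kT^1_{qG}(\mathcal{X}/S)=0$ for some $k$ if and only if $T^1_{qG}(X_{n+1}/A_{n+1})=T^1_{qG}(X_n/A_n)$ for all $n\ge k$. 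So $(2)\Leftrightarrow(3)$ and it remains to prove $(1)\Leftrightarrow(2)$.

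For $(1)\Rightarrow(2)$: if $f$ is a smoothing then the general fibre $\mathcal{X}_g=\mathcal{X}\times_S\mathrm{Spec}\,K(A)$ is smooth, so $\omega_{\mathcal{X}_g}$ is invertible, hence $\mathrm{Fitt}_1(\omega_{\mathcal{X}/S})$ is the full structure sheaf on $\mathcal{X}_g$ and $\sheaf_{\mathcal{X}}/\mathrm{Fitt}_1(\omega_{\mathcal{X}/S})$ is supported on $X$. Moreover $\mathcal{X}_g$ is then Gorenstein, so its index $1$ cover is trivial and $T^1_{qG}(\mathcal{X}_g)=T^1(\mathcal{X}_g)=\mathcal{E}xt^1(\Omega_{\mathcal{X}_g},\sheaf_{\mathcal{X}_g})=0$; by the base change property of $T^1_{qG}$ (Proposition~\ref{properties}) this shows $T^1_{qG}(\mathcal{X}/S)$ is supported on $X$. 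By the first paragraph this is (2).

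For $(2)\Rightarrow(1)$ the content is to show $\mathcal{X}_g$ is smooth. From $\mathfrak{I}^m\subset\mathrm{Fitt}_1(\omega_{\mathfrak{X}/\mathfrak{S}})$ we get, descending to $\mathcal{X}$ and restricting to $\mathcal{X}_g$, that $\omega_{\mathcal{X}_g}$ is locally generated by one element; being reflexive of rank one on a reduced Cohen--Macaulay scheme it is invertible, so $\mathcal{X}_g$ is Gorenstein. Next I would propagate the lci hypothesis using properness: locally along $X$ the family is $\tilde{\mathcal{X}}/\mu_r$ with $\tilde{\mathcal{X}}\to S$ a flat deformation of the lci scheme $\tilde X$, and since a Noetherian local ring with a nonzerodivisor $t$ whose quotient by $t$ is a complete intersection is itself a complete intersection, $\tilde{\mathcal{X}}$ is lci at every point of $\tilde X$. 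Hence the set of $x\in\mathcal{X}$ such that a local index $1$ cover near $x$ is lci along the fibre over $x$ is well defined (index $1$ covers are unique up to isomorphism), open in $\mathcal{X}$ (the lci locus of the cover is open and the cover is finite over $\mathcal{X}$), and contains $X$; by properness of $f$ over the local DVR base it is all of $\mathcal{X}$. In particular the restriction to $\mathcal{X}_g$ of a local relative index $1$ cover is lci, and it is étale over $\mathcal{X}_g$ because $\mathcal{X}_g$ is Gorenstein, so $\mathcal{X}_g$ is lci. Finally $T^1_{qG}(\mathcal{X}/S)$ is supported on $X$ by (2), so $\mathcal{E}xt^1(\Omega_{\mathcal{X}_g},\sheaf_{\mathcal{X}_g})=T^1(\mathcal{X}_g)=T^1_{qG}(\mathcal{X}_g)=0$; by Lemma~\ref{ci} an lci scheme with vanishing $\mathcal{E}xt^1(\Omega,\sheaf)$ is smooth, so $\mathcal{X}_g$ is smooth and $f$ is a smoothing.

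The step I expect to be the main obstacle is the propagation of the lci hypothesis in $(2)\Rightarrow(1)$: one must be careful that ``the relative index $1$ cover is lci'' cuts out an open subset of $\mathcal{X}$ that contains the \emph{whole} special fibre and is therefore everything by properness, and that on the Gorenstein general fibre the relevant index $1$ cover becomes étale so that lci-ness descends to $\mathcal{X}_g$. All other steps are the routine translation between the formal sheaves $T^1_{qG}(\mathfrak{X}/\mathfrak{S})$, $\omega_{\mathfrak{X}/\mathfrak{S}}$ and their algebraic counterparts via Section~\ref{Q-section}, together with the $t$-adic bookkeeping already carried out in the proof of Proposition~\ref{formal-smoothing}.
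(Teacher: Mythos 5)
Your proposal is correct and follows the same strategy as the paper: read off from the $\mathrm{Fitt}_1(\omega)$ condition that the generic fibre is Gorenstein, propagate the complete-intersection hypothesis on the index~$1$ covers of $X$ to $\mathcal{X}_g$, and then conclude smoothness from $T^1_{qG}(\mathcal{X}_g)=T^1(\mathcal{X}_g)=0$ via Lemma~\ref{ci}, with the $t$-adic bookkeeping lifted from Proposition~\ref{formal-smoothing} through the index~$1$ covers and $\mu_r$-invariants.

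The one place where you add genuine substance beyond the paper's very terse argument is the propagation of lci-ness from the central fibre to $\mathcal{X}_g$. The paper simply states ``it follows that the general fiber of $f$ is also complete intersection,'' whereas you make explicit the mechanism: the locus in $\mathcal{X}$ where the (relative) index~$1$ cover is lci is open (lci locus open in the cover, pushed down by a finite morphism), contains $X$ because a flat deformation of an lci over a DVR is lci along the special fibre, and therefore by properness of $f$ over a DVR base is all of $\mathcal{X}$; on $\mathcal{X}_g$ the cover is \'etale because $\mathcal{X}_g$ is Gorenstein, so lci-ness descends. This is exactly the gap a careful reader would want filled, and it is the step you flagged as the main obstacle. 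The remaining reductions (formal $\leftrightarrow$ algebraic via completion commuting with $\mathcal{E}xt$ and Fitting ideals; $(2)\Leftrightarrow(3)$ via the $t^{n+1}$-multiplication sequence on the index~$1$ cover followed by $\mu_r$-invariants; $(1)\Rightarrow(2)$ via smooth $\Rightarrow$ Gorenstein and trivial index~$1$ cover) match the paper's intent precisely.
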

\begin{proof}
The proof goes along the lines of the proof of Proposition~\ref{formal-smoothing} with a few differences that we explain next.
The condition $\mathfrak{I}^m \subset \mathrm{Fitt}_1(\omega_{\mathfrak{X}/\mathfrak{S}})=0$ means that generically over $S$, $\omega_{\mathcal{X}/S}$ is generated by one element and hence it is a line bundle. Therefore the general fiber of $f$ is Gorenstein. Hence the index 1 cover of any singularity of $\mathcal{X}$ is \'etale away from the central fiber. Now since the index 1 cover of any singular point of $X$ is assumed to be complete intersection, it follows that the general fiber of $f$ is also complete intersection. Then applying the arguments of the proof of Proposition~\ref{formal-smoothing}, we get the claimed result.
\end{proof}

\section{Smoothing criteria.}\label{smoothings-section}
Let $X$ be a proper pure and reduced scheme of finite type over a field $k$. Moreover, assume that the singular points of $X$ are either complete intersection or $\mathbb{Q}$-Gorenstein with complete intersection index 1 covers. In this section we give some smoothing and non-smoothing criteria for such schemes $X$. Following the methodology of this section and the methods developed in previous sections, one could also give similar criteria for algebraic germs $Y \subset X$. However, for the sake of simplicity we will only consider the case $X=Y$.

In what follows we denote by $D$ either $Def(X)$ or $Def^{qG}(X)$ and by $T^i_D(X)$ either $T^i(X)$ or $T^i_{qG}(X)$.

The sheaves $T^i_D(X)$ are fundamental in the study of the deformation theory of $X$. However they can be extremely complicated. The reduced part of their support is contained in the singular locus of $X$ but it may have embedded components. This happens even in the
simplest cases. For example if $X$ is the pinch point given by $x^2-y^2z=0$, then $T^1(X)=k[x,y,z]/(x,y^2,yz)$ and it has an embedded point over
 the pinch point. This makes any calculation involving $T^i_D(X)$ very difficult. So it is better to consider the pure part of $T^i_D(X)$, instead which we define next. It is just a generalization of
the notion of torsion free.
\begin{definition}
Let $X$ be a pure and reduced scheme and $\mathcal{F}$ a coherent sheaf on $X$ of dimension $d$. Let $\mathcal{F}_{d-1}\subset \mathcal{F}$ be
the maximal subsheaf of $\mathcal{F}$ of dimension at most $d-1$. Then we define,
\begin{enumerate}
\item The support of the torsion part of $\mathcal{F}$ to be the support of $\mathcal{F}_{d-1}$.
\item The rank of $\mathcal{F}$,$rk(\mathcal{F})$, by
\[
rk(\mathcal{F}) = \text{max}_{\xi} \{ \text{length$(\mathcal{F}_{\xi})$, where $\xi$ is a generic point of the support of $\mathcal{F}$} \}
\]
\item The pure part of $\mathcal{F}$, $p(\mathcal{F})$, to be the quotient $\mathcal{F}/\mathcal{F}_{d-1}$. This is pure of dimension $d$.
\end{enumerate}
\end{definition}
Let $X_n \la \mathrm{Spec}A_n$ be a deformation of $X$ over $A_n$ and let $X_{n-1}=X_n \otimes_{A_{n-1}}A_n$. Then from our discussion in sections~\ref{local-to-global-section},~\ref{Q-section}, it is follows that in order to understand the obstructions to lift $X_n$ to a deformation $X_{n+1}$ over $A_{n+1}$, it is important to study the sheaves $\mathcal{F}_n$ and $T^1_D(X)/\mathcal{F}_n$, where $\mathcal{F}_n \subset T^1_D(X)$ is the cokernel of the natural map $T_{X_n/A_n}\la T_{X_{n-1}/A_{n-1}}$. The next lemma does this in some cases.
\begin{lemma}\label{Fn}
Let $\mathcal{X} \la \Delta=\text{Spec}(R)$ be a deformation of $X$, where $(R,m)$ is a DVR. Let $X_n = \mathcal{X} \otimes_R R/m^{n+1}$ and
(as in Proposition~\ref{local-sequence})
$\mathcal{F}_n \subset T^1_D(X)$ the cokernel of the natural map $T_{X_n/A_n} \la T_{X_{n-1}/A_{n-1}}$, where $A_n =R/m^{n+1}$. Then
\begin{enumerate}
\item There is an injective map
\[
\phi \colon \widehat{T_D^1}(\mathcal{X}/\Delta) \la \lim_{\underset{n}{\leftarrow}}T_D^1(X_n/A_n)
\]
where $\widehat{T_D^1}(\mathcal{X}/\Delta)$ is the $m$-adic completion of $T^1_D(\mathcal{X}/\Delta)$. Moreover, $\phi$ is an isomorphism at any local
complete intersection point of $X$.
\item Suppose that $X$ is unobstructed at any generic point of its singular locus and that $\mathcal{X}$ is a smoothing.
Then there is $n_0 \in \mathbb{Z}$ such that
\begin{enumerate}
\item
\[
rk(T^1_D(X)/\mathcal{F}_n)=0
\]
if $n \geq n_0$, and
\item
\[
0 < rk(T^1_D(X)/\mathcal{F}_n) \leq rk(T^1_D(X))
\]
for all $n < n_0$.
\item Suppose that at any generic point $\xi$ of the singular locus of $X$, $X$ is a hypersurface singularity $(f=0) \subset \mathbb{C}^n$
with $\mu(f)=\tau(f)$, where $\mu(f)$, $\tau(f)$ are the Milnor and Tjurina numbers of $f$. Then if $\mathcal{X}$ is smooth at $\xi$,
$rk(T^1_D(X)/\mathcal{F}_n)=0$, for all $n$.
\end{enumerate}
\end{enumerate}
\end{lemma}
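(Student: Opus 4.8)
The plan is to analyze the behavior of the natural maps $T_{X_n/A_n} \to T_{X_{n-1}/A_{n-1}}$ and their cokernels $\mathcal{F}_n$ locally at the generic points of the singular locus, exploiting the fact that $\mathcal{X} \to \Delta$ is a smoothing and that the relevant local rings are complete intersection (or unobstructed hypersurface singularities) at those points. Since $T^1_D(X)/\mathcal{F}_n$ is a coherent sheaf on the (one-dimensional, in the generic-point analysis) singular locus, its rank is governed entirely by the stalks at the finitely many generic points $\xi$ of $X^{sing}$, so the whole statement reduces to a local computation of lengths of stalks.

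For part (1), I would first identify $T^1_D(\mathcal{X}/\Delta)$ with $\mathcal{E}xt^1_{\mathcal{X}}(\Omega_{\mathcal{X}/\Delta},\sheaf_{\mathcal{X}})$ (resp.\ the $\mu_r$-invariant part for the $\mathbb{Q}$-Gorenstein case, using Proposition~\ref{T0} and the index~1 cover machinery), and similarly identify $T^1_D(X_n/A_n)$ via Proposition~\ref{T1-formula}. The exact sequence obtained by applying $\mathcal{H}om$ to $0 \to \sheaf_{\mathcal{X}} \xrightarrow{t^{n+1}} \sheaf_{\mathcal{X}} \to \sheaf_{X_n} \to 0$ — exactly the one written in the proof of Proposition~\ref{formal-smoothing} — gives surjections $T^1_D(\mathcal{X}/\Delta) \twoheadrightarrow T^1_D(\mathcal{X}/\Delta)/t^{n+1} \hookrightarrow T^1_D(X_n/A_n)$, compatible in $n$, hence a map $\phi$ from the inverse limit; injectivity follows because $\widehat{T^1_D}(\mathcal{X}/\Delta) = \varprojlim T^1_D(\mathcal{X}/\Delta)/t^{n+1}$ by definition of the $m$-adic completion, and the left-exactness of the relevant Hom functors identifies the cokernel of $T^1_D(X_n/A_n)$-approximation with a term killed by the smoothing hypothesis on $T_{\mathcal{X}/\Delta}$. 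At a local complete intersection point the obstruction term $T^2$ vanishes, the sequence $T^1_D(\mathcal{X}/\Delta) \xrightarrow{t^{n+1}} T^1_D(\mathcal{X}/\Delta) \to T^1_D(X_n/A_n) \to 0$ is exact on the right, and so $\phi$ is an isomorphism there.

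For part (2), the key is that $\mathcal{F}_n$ is the image of $T_{X_{n-1}/A_{n-1}}$, so $T^1_D(X)/\mathcal{F}_n$ is the cokernel of $T_{X_{n-1}/A_{n-1}} \to T^1_D(X)$, which by the long exact sequence of Proposition~\ref{the-exact-sequence} (resp.\ Proposition~\ref{Q-exact-sequence}) injects into $T^1_D(X_n/A_n)$. Localizing at a generic point $\xi$ of $X^{sing}$: unobstructedness of $X$ at $\xi$ means $T^2_D(X)_\xi = 0$, so the local liftings are unobstructed and $T^1_D(X_n/A_n)_\xi$ is computed by the hypersurface/complete-intersection formula; since $\mathcal{X}$ is a smoothing, $T^1_D(\mathcal{X}/\Delta)$ is supported on the central fiber, hence $t$-power torsion, hence by part (1) $\operatorname{length}(T^1_D(X_n/A_n)_\xi)$ stabilizes, and in fact $T^1_D(X_n/A_n)_\xi = T^1_D(X_{n-1}/A_{n-1})_\xi$ for $n$ large; feeding this back into the exact sequence forces $(T^1_D(X)/\mathcal{F}_n)_\xi = 0$, i.e.\ $rk = 0$, for $n \geq n_0$. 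For $n < n_0$ the map $T_{X_{n-1}/A_{n-1}} \to T^1_D(X)$ is (at $\xi$) a composite factoring through the surjection $T^1_D(X) \to T^1_D(X)/\mathcal{F}_n$, and since $\mathcal{F}_n \subset T^1_D(X)$ the quotient has rank at most $rk(T^1_D(X))$, while positivity before stabilization follows because $\mathcal{F}_{n}$ is strictly contained whenever the obstruction to lifting $X_n$ has not yet died — here one uses that a smoothing is in particular obstructed at finite level before becoming formally unobstructed. For part (2)(c), the hypothesis $\mu(f) = \tau(f)$ at $\xi$ means $f$ is a weighted-homogeneous-type singularity where the Milnor algebra equals the Tjurina algebra, so the relative $T^1$ of the smoothing is torsion-free of the predicted rank along the smoothing direction and the cokernel $(T^1_D(X)/\mathcal{F}_n)_\xi$ vanishes for \emph{all} $n$ by a direct length count: the smoothness of $\mathcal{X}$ at $\xi$ makes $T^1_D(\mathcal{X}/\Delta)_\xi$ already zero, so $\mathcal{F}_n = T^1_D(X)$ at $\xi$ from the start.

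The main obstacle will be part (2)(b), distinguishing the regime $n < n_0$: one must show $rk(T^1_D(X)/\mathcal{F}_n) > 0$ strictly, which amounts to showing that at some generic point the local-to-local map $T_{X_{n-1}/A_{n-1},\xi} \to T^1_D(X)_\xi$ is not surjective before stabilization. This requires tracking the length drop in $T^1_D(X_n/A_n)_\xi$ as $n$ increases — essentially a semicontinuity / specialization argument for the relative $T^1$ of the smoothing, which in the hypersurface case can be made explicit via the (relative) Jacobian ideal, but in the general complete-intersection $\mathbb{Q}$-Gorenstein case needs care with the $\mu_r$-invariants and with the fact that $\operatorname{length}(T^1_D(X_n/A_n)_\xi)$ is only \emph{eventually} constant, not monotone a priori. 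I expect this to be handled by combining Lemma~\ref{ci} (smoothness $\iff$ $T^1$ supported on the central fiber) with a Nakayama-type argument over the DVR, exactly as in the proof of Proposition~\ref{formal-smoothing}.
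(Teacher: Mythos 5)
Your overall strategy for parts (1), (2)(a) and (2)(b) tracks the paper's proof closely: apply $\mathcal{H}om$ to $0 \to \sheaf_{\mathcal{X}} \xrightarrow{t^{n+1}} \sheaf_{\mathcal{X}} \to \sheaf_{X_n} \to 0$, get $T^1_D(\mathcal{X}/\Delta)/t^{n+1} \hookrightarrow T^1_D(X_n/A_n)$ with the cokernel killed by $T^2_D(\mathcal{X}/\Delta)$ at complete intersection points, pass to inverse limits, then localize at generic points $\xi$ of $X^{\mathrm{sing}}$ where $T^2$ vanishes so that $T^1_D(X_n/A_n)_\xi = T^1_D(\mathcal{X}/\Delta)_\xi / t^{n+1}$, and use the smoothing hypothesis (which makes $T^1_D(\mathcal{X}/\Delta)_\xi$ $t$-power torsion) to get stabilization. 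The ``main obstacle'' you flag for (2)(b) is real, and the paper resolves it with the observation you are groping toward: if $T^1_D(X_k/A_k)_\xi = T^1_D(X_{k-1}/A_{k-1})_\xi$ then $t^{k+1}M = t^kM$ for $M = T^1_D(\mathcal{X}/\Delta)_\xi$, and multiplying by $t$ propagates this to $t^{n+1}M = t^nM$ for all $n \geq k$. Combined with eventual torsion, this pins down a first index $n_0$ at which the filtration stabilizes, giving $(T^1_D(X)/\mathcal{F}_n)_\xi = t^nM/t^{n+1}M$, which is $0$ for $n \geq n_0$ and nonzero for $n < n_0$. Your Nakayama-based hunch is in the right direction.

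However, part (2)(c) of your proposal contains a genuine error. You assert that ``the smoothness of $\mathcal{X}$ at $\xi$ makes $T^1_D(\mathcal{X}/\Delta)_\xi$ already zero,'' which conflates the absolute $T^1(\mathcal{X})$ (which does vanish when the total space is smooth) with the \emph{relative} $T^1(\mathcal{X}/\Delta) = \mathcal{E}xt^1(\Omega_{\mathcal{X}/\Delta},\sheaf_{\mathcal{X}})$, which detects singularities of the fibers. Since the central fiber $X$ is singular at $\xi$, the relative $T^1$ at $\xi$ is necessarily nonzero; in fact, if you were right, then $T^1_D(X)_\xi = T^1_D(\mathcal{X}/\Delta)_\xi/t$ would also vanish, contradicting the hypothesis that $\xi$ is a singular point. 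The paper's argument is more delicate: when $\mathcal{X}$ is smooth at $\xi$, the stalk $T^1_D(\mathcal{X}/\Delta)_\xi$ has length $\mu(f)$; independently $T^1_D(X)_\xi$ has length $\tau(f)$, and $T^1_D(X)_\xi = T^1_D(\mathcal{X}/\Delta)_\xi / tT^1_D(\mathcal{X}/\Delta)_\xi$. The hypothesis $\mu(f)=\tau(f)$ then forces $tT^1_D(\mathcal{X}/\Delta)_\xi = 0$ (equality of lengths of a finite-length module and its quotient), \emph{not} $T^1_D(\mathcal{X}/\Delta)_\xi = 0$. It is the vanishing of the $t$-filtration, not of the module itself, that yields $T^1_D(X_n/A_n)_\xi = T^1_D(X)_\xi$ for all $n$ and hence $rk(T^1_D(X)/\mathcal{F}_n) = 0$ for all $n$. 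You reach the right conclusion, but through an invalid step that would also (incorrectly) prove $T^1_D(X)_\xi = 0$.
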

\begin{proof}
Let $t$ be a generator of the maximal
ideal of $R$. Then the exact sequence
\[
0 \la \sheaf_{\mathcal{X}} \stackrel{t^{n+1}}{\la} \sheaf_{\mathcal{X}} \la \sheaf_{X_n} \la 0
\]
gives the exact sequence
\begin{equation}\label{Fn-eq3}
0 \la T_{\mathcal{X}/\Delta} \stackrel{t^{n+1}}{\la} T_{\mathcal{X}/\Delta} \la T_{X_n/A_n} \la
T^1_D(\mathcal{X}/\Delta) \stackrel{t^{n+1}}{\la}T^1_D(\mathcal{X}/\Delta) \la T_D^1(X_n/A_n)
\la T^2_D(\mathcal{X}/\Delta)
\end{equation}
where $T^2_D(\mathcal{X}/\Delta)$ is a sheaf
supported on the non complete intersection singular points of $X$. Then it follows that there are injections
\[
\phi_n \colon T^1_D(\mathcal{X}/\Delta)/ t^{n+1}T^1_D(\mathcal{X}/\Delta) \la T_D^1(X_{n}/A_{n})
\]
Passing to the inverse limits we get the map $\phi$ claimed. Moreover, since $\phi_n$ are isomorphisms at any complete intersection point of $X$,
$\phi$ is an isomorphism too.

Suppose that $\mathcal{X}$ is a smoothing and that at any generic point of its singular locus, $X$ is unobstructed. Then at any generic point $\xi$ of the singular locus of $X$, $T_D^2(\mathcal{X}/\Delta)_{\xi}=0$ and the argument of the proof of proposition~\ref{formal-smoothing} shows that there is an $n_0 \in \mathbb{Z}$ such that $T_D^1(X_n/A_n)_{\xi} = T_D^1(X_{n-1}/A_{n-1})_{\xi}$ for
all $n \geq n_0$. In fact something stronger holds. Suppose that there is $k \in \mathbb{Z}$ such that $T_D^1(X_{k}/A_{k})_{\xi} = T_D^1(X_{k-1}/A_{k-1})_{\xi}$.
Then we will show that
$T_D^1(X_n/A_n)_{\xi} = T_D^1(X_{n-1}/A_{n-1})_{\xi}$, for all $n \geq k$. From~(\ref{Fn-eq3}) it follows that
\[
T_D^1(X_{n}/A_{n})_{\xi}=T^1_D(\mathcal{X}/\Delta)_{\xi}/ t^{n+1}T^1_D(\mathcal{X}/\Delta)_{\xi}
\]
for all $n$ and hence, since  $T_D^1(X_{k}/A_{k})_{\xi} = T_D^1(X_{k-1}/A_{k-1})_{\xi}$,
\[
t^{k+1}T^1_D(\mathcal{X}/\Delta)_{\xi}=t^{k}T^1_D(\mathcal{X}/\Delta)_{\xi}
\]
and consequently
\[
t^{n+1}T^1_D(\mathcal{X}/\Delta)_{\xi}=t^{n}T_D^1(\mathcal{X}/\Delta)_{\xi}
\]
for all $n \geq k$. Hence $T_D^1(X_n/A_n)_{\xi} = T_D^1(X_{n-1}/A_{n-1})_{\xi}$, for all $n \geq k$.

Moreover, by Propositions~\ref{local-sequence},~\ref{Q-local-sequence}, there is an exact sequence
\begin{equation}\label{Fn-eq2}
 0 \la T^1_D(X)/\mathcal{F}_n \la T^1_D(X_n/A_n) \stackrel{\phi_n}{\la} T^1_D(X_{n-1}/A_{n-1})
\end{equation}
and hence it follows that there is a $n_0 \in \mathbb{Z}$ such that generically along the singularities of $X$,
$\phi_n$ is an isomorphism for all $n \geq n_0$, but not if $n< n_0$. Therefore
$ rk(T^1_D(X)/\mathcal{F}_n)=0$, if $n \geq n_0$, and $0 < rk(T^1_D(X)/\mathcal{F}_n) \leq rk(T^1_D(X))$, if $n < n_0$,
as claimed.

Let $\xi \in X$ be a generic point of the singular locus of $X$ and let $K=k(\sheaf_{\mathcal{X},\xi})$.
Suppose that at $\xi$, $X$ is a hypersurface singularity given by $(f=0) \subset \mathbb{C}^n$ and $\mu(f)=\tau(f)$. If $\mathcal{X}$ is smooth
at $\xi$, then $\dim_K T^1_D(\mathcal{X}/\Delta) = \mu(f)$. But since by assumption $\mu(f)=\tau(f)=\dim_K T^1_D(X)$, it follows from~(\ref{Fn-eq3})
that $T^1_D(\mathcal{X}/\Delta) = T^1_D(X)$ and hence $tT^1_D(\mathcal{X}/\Delta)=0$. Therefore,  $t^nT^1_D(\mathcal{X}/\Delta)=0$, for all $n$,
and hence
\[
T_D^1(X_n/A_n)=T_D^1(X_{n-1}/A_{n-1})=T^1_D(\mathcal{X}/\Delta)
\]
for all $n$. Hence from~\ref{Fn-eq2} it follows that $rk(T^1_D(X)/\mathcal{F}_n)=0$, for all $n$, as claimed.
\end{proof}

The next theorem gives some conditions under which $X$ is not smoothable.
\begin{theorem}\label{non-smoothing-2}
Suppose that $H^0(p(T^1_D(X)))=0 $ and that at any generic point of the singular locus of $X$, $X$ is
complete intersection. Let $Z$ be the support of the torsion part of $T^1_D(X)$ and let
$f \colon \mathcal{X} \la \Delta$ be a one parameter deformation of $X$. Then
\begin{enumerate}
\item $X^{sing} \subset \mathcal{X}^{sing}$, where $X^{sing}$ and $\mathcal{X}^{sing}$ are the singular parts of $X$ and $\mathcal{X}$. In particular,
$\mathcal{X}$ is not smooth.
\item Suppose in addition that $H^1_Z(p(T^1_D(X)))=0$ and that at any generic point $\xi$ of the singular
locus of $X$, $X$ is analytically isomorphic to $(x_1^2+\cdots +x_k^2=0)\subset \mathbb{C}^n$. Then there is a proper
closed subset $W$ of the singular locus of $X$, such that $\mathcal{X}-W$ is locally trivial. In particular,
the general fiber $\mathcal{X}_g$ of $f$ is singular and hence $X$ is not smoothable.
\end{enumerate}
\end{theorem}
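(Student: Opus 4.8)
\emph{Overall approach.} I would run the argument through the sections $s_n\in H^0(T^1_D(X_n/A_n))$ attached to $f$ in Section~\ref{local-to-global-section} (so $\sigma_n(s_n)=s_{n-1}$, and $s_0=\phi_0(Y_0)$ is the local Kodaira--Spencer class of the first order part $\mathcal{X}\otimes_R R/m^2$), together with the comparison $\widehat{T^1_D}(\mathcal{X}/\Delta)\hookrightarrow\varprojlim T^1_D(X_n/A_n)$ of Lemma~\ref{Fn}(1) and the local structure of $X$ transverse to its singular locus.

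\emph{Part 1.} Suppose for contradiction that $\mathcal{X}$ is smooth at some $P\in X^{sing}$, and let $\xi$ be the generic point of a component of $X^{sing}$ through $P$. Since the smooth locus of $\mathcal{X}$ is open and $P\in\overline{\{\xi\}}$, $\mathcal{X}$ is smooth at $\xi$, so $T^1_D(\mathcal{X})_\xi=0$; as $X$ is a complete intersection at $\xi$ it is Gorenstein of index $1$ there, hence $T^i_D$ agrees with Schlessinger's $T^i$ near $\xi$. Dualizing~(\ref{pullback-sequence}) gives $T^1_D(\mathcal{X})=T^1_D(\mathcal{X}/\Delta)/\sheaf_{\mathcal{X}}\cdot e$, so $T^1_D(\mathcal{X}/\Delta)_\xi$ is generated by $e_\xi$; reducing modulo $t$ in~(\ref{Fn-eq3}), whose last term $T^2_D(\mathcal{X}/\Delta)$ vanishes at the complete intersection point $\xi$, we obtain that $T^1_D(X)_\xi$ is generated by the image $s_{0,\xi}$ of $e_\xi$. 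Because $X$ is singular and a complete intersection at $\xi$, Lemma~\ref{ci} localized at $\xi$ gives $T^1_D(X)_\xi\neq 0$, so $s_{0,\xi}\neq 0$. On the other hand $s_0\in H^0(T^1_D(X))$ maps to $0$ in $H^0(p(T^1_D(X)))=0$, so $s_0$ lies in $H^0$ of the torsion subsheaf $\mathcal{F}_{d-1}$ of $T^1_D(X)$, which is supported on $Z$; since $\xi\notin Z$ this forces $s_{0,\xi}=0$, a contradiction. Hence $X^{sing}\subset\mathcal{X}^{sing}$; in particular $\mathcal{X}$ is not smooth.

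\emph{Part 2.} Set $W=Z\cup\{P\in X:\ X\text{ is not a local complete intersection at }P\}$. By hypothesis no generic point of $X^{sing}$ lies in $W$, so $W$ is a proper closed subset of $X^{sing}$, and over $X-W$ we have $T^2_D=0$ and $T^1_D(X)=p(T^1_D(X))$. It suffices to prove that the formal deformation $f_n\colon X_n\la\mathrm{Spec}\,A_n$ is locally trivial over $X-W$: granting this, Corollary~\ref{formal-to-global} (Artin's theorem~\ref{Artin} applied at each point of $X-W$) produces an \'etale neighbourhood of each such point on which $f$ is a trivial family, so the general fibre $\mathcal{X}_g$ is \'etale-locally isomorphic to $X-W$ and hence singular along the nonempty locus $X^{sing}-W$; as $f$ was arbitrary, $X$ is not smoothable. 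To prove local triviality over $X-W$ I would show by induction on $n$ that $s_n|_{X-W}=0$. For $n=0$ this is the computation of Part~1 ($s_0$ is supported on $Z\subset W$). For the inductive step, work over $X-W$, where the sheaf sequence underlying Proposition~\ref{local-sequence} is short exact, $0\la T^1_D(X)/\mathcal{F}_n\la T^1_D(X_n/A_n)\stackrel{h_n}{\la}T^1_D(X_{n-1}/A_{n-1})\la 0$; since $h_n(s_n)=s_{n-1}$ vanishes on $X-W$ by induction, $s_n|_{X-W}$ is a section of the quotient $(T^1_D(X)/\mathcal{F}_n)|_{X-W}$ of $p(T^1_D(X))|_{X-W}$, and one shows it is $0$ using $H^1_Z(p(T^1_D(X)))=0$ together with the rigidity of the transverse singularity: at a generic point $\xi$ of $X^{sing}$ the hypothesis $X\cong(x_1^2+\cdots+x_k^2=0)$ gives $\dim_{\kappa(\xi)}T^1_D(X)_\xi=1$, hence by~(\ref{Fn-eq3}) $T^1_D(\mathcal{X}/\Delta)_\xi\cong R/(h)$ for some $h\in m_R$, so the vanishing of $s_{m,\xi}$ for all $m$ is equivalent to $h=0$, i.e. to triviality of $f$ at $\xi$, and the local-cohomology-along-$Z$ forms of the exact sequences of Proposition~\ref{local-sequence} propagate this vanishing from the generic points of $X^{sing}$ across an open dense part of $X^{sing}-W$ and through all $n$. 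Once $s_n|_{X-W}=0$ for every $n$, the formal deformation is trivial at every point of $X-W$ (automatic on $X-X^{sing}$; on $X^{sing}-W$ because $s_n|_{X-W}$ is precisely the obstruction to triviality of the $n$-th order lift given triviality of the $(n-1)$-st), which completes the argument.

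\emph{The main obstacle.} The crux is the inductive step of Part~2: upgrading the abstract vanishing $H^1_Z(p(T^1_D(X)))=0$ to the statement that the \emph{canonical} class $s_n=\phi_n(Y_n)$ itself --- not merely some lift of $s_{n-1}$ --- restricts to $0$ on $X-W$, uniformly in $n$, so that these vanishings assemble into an actual local trivialization of the formal family. This requires controlling the ambiguity $H^0((T^1_D(X)/\mathcal{F}_n)|_{X-W})$, using both cohomological hypotheses at once, exploiting the complete-intersection and $\mu=\tau$ structure transverse to $X^{sing}$, and keeping the $s_n$ tied to the genuine deformation $\mathcal{X}$ via Lemma~\ref{Fn}(1). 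By contrast, Part~1, the reduction to local triviality of the formal family, and the passage from local triviality to singularity of the general fibre via Artin's theorem are routine given the results already established.
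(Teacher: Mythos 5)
Your Part~1 argument is correct and in the same spirit as the paper's, though a bit more streamlined: you localize the contradiction at a single generic point $\xi$ rather than going through the rank computation of Lemma~\ref{Fn}, and the chain $T^1_D(\mathcal{X})_\xi=0 \Rightarrow T^1_D(\mathcal{X}/\Delta)_\xi = \sheaf_{\mathcal{X},\xi}\cdot e_\xi \Rightarrow T^1_D(X)_\xi = k(\xi)\cdot s_{0,\xi} \neq 0$ together with $s_0 \in H^0(\mathcal{F}_{d-1})$ and $\xi\notin Z$ is exactly the right way to get the contradiction. Your reduction of Part~2 to showing local triviality of the formal family on the complement of a fixed proper closed set, and then invoking Artin's Theorem~\ref{Artin} via Corollary~\ref{formal-to-global}, is also sound.

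However, there is a genuine gap in the inductive step of Part~2, which you yourself flag as ``the main obstacle'' but do not resolve, and where your proposed set-up in fact diverges from what can be proved. You want to show that the specific classes $s_n=\phi_n(Y_n)$ vanish on a single fixed open set $X-W$ for all $n$. The paper does not prove this. What it proves (item~(2) of its claim) is the weaker assertion that for each $n$ \emph{every} section of $T^1_D(X_n/A_n)$ is supported on some proper closed subset $Z_n\subset X^{sing}$, where $Z_n=Z'_n\cup Z_{n-1}$ is allowed to grow with $n$ ($Z'_n=\mathrm{Supp}(T^1_D(X)/\mathcal{F}_n)$, which is proper for $n\ge n_0$ by Lemma~\ref{Fn}(2)). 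Since each $Z_n$ is still proper, $\xi\notin Z_n$ for every $n$, so $s_{n,\xi}=0$ for all $n$, and Lemma~\ref{Fn}(1) then forces $e_\xi=0$, giving the contradiction with Claim~(1); this is all that is needed. The mechanism that makes the induction close is what your sketch leaves out: after passing to the pushout of $T^1_D(X)/\mathcal{F}_n \to T^1_D(X_n/A_n)$ along $\beta_n\colon T^1_D(X)/\mathcal{F}_n\twoheadrightarrow p(T^1_D(X)/\mathcal{F}_n)$, one compares the $H^0_Z(-)$ and $H^0(-)$ ladders for the resulting extension $0\to p(T^1_D(X)/\mathcal{F}_n)\to Q_n\to T^1_D(X_{n-1}/A_{n-1})$ and applies the five lemma, using precisely $H^0(p(T^1_D(X)))=0$ and $H^1_Z(p(T^1_D(X)))=0$ to conclude $H^0_Z(Q_n)\xrightarrow{\sim}H^0(Q_n)$; this, and not a direct appeal to ``rigidity of the transverse singularity,'' is what propagates the support condition. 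Note also that the case split $n<n_0$ versus $n\geq n_0$ from Lemma~\ref{Fn}(2), exploiting $\mu=\tau$ (i.e.\ $rk\,T^1_D(X)=1$) at the generic point, is needed to know that $p(T^1_D(X)/\mathcal{F}_n)=p(T^1_D(X))$ for $n<n_0$ and that $T^1_D(X)/\mathcal{F}_n$ has a proper support for $n\ge n_0$; without this, the pushout diagram does not have the right $H^1_Z$-term. Your proposal does not carry out this bookkeeping, and the stronger statement you set out to prove (a single $W$ working uniformly in $n$) is not what the argument delivers; rather, the fixed $W$ in the theorem's conclusion is extracted \emph{after} one knows triviality of $\widehat{\mathcal{X}}$ at every generic point of $X^{sing}$, as the complement in $X^{sing}$ of the open locus of triviality.
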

\begin{corollary}
Suppose that $T^1_D(X)$ is pure and $H^0(T^1_D(X))=0$. Suppose also that
 the general singularity of $X$ is analytically isomorphic to
$(x_1^2+\cdots +x_k^2=0)\subset \mathbb{C}^n$. Then $X$ is not smoothable.
\end{corollary}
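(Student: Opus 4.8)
The plan is to obtain this corollary as a direct specialization of Theorem~\ref{non-smoothing-2}(2) to the case in which $T^1_D(X)$ has no torsion. The first step is to record what purity of $T^1_D(X)$ buys us: if $\mathcal{F}=T^1_D(X)$ is pure of dimension $d$, then its maximal subsheaf of dimension at most $d-1$ is zero, so $p(T^1_D(X))=T^1_D(X)$ and the support $Z$ of the torsion part of $T^1_D(X)$ is empty. Hence the hypothesis $H^0(p(T^1_D(X)))=0$ appearing in Theorem~\ref{non-smoothing-2} is literally the assumed vanishing $H^0(T^1_D(X))=0$, and the additional hypothesis $H^1_Z(p(T^1_D(X)))=0$ required in part (2) holds automatically, since local cohomology supported on the empty set vanishes.

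Next I would check the local conditions at the generic points of the singular locus. By assumption the general singularity of $X$ is analytically isomorphic to the quadric cone $(x_1^2+\cdots+x_k^2=0)\subset \mathbb{C}^n$; in particular $X$ is a hypersurface, hence a local complete intersection, at every generic point $\xi$ of its singular locus. This simultaneously verifies the complete intersection hypothesis of Theorem~\ref{non-smoothing-2} and the precise normal form $(x_1^2+\cdots+x_k^2=0)$ demanded in part (2).

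With all hypotheses in place, I would apply Theorem~\ref{non-smoothing-2}(2): for any one-parameter deformation $f\colon \mathcal{X}\la \Delta$ of $X$ there is a proper closed subset $W$ of the singular locus of $X$ such that $\mathcal{X}-W$ is locally trivial, so the general fiber $\mathcal{X}_g$ is singular and $X$ is not smoothable. There is essentially no obstacle here beyond the bookkeeping in the first step: the content lies entirely in Theorem~\ref{non-smoothing-2}, and this corollary is just the torsion-free shadow of it. The one thing one must not overlook is that purity of $T^1_D(X)$ forces $Z$ to be empty and therefore renders the auxiliary local-cohomology hypothesis vacuous, so that the two stated vanishing conditions $H^0(T^1_D(X))=0$ and "$T^1_D(X)$ pure" really do supply both cohomological inputs needed by the theorem.
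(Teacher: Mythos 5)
Your proof is correct and is exactly the intended derivation: the corollary is stated in the paper without a separate proof because it is meant to be the direct specialization of Theorem~\ref{non-smoothing-2}(2) to the case where $T^1_D(X)$ has no lower-dimensional subsheaf, which (as you observe) makes $p(T^1_D(X))=T^1_D(X)$, $Z=\emptyset$, and hence $H^1_Z$ trivially zero, while the normal crossing form supplies both the complete intersection hypothesis and the specific local model.
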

In particular the previous corollary applies to schemes with only normal crossing singularities.

\begin{proof}[Proof of Theorem~\ref{non-smoothing-2}]
Let $\mathcal{X}\la \Delta$ be a deformation of $X$ over $\Delta=\mathrm{Spec}(R)$, where $(R,m)$ is a discrete valuation ring.
Suppose that $\mathcal{X}$ is not trivial at any generic point of the
singular locus of $X$.
Let $X_n=\mathcal{X}\times_{\Delta} \mathrm{Spec}(R/m^n)$.
By our assumptions, every section of $T^1_D(X)$ vanishes generically along the singularities of $X$.
The theorem will follow if we show that
\begin{enumerate}
\item $T^1_D(\mathcal{X}/\Delta)$ has a section $s$ that does not vanish generically along the singular locus of $X$.
\item Any section of $T_D^1(X_n/A_n)$ vanishes generically along the singular locus of $X$ for any $n$.
\end{enumerate}
Indeed, if there is a smoothing $\mathcal{X}$, then by $(1)$ there is a section $s$ of $T^1_D(\mathcal{X}/\Delta)$ that does not vanish at
any generic point of the singular locus of $X$. But then by Lemma~\ref{Fn}.1, there is a $n \in \mathbb{Z}$ such that the image $s_{n}$
of $s$ in $T^1_D(X_{n}/A_{n})$ does not vanish at any generic point of the singular locus of $X$. But this is impossible by $(2)$.

Next we show $(1)$.
Since at any generic point of the singular locus $X$, $X$ is complete intersection, it follows that there is an exact sequence
\begin{equation}\label{extension1}
0 \la f^{\ast}\omega_{\Delta}=\sheaf_{\mathcal{X}} \la \Omega_{\mathcal{X}} \la \Omega_{\mathcal{X}/\Delta} \la 0
\end{equation}
This gives a section $s$ of $\mathcal{E}xt^1_{\mathcal{X}}(\Omega_{\mathcal{X}/\Delta},\sheaf_{\mathcal{X}})=T^1(\mathcal{X}/\Delta)$.
If $\mathcal{X}$ is also $\mathbb{Q}$-Gorenstein, then this gives an element of $T^1_{qG}(\mathcal{X}/\Delta)$.
Since $X$ is pure, $X_n$ is also pure and hence there is an exact sequence
\begin{equation}\label{extension2}
0 \la \sheaf_{X_n} \la \Omega_{\mathcal{X}}\otimes_{\sheaf_{\mathcal{X}}} \sheaf_{X_n} \la \Omega_{X_n/A_n} \la 0
\end{equation}
which gives an element of $T^1(X_n/A_n)=\mathcal{E}xt^1_{X_n}(\Omega_{X_n/A_n}, \sheaf_{X_n})$. If $\mathcal{X}$ is also $\mathbb{Q}$-Gorenstein,
then this gives an element of $T^1_{qG}(X_n/A_n)$.
Next we claim that the extension~\ref{extension1} is not split, not even generically split along the singular locus of $X$.

\textbf{Case 1.} Suppose that $\mathcal{X}$ is smooth and that~\ref{extension1} is generically split along the singular locus of $X$.
Then $\Omega_{\mathcal{X}}\cong \Omega_{\mathcal{X}/\Delta} \oplus \sheaf_{\mathcal{X}}$ and hence $\Omega_{\mathcal{X}/\Delta} $ is free
and hence $\Omega_X $ is free which is of course not true. Hence in this case, ~\ref{extension1} is not even generically split.

\textbf{Case2.} Suppose that the general singularity of $X$ is analytically isomorphic to
\begin{equation}\label{singularity}
(x_1^2+\cdots +x_k^2=0)\subset \mathbb{C}^n
\end{equation}
 Then if~\ref{extension1} was generically split, then generically over the singular locus of $X$,
\begin{equation}\label{ext-equality}
\mathrm{Ext}^1_{\mathcal{X}}(\Omega_{\mathcal{X}},\sheaf_{\mathcal{X}}) \cong \mathrm{Ext}^1_{\mathcal{X}}(\Omega_{\mathcal{X}/\Delta},\sheaf_{\mathcal{X}})
\end{equation}
 Around the generic point $\xi$ of the singular locus of $X$ we may assume that $X$ is a the singularity given by~\ref{singularity}.
Hence now all Ext spaces involved are finite dimensional over $K=k(\sheaf_{\mathcal{X},\xi})$. We will now show by direct computation
that that~\ref{ext-equality} is impossible. In suitable local analytic coordinates, $X$ is given by~\ref{singularity} and, by using the
Weierstrass preparation theorem, $\mathcal{X}$ by
\[
x_1^2+\cdots x_k^2 +t^sg(x_{k+1},\ldots, x_n,t) =0
\]
where $g\neq 0$ and $t$ does not divide $g$. Straightforward calculations show that
\[
\mathrm{Ext}^1_{\mathcal{X}}(\Omega_{\mathcal{X}},\sheaf_{\mathcal{X}})=\frac{k[x_1,\cdots , x_n,t]}{(x_1,\ldots,x_k,t^s \partial g/ \partial x_{k+1},
\ldots, t^s \partial g /\partial x_{n}, st^{s-1}g+t^s\partial g/\partial t, t^sg)}
\]
and similarly
\[
\mathrm{Ext}^1_{\mathcal{X}}(\Omega_{\mathcal{X}/\Delta},\sheaf_{\mathcal{X}})=\frac{k[x_1,\ldots,x_n,t]}{(x_1,\ldots,x_k,t^s \partial g/ \partial x_{k+1},
\ldots, t^s\partial g /\partial x_{n}, t^sg)}
\]
If $\mathrm{Ext}^1_{\mathcal{X}}(\Omega_{\mathcal{X}},\sheaf_{\mathcal{X}}) \cong
\mathrm{Ext}^1_{\mathcal{X}}(\Omega_{\mathcal{X}/\Delta},\sheaf_{\mathcal{X}})$, then
\[
st^{s-1}g+t^s\partial g/\partial t \in (x_1,\ldots,x_k,t^s \partial g/ \partial x_{k+1},\ldots, t^s\partial g /\partial x_{n}, t^sg)
\]
and hence there are polynomials $h_i, h \in k[x_1,\ldots, x_n,t]$ such that
\[
st^{s-1}g+t^s\partial g/\partial t = \sum_{i=s+1}^n h_i t^s \partial g/\partial x_i +ht^s g
\]
and therefore $t | g$, which is impossible. This shows part $(1)$ of the claim.

Next we show \textit{(2)}. We proceed by induction on $n$. $n=1$ is true by assumption. By Lemma~\ref{Fn}, there is $n_0 \in \mathbb{Z}$
such that $rk((T^1_D(X)/\mathcal{F}_n)=0$, for all $n \geq n_0$ and, since $rk(T^1_D(X))=1$, $rk((T^1_D(X)/\mathcal{F}_n)=1$,
for all $n < n_0$. Hence $p(T^1_D(X)/\mathcal{F}_n)=p(T^1_D(X))$, for all $n<n_0$.

Suppose that $n <n_0$ and construct the pushout diagram
\[
\xymatrix{
0 \ar[r] & T^1_D(X)/\mathcal{F}_n \ar[r]^{\alpha_n}\ar[d]^{\beta_n} & T^1(X_n/A_n) \ar[r]\ar[d]^{\gamma_n} & T^1(X_{n-1}/A_{n-1})\ar@{=}[d] \\
0 \ar[r] & p(T^1_D(X)/\mathcal{F}_n) \ar[r] & Q_n \ar[r] & T^1(X_{n-1}/A_{n-1})
}
\]
with $\text{Ker}(\beta_n)=\text{Ker}(\gamma_n)$ supported on $Z$. Let $M_n=\text{CoKer}(\alpha_n)$. Then there is a commutative diagram
\[
\xymatrix{
0 \ar[r] & 0=H_Z^0( p(T^1_D(X))) \ar[r]\ar[d]_{f_1} & H_Z^0(Q_n) \ar[r]\ar[d]_{f_2} & H_Z^0(M_n) \ar[r]\ar[d]_{f_3}
& H_Z^1( p(T^1_D(X)))\ar[d]_{f_4}\\
0 \ar[r] & 0=H^0( p(T^1_D(X))) \ar[r] & H^0(Q_n) \ar[r] & H^0(M_n) \ar[r]\ & H^1( p(T^1_D(X)))
}
\]
Now $f_3$ is an isomorphism by induction and $H_Z^1( p(T^1_D(X)))=0$, by assumption. Hence, by the five-lemma, $f_2$ is also an isomorphism
and therefore all sections of $Q_n$ are supported on $Z$. Now there is an exact sequence
\[
0 \la \text{Ker}(\gamma_n) \la  T^1(X_n/A_n) \la Q_n \la 0
\]
and since $\text{Ker}(\beta_n)=\text{Ker}(\gamma_n)$, $\text{Ker}(\gamma_n)$ is supported on $Z$. Let $U=X-Z$. Then $T^1(X_n/A_n)|_U=Q_n|_U$ and hence,
since the sections of $Q_n$
are supported on $Z$, the sections of $T^1(X_n/A_n)$ are also supported on $Z$. Hence for all $n<n_0$, the sections of $T^1(X_n/A_n)$ are supported
on $Z$. If $n \geq n_0$, then
$rk(T^1_D(X)/\mathcal{F}_n)=0$ and hence $Z^{\prime}_n=\text{Supp}(T^1_D(X)/\mathcal{F}_n)$ is a proper subset of $X^{sing}$. By induction, all
sections of $T^1(X_{n-1}/A_{n-1})$ are supported
on a proper subset $Z_{n-1}$ of $X^{sing}$. Let $Z_n=Z_n^{\prime}\cup Z_{n-1}$ and $U_n=X-Z_n$. Then $T^1(X_n/A_n)|_{U_n}=T^1(X_{n-1}/A_{n-1})|_{U_{n}}$
and hence all sections of $T^1(X_n/A_n)$
are supported on $Z_n$. This shows $(2)$.

It remains to show part $(1)$ of the theorem. This is a local result and hence we may assume that $X$ is affine and $\mathcal{X}$ smooth.
Then from Lemma~\ref{Fn} it follows that $rk((T^1_D(X)/\mathcal{F}_n)=0$, for all $n$. Then the previous proof shows that the sections
of $T_D^1(X_n/A_n)$ vanish at any generic point of the singular locus of $X$, for all $n$, and part $(1)$ follows as before.
\end{proof}

Next we present some smoothing criteria.

\begin{theorem}\label{smoothing1}
Let $X$ be a proper pure and reduced scheme of finite type over a field $k$ of characteristic zero.
Let $D$ be either $Def(X)$ or $Def^{qG}(X)$. Moreover assume that
\begin{enumerate}
\item $X$ has complete intersection singularities if $D=Def(X)$, or
\item $X$ is locally smoothable and the index 1 cover of any singularity of $X$ has complete intersection singularities, if $D=Def^{qG}(X)$.
\end{enumerate}
Then if  $T_D^1(X)$ is finitely generated by its global sections and $H^1(T_D^1(X))=H^2(T_X)=0$, $X$ is $D$-formally smoothable.
\end{theorem}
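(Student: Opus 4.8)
The plan is to build, step by step, a compatible system of deformations $X_n\in D(A_n)$, $A_n=k[t]/(t^{n+1})$, and to show that for a sufficiently generic initial choice the associated formal deformation $\mathfrak X\la\mathfrak S=\mathrm{Specf}\,k[[t]]$ is a formal smoothing. First I would translate the condition ``formal smoothing'' into a statement about the sheaves of Proposition~\ref{the-exact-sequence} (resp.\ Proposition~\ref{Q-exact-sequence} when $D=Def^{qG}$). Since $X$ (resp.\ the index $1$ cover of every singular point of $X$) is a local complete intersection, $\mathcal{E}xt^2_X(\Omega_X,\sheaf_X)=0$ and $T^2_D(X)=0$, so the maps $T^1_D(X_n/A_n)\la T^1_D(X_{n-1}/A_{n-1})$ are always surjective with kernel $T^1_D(X)/\mathcal F_n$, where $\mathcal F_n\subseteq T^1_D(X)$ is the image of $\widehat T_{X_{n-1}/A_{n-1}}$ under the connecting map. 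Using the intrinsic Fitting-ideal definition of a formal smoothing together with Lemma~\ref{ci} and the complete-intersection presentation of $\Omega_{\mathfrak X/\mathfrak S}$ (compare Propositions~\ref{formal-smoothing} and~\ref{Q-formal-smoothing}), the system $\{X_n\}$ is a formal $D$-smoothing precisely when $\mathcal F_n=T^1_D(X)$ for all $n\gg 0$ — in the $\mathbb Q$-Gorenstein case together with the requirement that $\omega_{\mathfrak X/\mathfrak S}$ become generically invertible over $\mathfrak S$, which is automatic as soon as the general fibre is smooth. So the whole problem is to force the $\mathcal F_n$ to exhaust $T^1_D(X)$.

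Next I would run the local lifting method of Section~\ref{local-subsection} (and Section~\ref{Q-section} in the $\mathbb Q$-Gorenstein case). Because $H^2(T_X)=0$, Proposition~\ref{local-to-global} makes the global-to-local map $\mathbb T^1_D(X)\la H^0(T^1_D(X))$ surjective; combined with the hypothesis that the finitely many global sections of $T^1_D(X)$ generate it, this lets me choose the first deformation $X_1\in D(A_1)$, and possibly a finite initial string of the $X_i$, generically enough that the associated formal total space is generically smooth over $\mathfrak S$ transversally to $X^{sing}$ — equivalently (the substantive point, deferred to the last paragraph) that the sheaves $\mathcal F_n$ equal $T^1_D(X)$ for all $n\gg 0$. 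Granting this, the inductive step is routine: when $\mathcal F_n=T^1_D(X)$ one has $T^1_D(X)/\mathcal F_n=0$, so by Corollary~\ref{Cor-local-ob-1} (and its $\mathbb Q$-Gorenstein analogue) the obstruction in $H^1(T^1_D(X)/\mathcal F_n)$ to finding $s_n$ with $\sigma_n(s_n)=s_{n-1}$ vanishes; and since $H^1(\mathcal F_n)=H^1(T^1_D(X))=0$ and $H^2(\widehat T_X)=H^2(T_X)=0$, Propositions~\ref{local-obstructions-2} and~\ref{loc-trivial-ob} provide a lift $Y_n\in\mathbb T^1_D(X_n/A_n)$ of $Y_{n-1}$, after which Proposition~\ref{T1-1} produces $X_{n+1}\in D(A_{n+1})$ with $D(\varepsilon_n)(X_{n+1})=Y_n$. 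This extends the system indefinitely, so by the previous paragraph we obtain a formal $D$-smoothing; the $\mathbb Q$-Gorenstein case is reduced to the complete-intersection one by passing to index $1$ covers as in Section~\ref{Q-section}, using $T^2_{qG}(X)=(T^2(\widetilde X))^{\mu_r}=0$ (Proposition~\ref{T2qG}).

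The heart of the matter is the step granted above: that a generic choice of the initial directions really does make $\mathcal F_n=T^1_D(X)$ from some stage on, equivalently that $T^1_D(\mathfrak X/\mathfrak S)=\varprojlim T^1_D(X_n/A_n)$ is annihilated by a power of the ideal of definition $\mathfrak I$. This is where global generation of $T^1_D(X)$, and not merely local smoothability, is indispensable: locally at each point of $X^{sing}$ the transverse complete-intersection singularity carries a smoothing direction, but these have to be assembled into a single one-parameter family, and only global generation guarantees that finitely many global sections of $T^1_D(X)$ jointly hit a smoothing direction at every generic point of $X^{sing}$. Concretely I would identify $T^1_D(X)\la T^1_D(X_n/A_n)$ with multiplication by $t^n$ on $T^1_D(\mathfrak X/\mathfrak S)$, so that $\mathcal F_n=T^1_D(X)$ for $n\gg 0$ becomes exactly the torsion statement above; then, working generic point by generic point along $X^{sing}$ and using a Bertini-type argument for the transverse complete intersections, one shows that the relative Jacobian of $\mathfrak X/\mathfrak S$ has generically full rank once the chosen directions have used up a generating set, with the number of steps needed depending only on $X$. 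Everything else — surjectivity of $\sigma_n$ once $\mathcal F_n$ is full, the successive obstruction vanishings, and the bookkeeping with the maps $\alpha_n,\beta_n,\gamma_n,\varepsilon_n$ of Section~\ref{T1-section} — is then routine given the machinery of Sections~\ref{local-to-global-section}--\ref{Q-section}.
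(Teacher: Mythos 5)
Your proposal follows a genuinely different route than the paper, and the difference is where the gap lies. The paper does \emph{not} try to build a one-parameter family step by step. Instead it chooses sections $s_1,\dots,s_k\in H^0(T^1_D(X))$ that generate the sheaf and lifts them all at once (using smoothness of the hull, which follows from the vanishing hypotheses) to a $k$-parameter formal family $\mathfrak f\colon\mathcal Y\la\mathcal S=\mathrm{Specf}\,k[[t_1,\dots,t_k]]$. Because the first-order directions are a full generating set, the map $\sheaf_{\mathcal Y}^k\la T^1(\mathcal Y/\mathcal S)$ is surjective by construction, hence $T^1(\mathcal Y)=0$; together with the complete-intersection presentation of $\widehat\Omega_{\mathcal Y}$ this forces $\widehat\Omega_{\mathcal Y}$ to be locally free, so $\sheaf_{\mathcal Y}$ has smooth local rings, and a generic formal arc $\mathrm{Specf}\,k[[t]]\la\mathcal S$ then gives the formal smoothing. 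The $\mathbb Q$-Gorenstein case is handled by the same computation on the index $1$ covers.

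Your approach instead tries to produce the one-parameter family $\{X_n\}$ directly by the inductive machinery of Section~\ref{local-to-global-section}, and the burden then falls entirely on showing that a sufficiently generic choice of initial directions makes $\mathcal F_n=T^1_D(X)$ for $n\gg0$. This step is not routine, and the sketch you give does not close it. First, the hypothesis is that $T^1_D(X)$ is generated by finitely many global sections, not by a single one; a one-parameter family supplies a single element $s_1\in H^0(T^1_D(X))$ at first order, and at a generic point $\xi$ of $X^{sing}$ where $T^1_D(X)_\xi$ has $K$-dimension $\geq 2$ there is no a priori reason that any single section together with its higher-order corrections exhausts $T^1_D(X)_\xi$. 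Second, the ``Bertini-type argument for the transverse complete intersections'' you invoke is a positive-dimensional-linear-system statement; it does not obviously apply to a single arc in which the remaining generators enter only through higher-order coefficients, and you do not give, or cite, the version you would need. Third, you have to control the obstruction in $H^1(T^1_D(X)/\mathcal F_n)$ during the initial stages, before $\mathcal F_n$ is all of $T^1_D(X)$; the hypothesis $H^1(T^1_D(X))=0$ does not immediately give $H^1(T^1_D(X)/\mathcal F_n)=0$, and your proposal does not address this. In short, you correctly identify the crux — that the relative $T^1$ of $\mathfrak X/\mathfrak S$ must become $\mathfrak I$-torsion — but the one-parameter route requires precisely the statement you defer, and that statement is what the paper's multi-parameter construction is designed to make automatic. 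If you want to keep the one-parameter framework, you would need a genuine lemma guaranteeing that the arc can be chosen so that the pullback of the relative Jacobian ideal of the universal $k$-parameter family is $m$-primary along $X^{sing}$; as written, that is missing.
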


\begin{corollary}\label{effective}
If every deformation of $X$ is effective then $X$ is $D$-smoothable.
\end{corollary}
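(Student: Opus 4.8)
The plan is to obtain the corollary by combining Theorem~\ref{smoothing1} with the formal-to-algebraic comparison results, so that the argument is essentially bookkeeping once those inputs are in place. First I would apply Theorem~\ref{smoothing1}: under the running hypotheses on $X$ and $D$ it produces a formal $D$-smoothing $\mathfrak{f}\colon\mathfrak{X}\la\mathfrak{S}=\mathrm{Specf}\,k[[t]]$ of $X$, i.e.\ a formal smoothing when $D=Def(X)$ and a formal $\mathbb{Q}$-Gorenstein smoothing when $D=Def^{qG}(X)$. Next, by the standing hypothesis that every deformation of $X$ is effective, this $\mathfrak{f}$ is the formal deformation associated to a flat morphism of finite type $f\colon\mathcal{X}\la\mathcal{S}=\mathrm{Spec}\,k[[t]]$ with $\mathcal{X}\times_{\mathcal{S}}\mathrm{Spec}\,k\cong X$ and $\widehat{\mathcal{X}}=\mathfrak{X}$. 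Since every scheme here is separated and $X$ is proper, $f$ is a separated finite-type morphism to the complete local ring $k[[t]]$ with proper closed fibre, hence proper; and $k[[t]]$ is a discrete valuation ring, so $f$ is a deformation of $X$ over the spectrum of a DVR in the sense used throughout the paper.

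The point requiring a short argument is the $\mathbb{Q}$-Gorenstein case, where I must also check that $f$ is $\mathbb{Q}$-Gorenstein, i.e.\ that $\omega^{[n]}_{\mathcal{X}/\mathcal{S}}$ is invertible for the index $n$ of $X$. For this one uses that $\omega^{[n]}_{\mathcal{X}/\mathcal{S}}$ is coherent and that its completion along $X$ is $\omega^{[n]}_{\mathfrak{X}/\mathfrak{S}}$, which is invertible because $\mathfrak{f}$ is a formal $\mathbb{Q}$-Gorenstein deformation; since $f$ is proper every point of $\mathcal{X}$ specializes into the closed fibre $X$, so a coherent sheaf on $\mathcal{X}$ is invertible as soon as its completion along $X$ is invertible. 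Hence $\omega^{[n]}_{\mathcal{X}/\mathcal{S}}$ is invertible and $f$ is a $\mathbb{Q}$-Gorenstein deformation over a DVR.

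Finally I would invoke the comparison proposition (the one asserting that for a proper equidimensional scheme over a separable field a deformation over a DVR is a smoothing if and only if the associated formal deformation is a formal smoothing; in the present complete-intersection setting this is exactly Proposition~\ref{formal-smoothing}, and in the $\mathbb{Q}$-Gorenstein complete-intersection setting Proposition~\ref{Q-formal-smoothing}). Since $X$ is proper, reduced and pure, hence equidimensional, over the field $k$ of characteristic zero, which is therefore separable, and since $\mathfrak{f}$ is a formal (resp.\ formal $\mathbb{Q}$-Gorenstein) smoothing, it follows that $f$ is a smoothing (resp.\ $\mathbb{Q}$-Gorenstein smoothing) of $X$, so $X$ is $D$-smoothable. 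The only real difficulty is the transfer of properties across algebraization --- properness of $f$ and, in the $\mathbb{Q}$-Gorenstein case, invertibility of $\omega^{[n]}$ --- after which Theorem~\ref{smoothing1} and the comparison proposition do all the work.
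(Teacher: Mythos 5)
Your argument follows the intended route: Theorem~\ref{smoothing1} produces a formal $D$-smoothing $\mathfrak{f}\colon\mathfrak{X}\to\mathrm{Specf}\,k[[t]]$, the effectivity hypothesis algebrizes it, and Propositions~\ref{formal-smoothing}/\ref{Q-formal-smoothing} (or the unnamed proposition preceding them) transfer the smoothing property from the formal to the algebraic deformation. The $\mathbb{Q}$-Gorenstein step — invertibility of $\omega^{[n]}_{\mathcal{X}/\mathcal{S}}$ is detected on the completion along $X$ and propagated to all of $\mathcal{X}$ using that every nonempty closed subset meets $X$ — is handled correctly.

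The one step that is stated too strongly is the properness of $f$. The claim that a separated finite-type flat morphism to $\mathrm{Spec}\,k[[t]]$ with proper closed fibre is automatically proper is false: for example, if $\mathcal{X}_0\to\mathrm{Spec}\,k[[t]]$ is a proper algebraization, then $\mathcal{X}_0\sqcup\mathbb{A}^1_{K}$ (with $K=k((t))$) is still flat, separated, of finite type, has the same closed fibre and the same formal completion along $X$, yet is not proper. The definition of effectivity in the paper only supplies \emph{some} finite-type flat algebraization, not a proper one, so you cannot conclude properness from the closed fibre alone. What is true — and what is actually needed — is that one may \emph{choose} a proper algebraization: starting from any algebraization one can restrict to an open neighborhood of $X$ over which the morphism is proper (this is a standard openness-of-properness fact in the flat, finite type, separated setting), or appeal to uniqueness of proper algebraizations of a proper formal scheme over a complete local Noetherian ring. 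After this adjustment your argument goes through, and the $\mathbb{Q}$-Gorenstein part is then applied to that proper model. This is essentially a bookkeeping imprecision rather than a break in the proof, but the properness should be justified via "one can choose $f$ proper" rather than "$f$ is proper."
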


\begin{remark}
\begin{enumerate}
\item The requirement that $X$ is proper can be replaced by the more general requirement that $Def(X)$ has a hull. 
\item The conditions of the theorem on the vanishing of the obstructions are rather restrictive but there are cases when they are satisfied. We mention 
two of them. The first case is when there is a proper morphism $f \colon X \rightarrow \mathrm{Spec} A$, such that $\dim f^{-1}(s) \leq 1$, for all 
$s \in \mathrm{Spec} A$. Then by the formal functions theorem it follows that $H^2(T_X)=0$. This is for example the case of birational maps 
with at most one-dimensional fibers. The second case is when $X$ is a Fano variety with only 
double point normal crossing singularities such that $T^1(X)$ is finitely generated by its global sections. 
Then $H^1(T^1(X))=H^2(T_X)=0$~\cite{Tzi09a}.
\end{enumerate}
\end{remark}

\begin{proof}[Proof of Theorem~\ref{smoothing1}]
For simplicity I will only do the case when $D=Def(X)$. The $\mathbb{Q}$-Gorenstein case is exactly similar. One has only to lift the following 
argument to the index 1 covers.

The conditions of the theorem imply that $\text{Def}(X)$ exists and is smooth.
Let $s_1, \ldots , s_k \in H^0(T^1(X))$ be sections that generate $T^1(X)$. Since $\text{Def}(X)$ is smooth, the sections $s_1, \ldots , s_k$
lift to a formal deformation $f_n \colon Y_n \la S_n$ of $X$ over $S_n=\mathrm{Spec} (S/m_S^{n+1})$, where $S=k[[t_1, \ldots, t_k]]$
and $m_S$ its maximal ideal. Let $f \colon \mathcal{Y} \la \mathcal{S}$ be the corresponding morphism of formal schemes.
We will show that $\mathcal{Y}$ is smooth over $\mathrm{Specf} K(S)$. Let $U \subset X$ be the smooth locus of $X$. Then $f|_U$ is smooth and hence since
$X$ is pure, it follows that there is an exact sequence~\cite{TaLoRo07}
\begin{equation}\label{formal-seq1}
0 \la f^{\ast} \widehat{\Omega}_{\mathcal{S}} \la \widehat{\Omega}_{\mathcal{Y}} \la \widehat{\Omega}_{\mathcal{Y}/\mathcal{S}} \la 0
\end{equation}
Moreover, $\widehat{\Omega}_{\mathcal{S}}=\widehat{\Omega}_R^{\triangle}\cong \sheaf_{\mathcal{S}}^k$, where $R=k[t_1,\ldots, t_k]$.
Hence $f^{\ast} \widehat{\Omega}_{\mathcal{S}}=\sheaf_{\mathcal{Y}}^k$ and dualizing the previous sequence we get
\begin{equation}\label{formal-seq2}
\mathrm{Hom}_{\mathcal{Y}}(\widehat{\Omega}_{\mathcal{Y}},\sheaf_{\mathcal{Y}}) \la \sheaf_{\mathcal{Y}}^k \stackrel{\phi}{\la} T^1(\mathcal{Y}/\mathcal{S}) \la T^1(\mathcal{Y}) \la 0
\end{equation}
But by construction $\phi$ is surjective and therefore
\[
T^1(\mathcal{Y})=\underline{\text{Ext}}^1_{\mathcal{Y}}(\widehat{\Omega}_{\mathcal{Y}},\sheaf_{\mathcal{Y}})=0
\]
\textbf{Claim:} $\sheaf_{\mathcal{Y}}$ and $\sheaf_{\mathcal{Y}}\otimes_{S}K(S)$ have smooth local rings.

The result is local and hence we may assume that $X$ is affine given by $\sheaf_X = k[x_1, \ldots, x_m]/(f)$, where $(f)=(f_1, \ldots , f_s)$
is a complete intersection. Then $\sheaf_{X_n}=S_n[x_1, \ldots , x_m] /(f_n)$, where $(f_n)$ is a lifting of $(f)$ on $S_n[x_1, \ldots, x_m]$ and hence
\begin{equation}\label{formal-ring}
\sheaf_{\mathcal{Y}}=\underset{\leftarrow}{\lim} \sheaf_{X_n} =
\frac{S[x_1,\ldots , x_m]^{\wedge}}{(\overline{f}_1, \ldots , \overline{f}_s)}
\end{equation}
where $S[x_1,\ldots , x_m]^{\wedge}$ is the $m_S$-adic completion of $S[x_1,\ldots , x_m]$ and $\overline{f}_i = \underset{\leftarrow}{\lim} f_i^{(n)}$.
Let
\[
0 \la \sheaf_X^r \la \sheaf_X^m \la \Omega_X \la 0
\]
be a presentation of $\Omega_X$, where $m-r = \dim \sheaf_X$. Then this exact sequence lifts to compatible exact sequences
\[
0 \la \sheaf_{X_n}^r \la \sheaf_{X_n}^m \la \Omega_{X_n} \la 0
\]
Moreover, $\sheaf_{\mathcal{Y}}= \underset{\leftarrow}{\lim}\sheaf_{X_n}$, and hence taking inverse limits and taking into consideration that
$\widehat{\Omega}_{\mathcal{Y}}= \underset{\leftarrow}{\lim} \Omega_{\sheaf_{X_n}}$~\cite{TaLoRo07} we get an exact sequence
\[
0 \la \sheaf_{\mathcal{Y}}^r \la \sheaf_{\mathcal{Y}}^m \la \widehat{\Omega}_{\mathcal{Y}} \la 0
\]

This extension is trivial since $\underline{\text{Ext}}^1_{\mathcal{Y}}(\widehat{\Omega}_{\mathcal{Y}},\sheaf_{\mathcal{Y}})=0$. Hence
\[
\sheaf_{\mathcal{Y}}^m \cong   \sheaf_{\mathcal{Y}}^r   \oplus  \widehat{\Omega}_{\mathcal{Y}}
\]
and hence $\widehat{\Omega}_{\mathcal{Y}}$ is locally free of the rank claimed. This implies that $\sheaf_{\mathcal{Y}}$ has geometrically
regular local rings. Indeed, let $P \in X$ be a point and $m_P$ the maximal ideal of $\sheaf_{\mathcal{Y},P}$. Then since $k$ is perfect it
follows that~\cite{Eis95}
\[
m_p/m_P^2 \cong \widehat{\Omega}_{\mathcal{Y}} \otimes k(P)
\]
and therefore $\dim_{k(P)} m_P/m_P^2 = \dim \sheaf_{\mathcal{Y},P}$ and hence $\sheaf_{\mathcal{Y},P}$ is geometrically regular and therefore smooth. Since any localization of $\sheaf_{\mathcal{Y}}$ is a localization of $\sheaf_{\mathcal{Y},P}$, for some $P\in X$, it follows that $\sheaf_{\mathcal{Y}}$ has smooth local rings. In particular, since any localization of $\sheaf_{\mathcal{Y}}\otimes_{S}K(S)$ is a localization of $\sheaf_{\mathcal{Y}}$, $\sheaf_{\mathcal{Y}}\otimes_{S}K(S)$ is smooth.

Since $\widehat{\Omega}_{\mathcal{Y}}\cong \sheaf_{\mathcal{Y}}^d$, where $d = \dim X$, the sequence~\ref{formal-seq2} becomes
\[
\sheaf_{\mathcal{Y}}^d \stackrel{\psi}{\la} \sheaf_{\mathcal{Y}}^k \stackrel{\phi}{\la} T^1(\mathcal{Y}/\mathcal{S}) \la T^1(\mathcal{Y}) \la 0
\]
and as in the usual scheme case, $\psi$ is given by the jacobian matrix $J=(\partial \overline{f}_i /x_j)$. Since $\sheaf_{\mathcal{Y}}\otimes_{S}K(S)$ is smooth, it follows that $J$ has maximum rank at all localizations of $\sheaf_{\mathcal{Y}}\otimes_{S}K(S)$. Therefore, $\psi \otimes_S K(S)$ is surjective and hence $T^1(\mathcal{Y}/\mathcal{S})$ is torsion over $\mathcal{S}$. Hence there is a formal arc $\Delta=\mathrm{Specf}k[[t]] \la \mathrm{Spec}S$ such that in the fiber $\mathcal{X}=\mathcal{Y} \times_{\mathrm{Specf}S} \mathrm{Specf}k[[t]]$, $T^1(\mathcal{X}/\Delta)$ is torsion over $k[[t]]$ and hence there is $l \in \mathbb{N}$ such that $t^lT^1(\mathcal{X}/\Delta)=0$ and therefore $\mathcal{X} \la \Delta$ is a formal smoothing of $X$.

\end{proof}

The previous proof shows that
\begin{corollary}
With assumptions as in Corollary~\ref{effective}, suppose that $T_D^1(X)=\sheaf_Z$, where $Z$ is the singular locus of $X$. Then there is
a smoothing $f \colon \mathcal{X} \la \Delta$ of $X$ such that
\begin{enumerate}
\item $ \mathcal{X}$ is smooth if $D=Def(X)$;
\item the singularities of $\mathcal{X}$ are smooth quotients if $D=Def^{qG}(X)$.
\end{enumerate}
\end{corollary}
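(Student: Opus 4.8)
The plan is to specialize the proof of Theorem~\ref{smoothing1} to a single deformation variable. First observe that all the hypotheses of Theorem~\ref{smoothing1} are in force under the assumptions of Corollary~\ref{effective}: the sheaf $T^1_D(X)=\sheaf_Z$ is a cyclic $\sheaf_X$-module, generated by the global section $1\in H^0(\sheaf_Z)=H^0(T^1_D(X))$, so it is finitely generated by its global sections, while $H^1(T^1_D(X))=H^1(\sheaf_Z)=0$ and $H^2(T_X)=0$ are assumed. Hence $D$ has a smooth hull, and one may run the construction in the proof of Theorem~\ref{smoothing1} with $k=1$ and $s_1$ the above generator. Then $S=k[[t]]$ is a power series ring in one variable, $\mathcal{S}=\mathrm{Specf}\,k[[t]]$, and the lifting produced there is already a one-parameter formal deformation $f\colon\mathcal{Y}\la\mathcal{S}$ of $X$; no passage to a general formal arc is needed.

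The heart of the argument is then exactly as in the proof of Theorem~\ref{smoothing1}. For $D=Def(X)$: by construction the map $\phi\colon\sheaf_{\mathcal{Y}}\la T^1(\mathcal{Y}/\mathcal{S})$ dual to $0\la f^{\ast}\widehat{\Omega}_{\mathcal{S}}\la\widehat{\Omega}_{\mathcal{Y}}\la\widehat{\Omega}_{\mathcal{Y}/\mathcal{S}}\la 0$ is surjective, since it realizes the chosen generator of $T^1$, so $T^1(\mathcal{Y})=0$. Together with the complete intersection presentation $0\la\sheaf_{\mathcal{Y}}^r\la\sheaf_{\mathcal{Y}}^m\la\widehat{\Omega}_{\mathcal{Y}}\la 0$ obtained by passing to the inverse limit, this forces $\widehat{\Omega}_{\mathcal{Y}}\cong\sheaf_{\mathcal{Y}}^{d}$ with $d=\dim X$, whence, as $\mathrm{char}\,k=0$, the ring $\sheaf_{\mathcal{Y}}$ has geometrically regular, hence smooth, local rings. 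For $D=Def^{qG}(X)$ one runs the same computation after passing to the formal index~$1$ covers $\pi_i\colon\tilde{\mathcal{Y}}_i\la\mathcal{Y}|_{U_i}$: since each $\tilde{X}$ is a complete intersection and $T^1_{qG}(\mathcal{Y}/\mathcal{S})=(T^1(\tilde{\mathcal{Y}}_i/\mathcal{S}))^{G_i}$, the argument gives that $\tilde{\mathcal{Y}}_i$ has smooth local rings, so $\mathcal{Y}$ is, formally locally, of the form $\tilde{\mathcal{Y}}_i/G_i$ with $\tilde{\mathcal{Y}}_i$ smooth.

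Finally one checks that $f$ is a formal smoothing and algebraizes it. With $k=1$ the sequence $\sheaf_{\mathcal{Y}}^{d}\stackrel{\psi}{\la}\sheaf_{\mathcal{Y}}\stackrel{\phi}{\la}T^1(\mathcal{Y}/\mathcal{S})\la 0$, with $\psi$ the Jacobian, together with smoothness of $\sheaf_{\mathcal{Y}}\otimes_{k[[t]]}K(k[[t]])$, shows $\psi$ is generically surjective, so $T^1(\mathcal{Y}/\mathcal{S})$ is $t$-torsion; in the $\mathbb{Q}$-Gorenstein case one argues the same way with $T^1_{qG}$ and notes that smoothness of the index~$1$ covers makes the generic fiber Gorenstein, so Proposition~\ref{Q-formal-smoothing} applies. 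By the effectivity hypothesis I would then algebraize $f$ to a deformation $f\colon\mathcal{X}\la\Delta=\mathrm{Spec}\,k[[t]]$ over a DVR; by Proposition~\ref{formal-smoothing} (resp.\ Proposition~\ref{Q-formal-smoothing}) it is a smoothing, so its generic fiber $\mathcal{X}_g=\mathcal{X}\setminus X$ is regular, and since $\mathcal{Y}=\widehat{\mathcal{X}}$ the local rings of $\mathcal{X}$ at points of $X$ coincide after completion with those of $\mathcal{Y}$, hence are regular (resp.\ $\mathcal{X}$ is, near $X$, the quotient of a regular scheme by a finite group). Since a neighbourhood of $X$ has the stated property and the complementary open $\mathcal{X}_g$ is regular, $\mathcal{X}$ is regular (resp.\ has only smooth quotient singularities) everywhere. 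The main obstacle — essentially the only step where anything nontrivial happens — is the identification $\widehat{\Omega}_{\mathcal{Y}}\cong\sheaf_{\mathcal{Y}}^{d}$ forced by $T^1(\mathcal{Y})=0$ and the complete intersection hypothesis; the rest is a transcription of the proof of Theorem~\ref{smoothing1} to a single deformation variable together with routine propagation of regularity.
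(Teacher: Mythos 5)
Your proof is correct and takes essentially the same route the paper intends when it writes ``the previous proof shows that'': one simply reruns the proof of Theorem~\ref{smoothing1} observing that the hypothesis $T^1_D(X)=\sheaf_Z$ allows $k=1$. The observation you make explicit---that $\sheaf_Z$ is cyclic, generated by $1\in H^0(\sheaf_Z)$, so the base of the versal-type family constructed in that proof can be taken to be $\mathrm{Specf}\,k[[t]]$ itself, eliminating the final step of choosing a formal arc---is exactly the point that upgrades ``$X$ is smoothable'' to ``$X$ has a smoothing with smooth (resp.\ smooth quotient) total space,'' since for $k>1$ the pullback along an arc would only be a complete intersection in the smooth $\mathcal{Y}$ and need not itself be regular. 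The rest of your argument (the Claim in the proof of Theorem~\ref{smoothing1} giving regular local rings for $\sheaf_{\mathcal{Y}}$, the transfer of regularity from the completion to $\mathcal{X}$ near $X$, the regularity of $\mathcal{X}_g$ from the smoothing property, and the passage to index~1 covers in the $\mathbb{Q}$-Gorenstein case) is a faithful transcription of the paper's reasoning; your treatment of the $\mathbb{Q}$-Gorenstein case is just as terse as the paper's own ``lift the argument to the index~1 covers,'' so no gap is introduced there that is not already present in the source.
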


There is one nice and very simple case when $T^1(X)$ is finitely generated by its global sections.
\begin{corollary}
Let $X$ be a projective local complete intersection field over a field $k$ of characteristic zero. Let $X \subset Y$ be an embedding such that $Y$ is smooth.
Suppose that $\mathcal{N}_{X/Y}$ is finitely generated by its global sections and
$H^1(T^1(X))=H^2(T_X)=0$. Then $X$ is formally smoothable.
\end{corollary}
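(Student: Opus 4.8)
The plan is to deduce this from Theorem~\ref{smoothing1} applied with $D=Def(X)$, so the only substantive point is to verify that $T^1(X)$ is finitely generated by its global sections; the vanishing hypotheses $H^1(T^1(X))=H^2(T_X)=0$ are assumed, $X$ is projective hence proper, and a local complete intersection scheme is Cohen--Macauley, hence pure (being reduced is part of the standing hypotheses of this section, and in any case is needed for the notion of smoothing to be meaningful).

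First I would write down the conormal sequence. Let $I\subset \sheaf_Y$ be the ideal sheaf of $X$ in $Y$. Since $X$ is a local complete intersection in the smooth scheme $Y$, the sheaf $I/I^2$ is locally free and the conormal sequence
\[
0 \la I/I^2 \la \Omega_Y\otimes\sheaf_X \la \Omega_X \la 0
\]
is exact, including on the left. Applying $\mathcal{H}om_X(\,\cdot\,,\sheaf_X)$ and using that $\Omega_Y\otimes\sheaf_X$ is locally free, so that $\mathcal{E}xt^1_X(\Omega_Y\otimes\sheaf_X,\sheaf_X)=0$, I obtain the four-term exact sequence
\[
0 \la T_X \la T_Y\otimes\sheaf_X \la \mathcal{N}_{X/Y} \la \mathcal{E}xt^1_X(\Omega_X,\sheaf_X) \la 0 .
\]
Since $X$ is reduced (indeed a local complete intersection), $\mathcal{E}xt^1_X(\Omega_X,\sheaf_X)=T^1(X)$ as in the proof of Proposition~\ref{local-to-global}, so this sequence exhibits a surjection $\mathcal{N}_{X/Y}\la T^1(X)$. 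Now, as $\mathcal{N}_{X/Y}$ is generated by its global sections and $X$ is projective over $k$, the space $H^0(X,\mathcal{N}_{X/Y})$ is finite dimensional; choosing finitely many global sections of $\mathcal{N}_{X/Y}$ that generate it and pushing them forward along the surjection above produces finitely many global sections of $T^1(X)$ that generate it. Hence $T^1(X)$ is finitely generated by its global sections.

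At this point all hypotheses of Theorem~\ref{smoothing1} are in place for $D=Def(X)$: $X$ is proper, pure and reduced, of finite type over a field of characteristic zero, has complete intersection singularities, $T^1(X)$ is finitely generated by its global sections, and $H^1(T^1(X))=H^2(T_X)=0$. The theorem then gives that $X$ is $Def(X)$-formally smoothable, that is, formally smoothable, which is the assertion. I do not expect a serious obstacle; the two points requiring a little care are the left-exactness of the conormal sequence, which is precisely the local complete intersection condition on $X\subset Y$, and the identification $T^1(X)\cong\mathcal{E}xt^1_X(\Omega_X,\sheaf_X)$, which holds here because $X$ is reduced.
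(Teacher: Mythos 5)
Your argument is correct and is essentially the paper's own proof, just spelled out in more detail: both dualize the conormal sequence to obtain a surjection $\mathcal{N}_{X/Y}\twoheadrightarrow T^1(X)$, conclude that $T^1(X)$ is finitely generated by its global sections, and then invoke Theorem~\ref{smoothing1}. The extra care you take about left-exactness of the conormal sequence and the identification $T^1(X)\cong\mathcal{E}xt^1_X(\Omega_X,\sheaf_X)$ is accurate but not a deviation from the paper's route.
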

\begin{proof}
Dualizing the conormal sequence for $X \subset Y$ we get a surjection
\[
\mathcal{N}_{X/Y} \la T^1(X) \la 0
\]
Hence $T^1(X)$ is finitely generated by its global sections too and hence $X$ is formally smoothable.
\end{proof}
Next we give a similar criterion for $\mathbb{Q}$-Gorenstein deformations.
\begin{corollary}
Let $X$ be a projective $\mathbb{Q}$-Gorenstein scheme defined over a field $k$ of characteristic zero. Suppose that its Gorenstein points are complete
intersections and the high index points
are complete intersection quotients. Let $X \subset Y$ be an embedding such that locally around any point $P \in X$, $P \in Y$ is a general
deformation of $P \in X$.  Suppose that $\mathcal{N}_{X/Y}$ is finitely generated by its global sections and
$H^1(T_{qG}^1(X))=H^2(T_X)=0$. Then $X$ has a $\mathbb{Q}$-Gorenstein smoothing.
\end{corollary}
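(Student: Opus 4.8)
The plan is to mimic the proof of the preceding corollary and reduce the statement to Theorem~\ref{smoothing1}, applied with $D=Def^{qG}(X)$. Its hypotheses are available in our setting: the index $1$ cover of every singular point of $X$ is a complete intersection (at a Gorenstein point the index $1$ cover is $X$ itself, a complete intersection by assumption, and at a high index point it is a complete intersection quotient, so the cover is a complete intersection); $X$ is locally $\mathbb{Q}$-Gorenstein smoothable — this should be read into the hypothesis, as it is implicit in the assertion that $Y$ is locally a general $\mathbb{Q}$-Gorenstein deformation of $X$; and $H^1(T^1_{qG}(X))=H^2(T_X)=0$ is assumed. Thus the only thing left to check is that $T^1_{qG}(X)$ is finitely generated by its global sections.

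For that I would produce a surjection of coherent sheaves
\[
\mathcal{N}_{X/Y}\la T^1_{qG}(X)\la 0 ;
\]
since $\mathcal{N}_{X/Y}$ is generated by finitely many global sections by hypothesis, their images then generate $T^1_{qG}(X)$ and we are done. In the case treated in the previous corollary ($Y$ smooth, $D=Def(X)$) this surjection is the Lichtenbaum--Schlessinger transitivity sequence for $k\to\sheaf_Y\to\sheaf_X$: the cokernel of the natural map $\mathcal{N}_{X/Y}\la T^1(X)$ embeds in $T^1(\sheaf_Y/k,\sheaf_X)$, which vanishes when $Y$ is smooth. Here $Y$ is not smooth, but locally it is a \emph{general} $\mathbb{Q}$-Gorenstein deformation of $X$, and I would exploit this by passing to index $1$ covers. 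Fix $P\in X$ of index $r$, let $\pi\colon\tilde X\la X$ be the local index $1$ cover (a complete intersection, with $\mu_r$ acting and $T^1_{qG}(X)=(T^1(\tilde X))^{\mu_r}$ as in the construction of the sheaves $T^i_{qG}$ in Section~\ref{Q-section}). By \cite{KoBa88} the $\mathbb{Q}$-Gorenstein deformation $P\in Y$ lifts to a $\mu_r$-equivariant deformation $\tilde Y$ of $\tilde X$; since $\tilde X$ is a complete intersection so is $\tilde Y$, and $\tilde X\subset\tilde Y$ is a regular embedding, hence $\mathcal{N}_{\tilde X/\tilde Y}$ is locally free. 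The genericity of $P\in Y$ translates into the statement that the Kodaira--Spencer map $\mathcal{N}_{\tilde X/\tilde Y}\la T^1(\tilde X)$ is surjective, i.e. that the obstruction in $T^1(\tilde Y/k,\sheaf_{\tilde X})=\mathcal{E}xt^1_{\tilde X}(\Omega_{\tilde Y}\otimes\sheaf_{\tilde X},\sheaf_{\tilde X})$ seen by this map vanishes. Taking $\mu_r$-invariants — which is exact here because $\pi$ is \'etale in codimension one, $r$ is invertible in $k$, and the sheaves involved are reflexive — and using $\mathcal{N}_{X/Y}=(\pi_{\ast}\mathcal{N}_{\tilde X/\tilde Y})^{\mu_r}$ up to reflexive hull, yields the desired surjection $\mathcal{N}_{X/Y}\la T^1_{qG}(X)$ near $P$; at Gorenstein points it is the dualized conormal sequence directly and at smooth points both sheaves are zero, and these local surjections patch.

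Granting the surjection, $T^1_{qG}(X)$ is finitely generated by its global sections, so Theorem~\ref{smoothing1} (with $D=Def^{qG}(X)$) shows that $X$ admits a $\mathbb{Q}$-Gorenstein formal smoothing, and effectivity — available since $X$ is projective, cf. Corollary~\ref{effective} — upgrades this to an actual $\mathbb{Q}$-Gorenstein smoothing.

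The main obstacle will be the surjectivity of $\mathcal{N}_{X/Y}\la T^1_{qG}(X)$: one must make precise what ``$P\in Y$ is a general deformation of $P\in X$'' means at the level of the index $1$ cover, and extract from it the vanishing of the Lichtenbaum--Schlessinger obstruction $T^1(\tilde Y/k,\sheaf_{\tilde X})$ against the map $\mathcal{N}_{\tilde X/\tilde Y}\la T^1(\tilde X)$ — this is the only place where genericity of $Y$ is genuinely used, since when $\tilde Y$ is singular the dualized conormal sequence alone does not give surjectivity. A secondary point to nail down is the compatibility of the passage to $\mu_r$-invariants with this surjection and with the identification $\mathcal{N}_{X/Y}=(\pi_{\ast}\mathcal{N}_{\tilde X/\tilde Y})^{\mu_r}$, together with the (routine) verification that the local smoothability hypothesis of Theorem~\ref{smoothing1} is supplied by the ``general deformation'' assumption.
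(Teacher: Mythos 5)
Your overall plan — reduce to Theorem~\ref{smoothing1} by exhibiting a surjection $\mathcal{N}_{X/Y}\la T^1_{qG}(X)$, built locally by passing to index $1$ covers and then taking Galois invariants — is exactly the structure of the paper's argument, and the reduction itself is correctly set up. However, you have flagged as "the main obstacle" precisely the step where you have missed a simplifying observation, and as written your argument does not close.

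You assert that $\tilde Y$ is merely a \emph{complete intersection} and then try to extract surjectivity of $\mathcal{N}_{\tilde X/\tilde Y}\la T^1(\tilde X)$ from genericity, acknowledging that "when $\tilde Y$ is singular the dualized conormal sequence alone does not give surjectivity." The point the paper makes, and which you are missing, is that $\tilde Y$ is in fact \emph{smooth}. Indeed, by hypothesis $\tilde X$ has complete intersection singularities, so it is smoothable, and since $\tilde Y$ is a \emph{general} deformation of $\tilde X$ it must therefore be smooth. Once $\tilde Y$ is smooth, $\Omega_{\tilde Y}\otimes\sheaf_{\tilde X}$ is locally free on $\tilde X$, so $\mathcal{E}xt^1_{\tilde X}(\Omega_{\tilde Y}\otimes\sheaf_{\tilde X},\sheaf_{\tilde X})=0$ and the dualized conormal sequence alone gives the surjection $\mathcal{N}_{\tilde X/\tilde Y}\twoheadrightarrow T^1(\tilde X)$; no further use of genericity is needed and there is no Lichtenbaum--Schlessinger obstruction term to control. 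Taking $G$-invariants then gives $\mathcal{N}_{X/Y}\twoheadrightarrow T^1_{qG}(X)$ as you intended. So the "main obstacle" you identified evaporates once you use the stronger conclusion that generic deformations of complete intersections are smooth, rather than only that they are complete intersections. With that in hand, the rest of your reduction to Theorem~\ref{smoothing1} matches the paper's proof.
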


\begin{proof}
Dualizing the conormal sequence for $X \subset Y$ we get a sequence
\[
\mathcal{N}_{X/Y} \stackrel{\phi}{\la} T^1(X) \la \underline{Ext}^1_{X}(\Omega_Y \otimes \sheaf_X , \sheaf_X) \la 0
\]
We claim that $\mathrm{Im}(\phi)=T^1_{qG}(X)$ and hence if $\mathcal{N}_{X/Y}$ is generated by global sections, so is $T^1_{qG}(X)$. The claim
is local at the singularities of $Y$ and hence we may assume that $Y$ is affine. By assumption, $Y$ is smooth at any index 1 point and in
this case we are done. Asssume then that $Y$  index $ r > 1$. Let $ \pi \colon \tilde{Y} \la Y$
be the index 1 cover. Then $\tilde{X}= \pi^{-1}(X)$ is the index 1 cover of $X$.
Moreover, since by assumption $Y$ is the general deformation of $X$, $\tilde{Y}$ is smooth and hence there is a surjection
\[
\mathcal{N}_{\tilde{X}/\tilde{Y}} \la T^1(\tilde{X}) \la 0
\]
Let $G$ be the Galois group of the $\pi$. Then taking invariants we get that
\[
\mathcal{N}_{\tilde{X}/\tilde{Y}}^G = \mathcal{N}_{X/Y} \la T^1_{qG}(X) \la 0
\]
as claimed.

\end{proof}
In general, if $X \subset Y$ and $\mathcal{N}_{X/Y}$ is finitely generated by its global sections, or even better, ample,
then $X$ has nice deformation properties. Considering cases with respect to the singularities of $X$ (like normal crossings) and the shape
of the singular locus of $X$, one can get various kinds of criteria similar to the previous corollary for the smoothability of $X$, without even
refering to $T^1(X)$.
Let $Z$ be the the singular locus of $X$. In general $T^1(X)$ is not a sheaf of $\sheaf_Z$-modules. It usually has an embeded part and in fact sometimes
even $Z$ is an embeded component of its support (this for example happens if $X$ is given by $xy+z^n=0$ in $\mathbb{C}^4$, $n \geq 3$). So it is
rather difficult to describe $T^1(X)$ directly and check if it is generated by its global sections. However, if the singular locus of $X$ is
1-dimensional, then it is possible to give criteria for the finite generation of $T^1(X)$ without any reference to its embeded part.

\begin{theorem}\label{smoothing2}
Let $X$ be projective scheme with singularities as in Theorem~\ref{smoothing1} and let $Z$ be its reduced singular locus.
Suppose that $\dim Z =1$ and that
\begin{enumerate}
\item $p(I_Z^k T_D^1(X)/I_Z^{k+1}T_D^1(X))$ is generated by its global sections, for all $k \geq 0$.
\item
\[
H^1(p(I_Z^k T_D^1(X)/I_Z^{k+1}T_D^1(X)))=0
\]
for all $k \geq 0$.
\item \[
H^1(p(T_D^1(X) \otimes \sheaf_Z))=H^2(T_X)=0
\]
\end{enumerate}
Then $X$ is $D$-formally smoothable.
\end{theorem}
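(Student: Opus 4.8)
The plan is to deduce this from Theorem~\ref{smoothing1}. The singularity hypotheses on $X$ are taken to be exactly those of that theorem, and $H^2(T_X)=0$ is assumed outright, so it will be enough to prove that $T_D^1(X)$ is generated by its global sections and that $H^1(T_D^1(X))=0$; working directly with $T_D^1(X)$ is hopeless because it usually carries embedded components (often $Z$ itself is embedded in its support), which is precisely why the hypotheses are phrased in terms of pure parts of graded pieces of the $I_Z$-adic filtration. So the first step is: since $T_D^1(X)$ is coherent and supported (set-theoretically) on $Z$ and $X$ is projective, there is an integer $N$ with $I_Z^N\, T_D^1(X)=0$; setting $M_k=I_Z^k\, T_D^1(X)$ gives a finite filtration $T_D^1(X)=M_0\supset M_1\supset\cdots\supset M_N=0$ whose successive quotients $\mathrm{gr}^k=M_k/M_{k+1}=I_Z^k T_D^1(X)/I_Z^{k+1}T_D^1(X)$ are coherent $\sheaf_Z$-modules on the curve $Z$. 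Note that $\mathrm{gr}^0=T_D^1(X)\otimes\sheaf_Z$, so the $k=0$ instance of hypothesis (2) coincides with the first vanishing in hypothesis (3).

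Next I would analyse a single graded piece. For each $k$ there is a short exact sequence $0\la T_k\la \mathrm{gr}^k\la p(\mathrm{gr}^k)\la 0$ in which, because $\dim Z=1$, the torsion subsheaf $T_k$ has zero-dimensional support; hence $H^i(T_k)=0$ for $i>0$, and any coherent sheaf with zero-dimensional support on a scheme of finite type over $k$ is globally generated. By hypotheses (1) and (2), $p(\mathrm{gr}^k)$ is globally generated with $H^1(p(\mathrm{gr}^k))=0$. The cohomology sequence of this short exact sequence then gives $H^1(\mathrm{gr}^k)=0$ and, since $H^1(T_k)=0$ makes $H^0(\mathrm{gr}^k)\la H^0(p(\mathrm{gr}^k))$ surjective, the standard lifting argument shows that $\mathrm{gr}^k$ is globally generated as well.

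The third step is to climb the filtration by descending induction on $k$, proving that every $M_k$ is globally generated with $H^1(M_k)=0$; the case $M_N=0$ is trivial. For the inductive step one applies cohomology to $0\la M_{k+1}\la M_k\la \mathrm{gr}^k\la 0$: from $H^1(M_{k+1})=0$ and $H^1(\mathrm{gr}^k)=0$ we get $H^1(M_k)=0$, while $H^1(M_{k+1})=0$ makes $H^0(M_k)\la H^0(\mathrm{gr}^k)$ surjective, so that global generation of $M_{k+1}$ and of $\mathrm{gr}^k$ forces global generation of $M_k$. Taking $k=0$ yields that $T_D^1(X)$ is globally generated --- hence finitely generated by its global sections, $X$ being Noetherian --- and that $H^1(T_D^1(X))=0$. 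Combined with $H^2(T_X)=0$, all the hypotheses of Theorem~\ref{smoothing1} are now verified, and it gives that $X$ is $D$-formally smoothable.

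I do not expect a serious obstacle: the entire argument is the reduction via the $I_Z$-adic filtration. The only point requiring a little care is the propagation of global generation up the filtration, which is exactly where the $H^1$-vanishing of hypothesis (2) is needed; the role of $\dim Z=1$ is solely to force the torsion part of each $\mathrm{gr}^k$ to be zero-dimensional, so that the cohomological and generation statements for $\mathrm{gr}^k$ reduce to the same statements for its pure part $p(\mathrm{gr}^k)$.
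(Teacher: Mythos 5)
Your proof is correct and uses essentially the same approach as the paper: both climb the $I_Z$-adic filtration of $T_D^1(X)$, using that the torsion in each graded piece has zero-dimensional support (hence vanishing $H^1$ and automatic global generation), to transfer global generation and $H^1$-vanishing from the pure parts $p(I_Z^k T_D^1(X)/I_Z^{k+1}T_D^1(X))$ up to $T_D^1(X)$ itself, and then invoke Theorem~\ref{smoothing1}. The only cosmetic difference is that you run descending induction on the submodules $I_Z^k T_D^1(X)$ while the paper ascends through the quotients $T_D^1(X)/I_Z^k T_D^1(X)$; incidentally you are a bit more thorough than the printed proof, which never explicitly checks $H^1(T_D^1(X))=0$ (a stated input to Theorem~\ref{smoothing1}), even though that vanishing follows from the same filtration argument.
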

\begin{proof}
Let $\mathcal{F}_k \subset I_Z^k T_D^1(X)/I_Z^{k+1}T_D^1(X)$ be the maximal zero-dimensional subsheaf of $I_Z^k T_D^1(X)/I_Z^{k+1}T_D^1(X)$.
Then there is an exact sequence
\[
0 \la H^0(\mathcal{F}_k) \la H^0(I_Z^k T_D^1(X)/I_Z^{k+1}T_D^1(X)) \la H^0(p(I_Z^k T_D^1(X)/I_Z^{k+1}T_D^1(X))) \la 0
\]
Hence if $p(I_Z^k T_D^1(X)/I_Z^{k+1}T_D^1(X))$ is generated by its global sections, so is $I_Z^k T_D^1(X)/I_Z^{k+1}T_D^1(X)$. There are also exact sequences
\[
0 \la I_Z^k T_D^1(X)/I_Z^{k+1}T_D^1(X) \la T_D^1(X) / I_Z^{k+1}T_D^1(X) \la T_D^1(X) / I_Z^{k}T_D^1(X)\la 0
\]
for all $k \geq 0$. Hence by induction, $T_D^1(X) / I_Z^{k}T_D^1(X)$ is finitely generated by its global sections, for all $k$. But since $T_D^1(X)$
is supported on $Z$, $I_Z^m T_D^1(X)=0$, for $m$ sufficiently large. Hence $T_D^1(X)$ is finitely generated by its global sections and hence
by Theorem~\ref{smoothing1} $X$ is D-smoothable.
\end{proof}
If $X$ is normal crossings at any generic point of its singular locus, then $p(T^1(X))$ is an $\sheaf_Z$-module and hence one needs only to take $k=0$
in the conditions of the theorem.
\begin{corollary}\label{smoothing3}
With assumptions as in the previous theorem, suppose in addition that $X$ is normal crossings at any generic point of its singular locus,
$p(T_D^1(X))$ is finitely generated by its global sections and that $H^1(p(T_D^1(X)))=H^2(T_X)=0$. Then $X$ is smoothable.
\end{corollary}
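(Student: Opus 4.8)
The plan is to derive the corollary directly from Theorem~\ref{smoothing2}: under the extra normal-crossings hypothesis the conditions (1)--(3) of that theorem all follow from the (weaker) hypotheses imposed here, after which Theorem~\ref{smoothing2} gives the conclusion. The assumptions common with Theorem~\ref{smoothing1} --- $X$ projective, pure and reduced with the prescribed singularity type, and $\dim Z=1$ for the reduced singular locus $Z$ --- are carried over by ``with assumptions as in the previous theorem'', so only (1), (2), (3) of Theorem~\ref{smoothing2} remain to be established.

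The one substantive step is a local computation at a generic point $\xi$ of a component of $Z$. Since $\dim Z=1$, only two local branches of the normal crossing $X$ meet at $\xi$ (the locus where three or more branches meet has codimension $\ge 2$ in $X$, hence dimension $<\dim Z$), so \'etale locally near $\xi$ the scheme $X$ is an ordinary double point $\{x_1x_2=0\}$ transverse to $Z$. For such a point the standard hypersurface description of $T^1$ gives $T^1(X)_\xi\cong\sheaf_{Z,\xi}$, and since an ordinary double point is Gorenstein of index $1$ its index-$1$ cover is trivial, so $T^1_{qG}(X)_\xi\cong\sheaf_{Z,\xi}$ as well. In either case $I_Z$ annihilates $T^1_D(X)$ at every generic point of $Z$, so $I_Z\cdot T^1_D(X)$ is supported on a proper closed subset of $Z$ and therefore has dimension $\le 0=\dim Z-1$.

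I would then use two elementary properties of the pure-part functor $p$, which here is ``quotient by the maximal subsheaf of dimension $\le \dim Z-1=0$'': (i) $p$ is unchanged by quotienting a sheaf by a subsheaf of dimension $\le 0$, and (ii) $p$ kills every sheaf of dimension $\le 0$. Since the surjection $T^1_D(X)\la T^1_D(X)\otimes\sheaf_Z=T^1_D(X)/I_ZT^1_D(X)$ has $0$-dimensional kernel, (i) gives $p(T^1_D(X))=p(T^1_D(X)\otimes\sheaf_Z)=p(T^1_D(X)/I_ZT^1_D(X))$; and for $k\ge1$ the graded piece $I_Z^kT^1_D(X)/I_Z^{k+1}T^1_D(X)$ is a subquotient of the $0$-dimensional sheaf $I_ZT^1_D(X)$, so (ii) gives $p(I_Z^kT^1_D(X)/I_Z^{k+1}T^1_D(X))=0$. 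Feeding these identities into Theorem~\ref{smoothing2}: condition (1) reduces for $k=0$ to the global generation of $p(T^1_D(X))$ and is vacuous for $k\ge1$; condition (2) reduces for $k=0$ to $H^1(p(T^1_D(X)))=0$ and is vacuous for $k\ge1$; and condition (3) becomes $H^1(p(T^1_D(X)))=H^2(T_X)=0$. All of these are among the hypotheses of the corollary, so Theorem~\ref{smoothing2} applies and $X$ is $D$-formally smoothable.

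The only place that needs genuine work is the local identification $T^1_D(X)_\xi\cong\sheaf_{Z,\xi}$ of the second paragraph --- i.e., that a one-dimensional normal-crossing singular locus is generically an ordinary node whose $T^1$ (and $T^1_{qG}$) is the structure sheaf of the double locus. Everything else is formal bookkeeping with $p$ and with the hypotheses inherited from Theorem~\ref{smoothing2}.
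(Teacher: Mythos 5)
Your proposal is correct and is essentially the argument the paper intends: the paper's own proof is the one-line remark preceding the corollary — that normal crossings at the generic points of $Z$ forces $p(T^1_D(X))$ to be an $\sheaf_Z$-module, so only the $k=0$ case of Theorem~\ref{smoothing2} is in play — and your write-up is an explicit unwinding of exactly that observation: the local identification $T^1_D(X)_\xi \cong \sheaf_{Z,\xi}$ shows $I_Z T^1_D(X)$ is zero-dimensional, hence $p(T^1_D(X)) = p(T^1_D(X)\otimes\sheaf_Z)$ and the $k\ge 1$ graded pieces contribute nothing.

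One cosmetic caveat: your step (ii), "$p$ kills every sheaf of dimension $\le 0$," is not literally what the paper's definition of $p$ gives, since that definition sets the cutoff at $d-1$ with $d=\dim\mathcal{F}$, making $p$ the identity on a zero-dimensional sheaf. (The usage in the proof of Theorem~\ref{smoothing2} is however consistent with your reading — the paper is slightly inconsistent on this point.) This doesn't affect your conclusion: whether $p$ of the zero-dimensional graded pieces is zero or the whole sheaf, a zero-dimensional coherent sheaf on a projective scheme is automatically globally generated with vanishing $H^1$, so conditions (1) and (2) of Theorem~\ref{smoothing2} are satisfied for $k\ge 1$ either way.
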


\section{Examples}\label{examples}
In this section we apply the theory developed in the previous parts of the paper to give some examples from the theory of moduli spaces of stable surfaces and the three dimensional minimal model program.

\textbf{1.}  In this example we construct a few classes of locally but not globally smoothable stable surfaces with normal crossing singularities. 
Hence the 
irreducible components of the moduli space of stable surfaces that they belong to, do not contain any smooth surfaces of general type. Therefore these 
are extra components that appear after the moduli space of surfaces of general type is compactified by adding the stable surfaces.

\textbf{1.1.} Let $X$ be a projective surface with exactly one singular point $P$ such that
\renewcommand{\labelenumi}{\alph{enumi}.}
\begin{enumerate}
\item $K_X=kA$, where $A$ is very ample and $k \geq 2$ is an integer.
\item $P \in X$ is analytically isomorphic to the cone over a smooth projective plane curve of degree 4.
\end{enumerate}
Note that such surfaces do exist. Take for example $X \subset \mathbb{P}^3$ given by $(x_0^2+x_3^2)x_0^4+(x_1^2+x_3^2)x_1^4+(x_2^2+x_3^2)x_2^4=0$.

Let $f \colon Y \la X$ be the blow up of $X$ along $P$. Then $Y$ is smooth and the $f$-exceptional divisor is a smooth curve $E \subset \mathbb{P}^2$
of degree 4 such that $\mathcal{N}_{E/Y}=\mathcal{O}_E(-1)$ and hence $E^2=-4$. Moreover, a straightforward calculation shows that
\[
K_Y=f^{\ast}K_X -2E.
\]
Let $Z$ be obtained by glueing two copies of $Y$ along $E$. This is a surface with normal crossing singularities and I claim that $K_Z$ is ample
and $Z$ is not smoothable.

By~\cite{Fr83} or~\cite{Tzi08}, $T^1(Z)=\mathcal{N}_{E/Y} \otimes \mathcal{N}_{E/Y} = \sheaf_E(-2)$ and hence $H^0(T^1(X))=0$.
Hence by Theorem~\ref{non-smoothing-2}, $Z$ is not smoothable.

Next we show that $K_Z$ is ample. For this it suffices to show that $K_Z|_{Z_i}$, $i=1,2$, is ample, where
$Z_i \cong Y$ are the irreducible components of $Z$. It is not difficult to see that
\[
K_Z|_{Z_i}=K_Y+E=f^{\ast}K_X -E
\]
This is ample if and only if $(f^{\ast}K_X-E)^2 > 0 $ and $(f^{\ast}K_X-E)\cdot D >0$, for any irreducible curve $D \subset Y$.
Now
\[
(f^{\ast}K_X-E)^2=K_X^2-4=k^2A^2-4 >0
\]
since $k \geq 2$ and $A$ is very ample, hence $A^2 >1$. Let $D \subset Y$ be an irreducible curve and $C = f_{\ast} D$. Then
\[
(f^{\ast}K_X-E)\cdot D=K_X \cdot C - E \cdot D = k A \cdot C - m_P(C) = k \deg (C) -m_P(C)
\]
where $m_P(C)$ is the multiplicity of $C$ at $P$ and $\deg C$ is the degree of $C$ with respect to the embedding defined by $A$.
Then $\deg(C) \geq m_P(C)$ and hence since $ k \geq 2$ it follows that
\[
(f^{\ast}K_X-E)\cdot D > 0
\]
 and therefore $K_Z$ is ample as claimed.

\textbf{1.2} Let $X$ be a smooth projective surface with $K_X$ ample. Suppose that $X$ contains a smooth curve $C$ with $p_a(C) \geq 2$
and $K_X \cdot C >2(p_a(C)-1)$.

Such surfaces do exist. For example, let $C \subset \mathbb{P}^3$ be a smooth plane curve of degree $k \geq 4$ given by $f_k(x,y,z)=t=0$,
where $f_k(x,y,z)$ is a homogeneous polynomial of degree $k \geq 4$, and $X \subset \mathbb{P}^3$ be the hypersurface of degree $d > k+1$ given by
\[
g_{d-k}(x,y,z,t)f_k(x,y,z)+th_{d-1}(x,y,z,t)=0,
\]
where $g_{d-k}(x,y,z,t)$ and $h_{d-1}(x,y,z,t)$ are homogeneous polynomials of degrees $d-k$ and $d-1$. For general choice of $g_{d-k}(x,y,z,t)$ and
$h_{d-1}(x,y,z,t)$, $X$ is a smooth surface containing $C$. Moreover, $\mathcal{O}_X(K_X)=\mathcal{O}_X(d-4)$ and hence
\[
K_X \cdot C = \deg \mathcal{O}_C(d-4)=(d-4)k>k^2-3k=2(p_a(C)-1),
\]
since $d >k+1$ and $k \geq 4$. Moreover, $K_X$ is ample.

Let $Z$ be obtained by glueing two copies of $X$ along $C$.  This is a surface with normal crossing singularities and I claim that $K_Z$ is ample
and $Z$ is not smoothable.

Let $Z_1$, $Z_2$ be the two irreducible components of $Z$. By construction, $Z_i \cong Z$, $i=1,2$. Then $K_Z$ is ample if and only if $K_Z|_{Z_i}$
is ample, $i=1,2$.
As in the previous example, $K_Z|_{Z_i}=K_X+C$. By construction, $K_Z+C$ is ample and hence $K_Z$ is ample.

By adjunction, $\mathcal{N}_{C/X}=\omega_C \otimes \omega_X^{-1}$ and therefore
\[
\deg \mathcal{N}_{C/X}=2p_a(C)-2-K_X \cdot C <0,
\]
by assumption. As in the previous example, $T^1(X)=\mathcal{N}_{C/X} \otimes \mathcal{N}_{C/X}$. Hence $T^1(X)$ is a line bundle on $C$ of negative
degree. Therefore $H^0(T^1(X))=0$ and $X$ is not smoothable by Theorem~\ref{non-smoothing-2}.

\textbf{2.} In this example we construct a terminal 3-fold divisorial extremal neighborhood $f \colon Y \la X$ such that the general member of $|\mathcal{O}_Y|$
is not normal.

Let $U$ be the germ of a smooth surface around the configuration of rational curves\[
\stackrel{-2}{\circ} \mbox{\noindent ---} \stackrel{-2}{\circ} \mbox{\noindent ---}
\stackrel{-2}{\circ} \mbox{\noindent ---} \stackrel{-3}{\circ} \mbox{\noindent ---}
\stackrel{-2}{\bullet} \mbox{\noindent ---} \stackrel{-3}{\circ} \mbox{\noindent ---}
\stackrel{-1}{\bullet} \mbox{\noindent ---}
\stackrel{-2}{\circ} \mbox{\noindent ---} \stackrel{-5}{\circ}
\]
Let $h \colon U \la \tilde{Z}$ be the contraction of all the curves except the ones marked by a solid circle.
Then we get a map $\tilde{Z} \stackrel{\tilde{f}}{\la} T$ contracting two smooth rational curves $C_1$ and $C_2$ to a point $0 \in T$ such
 that $0 \in T$ is
an $A_5$ singularity, and $\tilde{Z}$ has exactly three singular points $P_1 \in C_1$, $P_2\in C_2$ and $Q = C_1 \cap C_2$.
It is easy to see that $(P_1 \in \tilde{Z}) \cong 1/9 (1,5)$, $(P_2 \in \tilde{Z}) \cong 1/9(1,-5)$,
and $(Q \in \overline{Z}) \cong 1/3(1,1)$. Let $Z$ be obtained from $\tilde{Z}$ by identifying $C_1$ and $C_2$ via an involution of $C_1+C_2$ taking
$P_1$ to $P_2$ and leaving $Q$ fixed. Let $\pi \colon \tilde{Z} \la Z$ be the quotient map. Then the singular locus of $Z$ is a smooth rational
curve $C$, $\pi^{-1}(C) = C_1+C_2$ and $Z$
 has one singularity analytically isomorphic to $(xy=0)/\mathbb{Z}(5,-5,1)$, one degenerate cusp analytically isomorphic to $x^3+y^3+xyz=0$ and is
normal crossings at all other singular points.
Moreover, $\tilde{Z}$ is the normalization of $Z$ and there is a natural morphism $ f \colon Z \la T$ contracting $C$ to $0 \in T$.

Straightforward calculations show that $K_Z \cdot C =-1/9 <0$. Moreover since $U$ is the minimal log-resolution of $C \subset Z$,
$\deg p(T^1_{qG}(Z)) = -2-1+1+3=1$~\cite{Tzi08} and hence $p(T^1_{qG}(Z))=\mathcal{O}_{\mathbb{P}^1}(1)$. Hence by Corollary~\ref{smoothing3}, there
exists a
$\mathbb{Q}$-Gorenstein smoothing $Y \la \Delta$ of $Z$. Now $f$ extends to a morphism $g \colon Y \la X$ over $\Delta$, where $X$ is a deformation
of $T$~\cite{Ko-Mo92}. Now $g \colon Y \la X$ is
a 3-fold extremal neighborhood and $Z \in |\mathcal{O}_Y|$ is the general member. Moreover, the neighborhood is divisorial since $X$ is Gorenstein.

Finally I would like to remark that the method of producing 3-fold extremal neighborhoods by deforming birational surface morphisms $f \colon Z \la T$
is fundamental in the classification of
flips by Koll\'ar and Mori~\cite{Ko-Mo92} and in principle it could be used in higher dimensions in order to understand higher dimensional flips and
divisorial contractions.


\begin{thebibliography}{Lichtenbaum}
\bibitem[Ale06]{Ale06} V. Alexeev, \textit{Higher-dimensional analogues of stable curves}, International Congress of Mathematicians. Vol. II,  515--536, Eur. Math. Soc., Z\"urich, 2006.
\bibitem[Art69]{Art69} M. Artin, \textit{Algebraic approximation of structures over complete local rings}, Publ. Math. IHES 36, 1969, 23-58.
\bibitem[Art76]{Art76} M. Artin, \textit{Deformations of singularities}, TATA Lecture Notes vol. 54, 1976.
\bibitem[Cox78]{Cox78} D. Cox. \textit{Algebraic tubular neighborhoods I}, Math. Scand. 42, 1978, 211-228.
\bibitem[Eis95]{Eis95} D. Eisenbud, \textit{Commutative algebra with a view toward algebraic geometry}, Springer GTM vol. 150, 1995.
\bibitem[FaMa98]{FaMa98} B. Fantechi, M. Manetti, \textit{Obstruction calculus for functors of Artin rings, I}, Journal of Algebra 202, 1998, 541-576.
\bibitem[FaMa99]{FaMa99} B. Fantechi, M. Manetti, \textit{On the $T^1$-lifting property}, J. Algebraic Geom. 8,  1999, no. 1, 31--39.
\bibitem[Fr83]{Fr83} R. Friedman, \textit{Global smoothings of varieties with normal crossings}, Ann. of Math. 118, 1983, 75-114.
\bibitem[Gra72]{Gra72} H. Grauert, \textit{\"{U}ber die deformationen isolierter singularit\"{a}ten analytischer mengen}, Inv. Math. 25, 1972, 171-198.
\bibitem[Gra74]{Gra74} H. Grauert, \textit{Der Satz von Kuranishi
  f\"{u}r kompakte komplexe R\"{a}ume}, Inv. Math. 25, 1974, 107-142.
\bibitem[Gr64]{Gr64} A. Grothendieck, \textit{El\'ements de G\'eom\'etrie Alg\'ebrique IV}, Publ. Math. IHES 20, no. 2, 1964.
\bibitem[Gr59]{Gr59} A. Grothendieck, \textit{G\'eom\'etrie formelle et g\'eom\'etrie alg\'ebrique}, Seminaire Bourbaki, exp. 221, 1959.
\bibitem[Hac04]{Hac04} P. Hacking, \textit{Compact moduli of plane curves},  Duke Math. J.  124, 2004,  no. 2, 213--257. 
\bibitem[Halp76]{Halp76} M. Halperin, \textit{Deformations of formal embeddings os schemes}, Trans. Amer. Math. Soc. Vol. 221, no. 2, 1976.
\bibitem[Har77]{Har77} R. Hartshorne, \textit{Algebraic Geometry}, Graduate Texts in Mathematics 52, Springer, 1977.
\bibitem[Har68]{Har68} R. Hartshorne, \textit{Cohomological dimension of algebraic varieties}, Ann. of Math. 88, 1968, 403-450.
\bibitem[Har04]{Har04} R. Hartshorne, \textit{Lectures on deformation theory}, unpublished, 2004.
\bibitem[Has-Kov04]{Has-Kov04} B. Hassett, S. Kov\'{a}cs, \textit{Reflexive pull-backs and base extension}, J. Algebraic
  Geom. 13, 2004, 233-247.
\bibitem[Kaw92]{Kaw92} Y. Kawamata, \textit{Unobstructed deformations - a remark on a paper of Z. Ran}, J. Algebraic Geom. 1, 1992, 183-190.
\bibitem[Kaw97]{Kaw97} Y. Kawamata, \textit{Erratum on Unobstructed deformations}, J. Algebraic Geom. 6, 1997, 803-804.
\bibitem[KawNam94]{KawNam94}, Y. Kawamata, Y. Namikawa, \textit{Logarithmic deformations of normal crossing varieties and smoothing of degenerate Calabi-Yau varieties}, Invent. Math. 118, 1994, 395-409.
\bibitem[KSB88]{KoBa88} J. Koll{\'{a}}r, N. I. Shepherd-Barron, \textit{Threefolds and deformations of surface singularities},
Invent. Math. 91, 1988, 299-338.
\bibitem[Ko-Mo92]{Ko-Mo92} J. Koll{\'{a}}r, S. Mori, \textit{Classification of three-dimensional flips},
J. Amer. Math. Soc. Vol. 5 no. 3, 1992, 533-703.
\bibitem[Li-Sch67]{Li-Sch67} S. Lichtenbaum, M. Schlessiger, \textit{ The cotangent
complex of a morphism}, Trans. Amer. Math. Soc. 128, 1967, 41-70.
\bibitem[Lip-Na-Sa05]{Lip-Na-Sa05} J. Lipman, J. Nayak, S. Sastry, \textit{ Pseudofunctorial behavior
of Cousin complexes on formal schemes}, Variance and duality for Cousin complexes on formal schemes. Contemp. Math., 375, Amer. Math. Soc., Providence, 2005, 3-133.
\bibitem[Nam06]{Nam06} Y. Namikawa, \textit{On deformations of Q-factorial symplectic varieties}, J. Reine Angew. Math.  599, 2006, 97--110.
\bibitem[PiPe83]{PiPe83} H. Pinkham, U. Persson, \textit{Some examples of nonsmoothable varieties with normal crossings}, Duke Math. J. 50, 1983, 477-486.
\bibitem[Ran92]{Ran92} Z. Ran, \textit{Deformations of manifolds with torsion or negative canonical bundle}, J. Algebraic Geom. 1, 1992, 279-291.
\bibitem[Ser06]{Ser06} E. Sernesi, \textit{Deformations of Algebraic Schemes}, Springer-Verlag Berlin Heidelberg, 2006.
\bibitem[Sch68]{Sch68} M. Schlessinger, \textit{Functors of Artin rings}, Trans. Amer. Math. Soc. 130, 1968, 208-222.
\bibitem[TaLoRo07]{TaLoRo07} L. A. Tarr\'io, A. J. L\'opez, M. P. Rodr\'iguez, \textit{Infinitesimal lifting and Jacobi criterion for smoothness on
formal schemes}, Communications in Algebra 35, 2007, 1341-1367.
\bibitem[Tzi09]{Tzi08} N. Tziolas, \textit{$\mathbb{Q}$-Gorenstein smoothings of nonnormal surfaces}, Amer. J. Math. 131, No 1, 2009, 171-193.
\bibitem[Tzi09a]{Tzi09a} N. Tziolas, \textit{Smoothings of normal crossing Fano schemes of dimension at most three}, 22 pages, available at 
www.arxiv.org under the reference arXiv:0907.3641v1 [math.AG].
\bibitem[Wa76]{Wa76}  J. Wahl, \textit{Equisingular deformations of normal surface singularities I}, Ann. Math. 104, 1976, 325-356.
\end{thebibliography}
\end{document}